\numberwithin{equation}{section}
\newtheorem{theorem}{Theorem}[section]
\newtheorem{proposition}[theorem]{Proposition}
\newtheorem{lemma}[theorem]{Lemma}
\newtheorem{corollary}[theorem]{Corollary}
\theoremstyle{remark}
\newtheorem{remark}[theorem]{Remark}
\theoremstyle{definition}
\newtheorem{definition}[theorem]{Definition}
\newcommand{\I}{\mathrm{i}}
\def\beq{\begin{equation}}
\def\eeq{\end{equation}}
\def\bel{\begin{lemma}}
\def\eel{\end{lemma}}
\newcommand{\unn}[2]{[\![#1,#2]\!]}
\DeclareMathOperator{\Var}{Var}
\def\matn{\mathrm{Mat}_{N}}
\def\iu{{\mathrm{i}}}
\def\msc{m_{\mathrm{sc} } }
\def\eer{\normalcolor}
\newcommand{\1}{\mathds{1}}
\newcommand{\Id}{{\mathrm{Id}}}
\numberwithin{equation}{section}
\newcommand{\bla}{\bm{\lambda}}
\newcommand{\bzeta}{\bm{\zeta}}
\newcommand{\bmu}{\bm{\mu}}
\newcommand{\boxi}{\bm{\xi}}
\newcommand{\bnu}{\mbox{\boldmath $\nu$}}
\newcommand{\eps}{\varepsilon}
\newcommand{\diff}{\mathop{}\mathopen{}\mathrm{d}}
\newcommand{\rd}{{\rm d}}
\newcommand{\bx}{{\bf{x}}}
\newcommand{\bz} {{\bf {z}}}
\newcommand{\be}{\begin{equation}}
\newcommand{\ee}{\end{equation}}
\newcommand{\e}{{\varepsilon}}
\newcommand{\la}{\lambda}
\newcommand{\cC}{{\cal C}}
\DeclareMathOperator{\rlog}{ {\rm  rlog}}
\DeclareMathOperator{\ilog}{ {\rm  ilog}}
\def\RR{{\mathbb R}}
\renewcommand{\cal}{\mathcal}
\newcommand{\wt}{\widetilde}
\newcommand{\ii}{\mathrm{i}} %\newcommand{\mi}{\mathrm{i}}
\renewcommand{\epsilon}{\varepsilon}
\renewcommand{\leq}{\leqslant}
\renewcommand{\geq}{\geqslant}
\renewcommand{\le}{\leq}
\renewcommand{\ge}{\geq}
\renewcommand{\P}{\mathbb{P}}
\newcommand{\E}{\mathbb{E}}
\newcommand{\R}{\mathbb{R}}
\newcommand{\C}{\mathbb{C}}
\newcommand{\N}{\mathbb{N}}
\newcommand{\pa}[1]{\left({#1}\right)}
\newcommand{\abs}[1]{\lvert #1 \rvert}
\newcommand{\absa}[1]{\left\lvert #1 \right\rvert}
\DeclareMathOperator{\tr}{Tr}
\DeclareMathOperator{\re}{Re}
\DeclareMathOperator{\im}{Im}
\renewcommand{\Im}{\operatorname{Im}}
\renewcommand{\Re}{\operatorname{Re}}
\DeclareMathOperator{\sgn}{sgn}
\DeclareMathOperator{\OO}{O}
\DeclareMathOperator{\oo}{o}
\renewcommand{\subsection}{\@startsection
{subsection}%                   % the name
{2}%                         % the level
{0mm}%                       % the indent
{-\baselineskip}%            % the before skip
{0 \baselineskip}%          % the after skip
{\normalfont\bf\itshape}} % the style
\renewcommand{\phi}{\varphi}
\def\sc{\rho_{\mathrm{sc}} }
\DeclareMathOperator{\GOE}{GOE}
\DeclareMathOperator{\GUE}{GUE}
\def\one{{\mathbbm 1}}
\renewcommand{\tilde}{\widetilde}
\def\@empty{}
\def\author#1{\par
    {\centering{\authorfont#1}\par\vspace*{0.05in}}
}
\def\titlefont{\fontsize{13}{15}\bfseries\boldmath\selectfont\centering{}}
\def\authorfont{\fontsize{13}{15}}
\def\abstractfont{\fontsize{8}{10}}
\let\affiliationfont\rhfont
\def\address#1{\par
    {\centering{\affiliationfont#1\par}}\par\vspace*{11pt}
}
\def\body{
\setcounter{footnote}{0}
\def\thefootnote{\alph{footnote}}
\def\@makefnmark{{$^{\rm \@thefnmark}$}}
}
\def\title#1{
    \thispagestyle{plain}
    \vspace*{-14pt}
    \vskip 79pt
    {\centering{\titlefont #1\par}}%
    \vskip 1em
}
\renewenvironment{abstract}{\par%
    \vspace*{6pt}\noindent %{\bf Abstract}
    \abstractfont
    \noindent\leftskip10pt\rightskip10pt
}{%
  \par}
\renewcommand{\section}{\@startsection
{section}%                   % the name
{1}%                         % the level
{0mm}%                       % the indent
{-2\baselineskip}%            % the before skip
{1\baselineskip}%          % the after skip
{\normalfont\large\scshape\centering}} % the style
\begin{document}

~\vspace{-1.4cm}

\title{Optimal rigidity and maximum of the characteristic polynomial of Wigner matrices} %and $\beta$-ensembles}

~\vspace{0.5cm}

\noindent\begin{minipage}[b]{0.33\textwidth}

 \author{Paul Bourgade}

\address{Courant Institute\\ New York University\\
   bourgade@cims.nyu.edu}

 \end{minipage}
\noindent\begin{minipage}[b]{0.33\textwidth}
 \author{Patrick Lopatto}

\address{Brown University \\ Division of Applied Mathematics\\
  patrick\textunderscore lopatto@brown.edu}

 \end{minipage}
\noindent\begin{minipage}[b]{0.33\textwidth}
 \author{Ofer Zeitouni}

\address{Weizmann Institute of Science \\and  Courant Institute\\
  ofer.zeitouni@weizmann.ac.il}

 \end{minipage}

\begin{abstract}
We determine to leading order the maximum of the characteristic polynomial for  Wigner matrices and $\beta$-ensembles. 
In the special case of Gaussian-divisible Wigner matrices,  our method provides universality of the maximum up to tightness.
These are the first universal results on the Fyodorov--Hiary--Keating conjectures for these models, and in particular answer the question of optimal rigidity for the spectrum of Wigner matrices. 

Our proofs combine dynamical techniques for universality of eigenvalue statistics with ideas surrounding the maxima of log-correlated fields and Gaussian multiplicative chaos. 
\end{abstract}

\vspace{0.3cm}

\tableofcontents

\section{Introduction}

%\cor
%[General plan: keep the folllowing subsection and make it slightly shorter. Then make a very long subsection on related works.] [
%\nc

\subsection{Universality in the Fyodorov--Hiary--Keating program.}\ In 2012,  Fyodorov, Hiary, and Keating (FHK)  initiated a new line of research on the connection between random matrix theory and the Riemann zeta function.
Motivated by ideas from statistical mechanics, they  conjectured that the extremal statistics for both characteristic polynomials of random matrices and  the zeta function on the critical line are identical to those of logarithmically correlated fields \cite{FyoHiaKea2012}. 
Such fields arise whenever one superimposes randomness equally on all length scales, and are characterized by correlations proportional to the logarithm of the inverse distance between two points. The branching random walk and the two-dimensional Gaussian free field are paradigmatic examples.

The FHK conjecture states that for a Haar-distributed $N\times N$ unitary matrix $U_N$, the random variable $X_N$ determined by the equality
\begin{equation}\label{eqn:FHK}
\max_{|z|=1}\log|\det(z-{\rm U}_N)|=\log N-\tfrac{3}{4}\log\log N+X_N%,\ \ X_N\overset{{\rm (d)}}{\longrightarrow}X_{\infty}
\end{equation}
converges to a variable $X_\infty$ in distribution as $N \rightarrow \infty$,
where $X_\infty$ is distributed as the sum of two independent Gumbel random variables. 
Recently, there have been significant advances towards proving \eqref{eqn:FHK} and analogous results for certain other random matrix ensembles, as we discuss below. However, all previous results have been limited to specific models, which admit representations either as a determinantal point process or a sparse matrix model

In this paper, we consider the FHK conjecture for a far broader class of random matrices. 
We study the following generalization that encompasses  real symmetric and complex Hermitian random matrices with independent entries (Wigner matrices), and systems of interacting particles at inverse temperature $\beta > 0$ and governed by a general potential $V\colon \mathbb{R} \rightarrow \mathbb{R}$ ($\beta$-ensembles). 
Under quite general conditions, the limiting spectrum of a Wigner matrix or $\beta$-ensemble is deterministic and supported on some compact interval $[A,B]$.  
We make this {\it one-cut} hypothesis in the statement below,  although similar asymptotics should hold in the bulk in the complementary {\it multicut} case.  
Fix a small $\eps >0$ and set $I = [A+\eps, B-\eps]$. We let $\lambda_1\leq\dots\leq \lambda_N$ denote with the eigenvalues of the matrix or the particles of the $\beta$-ensemble, as appropriate, and set $\det(E)=\prod_{i=1}^N(E-\lambda_i)$.   \\

\noindent{\bf Problem 1.}\ 
{\it Consider any $\beta$-ensemble or any Wigner matrix; in the matrix case, set $\beta=1$ if it is real symmetric or $\beta=2$ if it is complex Hermitian. 
With the above conventions, show that
\begin{equation}\label{eqn:prob1}
\sqrt{\frac{\beta}{2}}\cdot \max_{E\in I}\big(\log|\det(E)|-\E\big[\log|\det(E)|\big]\big)=\log N-\tfrac{3}{4}\log\log N+Z_N, \text{ where } \lim_{N\rightarrow \infty} Z_N\overset{{\rm (d)}}{=}Z_{\infty}
\end{equation} 
for a random variable $Z_{\infty}$ satisfying the tail decay asymptotic $c\, y e^{-2y}\leq\mathbb{P}(Z_{\infty}>y)\leq c^{-1} y e^{-2y}$ as $y\to\infty$, for some fixed $c>0$. (The exact distribution of $Z_{\infty}$ may depend on the matrix entries, or on $V$ and $\beta$.)}\\

\noindent Such a prediction was made in \cite{fyodorov2016distribution} for the Gaussian Unitary Ensemble (GUE). 
Both \cite{fyodorov2016distribution} and our paper focus on the bulk of the spectrum, since this corresponds to the original FHK setting \eqref{eqn:FHK}.

The first contribution of this paper is to establish that the first order term in the conjecture \eqref{eqn:prob1} is correct, both for Wigner matrices and $\beta$-ensembles defined by a general class of potentials (\Cref{thm:maxWigner} and \Cref{thm:Beta}).  We also establish this conjecture up to tightness for the class of Gaussian-divisible Wigner matrices, in the sense that the maximum for these ensembles can be coupled with the maximum for the Gaussian Orthogonal Ensemble (GOE) up to an error of order 1.

Our first order result is new even for the GOE; previous studies were limited to the GUE \cite{LamPaq2019}. 
Because the general models identified in Problem 1 are not integrable for $\beta \neq 2$,  and do not admit a sparse representation using independent 
variables
for non-quadratic $V$, the techniques previously used to prove FHK 
asymptotics are not applicable.  
Instead, we adopt dynamical ideas based around Dyson Brownian motion, %viewpoint towards the universality of spectral statistics, 
which have not previously been applied to FHK asymptotics due the singular, non-local character of the relevant observable.

The method we develop for Problem 1 also leads to a sharp characterization of a fundamental property of random matrices, eigenvalue rigidity. This term refers to the observation that eigenvalues of such matrices behave as repelling particles, with interactions that suppress their fluctuations and trap them near deterministic locations. 
We fix a small constant $c >0$ and consider the following problem.\\

\noindent{\bf Problem 2.}\ {\it  For general self-adjoint random matrices or $\beta$-ensembles,  how large is
$
\displaystyle\max_{cN \le i \le (1-c)N }\big|\lambda_i-\E[\lambda_i]\big|?
$}
\\

This  can be understood as asking for either of the following two things: 
\begin{enumerate} 
\item[(i)] An estimate giving the exact size of the maximum on a set of {\it high probability}, i.e.  $1-{\rm o}(1)$.  
\item[(ii)] A bound that captures the correct order of this maximum with {\it overwhelming probability}, i.e. 
$1-{\rm O}(N^{-D})$ for any $D>0$.
\end{enumerate}
The second contribution of this paper is to answer both versions of this question. 
For (i),  we identify the size of the maximum, including the correct constant prefactor, for Wigner matrices (the first part of \Cref{thm:Wigner}) and $\beta$-ensembles (\Cref{cor:Beta}). These are the first optimal rigidity results for matrix ensembles that are not unitary invariant. Previous works in this direction relied on reducing the rigidity question to one about a Riemann--Hilbert problem; such a translation is only possible for integrable ensembles \cite{claeys2021much,ChaFahWebWon2021}. 
For (ii), we obtain the Gaussian decay of the distribution of $\lambda_i-\E[\lambda_i]$ well beyond the fluctuations regime, in the second part of  \Cref{thm:Wigner}.  This solves the longstanding question of rigidity on the scale $(\log N)/N$ with overwhelming probability.

Our results are obtained by a novel combination of 
methods coming from
the study of universality for random matrices (in particular, heat flow, coupling and homogenization),  with  ideas coming from the theory of logarithmically correlated  fields.
We now give precise statements of our main results, and defer a complete survey of the existing literature to \Cref{s:related} below.

\subsection{Results.}\ \label{subsec:Results}
We begin with the definition of Wigner matrices.
\begin{definition}\label{def:wig}
A  Wigner matrix $H=H(N)$ is a  real symmetric or complex Hermitian $N\times N$ matrix  whose upper-triangular elements $\{H_{ij}\}_{i \le j}$  are independent random variables with mean zero and variances  $\E\left[ |H_{ij}|^2 \right] = N^{-1}$ for all $i\neq j$, and $\E\left[ |H_{ii}|^2 \right] = CN^{-1}$ for all $i$, where $C> 0$ is a constant. We have $H_{ij} = \overline{H}_{ij}$ for $i >j$, and in the case that $H$ is complex Hermitian, we suppose that the 
variables 
$\{\Im H_{ij}\}_{i,j}, \{\Re H_{ij})\}_{i,j}$ are all independent and satisfy
$\E[(\Im H_{ij})^2] = \E[(\Re H_{ij})^2]$ for all $i\neq j$. 
Further, there exists a constant $c>0$ such that, for all $i,j \in \unn{1}{N}$ and $x>0$,
\begin{equation}\label{eqn:tail}
\mathbb{P}\left(|\sqrt{N}H_{ij}|>x\right)\leq c^{-1}\exp\left({-x^{c}}\right).
\end{equation}
Moreover,  a  symmetric Wigner matrix is called Gaussian-divisible if it has the same distribution as $\sqrt{1-\e^2}H+\e {G}$, where $H$ is a Wigner matrix as defined above,  independent of the GOE matrix $G$.  Here $\e\in(0,1)$ does not depend on $N$.
\end{definition}

We recall that the empirical spectral density of a Wigner matrix converges to the semicircle law as $N \rightarrow \infty$, see e.g.\ \cite{anderson2010introduction}.  This distribution has density
\begin{equation}\label{e:scdensity}\rho_{\rm sc} (x) = \frac{\sqrt{ (4 - x^2)_+ }}{2 \pi }.
\end{equation}
%Before stating our main result, we introduce some further notation.
We consider the principal branch of the logarithm, extended to the negative real numbers by continuity from above, given by $\log(r e^{\ii \theta}) = \log(r) + \ii \theta$ for any $r>0$ and $\theta \in (-\pi,\pi]$.
As is usual, we define $z^\alpha$ by $\exp({\alpha \log(z)})$.  In particular for real $\lambda$ and $E$
we have
$\Re\log(E-\lambda)=\log|E-\lambda|$ and $\im\log(E-\lambda)=\pi\mathds{1}_{\lambda>E}$
Given a probability measure $\nu$ with  bounded density  and a matrix $H$ with eigenvalues $\lambda_1\leq\dots\leq\lambda_N$,  for real $E$ we also define
\begin{equation}
	\label{def:L_N}
	L_N(E) 
	=
	 \sum_{j=1}^N \log\pa{E-\lambda_j} 
	- N \int_\R \log\pa{E-x}  \rd\nu(x),
\end{equation}
which is the logarithm of the characteristic polynomial up to a centering shift.
The following theorem is our main result on the maximum of the characteristic polynomial for Wigner matrices.

\begin{theorem}\label{thm:maxWigner}
Let $H$ be a symmetric Wigner matrix as in Definition \ref{def:wig} and set $\nu=\rho_{\rm sc}(x)\rd x$ in \eqref{def:L_N}. Then for any $\e,\kappa> 0$ we have
\begin{align*}
&\mathbb{P}\left(\sup_{|E|<2-\kappa} \frac{\re L_N(E)}{\sqrt{2}\log N}\in[1-\e,1+\e]\right)=1-\oo(1),\\
&\mathbb{P}\left(\sup_{|E|<2-\kappa} \frac{\im L_N(E)}{\sqrt{2}\log N}\in[1-\e,1+\e]\right)=1-\oo(1).
\end{align*}
The same result holds for Hermitian Wigner matrices after replacing the  $\sqrt{2}$ factors with $1$.
\end{theorem}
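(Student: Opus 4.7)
The plan is to prove matching upper and lower bounds on $\sup_{|E|<2-\kappa}\re L_N(E)$; the case of $\im L_N$ and the Hermitian case follow by parallel arguments, since $\im L_N$ is $\pi$ times the centered counting function and shares the same covariance structure as $\re L_N$.

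\textbf{Upper bound.} The plan is a Gaussian-tail-plus-net argument. I would start from a sharp single-point tail estimate: for $E \in [-2+\kappa,2-\kappa]$,
\begin{equation*}
\P\pb{\abs{\re L_N(E)}>x}\le \exp\pB{-\frac{(1+\oo(1))\,\beta\, x^2}{2\log N}}, \qquad x\le (\log N)^{3/2}.
\end{equation*}
Such sharp sub-Gaussian tails follow from high-moment CLT estimates for the log-characteristic polynomial, obtainable by interpolating from $H$ to a GOE (or GUE) matrix along the Dyson Brownian motion (for which the sharp variance $\beta^{-1}\log N$ is classical) combined with sensitivity estimates for $L_N(E)$ under the flow. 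Next I regularize $E\mapsto E+\ii\eta$ at $\eta=N^{-1-\delta}$, which is harmless for the supremum since $\re L_N(E)\to-\infty$ near each eigenvalue; I net the interval by an $\eta$-grid of size $O(N^{1+\delta})$ and apply a union bound at level $x=(1+\e)\sqrt{2/\beta}\log N$. The failure probability is $N^{1+\delta-(1+\e)^2+\oo(1)}=\oo(1)$ for $\delta$ small.

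\textbf{Lower bound.} This is the main difficulty, and I would handle it in two stages. \emph{Stage one} treats Gaussian-divisible matrices $H^{(s)}=\sqrt{1-s^2}\,H+s G$ with $G$ a GOE matrix and $s$ a small fixed constant. Using the Dyson Brownian motion of the eigenvalues, I would decompose
\begin{equation*}
\re L_N(E,s)-\re L_N(E,0)=\sum_{j=1}^{K} X_j(E), \qquad K\sim\log N,
\end{equation*}
into martingale increments $X_j(E)$ over dyadic DBM time slabs whose effective $E$-correlation length decreases geometrically from macroscopic scale down to the eigenvalue spacing $1/N$. Using isotropic local laws and Stieltjes-transform estimates along the flow, one shows that the $X_j(E)$ are approximately independent across $j$, near-Gaussian, with total variance $\beta^{-1}\log N\,(1+\oo(1))$. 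This realizes $\re L_N$ as a near-branching-random-walk of depth $K$ whose maximum at leading order is $(1+\oo(1))\sqrt{2/\beta}\log N$. A barrier-type second-moment method, restricted to $E$ such that all partial sums stay below the Bramson profile, gives $\E Z\ge 1$ and $\E Z^2\le C(\E Z)^2$ for the counting variable $Z$ of grid points at height $(1-\e)\sqrt{2/\beta}\log N$; Paley--Zygmund then yields $\P(Z\ge 1)\ge c>0$, boosted to probability $1-\oo(1)$ by partitioning the interval into many disjoint macroscopic blocks. \emph{Stage two} transfers from Gaussian-divisible to general $H$ via a short-time Dyson Brownian motion coupling at time $s=N^{-c}$; homogenization gives $\abs{\lambda_i-\lambda_i^{(s)}}\le N^{-1-c'}$ in the bulk and hence $\sup_E\abs{\re L_N(E)-\re L_N^{(s)}(E)}=\oo(\log N)$, which suffices for the first-order statement.

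\textbf{Main obstacle.} The key technical difficulty is the two-point density estimate underlying the second-moment step: one needs joint control of $(\re L_N(E),\re L_N(E'))$, equivalently of the dyadic increments $X_j(E),X_j(E')$, with sharp logarithmic covariance $\beta^{-1}\log|E-E'|^{-1}$ on all scales from macroscopic down to $N^{-1}$, together with near-Gaussian higher moments. This goes well beyond classical CLTs for linear statistics and requires careful multi-resolution analysis of the DBM flow combined with a barrier truncation adapted from the log-correlated field and Gaussian multiplicative chaos literature; the latter must be robust to the non-exactly-Gaussian, non-exactly-log-correlated nature of $\re L_N$ and to the transfer step to general Wigner matrices.
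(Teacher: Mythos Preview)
Your overall architecture (establish the result for a Gaussian-type model, then transfer) is right, but there are genuine gaps in both halves.

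\textbf{Upper bound.} The sharp single-point tail $\P(|\re L_N(E)|>x)\le \exp\big(-(1+\oo(1))\beta x^2/(2\log N)\big)$ for a \emph{general} Wigner matrix is not available as an input; proving it is essentially as hard as the theorem. The paper does not attempt this. It first transfers the smoothed observable $\sup_E \re L_N(E+\ii/N)$ to GOE (via the coupling and moment matching discussed below), and only then invokes sharp exponential moments, which are proved for $\beta$-ensembles via loop equations at scale $\eta_0=(\log N)^{1000}/N$ (Theorem~\ref{thm:GaussFluct}), with a separate local-law argument bridging $\eta=1/N$ to $\eta_0$.

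\textbf{Transfer step.} Your Stage two is the main gap. First, the claimed bound $|\lambda_i-\lambda_i^{(s)}|\le N^{-1-c'}$ is false under either reading. If $\lambda_i^{(s)}$ are the eigenvalues after running DBM from $H$ for time $s=N^{-c}$, the Brownian increments alone are of order $\sqrt{s/N}=N^{-(1+c)/2}\gg N^{-1}$. If instead you mean the homogenization coupling between $H_t$ and GOE with common driving noise, Proposition~\ref{prop:homog} gives $\lambda_k(t)-\mu_k(t)=(\Im L_N^H-\Im L_N^{\rm GOE})(E_t)/(N\Im m_{\rm sc})+O(N^{-2+\e}/t)$, and the mesoscopic first term is of order $(\log N)/N$, not $N^{-1-c'}$. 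Second, even with the correct coupling, the implication ``eigenvalues close $\Rightarrow\sup_E\re L_N$ close'' is delicate: $\re L_N$ has logarithmic singularities at each eigenvalue, so eigenvalue control at scale $(\log N)/N$ does not directly bound $\sum_i\big|\log|z-\lambda_i|-\log|z-\mu_i|\big|$ at $\eta=1/N$. Proposition~\ref{l:relax} handles this via a trichotomy on $|\mu_k-E|$ using overcrowding estimates for the Gaussian ensemble (Theorem~\ref{thm:Hoi}); this is one of the paper's main technical contributions and cannot be skipped. Third, the DBM coupling only reaches $H_t$ with $t\ge\varphi^{-K}=e^{-O((\log\log N)^2)}$; closing the gap to a general Wigner matrix requires a four-moment comparison (Theorem~\ref{t:main}) applied to a regularized maximum, which your proposal omits entirely.

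\textbf{Lower bound, Stage one.} Your branching-random-walk decomposition along dyadic DBM time slabs is a genuinely different route from the paper's, which instead proves convergence of the GOE field at scale $\eta_0$ to Gaussian multiplicative chaos (Proposition~\ref{prop:GMC}) and reads off the lower bound from the general criterion of \cite{claeys2021much}. Your approach is plausible in principle and has analogues for circular ensembles, but the two-point estimate you flag as the obstacle would have to be proved from scratch here; the GMC route bypasses it by leveraging only the joint Laplace-transform asymptotics of Theorem~\ref{thm:GaussFluct}.
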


\begin{remark}
For the imaginary part of the logarithm,  a similar estimate on the minimum holds,  by considering the $\sup$ for the Wigner matrix $-H$:
$$
\mathbb{P}\left(\inf_{|E|<2-\kappa} \frac{\im L_N(E)}{\sqrt{2}\log N}\in[-1-\e,-1+\e]\right)=1-\oo(1).
$$
No such statement holds for the real part, as $\inf_{|E|<2-\e}\re L_N(E)=-\infty$.
\end{remark}

\noindent For Gaussian-divisible Wigner matrices,  universality actually holds up to tightness.

\begin{theorem}\label{thm:maxWignerDivisible}
Let $H$ be a Gaussian-divisible symmetric Wigner matrix as in Definition \ref{def:wig}. 
Then for any $\kappa>0$,  there exists a coupling between $H$ and a GOE such that the following sequence of random variables is tight:
\[
\Big(\sup_{|E|<2-\kappa} {\rm Re} L^{H}_N(E)-\sup_{|E|<2-\kappa} {\rm Re} L^{{\rm GOE}}_N(E)\Big)_{N\geq 1}.
\]
\end{theorem}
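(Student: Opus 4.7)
My approach is to realize the promised coupling via Dyson Brownian motion (DBM) and then show that under this coupling the two log-characteristic polynomials differ by a process which is uniformly $O_p(1)$ on the bulk interval with high probability. Since $H=\sqrt{1-\epsilon^2}\,H_0+\epsilon\,G$ carries a macroscopic Gaussian component, its eigenvalues $\lambda_i(t)$ can be represented as the time-$t$ state of DBM started from the eigenvalues of $\sqrt{1-\epsilon^2}H_0$, for some $t$ of order $\epsilon^2$. I would simultaneously run a second DBM $\mu_i(s)$ driven by the \emph{same} Brownian motions but started from the eigenvalues of an independent GOE, rescaled so that the associated Ornstein--Uhlenbeck-type DBM preserves the GOE distribution. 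By stationarity, $\{\mu_i(t)\}$ remains distributed as GOE eigenvalues, furnishing a coupling of $H$ with a reference matrix $\tilde G\sim\GOE$ on a common probability space.

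\textbf{Reduction and homogenization.} Tightness of the difference of suprema would follow from the uniform comparison
\[
\sup_{|E|<2-\kappa}\bigl|\re L_N^H(E)-\re L_N^{\tilde G}(E)\bigr|=O_p(1),
\]
which I would establish by writing the left side as $\sum_i[\log|E-\lambda_i(t)|-\log|E-\mu_i(t)|]$ and splitting according to a mesoscopic cutoff $\eta=N^{-1+\delta}$. For indices with $|E-\mu_i(t)|>\eta$, Taylor expansion reduces matters to controlling Cauchy-type sums $\sum_i u_i(t)/(E-\mu_i(t))$ and a next-order correction, where $u_i(t):=\lambda_i(t)-\mu_i(t)$. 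The homogenization theory for DBM (of Bourgade--Erd\H{o}s--Yau--Yin type and its refinements) supplies not only the pointwise bound $|u_i(t)|\le N^{-1-\delta}$ in the bulk, but crucially the regularity of $u_i(t)$ as a function of $i$, so that these weighted sums can be controlled uniformly in $E$ via a Helffer--Sj\"ostrand representation evaluating Stieltjes transforms at mesoscopic scales, combined with standard rigidity for the $\mu_i(t)$.

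\textbf{Main obstacle.} The hardest step is the near-spectrum contribution, where $|E-\mu_i(t)|\le\eta$ for some $i$: individual $\log|E-\lambda_i(t)|$ terms may be very large, so the microscopic configurations of $H$ and $\tilde G$ must be matched closely. DBM is known to equilibrate microscopic eigenvalue statistics on time scales $t\gg N^{-1+\delta}$, so with high probability the bulk eigenvalues of $\lambda_i(t)$ and $\mu_i(t)$ should be pairwise within distance $\ll 1/N$, making each difference $\log|E-\lambda_i(t)|-\log|E-\mu_i(t)|$ a bounded random variable. Promoting this pointwise $O_p(1)$ comparison to a uniform one requires a chaining argument on a fine grid in $E$, Lipschitz control of $L_N$ away from eigenvalues, and a union bound on the low-probability events where small gaps cluster. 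This microscopic matching is exactly what the Gaussian-divisible structure buys and what is unavailable for generic Wigner matrices, explaining why Theorem 1.2 is stated only in this restricted setting, whereas Theorem 1.1 gives merely leading-order information in the general case.
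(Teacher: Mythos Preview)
Your coupling via coupled Dyson Brownian motion is the right starting point and matches the paper. However, your reduction contains a genuine gap: the uniform comparison
\[
\sup_{|E|<2-\kappa}\bigl|\re L_N^{H}(E)-\re L_N^{\tilde G}(E)\bigr|=O_p(1)
\]
is almost certainly \emph{false}, and the paper explicitly flags this (see the discussion in Section~1.4). The issue is in your ``main obstacle'' paragraph: homogenization does \emph{not} give $|\lambda_i(t)-\mu_i(t)|\ll 1/N$. What Proposition~\ref{prop:homog} actually yields is
\[
\lambda_k(t)-\mu_k(t)=\ell_E+O\!\left(\frac{N^{\varepsilon}\max(|E-\gamma_k|,N^{-1})}{N t}\right),
\qquad
\ell_E:=\frac{\Im L_N^{H}(E_t)-\Im L_N^{\rm GOE}(E_t)}{N\,\Im m_{\rm sc}(E_t)},
\]
and since $\Im E_t\asymp t\asymp 1$, the numerator is a macroscopic linear statistic of size $O_p(1)$, so $\ell_E$ is genuinely of order $1/N$, not $o(1/N)$. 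Thus when $E$ is within $o(1/N)$ of some $\mu_k(t)$ (which happens for the maximizer of $\re L_N^{\tilde G}$), the ratio $|E-\lambda_k(t)|/|E-\mu_k(t)|$ is unbounded, and your near-spectrum argument collapses. No chaining or union bound repairs this, because the obstruction is deterministic given the coupling: the two point processes are rigidly translated by $\ell_E$ at the microscopic scale.

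The paper's fix (Proposition~\ref{l:relax2}) is precisely to absorb this shift: one compares $\re L_N^{\bmu_t}(E)$ not with $\re L_N^{\bla_t}(E)$ but with $\re L_N^{\bla_t}(E+\ell_E)$. After this translation the eigenvalue differences become $o(1/N)$ locally, and the microscopic matching you describe goes through (event $B$ in the proof). One then has to argue separately that (i) the shift $\ell_E$ is $O(1/N)$ so the centering $N\int\log|E-\lambda|\,d\rho_{\rm sc}$ changes by $O(1)$, (ii) the maximizer does not escape the window $|E|<2-\kappa$ after shifting (events $D,E,F,G$), and (iii) at the intermediate scale $\tilde\eta=\varphi^{100}/N$ the unshifted comparison is already $O(1)$ by stochastic advection (event $A$, via \eqref{eqn:StochAdv}). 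Your sketch is missing this random-shift mechanism entirely, and without it the argument cannot reach tightness---at best it recovers the $(\log N)^{1/2+\varepsilon}$ error of Proposition~\ref{l:relax}.
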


\begin{corollary}\label{cor:tight}
Conditionally to the tightness of the following random variables for $H$ in the integrable Gaussian orthogonal ensemble, 
$$\sup_{|E|<2-\kappa} \big({\rm Re}L^{H}_N(E)-\sqrt{2}(\log N-\frac{3}{4}\log\log N)),$$
tightness also holds for $H$ in the universal class of Gaussian-divisible symmetric Wigner matrices.
\end{corollary}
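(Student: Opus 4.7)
The plan is to derive this corollary as a short consequence of \Cref{thm:maxWignerDivisible} together with the elementary fact that if $(X_N)_{N\geq 1}$ and $(Y_N)_{N\geq 1}$ are tight sequences of real random variables defined on a common probability space, then $(X_N+Y_N)_{N\geq 1}$ is also tight. There is no hidden content beyond this decomposition; the substantive work is entirely bundled into \Cref{thm:maxWignerDivisible}, which provides the coupling.

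More precisely, I would fix $\kappa>0$, let $H$ be a Gaussian-divisible symmetric Wigner matrix, and invoke \Cref{thm:maxWignerDivisible} to produce a coupling with a GOE matrix $G$ such that
\[
X_N \deq \sup_{|E|<2-\kappa} \re L^{H}_N(E) - \sup_{|E|<2-\kappa} \re L^{\mathrm{GOE}}_N(E)
\]
is tight. By the hypothesis of the corollary, the sequence
\[
Y_N \deq \sup_{|E|<2-\kappa} \re L^{\mathrm{GOE}}_N(E) - \sqrt{2}\left(\log N - \tfrac{3}{4}\log\log N\right)
\]
is also tight. Writing
\[
\sup_{|E|<2-\kappa} \re L^{H}_N(E) - \sqrt{2}\left(\log N - \tfrac{3}{4}\log\log N\right) = X_N + Y_N,
\]
tightness of the left-hand side follows by the triangle inequality applied to the tail estimates: for any $\eta>0$, one chooses $M_X,M_Y$ such that $\P(|X_N|>M_X)\leq \eta/2$ and $\P(|Y_N|>M_Y)\leq \eta/2$ uniformly in $N$, and then $\P(|X_N+Y_N|>M_X+M_Y)\leq \eta$.

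The only potential obstacle is verifying that the coupling from \Cref{thm:maxWignerDivisible} is compatible with the (integrable) GOE used in the hypothesis, but this is automatic since the hypothesis is a statement about the law of $\sup_{|E|<2-\kappa} \re L^{\mathrm{GOE}}_N(E)$ and the coupling realizes a GOE matrix on the probability space of $H$. No further analysis of the maximum, the characteristic polynomial, or Dyson Brownian motion is required at this stage — all such work has been carried out in the proof of \Cref{thm:maxWignerDivisible}.
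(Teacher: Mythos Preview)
Your proposal is correct and is exactly the intended argument: the paper does not give a separate proof of this corollary, treating it as an immediate consequence of \Cref{thm:maxWignerDivisible} via the decomposition $X_N+Y_N$ you wrote down. There is nothing more to add.
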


\noindent Natural analogues of Theorem \ref{thm:maxWignerDivisible} and Corollary \ref{cor:tight} hold for the Hermitian symmetry class.

\begin{remark}
For the imaginary part of the logarithm,  the same statements Theorem \ref{thm:maxWignerDivisible} and Corollary \ref{cor:tight} are an immediate consequence of the homogenization of the Dyson Brownian motion from \cite[Theorem 3.1]{Bou2018}, 
and an elementary bound on macroscopic linear statistics of Wigner matrices.   The result is more subtle for the real part of the logarithm, as it involves a non-local observable of the spectrum.
\end{remark}

\begin{remark} We emphasize that tightness for the Gaussian ensembles is still 
  elusive,  despite the proof of this result for the circular ensembles \cite{ChhMadNaj2018}. 
Only the first order is established: for the GUE in \cite{LamPaq2019} for the real part,  in \cite{claeys2021much} for the imaginary part,  and for the GOE in Theorem \ref{thm:maxWigner}.
\end{remark}

Our second result considers optimal rigidity of the particles.
The first part establishes a high probability rigidity estimate with an optimal deviation including the multiplicative constant.
The second establishes a rigidity estimate with much stronger control on the low probability exceptional set, which is still of optimal order in $N$.

For a given probability measure $\nu$ as in (\ref{def:L_N}), 
the $i$-th quantiles of $\nu$, denoted $\gamma_i=\gamma_i(N,\nu)$  for $1\leq i\leq N$, are defined through the relation
\begin{equation}\label{quantiles}
\int_{-\infty}^{\gamma_i}\rd\nu=\frac{i-\tfrac{1}{2}}{N}.
\end{equation}

\begin{theorem}\label{thm:Wigner}
Let $H$ be a symmetric Wigner matrix as in Definition \ref{def:wig}.  The following holds.
\begin{enumerate}[(i)]
\item For every $\kappa,\eps >0$, we have
\[
\mathbb{P}\left(\max_{\kappa N\leq k\leq (1-\kappa)N} \frac{\pi}{\sqrt{2}}\cdot \frac{\rho_{\rm sc}(\gamma_k)\, N \big(\lambda_k-\gamma_k\big)}{\log N}\in[1-\e,1+\e]\right)=1-\oo(1),
\]
\item  
For any $\kappa,\e,A>0$ there exists $C>0$ such that the following holds for all $N \in \mathbb{N}$. For all $k \in  [ \kappa N , (1-\kappa)N]$ and
$u\in[0,A\sqrt{\log N}]$,
%and $N>N_0$ 
%we have 
\begin{equation}\label{eqn:sharpdecay}
\mathbb{P}\left(|\lambda_k-\gamma_k|>u\cdot\frac{\sqrt{2}}{\pi\rho_{\rm sc}(\gamma_k)}\cdot\frac{\sqrt{\log N}}{N}\right)\leq C e^{-(1-\e) u^2}.
\end{equation}
%for all $N \in \mathbb{N}$. 
\end{enumerate}
For Hermitian Wigner matrices,  (i) and (ii) also hold after replacing  the $\sqrt{2}$ factor with $1$.
\end{theorem}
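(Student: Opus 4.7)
The key identity for both parts is
\begin{equation*}
\Im L_N(E) \;=\; \pi\big(F(E) - \mathcal{N}(E)\big),
\end{equation*}
where $F(E) := N\int_{-\infty}^E \rho_{\rm sc}$ and $\mathcal{N}(E) := \#\{j : \lambda_j \le E\}$; it follows from the branch convention $\Im \log(E-\lambda) = \pi \mathbf{1}_{\lambda > E}$. Since $F(\gamma_k) = k-1/2$, this translates eigenvalue displacements into pointwise values of $\Im L_N$ at the classical locations.

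For part (i), I would invoke the standard a priori rigidity $|\lambda_k - \gamma_k| \le N^{-1+\delta}$ (overwhelming probability, Wigner). A first-order Taylor expansion of $F$ at $\gamma_k$ yields, uniformly in $\kappa N \le k \le (1-\kappa)N$,
\begin{equation*}
\pi N \rho_{\rm sc}(\gamma_k)\,(\lambda_k - \gamma_k) \;=\; \Im L_N(\gamma_k) \,+\, O(1),
\end{equation*}
where the $O(1)$ absorbs the half-integer shift coming from the definition of $\gamma_k$ and the quadratic Taylor error $O(N^{-1+2\delta})$. Taking the maximum over $k$, and noting that $\Im L_N(E)$ is piecewise constant between consecutive eigenvalues with $1/N$-spaced quantiles, the maximum over $\{\gamma_k\}$ matches the bulk supremum $\sup \Im L_N(E)$ up to $O(1)$. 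Theorem \ref{thm:maxWigner} then supplies the $(1\pm\epsilon)\sqrt{2}\log N$ asymptotics, proving (i).

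Part (ii), via the same identity and Taylor expansion, reduces to a pointwise Gaussian moderate-deviation bound
\begin{equation*}
\P\big(|\Im L_N(\gamma_k)| > u\sqrt{2 \log N}\big) \;\le\; C\,e^{-(1-\epsilon)\,u^2}, \qquad u \in [0, A\sqrt{\log N}].
\end{equation*}
I plan to prove this in three stages: (a) establish the sharp Gaussian tail in the GOE/GUE using their integrable structure (Pfaffian/determinantal formulas for the singular linear statistic $\mathcal{N}(\gamma_k)$, yielding a precise CLT with controlled Laplace transform); (b) transfer to Gaussian-divisible Wigner matrices by Dyson Brownian motion homogenization, matching their eigenvalues with those of an independent Gaussian ensemble up to $L^\infty$-error much smaller than $\sqrt{\log N}/N$; (c) pass from Gaussian-divisible to general Wigner matrices by a Green's function comparison, with entry-truncation controlled by the sub-exponential tail \eqref{eqn:tail}.

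The main obstacle is extracting the sharp constant $(1-\epsilon)$ in step (a), uniformly over $u$ up to $A\sqrt{\log N}$ --- which is of the same order as the maximum itself. Soft moment methods give only $Ce^{-c\,u^2}$ for some $c<1$; the sharp constant requires matching the exact leading-order variance $\sim \log N/(\pi^2 \beta)$ of $\mathcal{N}(\gamma_k)$ together with tight sub-Gaussian control of all moments, i.e.\ a full moderate-deviation principle with the correct Gaussian rate function for this singular linear statistic. The coupling in (b) and the moment matching in (c) follow patterns well established in the universality literature, but must be executed at the overwhelming-probability level that matches the scale in \eqref{eqn:sharpdecay}.
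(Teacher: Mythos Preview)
Your reduction for part (i) is the correct equivalence, but as written it is circular: in this paper the $\Im L_N$ half of Theorem~\ref{thm:maxWigner} is \emph{derived} from (i), not conversely. The paper explains why the direct route used for $\Re L_N$ does not transfer to $\Im L_N$ in the Wigner case: smoothing $\Im L_N(E)$ to $\Im L_N(E+\ii/N)$ requires a microscale bound on $|s_N(E+\ii/N)|$ (the analogue of~\eqref{eqn:microBound}), available for $\beta$-ensembles via Theorem~\ref{th:local_law_bulk} but not for Wigner matrices. The paper therefore proves (i) directly on the eigenvalue side: moment matching for the smoothed eigenvalue observable (Theorem~\ref{t:main2}) passes to a Gaussian-divisible matrix, homogenization (Proposition~\ref{prop:homog} with Corollary~\ref{lem:maxSmoothField}) couples to GOE, and Corollary~\ref{cor:Beta} handles GOE. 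The $\Im L_N$ statement in Theorem~\ref{thm:maxWigner} then follows from (i) by the implications~\eqref{eqn:equivRig2}.

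For part (ii) your three-stage plan matches the paper's outline, but the real gap is in step (c), which you treat as routine. The standard four-moment/Green's function comparison controls observables only up to additive errors $N^{-c}$, far too crude for target probabilities $e^{-(1-\epsilon)u^2}\sim N^{-A^2(1-\epsilon)}$ at $u=A\sqrt{\log N}$. The paper's device (Lemma~\ref{l:newcomparison}) is an \emph{iterative} Lindeberg comparison on the $2p$-th moments of a smoothed counting statistic $X$, run by induction over all $p\le A\log N$: the swap error on $\E[X^{2p}]$ involves lower moments $\E[X^{2n}]$, $n<p$, via the Leibniz expansion of $\partial^4 X^{2p}$, and the induction closes only because one has the sharp bound $\E[X^{2p}]\le(2/(e\pi^2)+\epsilon)^p p^p(\log N)^p$ in the Gaussian-divisible base case (Lemma~\ref{l:newcomparisonGD}). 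So (a) and (c) are coupled, and (c)---not (a)---is where the new idea lies. Your step (b) also oversimplifies: homogenization does not produce a bare $L^\infty$ error $\ll\sqrt{\log N}/N$ on eigenvalues but leaves a mesoscopic shift $(\Im L_N^H-\Im L_N^{\rm GOE})(E_t)/(N\Im\msc)$, which is absorbed not by a sup bound but through its tail (via~\eqref{eqn:pointBD}) and a small sacrificed parameter $\theta\to 0$; moreover the homogenization itself must be upgraded from $1-N^{-D}$ to stretched-exponential probability (Remark~\ref{rem:betterprobabilitybound}) for the argument to reach the required scale.
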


A union bound in (ii) proves the optimal rigidity scale $(\log N)/ N$ in the bulk of the spectrum:   for every $D>0$ there exists $C >1$ such that  for all $N \in \N$,
\begin{equation}\label{eqn:optRig}
\mathbb{P}\left(\max_{\kappa N\leq k\leq (1-\kappa)N} \left|\lambda_k-\gamma_k\right|
\ge \frac {C \log N}{N}
\right)\le C N^{-D}.
\end{equation}

We next turn to our results on $\beta$-ensembles. 
We recall that the $\beta$-ensemble of dimension $N$, inverse temperature $\beta >0$, and potential $V\colon \R \rightarrow \R$ is the probability measure on the subset 
$
\Delta_N =\{ \bm \lambda = (\lambda_1, \dots \lambda_N) \in \RR^N  : \lambda_1 \le \lambda_2 \le \dots \le \lambda_N\}
$
given by 
\begin{equation}
	\label{eq:beta_ensembles_density}
	\diff \mu_N (\lambda_1,\dots,\lambda_N)
	%\coloneqq 
	= \frac{1}{Z_N}
	\prod_{1 \leq k < l \leq N} 
	\absa{\lambda_k - \lambda_l}^\beta 
	\exp\left(- \frac{\beta N}{2} \sum_{k=1}^N V(\lambda_k)\right) 
	\rd \lambda_1 \dots \rd \lambda_N,
\end{equation}
where $Z_N = Z^{(\beta,V)}_N$ is a normalizing factor.
In this paper, $\beta>0$ is fixed and our assumptions on $V$  are the following.
\begin{enumerate}[label=(A\arabic*)]
\item $V$ is analytic on $\R$.  \label{assumption:analytic}%
\item \label{assumption:V'_at_infinity} At least one of the following growth conditions hold for $V$:
\begin{enumerate}[(i)]
\item Sub-quadratic:
\begin{equation} \label{eq:alternative_assumption}
	\liminf_{x \to \pm \infty} \frac{V(x)}{2 \ln \abs{x}} > 1
	\qquad \text{and} \qquad 
	\limsup_{x \to \pm \infty} \frac{\abs{V'(x)}}{\abs{x}} < \infty
\end{equation}

\item Super-linear:
There exist constants $M_0,C,c > 0$ such that 
$$ 
	V'(x) \geq c \quad \text{and} \quad
	\sup_{y \in [M_0,x]} \frac{V'(y)}{y} \leq C V(x) 
	\quad \text{for all} \quad x \geq M_0,
$$
and similar estimates apply for $x\leq -M_0$, i.e. the above holds for $\tilde V(x):=V(-x)$.
\end{enumerate}

\item \label{assumption:off-criticality} Under the previous assumptions, it is known that the expectation of the empirical spectral measure, given by $\E[N^{-1}\sum_{i=1}^N\delta_{\lambda_i}]$, converges weakly to an absolutely continuous probability measure $\mu_V$ with a continuous density, which we denote by $\rho_V$ (see \cite[Theorem 1]{de1995statistical} and \cite[Proposition 1]{albeverio2011} for details).
We assume that $\rho_V$ is supported on a single interval $[A,B]$
and is positive on $(A,B)$, with square root singularities at $A$ and $B$. This means that there exists a $c> 0$ and a function $r(E)\colon \R \rightarrow \R$  and
\beq\label{eqn:r}
\rho_V(E) =\frac{1}{\pi}  \sqrt{(E-A)(B-E)} r(E)\, \one_{[A,B] }.
\eeq
Moreover,  we assume that $r$ does not vanish on $[A,B]$ and has an analytic extension to $\mathbb{C}$.\footnote{We remark that  assumption \ref{assumption:off-criticality} is satisfied by a large class of potentials. For example, it suffices for $V$ to be convex and twice differentiable \cite[Example 1]{de1995statistical}.}
\item \label{assumption:large_dev} Let 
\begin{equation*}
L_V(x)  = \frac{V(x)}{2} - \int_\R \log\abs{x-t} \, \rd \mu_V(t).
\end{equation*}
There exists a constant $\ell_V$ such that $L_V(x) = \ell_V$ for $x \in [A,B]$, and $L_V(x) > \ell_V$ for $x \notin [A,B]$. 
\end{enumerate}

The following is an analogue of Theorem \ref{thm:maxWigner} for $\beta$-ensembles.

\begin{theorem}\label{thm:Beta} Let $(\lambda_1,\dots,\lambda_N)$ be distributed according to the density \eqref{eq:beta_ensembles_density} with a potential $V$ satisfying the above hypotheses. Take $\nu=\mu_V$ in the definition \eqref{def:L_N}.  Then for any $\e,\kappa>0$ we have
\begin{align*}
&\mathbb{P}\left(\sup_{A+ \kappa < E < B -\kappa } \sqrt{\frac{\beta}{2}}\frac{\re L_N(E)}{\log N}\in[1-\e,1+\e]\right)=1-\oo(1),\\
&\mathbb{P}\left(\sup_{A+ \kappa < E < B -\kappa} \sqrt{\frac{\beta}{2}}\frac{\im L_N(E)}{\log N}\in[1-\e,1+\e]\right)=1-\oo(1).
\end{align*}
\end{theorem}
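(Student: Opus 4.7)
The imaginary part $\im L_N(E)$ equals $\pi$ times the centred counting function of eigenvalues to the right of $E$, and its supremum over bulk $E$ is, up to a factor of the local density $\rho_V$, equivalent to $\max_k |\lambda_k - \gamma_k|$; the statement for $\im L_N$ will therefore follow from the optimal bulk rigidity for $\beta$-ensembles (Corollary \ref{cor:Beta}), which the paper establishes separately. The remainder of the plan focuses on the more delicate non-local observable $\re L_N(E)$, for which matching asymptotic upper and lower bounds must be proved.

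The main quantitative inputs I would rely on are loop-equation / Schwinger--Dyson estimates for $\beta$-ensembles with analytic potential (in the spirit of Johansson, Shcherbina, Borot--Guionnet, and Bekerman--Figalli--Guionnet), which for a bulk energy $E$ and scale $\eta \in [N^{-1+\delta}, 1]$ should yield: (i) a CLT-type Gaussian approximation for $\sqrt{\beta/2}\cdot\re L_N(E+\ii\eta)$ with limiting variance $\tfrac{1}{2}\log(1/\eta) + O(1)$; (ii) sub-Gaussian exponential moment bounds; and (iii) asymptotic independence of the increments $\re L_N(E+\ii\eta_{k+1}) - \re L_N(E+\ii\eta_k)$ across dyadic scales $\eta_k = 2^{-k}$, each of variance approximately $\frac{1}{\beta}\log 2$. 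Together with optimal rigidity, these inputs reduce the problem to an extremal question about a centred log-correlated field on a compact bulk interval.

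For the upper bound, I would fix a regularization scale $\eta_0 = N^{-1}(\log N)^C$. Optimal rigidity controls $|\re L_N(E) - \re L_N(E+\ii\eta_0)| = \oo(\log N)$ uniformly in bulk $E$ on an event of probability $1-\oo(1)$. Combined with the Gaussian tail in (ii) and a union bound over a polynomial grid in $[A+\kappa, B-\kappa]$, this gives $\sqrt{\beta/2}\cdot\sup_E \re L_N(E) \leq (1+\epsilon)\log N$ with probability $1-\oo(1)$. For the matching lower bound, I would adapt the modified second moment method from the extremal theory of log-correlated fields (Bramson; Ding--Roy--Zeitouni): decompose $\re L_N(E+\ii\eta_0) = \sum_k X_k(E)$ dyadically, impose a barrier event $\mathcal{B}(E)$ keeping each partial sum below its linear target trajectory with slope $\sqrt{2/\beta}$, and apply Cauchy--Schwarz to the count
\[
Z = \sum_j \mathds{1}_{\mathcal{B}(E_j)}\, \mathds{1}_{\{\sqrt{\beta/2}\,\re L_N(E_j + \ii\eta_0) > (1-\epsilon)\log N\}}
\]
over a suitably spaced grid $\{E_j\}$. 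The approximate log-correlated covariance $\cov(\re L_N(E_i+\ii\eta_0), \re L_N(E_j+\ii\eta_0)) \approx \frac{1}{\beta}\log(1/\max(\eta_0, |E_i - E_j|))$ is what keeps the second moment tractable.

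The main obstacle will be establishing, in the non-integrable $\beta$-ensemble setting, the joint Gaussian approximation of the $K \sim \log N$ multi-scale increments $X_1, \dots, X_K$ with errors sharp enough to run the barrier argument down to the local scale $\eta \approx N^{-1+\delta}$. This demands loop-equation estimates that are uniform across all mesoscopic scales, together with a conditional-on-the-past CLT for successive increments; neither has an off-the-shelf statement in the literature, and their derivation for general analytic $V$ would require a careful application of Bekerman--Figalli--Guionnet transport maps or Shcherbina-type multi-scale loop equation analysis. A secondary technical point is that the barrier event must remain robust under the passage from the $\eta_0$-regularized logarithm to the true $\re L_N(E)$, which is where optimal rigidity (\Cref{cor:Beta}) enters crucially.
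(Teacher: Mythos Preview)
Your plan has a circularity problem: Corollary~\ref{cor:Beta} is \emph{derived from} Theorem~\ref{thm:Beta} in the paper (Section~\ref{sec:elementary}), not established independently, so you cannot invoke it to prove the imaginary-part statement or to justify the smoothing step in your upper bound. The equivalence between $\sup_E \Im L_N(E)$ and $\max_k \rho_V(\gamma_k)N|\lambda_k-\gamma_k|$ is correct and recorded by the paper as a remark, but the logical direction in the paper is the reverse of what you assume.

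For the upper bound, the paper's route differs in the inputs. Rather than rigidity, it uses (i)~monotonicity of $\eta\mapsto\log|E+\ii\eta-\lambda|$ to pass to scale $1/N$; (ii)~the local law with Gaussian moment growth, Theorem~\ref{th:local_law_bulk}, to control the increment from scale $1/N$ to $\eta_0=(\log N)^{1000}/N$ via Markov's inequality with $p\sim\log N/\log\log N$; and (iii)~the Laplace transform asymptotics of Theorem~\ref{thm:GaussFluct} (proved by loop equations in Appendix~\ref{sec:gauss}) plus a union bound at scale $\eta_0$. The $(Cq)^{q/2}$ moment growth in (ii) is essential. Your smoothing step via rigidity is circular, and even granting optimal rigidity at scale $(\log N)/N$ it is not clear one gets uniform $\oo(\log N)$ control: roughly $N\eta_0$ eigenvalues lie within $\eta_0$ of $E$, each potentially contributing $\OO(\log\log N)$ to the difference $\Re L_N(E)-\Re L_N(E+\ii\eta_0)$.

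For the lower bound, your barrier/second-moment method is a genuine alternative that the paper explicitly considers and declines (Section~\ref{s:ideas}): implementing it would demand decoupling/conditional-Gaussian control across $K\sim\log N$ dyadic increments, which is not available for general $\beta$-ensembles. The paper instead shows that $e^{\sqrt{\beta}\gamma\,\Re\tilde L_N(E+\ii\eta_0)}$ converges to a Gaussian Multiplicative Chaos for all $|\gamma|<\sqrt{2}$ (Proposition~\ref{prop:GMC}), and then extracts the leading-order lower bound from the general machinery of \cite{claeys2021much}. The advantage is that the only input needed is the joint Laplace transform of $\Re L_N$ at a \emph{fixed} number $p$ of points in the upper half-plane---precisely what Theorem~\ref{thm:GaussFluct} supplies---rather than uniform control across $\log N$ scales. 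Your acknowledged main obstacle is exactly what the GMC route circumvents.
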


We next state the analogue of the first part of \Cref{thm:Wigner} for $\beta$-ensembles, which follows from the previous theorem.\footnote{An analogue of the rigidity scale with overwhelming probability,  i.e.  (\ref{eqn:optRig}) was already shown in \cite{BouModPai2020}.} 

\begin{corollary}\label{cor:Beta}
Under the same assumptions as Theorem \ref{thm:Beta} (in particular $\nu=\mu_V$ in (\ref{quantiles})), we have for every $\kappa >0$ that 
\[
\mathbb{P}\left(\max_{\kappa N \leq k\leq (1-\kappa)N} \pi\sqrt{\frac{\beta}{2}}\cdot \frac{\rho_V(\gamma_k)\, N\big(\lambda_k-\gamma_k\big)}{\log N}\in[1-\e,1+\e]\right)=1-\oo(1).
\]
\end{corollary}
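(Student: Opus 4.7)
The plan is to deduce \Cref{cor:Beta} from the imaginary-part statement in \Cref{thm:Beta} via the reformulation of $\im L_N$ in terms of the eigenvalue counting function. With the branch of the logarithm fixed before \eqref{def:L_N}, for real $E$,
\begin{equation*}
\im L_N(E) = \pi \pb{N F_V(E) - \mG_N(E)}, \quad F_V(E) \deq \int_{-\infty}^E \rho_V,\quad \mG_N(E) \deq \#\h{j : \lambda_j \le E}.
\end{equation*}
On each gap $(\lambda_{k-1}, \lambda_k)$ this function is continuous and strictly increasing with slope $\pi N \rho_V(E)$, and it drops by $\pi$ at every eigenvalue. Hence the supremum over $(A+\kappa, B-\kappa)$ is attained as $E \uparrow \lambda_k$ for some index $k$, with value $\pi\pb{N F_V(\lambda_k) - (k-1)}$.

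Using $N F_V(\gamma_k) = k - \tfrac12$ and Taylor expanding $F_V$ around $\gamma_k$ then yields
\begin{equation*}
\sup_{A+\kappa < E < B-\kappa} \im L_N(E) = \pi \max_{k \in K_\kappa} N \rho_V(\gamma_k)(\lambda_k - \gamma_k) + \tfrac{\pi}{2} + O\pB{N \max_{k \in K_\kappa}(\lambda_k - \gamma_k)^2},
\end{equation*}
where $K_\kappa \deq \h{k : \lambda_k \in (A+\kappa, B-\kappa)}$. The quadratic remainder is $\oo(1)$ with probability $1 - \oo(1)$ by the existing bulk rigidity on scale $N^{-1+\delta}$ for $\beta$-ensembles under \ref{assumption:analytic}--\ref{assumption:large_dev}, e.g.\ from \cite{BouModPai2020}. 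The same a priori rigidity, together with the continuity and positivity of $\rho_V$ on the bulk, guarantees that with probability $1 - \oo(1)$ there exist $0 < \kappa' < \kappa < \kappa''$ (depending only on $\rho_V$ and $\kappa$) such that $K_{\kappa''} \subset \h{\kappa N \le k \le (1-\kappa)N} \subset K_{\kappa'}$.

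Invoking \Cref{thm:Beta} separately with parameters $\kappa'$ and $\kappa''$, and combining with the above sandwich, gives
\begin{equation*}
\pi \max_{\kappa N \le k \le (1-\kappa)N} N \rho_V(\gamma_k)(\lambda_k - \gamma_k) = \sqrt{\tfrac{2}{\beta}}\, \log N \cdot (1 + \oo(1))
\end{equation*}
with probability $1 - \oo(1)$; multiplying by $\sqrt{\beta/2}/\log N$ then yields the corollary. The main bookkeeping subtlety lies in this translation between the $E$-interval of \Cref{thm:Beta} and the $k$-range of the corollary; it is handled by the sandwich above, which in turn relies on the fact that \Cref{thm:Beta} is valid for arbitrarily small $\kappa > 0$.
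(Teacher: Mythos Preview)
Your argument is correct and uses the same underlying idea as the paper: both derive the corollary from the imaginary-part statement in \Cref{thm:Beta} via the identity $\Im L_N(E)=\pi\big(NF_V(E)-\#\{j:\lambda_j\le E\}\big)$. The only difference is packaging: the paper converts the bound $\sup\Im L_N\lessgtr\pi M$ into eigenvalue-position inequalities of the form $\lambda_{k-n\pm2}\gtrless\gamma_k$ by discrete counting (their implications \eqref{eqn:equivRig}) and then estimates $\gamma_{k}-\gamma_{k-n\pm2}$ via the quantile spacing \eqref{intermediategapbound}, whereas you observe directly that the supremum is attained at eigenvalue left-limits and Taylor-expand $F_V$ around $\gamma_k$, using a priori rigidity to kill the quadratic remainder. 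Your formulation is slightly more streamlined in that it yields the two-sided equivalence $\sup\Im L_N=\pi\max_k N\rho_V(\gamma_k)(\lambda_k-\gamma_k)+O(1)$ in one stroke; the paper instead runs the upper and lower bounds separately. Both handle the translation between the $E$-interval and the $k$-range by varying the bulk cutoff; your sandwich $K_{\kappa''}\subset\{\kappa N\le k\le(1-\kappa)N\}\subset K_{\kappa'}$ is exactly the paper's ``taking $\delta$ sufficiently small'' made explicit. One small point you glossed over: the supremum on $(A+\kappa,B-\kappa)$ could in principle be attained at the right endpoint rather than at some $\lambda_k^-$, but this boundary contribution is dominated by the next $\lambda_m^-$, whose index $m$ still lies in the desired range by rigidity and the choice of $\kappa''$, so your sandwich absorbs it.
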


\begin{remark}
In  Theorem \ref{thm:Wigner} and Corollary \ref{cor:Beta} the quantiles $\gamma_k$ can be replaced by $\E[\lambda_k]$,   therefore answering the rigidity question as stated in Problem 2. Indeed,  the bound $|\E[\lambda_k]-\gamma_k|=\OO(N^{-1})$ holds thanks to  \cite[Theorems 1.4 and 1.5]{LanSos2018}.
\end{remark}

\begin{remark}
  \label{rem-meso}
All results in this article have direct analogues for maxima on mesoscopic intervals which are  supported in the bulk of the spectrum,  and the proofs are the same up to notational changes.  
For example,  in the case of symmetric Wigner matrices,  for any deterministic interval $I=I(N)\subset[-2+\kappa,2-\kappa]$ such that $\log|I|/\log N\to-1+\alpha$, $\alpha\in(0,1)$,  and any $\e>0$ fixed,  we have
\begin{equation}\label{eqn:meso}
\mathbb{P}\left(\sup_{E\in I} \frac{\re L_N(E)}{\sqrt{2}\log N}\in[\alpha-\e,\alpha+\e]\right)=1-\oo(1).
\end{equation}
\end{remark}

\subsection{Related Works.}\
\label{s:related}
 The FHK conjectures were first stated in \cite{FyoHiaKea2012,fyodorov2014freezing} for %in the context of 
 Haar-distributed 
unitary random matrices
and %of the
the Riemann zeta function.
See \cite{bailey2022maxima} for a recent survey. See also \cite{fyodorov2016distribution} for the case of the GUE.
While we do not discuss in this paper relations
with $\zeta$,  we remark that the FHK conjecture for it,
up to tightness of the analogue of the variable $Z_N$,
has been established in \cite{arguin2020fyodorov1,arguin2020fyodorov2} after
initial progress in \cite{Naj16,ArgBelBouRadSou2016,Har2019};
see \cite{Har2019II} for a survey.

On the random matrix side,  the sharpest results available are for 
the circular $\beta$-ensembles. The leading and second order terms, for
$\beta=2$, were computed in \cite{ArgBelBou2017} and
\cite{PaqZei2018}. For general $\beta$, the FHK conjecture up to tightness
of the random variable $Z_N$,
was obtained in \cite{ChhMadNaj2018}, and
the convergence was recently established in 
\cite{PaqZei2022}.
All these works rely on uncovering %(possibly numerous) 
hierarchical structures in the spectra of random matrices, permitting the use of methods originally developed  
for branching processes \cite{Bra1978,Bra1983}.

For other ensemble of random matrices, much less is known. 
As demonstrated in \cite{lambert2020maximum,claeys2021much} and also used by 
us, 
obtaining 
the leading order of the FHK conjectures (more precisely, a lower bound on
the leading order) is closely 
related to proving convergence of powers of (a rescaled) version of   
the characteristic polynomial towards
the Gaussian multiplicative chaos (GMC);
we refer the reader to 
\cite{Web2015,nikula2020multiplicative,lambert2021mesoscopic,lambert2023law,BouFal2022}
for some works concerning the GMC for random matrices and further
results in this direction. 
For $\beta=2$, convergence toward the GMC of the characteristic polynomial
(using Riemann--Hilbert techniques) was obtained in \cite{BerWebWon2017}, 
in the so-called $L^2$ phase, which is not sufficient for obtaining leading
order information on the maximum. 
In the context of more general Hermitian matrices,
related results on the distribution of the characteristic polynomial of 
Gaussian $\beta$-ensembles (which again are not sufficient for controlling 
the maximum) were 
proved in \cite{augeri2020clt,lambert2020strong1,lambert2020strong2,ChaFahWebWon2021}. 

The fundamental reason one expects extreme values statistics such as (\ref{eqn:FHK}) and a limiting Gaussian multiplicative chaos from random matrices is that they lie in the class of {\it logarithmically correlated fields}.  
For $\beta$-ensembles and Wigner matrices,  this log-correlation has been proved in the sense of distributional convergence first, as follows e.g.\  from \cite{Joh1998,LytPas2009}, and more recently in the 
pointwise sense  \cite{BouMod2018,BouModPai2020}.  For \textit{Gaussian} log-correlated
fields, a rich theory
concerning the extremes is available, with the same universal scaling as in 
\eqref{eqn:FHK}. In particular, the fluctuations of the analogue of 
$Z_N$ are always of the form of two independent random variables, one
being Gumbel and the second depending on the long-range behavior of the covariance. We refer the reader to \cite{BolDeuGia01,Bis20,Zei16} for an account of the theory in the canonical case of the Gaussian free field (from leading order computation to convergence of the maximum and details on the process of extrema), 
and to
\cite{ding2017convergence} for the universal description of the limit. 
Extending the theory beyond the Gaussian case (where extra tools, including
comparison theorems, are available) toward its natural universality class
has been a major challenge and has attracted
a lot of recent activity. Beyond the models of
random matrices and the Riemann zeta function already discussed, 
we mention here the sine-Gordon model \cite{BauHof2022} (where a renormalization
procedures enables coupling to the Gaussian free field, yielding a full convergence result),
the cover time for planar random
walk \cite{DemPerRosZeo04,BelKis14,BelRosZei20} (where tightness
has been proved),  
the maxima of 
Ginzburg--Landau fields \cite{BelWu20},  
the maxima of characteristic polynomials
of permutation matrices \cite{CooZei20}, where at this time only
leading order information is available,  and the model of  two dimensional
random polymers \cite{CarSunZyg20,CosZei23}, where not even the
leading order convergence has been demonstrated.\\

\noindent We next turn to the topic of rigidity of eigenvalues, which
has a long history,  going back at least to 
\cite{de1995statistical}. The importance of obtaining
some a-priori rigidity estimates for Wigner matrices
was highlighted in 
\cite{ErdSchYau2011}, as part of their celebrated proof 
of the universality of spacing distribution for the Wigner ensemble. %In 
This work established the
upper bound  $|\lambda_k-\gamma_k|\le N^{-1/2-\varepsilon}$ for some $\varepsilon>0$.

Sharper estimates on rigidity for Wigner matrices
were obtained in the
seminal work \cite{ErdYauYin2012}, which bounded the fluctuations of the eigenvalues by $N^{-1+\eps}$ for every $\eps >0$, with overwhelming probability. 
This  result was then refined to show the bound ${\rm O}((\log N)^C/N)$ 
with overwhelming probability for some (potentially large) constant $C>1$ \cite{TaoVu2013,GotNauTikTim2013,cacciapuoti2015bounds}. 
To our knowledge, the sharpest result on rigidity
prior to this work is contained
in \cite{GotNauTikTim2018}, who obtained the
rigidity scale ${(\log N)^2}/{N}$. Our result on Gaussian decay far in the tail distribution, given in (\ref{eqn:sharpdecay}), is new even for the Gaussian ensembles.

A question related in spirit  
to the rigidity question is that of the maximal 
spacing between successive eigenvalues,
going back to a question of Diaconis \cite{Dia03}. For the maximal spacing of GUE and CUE matrices
matrices, the first order of the gap
was computed in \cite{BenBou2013}, and convergence of the rescaled maximal gap
was
established in \cite{FengWei2018II}. Both of these works 
use determinantal methods. 
Universality and comparison results were obtained more recently in 
\cite{Bou2018} and \cite{LanLopMar2018}.

Finally,  many other aspects of extreme value theory for random matrices have been very active recently.  Notably we refer to important progress on the spectral radius of non-Hermitian random matrices,  an example where universal fluctuations are known (which are not in the log-correlated universality class, see \cite{CipErdWu2023} and the references therein).

\subsection{Proof Ideas.}\ \label{s:ideas}
{We now describe the main contours of the proofs in this paper. 
Even though our presentation of the main results starts with Wigner matrices, 
we describe the proofs first for $\beta$-ensembles (see \Cref{thm:Beta}), 
since the 
Wigner case is then based on a %(challenging) 
comparison  which takes as input the results
for the GOE and GUE.  For concreteness,  we focus
on the description of the
proofs for $\Re L_N(E)$.

{The standard approach for estimating $\sup_{A+\kappa<E<B-\kappa} \Re L_N(E)$
from above
has two components: first, one replaces the supremum over
the interval $[A+\kappa,B-\kappa]$ by a maximum over a finite collection
of $E_i$'s, of spacing of order $1/N$. That this is enough has already been
shown e.g.\ in  \cite[Corollary  5.4]{LamPaq2019} (based on an idea from
\cite{ChhMadNaj2018}).\footnote{We will actually use a different method, that
applies also to $\Im L_N$ and also allows one to work with mesoscopic
intervals as in Remark \ref{rem-meso}. Our method builds on local laws up to microscopic scales
from \cite{BouModPai2020}.} After achieving this reduction,
one uses a union bound together with a tail estimate on the law 
of $\Re L_N(E)$ with $E$ fixed and deterministic. In particular,
one needs to control exponential moments of the latter variable. 
Unlike the case treated in \cite{LamPaq2019}, we do not have at our disposal
an integrable structure, and so explicit computations are not possible. Instead,
we would like 
to use  exponential estimates from
\cite{BouModPai2020} (in an improved form described in
\Cref{thm:GaussFluct}).}

{Unfortunately, the estimates in \Cref{thm:GaussFluct} do not apply directly for
$E_i$, but rather only for $E_i +\ii \eta_0$ where $\eta_0=(\log N)^{1000}/N$ 
is as in \eqref{eqn:etao1}. Because of that, we need to modify the above
procedure and first move away from the real axis. Continuity arguments allow
one to move to distance $1/N$ from the real line; to go beyond that, we need
to use the very precise
local law with Gaussian tail from \cite[Remark 2.4]{BouModPai2020} 
(which, after
integration, give control on $L_N(E_i+\ii \eta_0)$). 
This recent theorem provides essentially optimal bounds on the centered 
moments of the Stieltjes transform on all scales $\Im z >0$, and is crucial for our work; weaker estimates, such as those available in
the previous literature, would not have sufficed.} 

{For the lower bound, due 
to the log-correlated structure of the field $L_N(\cdot)$, 
one could follow methods based on second moment analysis, including the insertion
of appropriate barriers, as described e.g.\ in \cite{Zei16}
for the Gaussian setup. There are several obstacles to that approach, 
including the need to obtain very precise decoupling inequalities for pairs of
macroscopically separated energies  $E_i$, based on Fourier transforms.
Instead, we use the GMC approach (introduced in similar contexts
in \cite{lambert2020maximum,claeys2021much}). Here again, the
proof starts with the preliminary step of moving the problem off the real line 
and into the upper half plane (by distance $\eta_0$), 
in order to improve the regularity of $L_N$;
this step is achieved using 
a Poisson integral representation of the harmonic function 
$\rlog_z(x)={\rm Re}\log(z-x)$. Then, in the main technical step, we demonstrate that for every $\gamma \in ( -\sqrt{2}, \sqrt{2})$, the random field 
with density
\vspace{-0.2cm}
\begin{equation}\label{e:gmc0}
F(E) = \frac{e^{\sqrt{\beta}\gamma \Re \tilde L_N(E+\ii\eta_0)}}{\E[e^{\sqrt{\beta}\gamma\Re \tilde L_N(E+\ii\eta_0)}]} 
\end{equation}

\vspace{-0.2cm}
\noindent with respect to Lebesgue measure, 
converges to a Gaussian multiplicative chaos as $N$ grows. 
Here $\tilde L_N$ is an appropriate centering of $L_N$. 
Following the general criteria in 
\cite{claeys2021much}, this again follows from the controls provided
by 
\Cref{thm:GaussFluct}. Once convergence to GMC has been achieved, the required
lower bound follows (essentially because the GMC is supported on points $E$
with 
$\Re L_N(E+\ii \eta_0)> \sqrt{\beta}\gamma/2-\delta$).}  \\

\noindent {We now turn to the proof of our result on the log-characteristic 
polynomials of Wigner matrices, \Cref{thm:maxWigner}. This is fundamentally 
a universality result, stating that results established in \Cref{thm:Beta} for 
the GOE/GUE does not depend on the distribution of the matrix entries. We adopt a dynamical approach to this question, in line with the general framework that has been developed  to resolve the Wigner--Dyson--Mehta conjecture and other problems regarding the universality of local spectral statistics \cite{erdos2017dynamical}. 

Our primary input is a method to couple characteristic polynomials.
We consider the matrix-valued stochastic differential equation
\vspace{-0.3cm}
\begin{equation}\label{OUintro}
\rd H_{t}=\frac{1}{\sqrt{N}} \rd B_t-\frac{1}{2}H_t\,\rd t
\end{equation}

\vspace{-0.1cm}
\noindent with initial data $H_0$ given by a Wigner matrix, where $B_t$ is a matrix of Brownian motions  that are independent up to the symmetry $B_{ij} = B_{ji}$. The dynamics are chosen so that if $H_0$ is a GOE, then its distribution remains invariant for $t>0$. It is well known that if the eigenvalues $(\lambda_i(t))_{i=1}^N$ of $H_t$ evolve according to the Dyson Brownian motion, given by 
\Cref{eqn:eigenvaluesSymmetric}:
\vspace{-0.3cm}
\begin{align}
\rd\la_k=\frac{\rd \beta_{k}}{\sqrt{N}}+\left(\frac{1}{N}\sum_{\ell\neq k}\frac{1}{\la_k-\la_\ell}-\frac{1}{2}\lambda_k\right)\rd t,\label{eqn:eigenvaluesSymmetricintro}
\end{align}
%\vspace{-0.2cm}
%
where the  $\{ \beta_k \}_{k=1}^N$ are independent, standard Brownian motions. To enforce a coupling, we let $(\mu_i(t))_{i=1}^N$ be a solution to \eqref{eqn:eigenvaluesSymmetricintro} with the same choice of driving Brownian motions with the initial data $t=0$ given by a GOE. Then the process $(\lambda_i(t) - \mu_i(t))_{i=1}^N$ satisfies a deterministic system of differential equations, which may be studied in detail using homogenization and the method of characteristics \cite{BouErdYauYin2016,Bou2018}. Our main result on coupling, \Cref{l:relax},
is the estimate is that for any $z=E+ \iu \eta$ with $\eta \in (N^{-1}, 1)$, any time $t > \exp(-\bar C_0(\log\log N)^2)$ for appropriate
$\bar C_0$, we have 
\vspace{-0.1cm}
\begin{equation}\label{eqn:relaxintro}
\mathbb{P}\left(\max_{-2+\kappa<E<2+\kappa}
\left|\sum\log\big(z-\mu_k(t)\big)-\sum\log\big(z-\la_k(t)\big)\right| > (\log N)^{1/2+\e}\right)=\oo(1).
\end{equation}

\vspace{-0.1cm}
 \noindent The crucial point here is that while $t$ is relatively
large, we are able
to approach the real line up to the microscopic distance $1/N$, which is precisely the
distance beyond which deterministic arguments do not yield  control on
the difference between
$L_N(E)$ and $L_N(E+\ii \eta)$. Note that the scale $1/N$ is below
the scale of rigidity (which is of order $\log N/N$, as we prove in 
\Cref{thm:Wigner}). The ability to nevertheless perform the coupling 
(using in a crucial way  overcrowding estimates from \cite{Ngu2018} 
and a-priori suboptimal 
rigidity estimates from \cite{ErdYauYin2012}) goes significantly beyond the earlier dynamics-based
coupling of characteristic polynomials. See e.g.\  \cite{Bou2018} for the existing sharpest result which requires ${\Im z}\geq N^{-1+\e}$.

{Recalling that \eqref{eqn:eigenvaluesSymmetricintro} is the eigenvalue evolution under \eqref{OUintro}, and that the desired result for the GOE follows from \Cref{thm:Beta}, 
we see that \eqref{eqn:relaxintro} implies matrices of the form $\sqrt{1-t} H + \sqrt{t} W$ satisfy the conclusions of \Cref{thm:maxWigner}, where $H$ is  Wigner matrix, $W$ is a GOE, and $t$ decays sufficiently slowly. 
It remains to extend the result from these \emph{weakly Gaussian-divisible} matrices to the entire Wigner class. For this, we use a standard comparison argument based on four-moment matching \cite{TaoVu2011}. It is well known that this technique shows that weakly Gaussian-divisible matrices are ``dense'' in the set of all Wigner matrices, in these sense that universality for sufficiently regular observables follows from establishing the expected behavior in the weakly Gaussian-divisible case.  While the extremal statistic $\sup_{|E|<2-\kappa}{\rm Re}L_N(E)$ is non-local and not regular enough to directly apply results from the literature,   ideas originally developed in \cite{LanLopMar2018} permit the comparison to proceed, and  complete the proof of \Cref{thm:maxWigner}.}

For Theorem \ref{thm:maxWignerDivisible}, no density argument is needed but the relaxation step becomes particularly delicate as it needs to reach the tightness precision.  
Even worse,  the maximum of the characteristic polynomials differences considered in (\ref{eqn:relaxintro}) is probably not tight as $N\to\infty$, even for $t\asymp 1$. The main insight consists in proving that
$\sum\log\big(E-\mu_k(t)\big)$ is very close to $\sum\log\big(E+X_N-\la_k(t)\big)$, up to error of order $1$  where $X_N$ is a {\it random shift}. This shift is small enough so that
it only changes the size of the centering of the log-characteristic polynomial by an order 1.  Choosing for $E$ the location of the maximum for GOE completes the relaxation
of the maximum, which actually requires many other ingredients as explained in the proof of Proposition \ref{l:relax2}.\\

\noindent {We now turn to the proofs of rigidity. While the best previous rigidity for Wigner matrices was proved directly by resolvent methods \cite{ErdYauYin2012}, as a precursor to the local study of Dyson Brownian motion, we reverse this usual logic and derive optimal rigidity as a consequence of refined estimates on the local dynamics.

The estimates with optimal constant, \Cref{thm:Wigner} (i) and \Cref{cor:Beta}, are  equivalent to  the corresponding results for $\Im L_N(E)$, \Cref{thm:maxWigner} and \Cref{thm:Beta},  see (\ref{eqn:equivRig}) for this classical equivalence. 
{On the other hand, obtaining \Cref{thm:Wigner} (ii), which asserts rigidity for Wigner matrices with overwhelming probability, requires further novelties. The traditional four-moment comparison method is effective only for statements that hold with probability $1- N^{-c}$, and therefore does not provide density of weakly Gaussian-divisible matrices for  \Cref{thm:Wigner} (ii).  
However,  
iterative comparisons of moments of linear statistics  have appeared in random matrix theory in \cite{KnoYin2013,TaoVu2013},  which were recently strengthened
in the context of eigenvector statistics towards comparison beyond the order of magnitude, up to optimal constants  \cite{BenLopFluc2022,BenLop2022}.  We adapt this method to obtain the sharp Gaussian decay
in (\ref{eqn:sharpdecay}) from the case of weakly Gaussian-divisible matrices.  For this ensemble,  we use our coupling \eqref{eqn:relaxintro}, along with estimates specific to the GOE/GUE, to provide an optimal-order upper bound on the large moments (of order $\log N$) of the eigenvalues  counting function (Lemma \ref{l:newcomparisonGD}).
With this estimate in hand for weakly Gaussian-divisible matrices, we use an inductive moment comparison (see \Cref{l:newcomparison}), to obtain a similar estimate on the $\log N$ moment for arbitrary Wigner matrices. The desired rigidity result,
and the precise tail bounds in \Cref{thm:Wigner}, 
then follow by Markov's inequality.}\\

\noindent To conclude this section on the developed methods,  we mention that the upcoming work \cite{CipolloniLandon}  obtains the analogue of Theorem \ref{thm:maxWigner}
for non-Hermitian matrices with independent entries,  with an approach relying on  Fourier transforms of linear statistics instead of GMC and dynamics (the special case of Ginibre matrices was proved,  also at leading order, in  \cite{lambert2020maximum}). While this approach is particularly robust for the leading order of the characteristic polynomial and likely applies to Theorem \ref{thm:maxWigner},  the dynamical method seems essential to our results on tightness (Theorem \ref{thm:maxWignerDivisible}) and rigidity with overwhelming probability (Theorem \ref{thm:Wigner}, part (ii)).

\subsection{Further Comments.}\
Since this paper is already long and uses a multitude of tools, we
have not discussed the edge of the spectrum, nor have we treated the 
case of $\beta$-ensembles with non-analytic potential.  These extensions require
work but seem within the reach of the tools developed here.

It is natural to expect that \eqref{eqn:prob1} holds. For Gaussian
$\beta$-ensembles, the Jacobi representation in terms of 
independent variables, 
used in \cite{augeri2020clt,lambert2020strong1,lambert2020strong2}, could potentially be useful, as it was in the C$\beta$E case \cite{ChhMadNaj2018,PaqZei2022}.
In view of 
Theorem \ref{thm:maxWignerDivisible}, any progress in that direction
for $\beta=1,2$
would immediately  translate to Gaussian divisible ensembles, at least at the level of tightness.
More generally,
we expect that the ideas developed here will be a 
useful basis for work on the higher order terms in Problem 1,
or for studying
FHK-type asymptotics and rigidity for other ensembles, 
including matrices of general Wigner type, 
adjacency matrices of random graphs, models arising in free probability,
and non-Hermitian matrices. 

In view of the first universal results on FHK asymptotics for Wigner matrices and $\beta$-ensembles,  another natural question  concerns universal limiting measures for random characteristic polynomials.  We also expect that some methods from this paper would help  towards the convergence of  $|\det (E-H)|^\gamma\rd E$ to a Gaussian multiplicative chaos.

\subsection{Organization.}\ \label{s:outline} 
\Cref{s:preliminary} fixes our notation conventions and states essential results from previous works. In \Cref{sec:maxgauss}, we prove \Cref{thm:Beta} and \Cref{cor:Beta}. 
\Cref{sec:Relax} studies the short-time relaxation to equilibrium of Dyson 
Brownian motion for FHK-type observables, and \Cref{sec:MomMat} provides moment matching lemmas for these observables. The results in these sections are then used in \Cref{s:wigmax} to prove \Cref{thm:maxWigner} and Theorem \ref{thm:Wigner}. 
\Cref{sec:moments} establishes \Cref{p:moments}, which controls diverging moments of the Stieltjes transform of Wigner matrices; these are used
in \Cref{sec:Relax}. 
\Cref{sec:gauss} proves \Cref{thm:GaussFluct}, on the Fourier--Laplace transform of the log-characteristic polynomial of $\beta$-ensembles near the real axis; this is used in \Cref{sec:maxgauss}.

\begin{remark} Throughout, we suppress the dependence of the constants in our arguments on the constants in Definition~\ref{def:wig} and the potential $V$  from \eqref{eq:beta_ensembles_density}, since this dependence never affects our arguments. One could give explicit (suboptimal)  error bounds in all our results in terms of these parameters, but for simplicity, we do not pursue this direction. Additionally, for brevity, we prove our results for real symmetric Wigner matrices, since the complex Hermitian case differs only in notation.
\end{remark} 

\subsection{Acknowledgments.}\ P.\ B.\ was supported by the NSF grant DMS-2054851.  P.\ L.\ was supported by NSF postdoctoral
fellowship DMS-2202891.  O.\  Z.\  was supported by Israel Science Foundation grant number 421/20.

\section{Preliminary Results}\label{s:preliminary}

We begin by recalling some fundamental concepts and notation. For deterministic sequences $X=X_N$ and $Y=Y_N>0$, we write $X = O(Y)$ if there exists a constant $C>1$ such that $|X| \le CY$ for all $N \ge 1$, and $X = o(Y)$ if $ \lim_{N\rightarrow \infty} X/Y = 0$.
% The function $\log x$ denotes the natural logarithm. 
We let $\mathbb{H} = \{ z : \Im z > 0 \}$ denote the complex upper half plane, and often use the notation $z = E + \I \eta$ for $z \in \mathbb{H}$, so that $E$ and $\eta$ stand for the real and imaginary parts of $z$, respectively.
We often identify the complex plane $\C$ with $\R^2$, and use the notation $[E_1, E_2] \times [\eta_1, \eta_2]$ to denote the set $\{z \in \C : \Re z \in [E_1, E_2] , \Im z \in [\eta_1, \eta_2] \}$. 
Our convention is that $\mathbb{N}$ denotes the set $\{1,2,\dots \}$.
The function $\log$ always denotes the natural logarithm.  

We will frequently define constants that depend on some number of parameters. These will be introduced as $C(x_1, \dots, x_n)$, for parameters $x_1, \dots, x_n$, and subsequently referred to as $C$ (suppressing the dependence on the parameters in the notation). These constants may change line to line without being renamed (while retaining the dependence on the same set of parameters). We usually write $C>1$ for large constants, and $c>0$ for small constants.

For $z=E+\ii\eta \in \mathbb{H}$ we use the notations 
\begin{equation}\label{eqn:logz}
{\rm Re}\log(z-\lambda)=\log|z-\lambda|,\ \  
{\rm Im}\log(z-\lambda)=\frac{\pi}{2}+\arctan\frac{\lambda-E}{\eta},
\end{equation}
which are coherent with our convention $\log(r e^{\ii \theta}) = \log(r) + \ii \theta$ for any $r>0$ and $\theta \in (-\pi,\pi]$.

We recall that Wigner matrices were defined in \Cref{def:wig}.  The Gaussian Orthogonal (resp.\  Unitary) Ensemble of dimension $N$, $\GOE_N$ (resp.\ $\GUE_N$), 
is defined as the $N\times N$ Wigner matrix with independent  entries $H_{ij}$,  $i\leq j$,  such that $\sqrt{N} H_{ij}$ is a real (resp. complex) Gaussian random variable with mean zero and variance $1 + \one_{i\neq j}$ (resp.  $\re H_{ij}$ and $\im H_{ij}$ are independent, each with variance $(1 + \one_{i\neq j})/2$).

We say that an event $\mathcal F = \mathcal F(N)$ holds with overwhelming probability if for any $D>0$, there exists a constant $C(D)>1$ such that $\P( \mathcal F^c) \le C N^{-D}$. 

\subsection{Semicircle law.}\ 
Let $\matn$ denote the set of $N\times N$ real symmetric matrices. Given $M \in \matn$, we index the eigenvalues $\lambda_i$ of $M$ in increasing order: $\lambda_1 \le \lambda_2 \le \dots \le \lambda_N$. 
The resolvent of $M$ is defined as by $G(z) = ( M - z \Id)^{-1}$  for $z \in \mathbb{H}$. 
The Stieltjes transform of $M$ is defined for $z \in \mathbb{H}$ by
\begin{align}\label{e:st}
m_N (z) = \frac{1}{N} \tr G = \frac{1}{N} \sum_i \frac{1}{ \lambda_i - z},
\end{align}
and  Stieltjes transform of the semicircle law is given by
\begin{align}\label{e:msc}
\msc (z) = \int_{\R} \frac{\rho_{\rm sc} (x)\, \rd x}{ x - z } = \frac{-z + \sqrt{z^2 -4}}{2}.
\end{align}
Here $\sqrt{z^2-4}:=\sqrt{z-2}\sqrt{z+2}$ is 
defined through the principal branch of the square root, extended to negative real numbers by
$\sqrt{-x}=\ii\sqrt{x}$ for $x>0$.

We now recall some elementary bounds on $\msc (z)$.

\begin{lemma}[{\cite[Lemma 6.2]{erdos2017dynamical}}]\label{l:sc}
There exists a constant $c>0$ such that the following holds. For all $z = E + \iu \eta$ such that $E \in [ -10, 10]$ and $\eta \in (0, 10]$,
\beq
c \le | \msc(z) | \le 1 - c \eta.
\eeq
Set $\kappa = | |E| - 2 |$. If $|E| \le 2$, then 
\beq
c\sqrt{\kappa + \eta} \le \Im \msc(z) \le c^{-1} \sqrt{\kappa + \eta}.
\eeq
\end{lemma}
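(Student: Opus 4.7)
The plan is to reason directly from the explicit formula $\msc(z) = (-z + \sqrt{z^2-4})/2$, interpreted via the principal branch of the square root so that $\Im \sqrt{z^2-4} > 0$ for $z$ in the upper half plane. Every bound in the lemma will reduce to controlling this square root, using the factorization $z^2-4 = (z-2)(z+2)$ together with the self-consistent equation $\msc^2 + z\msc + 1 = 0$ satisfied by $\msc$.

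For the uniform bounds $c \leq |\msc| \leq 1 - c\eta$, the key input is the algebraic identity obtained by multiplying $\msc^2 + z\msc + 1 = 0$ by $\overline{\msc}$ and taking imaginary parts:
\[
|\msc(z)|^2 = \frac{\Im \msc(z)}{\eta + \Im \msc(z)}, \qquad \text{equivalently} \qquad 1 - |\msc(z)|^2 = \frac{\eta}{\eta + \Im \msc(z)}.
\]
A crude bound from the explicit formula yields $|\sqrt{z^2-4}|^2 = |z-2|\,|z+2| \leq (|E|+\eta+2)^2$, hence $\Im \msc \leq |\sqrt{z^2-4}|/2$ is bounded by an absolute constant on the stated domain. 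The identity above then gives $1 - |\msc|^2 \geq c\eta$, which implies the upper bound $|\msc| \leq 1 - c\eta$ after adjusting $c$. For the matching lower bound, rewriting the quadratic as $\msc(\msc+z) = -1$ and using the just-obtained uniform boundedness of $|\msc|$ gives $|\msc| \geq 1/(|\msc|+|z|) \geq c$.

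For the bounds on $\Im \msc$ valid in the bulk $|E| \leq 2$, I would introduce polar coordinates $z - 2 = u_1 e^{\iu \theta_1}$ and $z+2 = u_2 e^{\iu \theta_2}$ with $\theta_1,\theta_2 \in (0,\pi)$, which gives
\[
\Im \sqrt{z^2-4} = \sqrt{u_1 u_2}\,\sin\tfrac{\theta_1+\theta_2}{2}.
\]
Assuming $E \geq 0$ by symmetry, $u_2 = |z+2| \asymp 1$ (with constants depending only on the domain), while $\theta_1 \in [\pi/2,\pi)$ and $\theta_2 \in (0,\pi/2]$, so $(\theta_1+\theta_2)/2 \in [\pi/4,3\pi/4]$ and the sine factor lies in $[\sqrt{2}/2,1]$. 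Since $u_1 = |z-2| = \sqrt{\kappa^2+\eta^2}$, this yields $\Im\sqrt{z^2-4} \asymp (\kappa^2+\eta^2)^{1/4} \asymp \sqrt{\kappa+\eta}$, and the upper bound $\Im \msc = (\Im\sqrt{z^2-4} - \eta)/2 \leq c^{-1}\sqrt{\kappa+\eta}$ is then immediate.

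The step I expect to be the most delicate is the matching lower bound $\Im \msc \geq c\sqrt{\kappa+\eta}$, since the subtraction of $\eta/2$ could a priori cancel the positive contribution from $\Im\sqrt{z^2-4}$. For $\eta$ below a fixed threshold this is handled by $\eta = o(\sqrt{\kappa+\eta})$, so the $\Im\sqrt{z^2-4}$ term strictly dominates. For $\eta$ bounded away from zero (and below $10$), both $\Im \msc$ and $\sqrt{\kappa+\eta}$ are continuous and strictly positive on the compact set $\{|E|\leq 2\}\times[\eta_0,10]$, so a compactness argument produces a uniform positive lower bound on the ratio, closing the estimate.
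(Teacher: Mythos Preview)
Your proof is correct. The paper does not give its own proof of this lemma; it is quoted verbatim from \cite[Lemma 6.2]{erdos2017dynamical} and simply cited as a known estimate, so there is nothing to compare against. Your argument via the self-consistent identity $|\msc|^2 = \Im\msc/(\eta+\Im\msc)$ for the modulus bounds, and the polar-coordinate computation of $\Im\sqrt{(z-2)(z+2)}$ for the bulk estimate on $\Im\msc$, is a clean and standard way to establish these inequalities.
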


\subsection{Wigner matrices.}\
We recall the following fundamental estimates on Wigner matrices from \cite[Theorem 2.1]{ErdYauYin2012} and \cite[Theorem 2.2]{ErdYauYin2012}.
In this theorem and throughout this paper, we will often use the control parameter
\begin{equation}\label{eqn:phi2}
\varphi=\exp\left(C_0(\log\log N)^2\right),
\end{equation}
where $C_0>0$ is a constant depending only on the constant $c$ from (\ref{eqn:tail}), whose value is fixed by the following lemma.

\begin{theorem}\label{l:lscl}
Let $H$ be a Wigner matrix. Then there exists $C_0>0$ such that the following claims hold.
\begin{enumerate}
\item There exists $c>0$ such that 
\begin{equation}\label{e:sclaw}
\P\left( \bigcup_{z \in \mathbb{H}} \left\{ |m_N(z) - \msc(z)| \ge \frac{\varphi}{N\eta} \right\} \right)\le c^{-1} \exp\left( - \varphi^c \right)
\end{equation}
and
\begin{equation}\label{e:gentries}
\P\left( \bigcup_{z \in \mathbb{H}} \left\{ \max_{i,j \in \unn{1}{N}} | G_{ij}(z) - \delta_{ij} \msc(z)| \ge \varphi \sqrt{\frac{\Im \msc(z)}{N\eta}} + \frac{\varphi}{N\eta} \right\} \right)\le c^{-1} \exp\left( -  \varphi^c \right).
\end{equation}
\item There exists $c>0$ such that, defining $\hat k=\min(k, N + 1-k)$ and $\gamma_k$ as in (\ref{quantiles}) with $\nu=\sc$, 
\begin{equation}\label{e:rigidity}
\P\left(\exists k \in \unn{1}{N} : | \lambda_k - \gamma_k|
 \ge \varphi\, \hat k^{-\frac{1}{3}} N^{-\frac{2}{3}} \right)\le c^{-1} \exp\left( -  \varphi^c \right).
\end{equation}

\end{enumerate}
\end{theorem}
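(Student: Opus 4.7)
The plan is to establish the resolvent bound \eqref{e:gentries} via the standard Schur-complement bootstrap that underlies the local semicircle law, then deduce \eqref{e:sclaw} by averaging, and finally derive the rigidity estimate \eqref{e:rigidity} by converting local control of $\Im m_N$ into counts of eigenvalues in short intervals. Concretely, from the identity
$$G_{ii}(z) = \frac{1}{H_{ii} - z - Z_i},\qquad Z_i := \sum_{k,l \neq i} H_{ik}\, G^{(i)}_{kl}(z) \,H_{li},$$
where $G^{(i)}$ is the resolvent of the $(N-1)\times(N-1)$ minor with row and column $i$ deleted, one writes $Z_i = N^{-1}\tr G^{(i)} + (Z_i - \mathbb{E}_i Z_i)=m_N+O((N\eta)^{-1})+(Z_i-\mathbb{E}_i Z_i)$. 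The fluctuation is controlled by a Hanson--Wright-type inequality adapted to sub-exponential entries; summing over $i$ yields a perturbed self-consistent equation $m_N^2 + z m_N + 1 = O(\Psi)$ with $\Psi \lesssim \varphi \sqrt{\Im \msc(z)/(N\eta)} + \varphi/(N\eta)$. Stability of this equation (checked separately in the bulk, at the edge, and outside the spectrum using \Cref{l:sc}) gives $|m_N - \msc| \lesssim \Psi$ and, on feeding back into the Schur identity, the entrywise bound \eqref{e:gentries}.

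To produce the stretched-exponential tail $\exp(-\varphi^c)$ with $\varphi = \exp(C_0(\log\log N)^2)$, I would apply Markov's inequality at moment order $p = \lfloor \varphi^c\rfloor$ to the quadratic-form fluctuation, using that \eqref{eqn:tail} yields $\mathbb{E}[|\sqrt{N}H_{ij}|^p] \leq (Cp)^{p/c}$. The choice of $\varphi$ is tuned precisely so that $(Cp)^{1/c}$ is absorbed into the multiplicative prefactor of the target envelope $\Psi$. A continuity bootstrap starting at $\eta = 10$, where $|G_{ii}|\leq \eta^{-1}$ is deterministic, and descending $\eta$ in geometric steps, propagates the bound down to $\eta \sim N^{-1}$; for smaller $\eta$ the claim is automatic since $\varphi/(N\eta) \geq 2\eta^{-1}$ dominates any resolvent entry. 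Uniformity in $z \in \mathbb{H}$ then follows by establishing the bounds on a polynomial-sized grid and extending by the Lipschitz estimate $\|\partial_z G(z)\| \leq \eta^{-2}$.

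For the rigidity estimate \eqref{e:rigidity}, I would compare the eigenvalue counting function $\mathcal{N}(E) = \#\{k : \lambda_k \leq E\}$ with its semicircle prediction $N\int_{-\infty}^E \sc(x)\,\rd x$ via a Helffer--Sj\"ostrand representation, integrated against \eqref{e:sclaw} at scale $\eta = \varphi/N$. This yields $|\mathcal{N}(E) - N\int_{-\infty}^E \sc(x)\,\rd x| \leq C\varphi$ on the same exceptional event. Inverting this bound, and using that $\sc(\gamma_k) \asymp (\hat k/N)^{1/3}$ uniformly in the bulk and at the edge, converts the integrated density estimate into $|\lambda_k - \gamma_k| \leq C\varphi\, \hat k^{-1/3} N^{-2/3}$.

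The main difficulty lies in the concentration step: obtaining the sharp envelope $\varphi \sqrt{\Im \msc/(N\eta)} + \varphi/(N\eta)$ rather than the cruder $\varphi/(N\eta)$ requires splitting $Z_i - \mathbb{E}_i Z_i$ into a diagonal contribution $\sum_k (|H_{ik}|^2 - N^{-1}) G^{(i)}_{kk}$ and an off-diagonal contribution $\sum_{k\neq l} H_{ik}\, G^{(i)}_{kl}\, H_{li}$, then exploiting the Ward identity $\sum_k |G^{(i)}_{kk}|^2 = (N\eta)^{-1} \sum_k \Im G^{(i)}_{kk}$ to convert sums of resolvent entries squared into averaged imaginary parts. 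Maintaining this sharpness uniformly down to $\eta \sim N^{-1}$ and simultaneously with the stretched-exponential tail dictated by $\varphi$ is the technical core of the argument, and it is precisely the bootstrap of Erd\H{o}s--Yau--Yin that makes it work.
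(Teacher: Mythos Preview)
The paper does not prove this theorem; it is stated in the preliminaries section and attributed directly to \cite[Theorems 2.1 and 2.2]{ErdYauYin2012}, with the extension from a compact spectral domain to all of $\mathbb{H}$ credited to \cite[Section 10]{benaych2016lectures}. Your sketch is a faithful outline of the Erd\H{o}s--Yau--Yin argument that underlies the cited reference, so there is nothing to compare: you have reconstructed the source proof, whereas the paper simply quotes the result.

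One small correction to your write-up: the identity you invoke as the Ward identity is misstated. The correct form, recorded in the paper as \eqref{e:ward}, is $\sum_j |G_{ij}|^2 = \eta^{-1}\Im G_{ii}$; it controls the Hilbert--Schmidt norm of a \emph{row} of the resolvent (and hence the variance of the off-diagonal quadratic form $\sum_{k\neq l} H_{ik} G^{(i)}_{kl} H_{li}$), not $\sum_k |G^{(i)}_{kk}|^2$.
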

\begin{remark}
In \cite{ErdYauYin2012}, \eqref{e:sclaw} and \eqref{e:gentries} were shown for $z$ in a compact spectral domain.
The extension to all $z \in \mathbb{H}$ follows from \cite[Section 10]{benaych2016lectures}.
\end{remark}

\subsection{Generic $\beta$-ensembles.}\
We recall that $\beta$-ensembles were defined above in \eqref{eq:beta_ensembles_density}, and we retain the notation from the previous section.
For $z\in \mathbb{H}$, let
\begin{equation}\label{e:szdef}
	s(z)= s_N(z) = \frac{1}{N} \sum_{k=1}^N \frac{1}{\lambda_k - z},\ \ \ 
	m_V(z) = \int_{\mathbb{R}}\frac{\rd \mu_V(x)}{x-z}.
% \ \ {\rm and\ define\ }\gamma_k\ {\rm through}\ \int_{-\infty}^{\gamma_k}\rd\mu_V=\frac{k}{N}.
\end{equation}

The following will be key to the proof of Theorem \ref{thm:Beta}. It follows from \cite[Remark 2.4]{BouModPai2020}
\begin{theorem} \label{th:local_law_bulk} Under the assumptions  (A1), (A.2) (i),  (A3) and (A4)  there exist constants $C, c , \tilde{\eta}>0$  such that for any $q \geq 1$, $N \geq 1$ and  $z = E + \ii \eta$ with $0 < \eta \leq \tilde{\eta}$ and $A-\eta \leq E \leq B+\eta$, we have
\[
	\E\big[ \absa{ s(z) - m_V(z) }^q \big]
	\leq \frac{(Cq)^{q/2}}{(N\eta)^q} 
		+ \frac{C^q e^{-cN}}{\abs{z-A}^{q/2} \abs{z-B}^{q/2}}.
\]
\end{theorem}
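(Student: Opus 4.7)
This estimate is quoted from~\cite[Remark 2.4]{BouModPai2020}. I sketch the strategy for proving it directly. The core tool is the Dyson--Schwinger (loop) equation for $\beta$-ensembles. Integration by parts against the joint density~\eqref{eq:beta_ensembles_density}, combined with the standard symmetrization
\[
\sum_{i\ne j}\frac{1}{(z-\lambda_i)(\lambda_i - \lambda_j)} \;=\; \tfrac{1}{2}\bigl(N^2 s(z)^2 - N s'(z)\bigr),
\]
yields the loop equation
\[
\E\!\left[s(z)^2 + V'(z)\,s(z) - \tilde R_N(z)\right] \;=\; \tfrac{1}{N}\bigl(1-\tfrac{2}{\beta}\bigr)\,\E[s'(z)],
\]
where $\tilde R_N(z) := \tfrac{1}{N}\sum_i \tfrac{V'(z)-V'(\lambda_i)}{\lambda_i - z}$ is analytic in~$z$ by~\ref{assumption:analytic}. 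Subtracting the limiting algebraic equation $m_V(z)^2 + V'(z) m_V(z) - R_\infty(z) = 0$, with $R_\infty(z) = \int \tfrac{V'(z)-V'(x)}{x-z}\,\rd\mu_V(x)$, and writing $\Delta := s - m_V$ linearizes the identity:
\[
(2m_V(z)+V'(z))\,\E[\Delta(z)] \;=\; -\E[\Delta(z)^2] + \E[\tilde R_N(z) - R_\infty(z)] + \tfrac{1}{N}\bigl(1-\tfrac{2}{\beta}\bigr)\,\E[s'(z)].
\]
Assumption~\ref{assumption:off-criticality} identifies the multiplier $2m_V(z)+V'(z)$ as a multiple of $\sqrt{(z-A)(z-B)}\,r(z)$: bounded below in the bulk and degenerating like $\sqrt{|z-A||z-B|}$ at the endpoints.

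To promote the identity to a bound on $\E[|\Delta(z)|^q]$ for general $q$, multiply the loop equation by $\overline{\Delta(z)}^{\,q-1}$ and integrate by parts once more (equivalently, introduce $q-1$ auxiliary spectral parameters and later specialize them to $z$). This produces a Dyson--Schwinger hierarchy culminating in a Gronwall-type inequality of the schematic form
\[
\E[|\Delta(z)|^{2q}] \;\leq\; \frac{Cq}{(N\eta)^{2}}\,\E[|\Delta(z)|^{2q-2}] + (\text{analytic remainders}),
\]
where the factor $q$ arises from the Leibniz rule applied to $\Delta^{q-1}$. Starting from a weak a~priori local law (available from concentration estimates for log-gases, cf.~\cite{de1995statistical}), induction on $q$ gives $\E[|\Delta(z)|^{2q}] \leq C^q q!/(N\eta)^{2q}$ and, via Stirling, the announced Gaussian moment growth $(Cq)^{q/2}/(N\eta)^q$.

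The main obstacle is reaching the microscopic scale $\eta \leq 1/N$. Because $s(z)$ is meromorphic with poles at the eigenvalues, bounds cannot be transferred between scales by analyticity alone, and $s'(z)$ loses a factor of $\eta$ per iteration; the hierarchy therefore has to be run all the way down without slack. The second, exponentially small term $C^q e^{-cN}/(|z-A|^{q/2}|z-B|^{q/2})$ records the contribution of configurations in which a particle lies outside~$[A,B]$; by assumption~\ref{assumption:large_dev} this has probability at most $C e^{-cN}$, and the inverse-square-root factors arise from the worst-case bound produced by inverting the edge-vanishing multiplier $2m_V+V'$ when $z$ is within distance $\eta$ of the endpoints.
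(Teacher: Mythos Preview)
The paper does not give a proof of this statement; it simply records that the bound follows from \cite[Remark~2.4]{BouModPai2020}. You correctly note this at the outset and then offer a sketch of the loop-equation method underlying that reference. Your outline---deriving the first loop equation, linearizing around $m_V$ via the multiplier $2m_V+V'=2r(z)b(z)$ from \eqref{eqn:2mplusV}, and bootstrapping moments of $\Delta=s-m_V$ through a Dyson--Schwinger hierarchy with induction on $q$---is faithful to the strategy in \cite{BouModPai2020}, and your interpretation of the second error term as a large-deviation contribution governed by \ref{assumption:large_dev} is correct. Since the paper treats this theorem purely as a black-box input, your sketch already goes beyond what is required here.
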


We will also use the following rigidity estimate, which directly follows from  \cite[Lemma 3.8]{BouModPai2020}.

\begin{lemma}\label{lem:rig}
 There exists $c(V)>0$ such that for any $N\geq 1$ and $k\in\llbracket 1,N\rrbracket$ we have
\[
\mu\left(|\lambda_k-\gamma_k|>N^{-\frac{2}{3}}(\widehat k)^{-\frac{1}{3}}(\log N)^{23}\right)\leq c^{-1}e^{-c(\log N)^5}.
\]
\end{lemma}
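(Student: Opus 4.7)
The plan is to deduce this rigidity from the strong moment local law in \Cref{th:local_law_bulk} via the standard chain: convert the $L^q$ bound on $s(z)-m_V(z)$ into a tail estimate at fixed $z$ using Markov's inequality, lift to uniform control in $z$ by a grid argument, translate to control on the eigenvalue counting function using the Helffer--Sjöstrand formula, and finally invert to a bound on individual eigenvalues $\lambda_k$. The stretched-exponential probability $e^{-c(\log N)^5}$ is dictated by choosing the moment order in \Cref{th:local_law_bulk} of size $q \sim (\log N)^5$; the polylogarithmic slack $(\log N)^{23}$ results from accumulated losses across these steps.

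\textbf{From Stieltjes transform to counting function.} For a fixed $z = E+\ii\eta$ with $\eta \ge N^{-1}$ and $E \in [A-1,B+1]$, \Cref{th:local_law_bulk} with $q = \lfloor c(\log N)^5\rfloor$ combined with Markov gives, for a sufficiently large constant $K$,
\[
\mu\pa{ \absa{s(z) - m_V(z)} > K (\log N)^{5/2}/(N\eta) } \leq (C/K)^q + \text{(exponentially small)} \leq e^{-c(\log N)^5},
\]
where the exponentially small edge term from \Cref{th:local_law_bulk} is harmless in this range because $\absa{z-A}, \absa{z-B} \gtrsim \eta$. A net of mesh $N^{-100}$ in $[A-1,B+1] \times \ii[N^{-1},1]$, combined with the trivial Lipschitz estimate $\absa{s'(z)} \leq \eta^{-2}$, promotes this to a uniform bound at the same probability cost. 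Setting $\eta_* = (\log N)^{A}/N$ with $A$ tuned, a standard Helffer--Sjöstrand argument with a smooth cutoff $\chi_E$ approximating $\mathbf{1}_{x\le E}$ at scale $\eta_*$, using
\[
\sum_k \chi_E(\lambda_k) - N \int \chi_E \rho_V = \frac{N}{\pi} \int_{\C} \bar\partial \tilde\chi_E(z) \pa{s(z) - m_V(z)} \dd z,
\]
converts the Stieltjes bound into the counting function estimate
\[
\mu\pa{ \sup_{E \in [A-1,B+1]} \absa{\mathcal{N}(E) - N F_V(E)} > (\log N)^{20} } \leq e^{-c(\log N)^5},
\]
where $\mathcal{N}(E) = \#\{k : \lambda_k \le E\}$ and $F_V(E) = \int_{-\infty}^E \rho_V$.

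\textbf{Inversion and main obstacle.} The counting bound directly inverts to the desired rigidity: writing $\Delta = (\log N)^{20}$, if $\absa{\mathcal{N}(E) - N F_V(E)} \leq \Delta$ uniformly then $\lambda_k$ lies within a window of size $\asymp \Delta/(N \rho_V(\gamma_k))$ of $\gamma_k$, by monotonicity of $\mathcal{N}$. In the bulk where $\rho_V(\gamma_k) \gtrsim 1$ this gives the scale $(\log N)^{20}/N$; near the edge, $\gamma_k - A \asymp (\hat k/N)^{2/3}$ and $\rho_V(\gamma_k) \asymp (\hat k / N)^{1/3}$ from assumption \ref{assumption:off-criticality}, producing the $(\log N)^{C} \hat k^{-1/3} N^{-2/3}$ scale, with $C \leq 23$ after bookkeeping. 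The principal difficulty is precisely the edge behavior: for $z$ extremely close to $A$ or $B$, the singular second term in \Cref{th:local_law_bulk} is non-negligible, and the vanishing of $\rho_V$ makes the counting-function inversion delicate. One must either restrict the Helffer--Sjöstrand integration to $\absa{\Im z} \geq N^{-1}$ and handle the inversion via the explicit square-root behavior of $\rho_V$ at the endpoints, or invoke a preliminary weak a priori confinement of the eigenvalues (which follows from \ref{assumption:large_dev}) to preclude eigenvalues far outside $[A,B]$ before running the counting-function argument.
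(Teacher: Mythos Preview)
The paper does not give its own proof of this lemma; it simply states that the result ``directly follows from \cite[Lemma 3.8]{BouModPai2020}.'' Your proposed argument is correct and is essentially the route taken in that reference: derive a pointwise tail bound on $s(z)-m_V(z)$ from the moment local law via Markov with $q\asymp(\log N)^5$, upgrade to a uniform bound by a net argument, pass to the counting function through Helffer--Sj\"ostrand, and invert using the square-root edge behavior of $\rho_V$. So your approach matches the cited proof in spirit and in its main steps.

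One small point of care: \Cref{th:local_law_bulk} as stated requires $A-\eta\le E\le B+\eta$, so your grid over $[A-1,B+1]\times\ii[N^{-1},1]$ is not literally covered by it. In practice this is harmless because, as you note, assumption \ref{assumption:large_dev} gives an a priori large-deviation confinement of the spectrum to a small neighborhood of $[A,B]$ with exponentially good probability, after which the counting-function argument only needs the local law in the stated range. This is exactly how \cite{BouModPai2020} handles the edge, so your identification of this as the principal difficulty and its resolution are both on target.
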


\subsection{Resolvent identities.}\
For $M \in \matn$ and any differentiable $f : \RR \rightarrow \RR$, we set
\begin{equation}\label{e:derivdef}
\partial_{ij} f(H) = \frac{d}{dt}\bigg\rvert_{t=0} f\left(H + t \Delta^{(ij)}\right),
\end{equation}
where $\Delta^{(ij)} \in \matn$ is the matrix whose entries are zero except in the $(i,j)$ and $(j,i)$ positions, in which case they equal one: $\Delta^{(ij)}_{kl} = (\delta_{ik} \delta_{jl} + \delta_{jk} \delta_{il} )(1 + \delta_{ij})^{-1}$.

Given $M, \widetilde M \in \matn$, with resolvents $G$ and $\widetilde G$, respectively, it follows immediately from the definitions that
\begin{equation}\label{resolventexpansion}
G - \widetilde G = G ( \widetilde M - M ) \widetilde G.
\end{equation}
Additionally,  the following resolvent identities are well known. The first can be found as \cite[(3.6)]{benaych2016lectures}. The second is a straightforward consequence of \eqref{resolventexpansion} and the definition \eqref{e:derivdef}. The third is an immediate consequence of the spectral decomposition of $G(z)$ \cite[(2.1)]{benaych2016lectures}.

\begin{lemma} Given $M \in \matn$, let $G(z) = ( M - z \Id)^{-1}$ denote its resolvent. 
\begin{enumerate}
\item For any $i\in \unn{1}{N}$,  
\begin{equation}\label{e:ward}
\sum_{1 \le j \le N} |G_{ij}|^2 = \frac{ \Im G_{ii}}{\Im z}.
\end{equation}
\item For $i,j,k,l \in \unn{1}{N}$,
\begin{equation}\label{e:gpartial}
\partial_{kl}G_{ij} = - (G_{ik} G_{lj} + G_{il} G_{kj}) (1 + \delta_{kl})^{-1}.
\end{equation}
\item For $i,j\in \unn{1}{N}$, 
\begin{equation}\label{e:gtrivial}
\left| G_{ij}(z) \right| \le \eta^{-1}.
\end{equation}
\end{enumerate} 
\end{lemma}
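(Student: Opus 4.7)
The three identities are standard consequences of the spectral calculus for a real symmetric matrix, so my strategy is to produce each from the spectral decomposition or a one-line resolvent manipulation, without any heavy machinery. Throughout, let $M=\sum_\alpha \lambda_\alpha u_\alpha u_\alpha^\top$ be the spectral decomposition of $M\in\matn$, with orthonormal real eigenvectors $u_\alpha$, so that
\begin{equation*}
G(z) \;=\; \sum_{\alpha=1}^N \frac{u_\alpha u_\alpha^\top}{\lambda_\alpha - z}, \qquad G_{ij}(z) \;=\; \sum_{\alpha} \frac{u_\alpha(i)\, u_\alpha(j)}{\lambda_\alpha - z}.
\end{equation*}

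For the Ward identity \eqref{e:ward}, my preferred approach is the resolvent-theoretic one: starting from the trivial identity $(M-zI)^{-1}-(M-\bar z I)^{-1} = (z-\bar z)(M-zI)^{-1}(M-\bar z I)^{-1}$, one obtains $G G^* = (2\iu\eta)^{-1}(G-G^*) = \eta^{-1}\Im G$. Reading off the $(i,i)$ entry on the left gives $\sum_j G_{ij}\overline{G_{ij}} = \sum_j |G_{ij}|^2$, and reading off the right gives $\eta^{-1}\Im G_{ii}$, which is \eqref{e:ward}. (Alternatively, expand $|G_{ij}|^2$ via the spectral decomposition, sum over $j$ and use $\sum_j u_\alpha(j) u_\beta(j)=\delta_{\alpha\beta}$ to collapse the double sum.)

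For the derivative identity \eqref{e:gpartial}, I would differentiate the defining relation $(M-zI)G(z)=I$ in the direction $\Delta^{(kl)}$. Since $z$ is held fixed, this yields
\begin{equation*}
\Delta^{(kl)} G + (M-zI)\,\partial_{kl} G \;=\; 0, \qquad \text{hence} \qquad \partial_{kl}G \;=\; -\,G\,\Delta^{(kl)}\,G.
\end{equation*}
Taking the $(i,j)$ entry and using $\Delta^{(kl)}_{ab} = (\delta_{ak}\delta_{bl}+\delta_{al}\delta_{bk})(1+\delta_{kl})^{-1}$ gives exactly the claimed expression $-(G_{ik}G_{lj}+G_{il}G_{kj})(1+\delta_{kl})^{-1}$. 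This step is purely algebraic once one accepts the differentiation rule for an inverse.

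For the bound \eqref{e:gtrivial}, because $M$ is real symmetric the operator $G(z)$ is normal, and its operator norm equals $\max_\alpha |\lambda_\alpha-z|^{-1} \le \eta^{-1}$. Since each entry is bounded by the operator norm (apply $G$ to a standard basis vector and use Cauchy--Schwarz), we obtain $|G_{ij}(z)|\le \eta^{-1}$.

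None of the three statements presents a genuine obstacle: the only place where one has to be mildly careful is the combinatorial factor $(1+\delta_{kl})^{-1}$ in \eqref{e:gpartial}, which comes from the convention that the diagonal perturbation $\Delta^{(ii)}$ shifts $H_{ii}$ by $1$ (not by $2$, as the symmetric off-diagonal perturbation $\Delta^{(ij)}$ would suggest if one forgot to symmetrize). Once that bookkeeping is correct, the lemma reduces to the three short computations above.
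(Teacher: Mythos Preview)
Your proof is correct and matches the paper's approach: the paper simply cites \cite[(3.6)]{benaych2016lectures} for \eqref{e:ward}, says \eqref{e:gpartial} follows from the resolvent expansion \eqref{resolventexpansion} and the definition \eqref{e:derivdef}, and attributes \eqref{e:gtrivial} to the spectral decomposition, which are precisely the three arguments you give in detail.
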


\subsection{Eigenvalue overcrowding.}\ We recall the following overcrowding estimate.  It is contained in \cite[Theorem 1.12]{Ngu2018}, which is stated in greater generality for Wigner matrices with subgaussian entries, but we need only the special case of the Gaussian ensemble.

\begin{theorem}\label{thm:Hoi}
Let $\{ \mu_i \}_{i=1}^N$ denote the eigenvalues of $\GOE_N$.
For any $0<\gamma<1$ there exist constants $C(\gamma),c(\gamma),\gamma_0(\gamma)>0$ such that for any $k\in\llbracket \gamma_0^{-1},\gamma_0N\rrbracket$, $\e>0$ and $E\in\mathbb{R}$, we have
\begin{equation}
\mathbb{P}\left(\left|\left\{\mu_i\in\left[E-\frac{\e k}{N},E+\frac{\e k}{N}\right]\right\}\right|\geq k\right)\leq (C\e)^{\frac{1}{2}(1-\gamma)k^2}+e^{-N^c}.
\end{equation}
A similar bound holds for $\GUE_N$, with $\frac{1}{2}(1-\gamma)k^2$ replaced by $(1-\gamma)k^2$.
\end{theorem}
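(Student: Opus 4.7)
The plan is to prove this overcrowding estimate by exploiting the explicit joint eigenvalue density of the Gaussian ensembles. First I split into the bulk and non-bulk cases: if $|E| > 2 + \delta$ for some fixed $\delta > 0$, standard large-deviation bounds for GOE/GUE eigenvalues (coming, e.g., from the tridiagonal representation or directly from the Coulomb gas density) show that no eigenvalue lies within distance $\delta/2$ of $E$ except on an event of probability at most $e^{-cN}$, contributing to the $e^{-N^c}$ term. Near the spectral edge (where $|E| \in [2-\delta, 2+\delta]$), the Tracy--Widom tail and edge rigidity give an even stronger bound since the typical microscopic spacing there is $N^{-2/3} \gg \epsilon k/N$ for the relevant range of $k$. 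So it suffices to treat bulk $E$.

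For $E$ in the bulk I apply Markov's inequality to the $k$-th factorial moment of $N_I = |\{i : \mu_i \in I\}|$:
\[
\mathbb{P}(N_I \ge k) \le \frac{1}{k!}\,\mathbb{E}\bigl[N_I (N_I-1)\cdots (N_I-k+1)\bigr] = \frac{1}{k!}\int_{I^k} \rho_k(x_1,\dots,x_k)\,dx_1\cdots dx_k,
\]
where $\rho_k$ is the $k$-point correlation function (a determinant of the Hermite kernel for $\GUE_N$, a Pfaffian of the $2\times 2$ matrix kernel for $\GOE_N$). Rescaling to microscopic coordinates $y_i = N\rho_{\rm sc}(E)(x_i - E)$, the interval $I$ becomes $J$ of length $L = 2\epsilon k\, \rho_{\rm sc}(E) \asymp \epsilon k$, and $\rho_k$ is replaced by the sine (resp.\ symplectic-sine) correlation $\rho_k^{\rm micro}$. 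Taylor expanding the kernel around coalescent points produces the Vandermonde bound $\rho_k^{\rm micro}(y) \le A_k \prod_{i<j}|y_i - y_j|^\beta$, with $A_k$ a constant built from Taylor coefficients of the kernel. Integration via Selberg's formula gives
\[
\int_{[0,L]^k}\prod_{i<j}|y_i - y_j|^\beta \,\mathrm{d}y = L^{k + \beta k(k-1)/2}\, S_k^{(\beta)},
\]
where the Selberg constant $S_k^{(\beta)}$ decays like $c_\beta^{k^2}$ by Barnes-$G$-function asymptotics (in particular $S_k^{(2)} \sim 4^{-k^2}$).

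Combining everything with $L \asymp \epsilon k$ yields
\[
\mathbb{P}(N_I \ge k) \lesssim \frac{1}{k!}(C\epsilon k)^{\beta k^2/2 + O(k)}\, A_k\, S_k^{(\beta)},
\]
and the main task is to show that the $k$-dependent prefactors---in particular the factor $k^{k^2}$ coming from $L^{k^2}$, the Stirling contributions from $k!$, and the asymptotics of $A_k$ and $S_k^{(\beta)}$---can be absorbed into the slack $(1-\gamma)$ in the target exponent, giving the desired bound $(C\epsilon)^{(1-\gamma)\beta k^2/2}$. This absorption is precisely the reason the threshold $k \ge \gamma_0(\gamma)^{-1}$ is imposed: for smaller $k$ the sub-exponential-in-$k^2$ corrections dominate and the exponent cannot be captured cleanly. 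The main obstacle of the proof is thus detailed bookkeeping through the Selberg and Wronskian expansions to verify the absorption, rather than any genuine conceptual difficulty; the qualitative picture of confined eigenvalues paying a Vandermonde cost of order $L^{\beta k^2/2}$ drives the argument.
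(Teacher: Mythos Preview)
The paper does not prove this statement; it is quoted from \cite[Theorem~1.12]{Ngu2018}, which handles general Wigner matrices via anticoncentration (inverse Littlewood--Offord) estimates rather than an explicit joint density. Your factorial-moment route is the natural one for the Gaussian case and does lead to the bound, with two caveats. First, passing to the microscopic sine/symplectic-sine kernel is both unnecessary and fragile: the statement allows $k$ up to $\gamma_0 N$, where finite-$N$ correlations are not uniformly approximated by their scaling limits, and a global inequality $\rho_k^{\rm micro}\le A_k\prod_{i<j}|y_i-y_j|^\beta$ with a single constant $A_k$ is delicate once points spread over microscopic distance of order one. It is cleaner to stay with the finite-$N$ joint density, where the Vandermonde among the $k$ confined particles is \emph{exact} (not an upper bound) and the remaining integral is a partition-function ratio times a moment of $|\det(E-H_{N-k})|^{\beta k}$. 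Second, the dangerous factor $k^{\beta k^2/2}$ coming from $L$ of order $\varepsilon k$ is not merely absorbed into the slack $(1-\gamma)$ as you suggest; it is genuinely cancelled by the Barnes-$G$ asymptotics hidden in your Wronskian constant $A_k$ (or, in the density approach, by the partition-function ratio together with the factor $N^{-\beta k(k-1)/2}$ from the interval length). The slack $(1-\gamma)$ and the lower threshold on $k$ serve only to handle the residual $e^{O(k^2)}$ constants and $O(k\log k)$ exponent corrections once that leading cancellation is in place.
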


\section[Maximum for Log-Gases]
{Maximum for Log-Gases}
\label{sec:maxgauss}
This section contains the proofs of \Cref{thm:Beta} and \Cref{cor:Beta}. The proof of \Cref{thm:Beta} is contained in the first three subsections, and the proof of \Cref{cor:Beta} is given in \Cref{sec:elementary}. 

We give the
proof of Theorem \ref{thm:Beta} here in the case where we suppose  (A.2) (i) holds.
Under the assumption (A.2) (ii),  some extra care is needed such as working under conditional measures. We explain the necessary changes in \Cref{s:otherpotentials}.

\subsection{Upper bound for the real part.}\  
By monotonicity of $\eta \mapsto \log|E+\ii\eta-\lambda|$,  $\eta>0$, and the estimate $\int\log|E-\lambda|\rd\rho_V(\lambda)=\int\log|E+\ii\e-\lambda|\rd\rho_V(\lambda)=\OO(\e)$ uniformly in $E$, 
there exists a fixed $C>0$ such that
\begin{equation}\label{eqn:UBRe1}
\sup_{E\in [A+\kappa,B-\kappa]}\Re L_N(E)\leq \sup_{E\in [A+\kappa,B-\kappa]}\Re L_N\left(E+\frac{\ii}{N}\right)+C.
\end{equation}
Let $J=[A+\kappa,B-\kappa]\cap N^{-1-c}{\mathbb{Z}}$, where $c>0$ is an arbitrary small constant.  For any $E\in[A+\kappa,B-\kappa]$, let $E'$ be the closest point in $J$,  $z=E+\frac{\ii}{N}$ and $z'=E'+\frac{\ii}{N}$.  
Then from $\log (1+\e)=\e+\OO(\e^2)$ and recalling the definition of $s(z)$ from \eqref{e:szdef},  this implies (in this section we abbreviate $m=m_V$)
\begin{multline}\label{eqn:Taylor}
\Re L_N(z)-\Re L_N(z')=\OO((z-z')N(s(z')-m(z')))+\OO\left((z-z')^2\sum\frac{1}{|z'-\lambda_i|^2}\right)+\OO(1)\\
=N^{-c}\OO(|s(z')-m(z')|)+N^{-2c}\OO(\Im s(z'))+\OO(1).
\end{multline}
Next,  \Cref{th:local_law_bulk} together with Markov's inequality gives 
\begin{equation}\label{eqn:tailnew}
\max_{E\in[A+\kappa,B-\kappa]} \P \left( |s(z)-m(z)| > (\log N)^{7/10} \right) \le N^{-200}.
\end{equation}
for large enough $N$.  Together with the 
boundedness of $m_V$ on compact sets of $\mathbb{C}$ (see \eqref{eqn:2mplusV}), 
this gives
\begin{equation}\label{eqn:microBound}
\mathbb{P}\left(\exists E'\in J: |s(z')|\geq (\log N)^{7/10}\right)\le N^{-100}. 
\end{equation}
We conclude that 
\begin{equation}\label{eqn:UBIm1bis}
\mathbb{P}\left(\sup_{E\in [A+\kappa,B-\kappa]}\Re L_N(E)\leq \sup_{E\in J}\Re L_N(E+\ii N^{-1})+(\log N)^{9/10}\right)\geq
1-\OO(N^{-100}). 
\end{equation}

We now control the increments of $L_N$ along the line segment $\{\Re z = E, N^{-1}<\Im z<\eta_0\}$  using Markov's inequality, where we  set 
\begin{equation}\label{eqn:etao1}
\eta_0=\frac{(\log N)^{1000}}{N}
\end{equation}
throughout this section.
  For 
$E\in J$, we denote $z=z(E)=E+\ii/N$ and $\tilde z=E+\ii \eta_0$. Then for any fixed $\e>0$ and $p\in \N$, we have by a union bound that
\begin{multline}\label{e:ubound}
\mathbb{P}\left(\exists E\in J:\Re L_N(z)>\Re L_N(\tilde z)+\e\log N\right)\\
\leq C N^{1+c}(\e\log N)^{-2p}\max_{E\in J}\E\left(\int_{[N^{-1},\eta_0]^{2p}}\prod_{i=1}^p(N(s-m_V)(E+\ii\eta_i))\prod_{i=p+1}^{2p}(N\overline{(s-m_V)}(E+\ii\eta_i))\right)\rd\eta_1\dots\rd\eta_{2p}.
\end{multline}

We now suppose that $p = O( N (\log \log N)^{-1} )$. 
Theorem \ref{th:local_law_bulk} gives, for $E\in[A+\kappa,B-\kappa]$,
\begin{equation}\label{eqn:ttLocalLaw}
\E[(|(s-m_{V})(E+\ii\eta)|^{p}]\leq 
\frac{(Cp)^{p/2}}{(N\eta)^p}
			+ C^p e^{-\tilde c N}\leq \frac{(Cp)^{p/2}}{(N\eta)^p},
\end{equation}
for some $C=C(V,\kappa)$,
where the latter inequality holds because we assume $\eta<\eta_0$. % and $p\ll\frac{N}{\log\log N}$.
Equation (\ref{eqn:ttLocalLaw})  and H{\"o}lder's inequality give
\begin{equation}\label{eqn:tLocalLaw}
\E\left[\prod_{i=1}^p\big|N(s-m_{V})(E_k+\ii\eta_i)\big|\prod_{i=p+1}^{2p}|N\overline{(s-m_{V})}(E_k+\ii\eta_i)|\right]\leq   (Cp)^{p}\prod_{i=1}^{2p}\frac{1}{\eta_i}.
\end{equation}
Inserting the previous display in \eqref{e:ubound}, we obtain
\begin{equation}\label{eqn:UBRe2}
\mathbb{P}\left(\exists E\in J:\Re L_N(z)>\Re L_N(\tilde z)+\e\log N\right)\leq \frac{N^{1+c} (Cp)^{p}(\log\log N)^{2p}}{(\e\log N)^{2p}}\leq N^{1+c}\frac{(Ap)^{p}(\log\log N)^{2p}}{(\log N)^{2p}}\leq N^{-100},
\end{equation}
where $A$ is a new constant depending on $C$ and $\e$, and the latter inequality is obtained by setting $p=B\frac{\log N}{\log\log N}$ for sufficiently large $B$.
We note that for the above reasoning,  the Gaussian-like moment growth $(Cq)^{q/2}$ in Theorem \ref{th:local_law_bulk} is crucial (as opposed to an exponential-like growth of $(Cq)^{q}$) . 

Moreover, from Markov's inequality and Theorem \ref{thm:GaussFluct},  for any fixed $\lambda >0$ we have 
\begin{multline*}
\mathbb{P}\left(\exists E\in J:\Re L_N(\tilde z)>(1+\e)\sqrt{\frac{2}{\beta}}\log N\right)\leq N^{1+c}\, e^{-\lambda (1+\e)\sqrt{\frac{2}{\beta}}\log N} \max_{E\in J}\E\big[e^{\lambda \Re L_N(\tilde z)}\big]\\
\leq C\, N^{1+c}\, \max_{E\in J}e^{\frac{\sigma(\lambda,0,\tilde z)}{2}+\mu(\lambda,0,\tilde z)-\lambda(1+\e)\sqrt{\frac{2}{\beta}}\log N},
\end{multline*}
where we refer to (\ref{eqn:sigma}) and (\ref{eqn:shift}) for the definitions of $\sigma$ and $\mu$.
From Lemma \ref{lem:varshift}, we have $\mu(\lambda,0,\tilde z)=\OO(1)$ 
and $\sigma(\lambda,0,\tilde z)=(1+\oo(1))\lambda^2\frac{\log N}{\beta}$
uniformly in $N$, $E\in J$,  and $\lambda$ in any compact subset of $\mathbb{R}_+$.  Choosing $\lambda=\sqrt{2\beta}$ this implies that 
\begin{equation}\label{eqn:UBRe3}
\mathbb{P}\left(\exists E\in J:\Re L_N(\tilde z)>(1+\e)\sqrt{\frac{2}{\beta}}\log N\right)\leq e^{-(2\e-c-\oo(1))\log N}\to 0.
\end{equation}
With the choice  $0<c<2\e$,  equations (\ref{eqn:UBIm1bis}),  (\ref{eqn:UBRe2}), (\ref{eqn:UBRe3}) conclude the proof that
\[
\mathbb{P}\left(\exists E\in[A+\kappa,B-\kappa] :\Re L_N(E)>(1+\e)\sqrt{\frac{2}{\beta}}\log N+2\e\log N\right)\to 0.
\]

\subsection{Upper bound for the imaginary part.}\ 
The proof of the upper bound for $\Im L_N$ the same as the one for the real part up to the following  complication: There is no analogue of (\ref{eqn:UBRe1}), as $\eta \mapsto \sum_i\Im \log (E+\ii\eta-\lambda_i)$ is not monotone.
To circumvent this problem,  we observe that the error made by shifting $\Im \log$ from the real axis to a scale $\eta$ can be bounded in terms of a linear combination of the real and imaginary parts of the Stieltjes transform at scale $\eta$.

More precisely,  note that,  from $\arctan(x)-\arctan(+\infty)=-\int_x^\infty\frac{\rd u}{1+u^2}=-\frac{1}{x}+{\rm O}(\frac{1}{x^2})$ as $x\rightarrow \infty$, we have
\[
\arctan((\lambda-E)/\eta)-\frac{\pi}{2}\cdot \sgn(\lambda-E)=- \frac{\eta(\lambda-E)}{(\lambda-E)^2+\eta^2}+{\rm O}\left( \frac{\eta^2}{(\lambda-E)^2+\eta^2}\right).
\]
As a consequence,  for $z=E+\frac{\ii}{N}$,
\begin{equation}\label{e:Lbd1}
\Big|\Im L_N(E)-\Im L_N(z)\Big|\leq 10|s(z)|.
\end{equation}
As in the previous paragraph,   let $J=[A+\kappa,B-\kappa]\cap N^{-1-c}{\mathbb{Z}}$, where $c>0$ is an arbitrary small constant.  For any $E\in[A+\kappa,B-\kappa]$, let $E'$ be the closest point in $J$ and $z'=E'+\frac{\ii}{N}$.  Then the mean value theorem yields
\begin{equation}\label{e:mvt}
s(z)-s(z')=\OO\Big(N^{-1-c}\max_{x\in[A+\kappa,B-\kappa]}\big|s'(x+\ii N^{-1})\big|\Big),
\end{equation}
with an implicit constant uniform in the choice of $E$.

Note that  $|s'(z)|\leq  \eta^{-1}\Im s(z) \le \eta^{-2}$ (where the last inequality follows from \eqref{e:gtrivial}),  so $s(z)$ is $\eta^{-2}$-Lipschitz continuous. Taking a union bound over a mesh with spacing size $O(N^{-10})$,  with (\ref{eqn:tailnew}) we obtain that 
\[
\P \left(\max_{x\in[A+\kappa,B-\kappa]}   \Im s(x+\ii N^{-1}) > (\log N)^{7/10} \right) \le N^{-100}.
\]
Together with \eqref{e:Lbd1} and \eqref{e:mvt}, and again using $|s'(z)|\leq  \eta^{-1}\Im s(z)$, this gives 
\begin{equation}\label{e:int0}
\mathbb{P}\left(\forall E\in[A+\kappa,B-\kappa], \Big|\Im L_N(E)-\Im L_N(z)\Big|\leq 10(|s(z')|+N^{-c/2})\right)\geq 1-N^{-100}.
\end{equation}
Moreover, the same estimate as (\ref{eqn:Taylor}) holds for $\Im L_N$, so that
\begin{equation}\label{e:int0new}
\mathbb{P}\left(\forall E\in[A+\kappa,B-\kappa], \Big|\Im L_N(E)-\Im L_N(z')\Big|\leq 100(|s(z')|+N^{-c/2})\right)\geq 1-N^{-100}.
\end{equation}
Together with \eqref{eqn:microBound}, we conclude that
\begin{equation}\label{eqn:UBIm1bb}
\mathbb{P}\left(\sup_{E\in [A+\kappa,B-\kappa]}\Im L_N(E)\leq \sup_{E\in J}\Im L_N(E+\ii N^{-1})+(\log N)^{9/10}\right)\geq
1-\OO(N^{-100}). 
\end{equation}

Further, the analogues of \eqref{eqn:UBRe2} and \eqref{eqn:UBRe3} hold,  with the same proofs up to notational changes, and together with \eqref{eqn:UBIm1bb} they conclude the proof of the upper bound for the imaginary part in Theorem \ref{thm:Beta}.

\subsection{Lower bound for the real and imaginary parts.}\ \label{eqn:lowerB}
We start with the proof for the real part. The proof for the imaginary part is essentially the same, and is described at the end of this section.\\

\noindent{\it First step:  shift to the upper half plane.}\ For any $z= E + \iu \eta$ with $\eta>0$, 
by harmonicity of $z\in\mathbb{H}\mapsto\log|z-\lambda|$
we have
\[
\log|z-\lambda|=\int_{\mathbb{R}}\log|x-\lambda|\cdot \frac{\eta}{\eta^2+(E-x)^2}\frac{\rd x}{\pi}.
\]
This implies 
\begin{equation}\label{eqn:Harmo}
\Re L_N(z)=\int_{\mathbb{R}}\Re L_N(x)\cdot \frac{\eta_0}{\eta_0^2+(E-x)^2}\frac{\rd x}{\pi}
\end{equation}
for $z\in \R+\ii\eta_0$, with $\eta_0$ defined in (\ref{eqn:etao1}).
On the other hand, 
\begin{multline}
\int_{[A+\kappa,B-\kappa]^c}|\Re L_N(x)|\cdot \frac{\eta_0}{\eta_0^2+(E-x)^2}\frac{\rd x}{\pi}\leq \sum_i\int_{[A+\kappa,B-\kappa]^c}|\log|x-\lambda_i|-\log|x-\gamma_i||\cdot \frac{\eta_0}{\eta_0^2+(x-E)^2}\frac{\rd x}{\pi}\\
+
\int_{[A+\kappa,B-\kappa]^c}|N\int\log|x-\lambda|\rd\rho_V(\lambda)-\sum_i\log|x-\gamma_i||\cdot \frac{\eta_0}{\eta_0^2+(x-E)^2}\frac{\rd x}{\pi}.\label{eqn:cut}
\end{multline}

For $x\in[A+\kappa,B-\kappa]^{\rm c}$ and $E\in[A+2\kappa,B-2\kappa]$, we have $|x-E|>\kappa$,  so for such $x$ and $E$ we have $\frac{1}{\eta_0^2+(x-E)^2}\leq \frac{C}{1+x^2}$.  Therefore the first sum on the right-hand side of \eqref{eqn:cut} is smaller than
\begin{equation}\label{eqn:Harmo2}
C\eta_0 \sum_i\int_{\mathbb{R}}\frac{|\log|x-\lambda_i|-\log|x-\gamma_i||}{1+x^2}\rd x\leq C\eta_0\sum_i|\lambda_i-\gamma_i|\cdot|\log|\lambda_i-\gamma_i||.
\end{equation}
This implies that on the rigidity event from \Cref{lem:rig}, 
the first term on the right-hand side of (\ref{eqn:cut}) is $\OO( (\log N)^{50} \eta_0)= \oo(1)$; an error $\oo(\log N)$ would be enough for the proof of the leading order of the maximum, so there is substantial margin here. 
From rigidity of $\beta$-ensembles, 
\Cref{lem:rig}, we conclude that with probability $1-\OO(N^{-10})$,  for any 
$z\in [A+2\kappa,B-2\kappa]+\ii\eta_0$ we have
\begin{equation}\label{eqn:Harmo3}
\left|\Re L_N(z)-\int_{[A+\kappa,B-\kappa]}\Re L_N(x)\cdot \frac{\eta_0}{\eta_0^2+(E-x)^2}\frac{\rd x}{\pi}\right|\leq 1.
\end{equation}
Note that for $E\in[A+2\kappa,B-2\kappa]$ we have $\int_{[A+\kappa,B-\kappa]} \frac{\eta_0}{\eta_0^2+(E-x)^2}\frac{\rd x}{\pi}=1+\OO(\eta_0)$, so from the above equation we conclude that 
\begin{equation}\label{eqn:LBstep1}
\mathbb{P}\left(\sup_{z\in[A+2\kappa,B-2\kappa]+\ii\eta_0}\Re L_N(z)\leq \sup_{E\in[A+\kappa,B-\kappa]} \Re L_N(E)+1\right)=1-\oo(1).
\end{equation}

\noindent{\it Second step: lower bound for the smoothed field.} Similarly to \cite{claeys2021much} and \cite{lambert2023law}, the proof of the lower bound for $\max_{z\in[A+2\kappa,B-2\kappa]+\ii\eta_0}\Re L_N(z)$ will be an straightforward corollary of the convergence of the corresponding field 
to a Gaussian multiplicative chaos measure, in the full subcritical phase. 

More precisely,  consider a centered Gaussian field $G_\eta$ defined on  $[A+2\kappa,B-2\kappa]$ with covariance $\E[G_\eta(E_1)G_\eta(E_2)]=\sigma(1,0,(z_1,z_2))$, where we denote $z_i=E_i+\ii\eta$ and refer to  \eqref{eqn:sigma} for the definition of $\sigma$. The existence of this field for $\eta>\eta_0$ follows from positivity of the covariance, which is a byproduct of \Cref{thm:GaussFluct} (the limit of positive matrices is positive). 
From Lemma \ref{lem:varshift}, and noting that the covariance $\sigma$ is defined in terms of the kernel $c$ studied in that lemma, we have 
\be
\E[G_\eta(E_1)G_\eta(E_2)]
= -\frac{1}{\beta}\log|z_1-\bar z_2| + O_\kappa(1)
\ee
uniformly for $E_1, E_2 \in [A+2\kappa, B - 2\kappa]$. 

It is well known that for any $|\gamma|<\sqrt{2}$, there exists a random measure $\mu_{\gamma}$, called the Gaussian multiplicative chaos with parameter $\gamma$, such that the following holds. For any continuous $f\colon \R \rightarrow \R$ with compact support in $(A+2\kappa,B-2\kappa)$, we have the distributional convergence
\[
\lim_{\eta \rightarrow 0^+} \int f(E)\frac{e^{\sqrt{\beta}\gamma G_\eta(E)}}{\E[e^{\sqrt{\beta}\gamma G_\eta(E)}]}\rd E
= \int f \,\rd\mu_\gamma.
\]
The limiting  random variable can be written $\int f \rd\mu_\gamma$, for a certain  random measure $\mu_{\gamma}$, called the Gaussian multiplicative chaos with parameter $\gamma$.
We refer for example to \cite[Section 2.1]{claeys2021much}
for a modern treatment of the existence and non-triviality of this limit. 

In the following result,  we denote
$
\tilde L_N(z)=L_N(z)-\mu(1,0,z)
$,
with $\mu$ defined in (\ref{eqn:shift})

\begin{proposition}\label{prop:GMC}
For any $|\gamma|<\sqrt{2}$ and any continuous $f$ with compact support in $(A+2\kappa,B-2\kappa)$, the following convergence in distribution holds: 
\[
\lim_{N\rightarrow \infty} \int_{A+2\kappa}^{B-2\kappa} f(E)\frac{e^{\sqrt{\beta}\gamma \Re \tilde L_N(E+\ii\eta_0)}}{\E[e^{\sqrt{\beta}\gamma\Re \tilde L_N(E+\ii\eta_0)}]}\rd E%\underset{N\to\infty}{\longrightarrow}
=
\int f \rd\mu_\gamma.\]
\end{proposition}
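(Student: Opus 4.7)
The plan is to apply the general convergence-to-GMC criterion of \cite{claeys2021much}, which reduces the proposition to two inputs: a multi-point Gaussian fluctuation statement for $\Re \tilde L_N$ at the scale $\eta_0$, and an identification of the limiting covariance with that of a log-correlated Gaussian field. Both inputs are effectively already available in the paper, so the task is to combine them correctly and to handle the passage beyond the $L^2$ phase.

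\textbf{Step 1 (Identify the covariance).} By Lemma \ref{lem:varshift}, one has
\begin{equation*}
\sigma(1,0,(z_1,z_2)) = -\frac{1}{\beta}\log|z_1-\bar z_2| + O_\kappa(1)
\end{equation*}
uniformly for $E_1, E_2 \in [A+2\kappa, B-2\kappa]$ and imaginary parts $\geq \eta_0$. Multiplying by $\sqrt{\beta}\gamma$ produces covariance $-\gamma^2 \log|z_1-\bar z_2| + O(1)$, the standard log-correlated structure, and the hypothesis $|\gamma|<\sqrt{2}$ is precisely the subcritical condition $\gamma^2<2$ under which $\mu_\gamma$ exists and is non-degenerate.

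\textbf{Step 2 (Multi-point Laplace asymptotics).} For any fixed $p$ and distinct $E_1,\dots,E_p\in[A+2\kappa,B-2\kappa]$, I would apply Theorem \ref{thm:GaussFluct} to the linear combination $\sum_k t_k \Re \tilde L_N(z_k)$, $z_k = E_k + \ii\eta_0$, to obtain
\begin{equation*}
\E\Big[\exp\Big(\sum_{k=1}^p t_k \Re \tilde L_N(z_k)\Big)\Big] = \exp\Big(\tfrac{1}{2}\sum_{j,k} t_j t_k \, \sigma(1,0,(z_j,z_k))(1+o(1))\Big),
\end{equation*}
uniformly for $(t_1,\dots,t_p)$ in any fixed compact set. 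In particular, the normalizer satisfies $\E[e^{\sqrt{\beta}\gamma \Re \tilde L_N(z)}] \asymp N^{\gamma^2/2}$, and for integer $q$ one obtains the $q$-point intensity
\begin{equation*}
\E\Big[\prod_{k=1}^q F_N(E_k)\Big] = \exp\Big(\beta\gamma^2 \sum_{j<k} \sigma(1,0,(z_j,z_k)) + o(1)\Big),
\end{equation*}
which by Step 1 matches the $q$-point intensity of $\mu_\gamma$ when the $E_k$ are at macroscopic separation.

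\textbf{Step 3 (Apply the GMC criterion).} With the joint Gaussian fluctuations in hand, I would verify the abstract hypotheses of \cite[Section 2.1]{claeys2021much} (cf.\ the analogous application in \cite{lambert2023law}). In the $L^2$ range $\gamma<1$, convergence of $\int f F_N \, dE$ in $L^2$ to $\int f\, d\mu_\gamma$ is a direct consequence of Step 2: first- and second-moment computations produce exactly the limiting intensities of $\mu_\gamma$, and uniform integrability is ensured by the Gaussian character of the tail bounds in Theorem \ref{thm:GaussFluct}. To cover the full subcritical range $1\leq\gamma<\sqrt{2}$, I would implement a truncated second-moment method: introduce a barrier event $B_N$ on which $\Re \tilde L_N(z)$ does not exceed its typical scale by more than $(\log N)^{1/2}$, show via Theorem \ref{thm:GaussFluct} and the Markov inequality that $\int f F_N\, dE$ and $\int f F_N \mathbf 1_{B_N}\, dE$ agree in the limit, and then run the $L^2$ argument on the truncated field; this mirrors the strategy used in \cite{claeys2021much} for the GUE and in \cite{lambert2023law} for $\beta$-ensembles.

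\textbf{Main obstacle.} The most delicate step is the passage from Step 2 (one- and multi-point Laplace transforms for a finite collection of macroscopically separated points) to Step 3 (convergence of an integrated random measure). For $\gamma \geq 1$ the second moment of $\int f F_N$ diverges without truncation, and the standard GMC theory for Gaussian log-correlated fields does not apply directly since our field is not Gaussian. The truncation scheme required to bridge this gap depends crucially on having \emph{Gaussian-strength} tail bounds on $\Re \tilde L_N(z)$ — not merely variance estimates — and this is exactly the content of Theorem \ref{thm:GaussFluct}. Once the truncated field is controlled, one transfers the subcritical GMC convergence from its Gaussian analogue using Kahane's comparison or Shamov's characterization via rooted measures.
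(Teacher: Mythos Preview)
Your approach is essentially correct and aligns with the paper's: both reduce the proposition to the joint Laplace transform asymptotics of Theorem \ref{thm:GaussFluct}, then invoke a general GMC convergence criterion from \cite{claeys2021much}. The difference is one of economy. The paper applies \cite[Theorem 2.4]{claeys2021much} (originating from \cite[Theorem 1.7]{LamOstSim2018}) as a black box, whose \emph{only} hypothesis is precisely the uniform multi-point Laplace asymptotic
\[
\E_{\mu}\Big[e^{\sum_{k=1}^p\zeta_k\Re\tilde L_N(z_k)}\Big]=\E\Big[e^{\sum_{k=1}^p\zeta_kL(z_k)}\Big](1+o(1)),
\]
which is exactly your Step 2. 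Your Step 3 and ``Main obstacle''---the truncated second-moment scheme, barrier events, and the delicate passage beyond the $L^2$ phase---are already packaged inside that cited theorem, so you do not need to re-implement them. In short, Steps 1--2 plus a one-line citation of \cite[Theorem 2.4]{claeys2021much} already constitute the full proof; the machinery you describe as the main obstacle is precisely what that black-box criterion was designed to absorb.
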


\begin{proof}
The proof is a direct application of the general criterion for convergence of a non-Gaussian random field to a Gaussian multiplicative chaos;  see  \cite[Theorem 2.4]{claeys2021much} (which is a restatement from
\cite[Theorem 1.7]{LamOstSim2018}),  and the key technical input,  the Laplace transform of the log-characteristic polynomial from Theorem \ref{thm:GaussFluct}. 
More precisely, \cite[Theorem 2.4]{claeys2021much}  states that a sufficient condition for the conclusion of Proposition \ref{prop:GMC} is that there is a constant $c>0$ such that for any fixed $p$ and $\zeta_1,\dots,\zeta_p\in\mathbb{R}$,  uniformly in $\bz\in([A+2\kappa,B-2\kappa]\times [\eta_0,c])^p$,  we have
\begin{equation}\label{eqn:enough3}
\E_{\mu}\left[e^{\sum_{k=1}^p\zeta_k{\rm Re}\tilde L_N(z_k)}\right]=
\E\left[e^{\sum_{k=1}^p\zeta_kL(z_k)}\right]
(1+\oo(1))
\end{equation}
as $N\to\infty$, where $L$ is a centered Gaussian field defined on $[A+2\kappa,B-2\kappa]\times [\eta_0,c]$ characterized by ${\rm Var}[\sum_{k=1}^p\zeta_kL(z_k)]=\sigma(\bzeta,0,\bz)$.
Equation (\ref{eqn:enough3}) is a direct consequence of Theorem \ref{thm:GaussFluct}.
Note that we consider $\tilde L_N$ here instead of $L_N$ because \cite[Theorem 2.4]{claeys2021much} does not explicitly cover the possibility of a limiting shift.
\end{proof}
We now apply \cite[Theorem 3.4]{claeys2021much}.  The assumptions of this theorem are easily verified: 
 \cite[Assumption 3.1]{claeys2021much} follows from \ref{eqn:enough3} and 
 \cite[Assumption 3.3]{claeys2021much} follows from Proposition \ref{prop:GMC} and a standard approximation argument to allow indicators for $f$.  Then \cite[Theorem 3.4]{claeys2021much} gives,
for any fixed $\e>0$,
\[
\mathbb{P}\left(\max_{z\in[A+2\kappa,B-2\kappa]+\ii\eta_0}\Re \tilde L_N(z)\geq \left(\sqrt{\frac{2}{\beta}}-\e\right)\log N\right)=1-\oo(1).
\]
From Lemma \ref{lem:varshift},  $\mu(1,0,z)$ is uniformly bounded,  so from the previous equation there exists $C>0$ such that
\begin{equation}\label{eqn:LBstep2}
\mathbb{P}\left(\max_{z\in[A+2\kappa,B-2\kappa]+\ii\eta_0}\Re  L_N(z)\geq \left(\sqrt{\frac{2}{\beta}}-\e\right)\log N-C\right)=1-\oo(1).
\end{equation}
Equations (\ref{eqn:LBstep1}) and (\ref{eqn:LBstep2}) conclude the proof of the lower bound in Theorem \ref{thm:Beta}, for $\Re L_N$.\\

\noindent{\it Lower bound for the imaginary part.}\ 
The proof for $\Im L_N$ is identical because Theorem \ref{thm:GaussFluct}  also gives its joint Laplace transform, with limiting covariance $\sigma(0,\bzeta,\bz)=\sigma(\bzeta,0,\bz)+\OO(1)$,  a fact easily checked with   (\ref{eqn:sigma}) and Lemma \ref{lem:varshift}.  The only small difference is about the analogue of (\ref{eqn:cut}), i.e. bounding $\int_{[A+\kappa,B-\kappa]^c}|\Im L_N(x)|\cdot \frac{\eta}{\eta^2+(E-x)^2}\frac{\rd x}{\pi}$. By rigidity of the eigenvalues we have $|\Im L_N(E)|\leq N^\e$ for all $E$ (this is a consequence of the implications (\ref{eqn:equivRig2}) below), with overwhelming probability, and this is enough to conclude.

\subsection{Proof of \Cref{cor:Beta}}. \label{sec:elementary}
From the definition of $\Im \log$ before \eqref{def:L_N}, we have \[
\Im L_N(E)=\pi \left( \sum_{i=1}^N \mathds{1}_{\{\lambda_i>E\}}- N\int_E^{\infty}\rho_V(s)\, \rd s \right).\]  
Let $k\in\llbracket 0,N-1\rrbracket$, 
$n\in\mathbb{N}$, 
and
$
M\in[n,n+1]
$ be parameters such that $n+2<k<N-(n+2)$. 
For $E\in[\gamma_k,\gamma_{k+1})$, 
the previous display, together with the definition \eqref{quantiles} of $\gamma_k$, yields the following implications:
\begin{align}
\Im L_N(E)>\pi M\ \Rightarrow
\   \, \;
 \lambda_{k-n+2}
> \gamma_{k}\; \; \ \ &\ \Rightarrow\ \ \lambda_{k-n+2}-\gamma_{k-n+2}
> \gamma_{k}-\gamma_{k-n+2}
,\notag\\
\Im L_N(E)<\pi M\ \Rightarrow
\ \ \lambda_{k-n-2}
< \gamma_{k+1} \ &\ \Rightarrow\ \ \lambda_{k-n-2}-\gamma_{k-n-2}
< \gamma_{k+1}-\gamma_{k-n-2}\label{eqn:equivRig}
.
\end{align}
For the upper bound on the eigenvalue deviations, we take
$M = \pi^{-1} \sqrt{\frac{2}{\beta} }(1 + \epsilon) \log N$. By 
Theorem \ref{thm:Beta}, we have for any $\delta>0$ that $\sup_{A + \delta < E < B - \delta} \Im L_N(E) \le \pi M$ with high probability. Then taking $n = \lfloor M \rfloor$ and $\delta$ sufficiently small (in a way that depends only on $\kappa$), \eqref{eqn:equivRig} implies that for every $j \in \llbracket \kappa N, (1-\kappa) N \rrbracket$, we have 
\be\label{intermediategap}
\lambda_j - \gamma_j < \gamma_{j + n +2  } - \gamma_j. 
\ee
Further, if we define the quantity $\eps_{j,m}$ implicitly by 
\[
\gamma_{j+m}-\gamma_j=\frac{m}{N\rho_{V}(\gamma_k)}(1+\eps_{j,m}),
\]
then
\eqref{quantiles} implies that
\be\label{intermediategapbound}
\sup_{j\in\llbracket \kappa N,(1-\kappa)N\rrbracket} \sup_{m\le (\kappa/2) \sqrt{N} } 
\eps_{j,m}  = \oo(1).
\ee
Combining \eqref{intermediategap} with \eqref{intermediategapbound} completes the proof of the upper bound. The proof for the lower bound is similar and hence omitted. 
\begin{remark}
The reasoning in this proof of \Cref{cor:Beta} could be reversed to show that \Cref{cor:Beta} implies the bound on the imaginary part in \Cref{thm:Beta}, which demonstrates that these statements are logically equivalent. The relevant implications are now (using the same notation)
\begin{align}
\lambda_{k-n-2}
-\gamma_{k-n-2}
> \gamma_{k+1}-\gamma_{k-n-2}
&\  \Rightarrow\ \ \Im L_N(E)>\pi M,\notag\\
\lambda_{k+n+2}-\gamma_{k+n+2}
< \gamma_{k}-\gamma_{k+n+2}
 & \ \Rightarrow\ \ \Im L_N(E)< \pi M.\label{eqn:equivRig2}
\end{align}
\end{remark}

\section{Relaxation}\label{sec:Relax}

This section proves convergence to equilibrium of $\sup \Im L_N$ (subsection \ref{s:relaxprelim}) and $\sup\Re L_N$ (subsection \ref{subsec:Re}) for
 the matrix Ornstein--Uhlenbeck dynamics (\ref{OU}) defined below.
Indeed we will prove that theorems \ref{thm:maxWigner}, \ref{thm:maxWignerDivisible}  and  \ref{thm:Wigner} (i) hold for matrices of type $H_t$,  for large enough $t$.  
We will finally prove relaxation of large moments of $ \Im L_N$,  which corresponds to a proof of Theorem \ref{thm:Wigner} (i) for such weakly Gaussian-divisible random matrices $H_t$ (subsection \ref{s:relaxLarge}).

As explained in the introduction,  this is an essential step in the proof for Wigner matrices,  which then proceeds by density of the weakly Gaussian-divisible ensemble in Wigner matrices (the moment matching from Section \ref{sec:MomMat}).

For local statistics in the bulk of the spectrum, relaxation was first proved in \cite{ErdSchYau2011} by a method based on entropy dissipation,  up to an averaging on the energy level which prevents from considering observables such as $\sup\Re L_N,\sup\Im L_N$.
Another method for relaxation was introduced in \cite{BouErdYauYin2016}, through a coupling of the spectrum of $H_t$ with a GOE. In this approach  relaxation follows from  homogenization of the Dyson Brownian motion: The difference between both spectra 
satisfies a deterministic, non-local parabolic equation at leading order,  locally and with probability $1-\oo(1)$.

While ergodicity of $\Im L_N$ is closely related to relaxation of local spectral statistics,  ergodicity of $\Re L_N$ requires convergence to equilibrium along the full spectrum.  Moreover part (ii) of Theorem \ref{thm:Wigner}  requires probability bounds stronger than $1-\oo(1)$.  
Fortunately,  the homogenization theory from \cite{BouErdYauYin2016} was greatly strengthened in \cite{LanSosYau2016} and in \cite{Bou2018},  as it holds with the probability bound $1-N^{-D}$ for arbitrary $D$.
For our paper,  the homogenization from \cite{Bou2018} is most pertinent as it holds for very large times,  a key fact for our observable $L_N$,  which reaches equilibrium only for $t= N^{-\oo(1)}$.  The methods from \cite{Bou2018} also directly cover relaxation of $\Re L_N$,  another decisive fact as the sum of the errors from the local homogenization in Proposition \ref{prop:homog} below exceeds the required $\oo(\log N)$ accuracy to catch the maximum of $\Re L_N$.

\subsection{The eigenvalues.}\  \label{s:relaxprelim} 
We first provide a quantitative relaxation of the eigenvalues (Proposition \ref{prop:homog}),  which is a variant of \cite[Theorem 3.1]{Bou2018} and relies on this work.

Let $H$ be a Wigner matrix. We first recall the definition of Dyson Brownian motion with initial data $H_0 = H$. As noted above in \Cref{s:ideas}, for concreteness we consider just the real symmetric case, as the complex Hermitian case is analogous.

Let $B \in \matn$ be such that the entries $\{ B_{ij} \}_{i < j }$ and $B_{ii}/\sqrt{2}$ are independent standard Brownian motions, and $B_{ij}=B_{ji}$. 
Consider the matrix Ornstein--Uhlenbeck process
\begin{equation}\label{OU}
\rd H_{t}=\frac{1}{\sqrt{N}} \rd B_t-\frac{1}{2}H_t\,\rd t.
\end{equation}
If the eigenvalues of $H_0$ are distinct, it is well known that the eigenvalues $\bla(t) = (\lambda_1(t), \lambda_2(t), \dots , \lambda_N(t))$ of $H_t$ are given by the strong solution
of the system of stochastic differential equations 
\begin{align}
\rd\la_k=\frac{\rd \beta_{k}}{\sqrt{N}}+\left(\frac{1}{N}\sum_{\ell\neq k}\frac{1}{\la_k-\la_\ell}-\frac{1}{2}\lambda_k\right)\rd t\label{eqn:eigenvaluesSymmetric},
\end{align}
where the  $\{ \beta_k \}_{k=1}^N$ are independent, standard Brownian motions. (See, for example, \cite[Lemma 4.3.3]{anderson2010introduction}.)

We now let $\bmu(t)=(\mu_1(t), \mu_2(t), \dots , \mu_N(t))$ be a strong solution of the same SDE \eqref{eqn:eigenvaluesSymmetric}
with initial condition $\bmu(0)=(\mu_1, \mu_2, \dots, \mu_N)$, where $\{\mu_k\}_{k=1}^N$ are the eigenvalues of a $\GOE_N$, denoted ${\rm GOE}$:
\[
\rd\mu_k=\frac{\rd \beta_{k}}{\sqrt{N}}+\left(\frac{1}{N}\sum_{\ell\neq k}\frac{1}{\mu_k-\mu_\ell}-\frac{1}{2}\mu_k\right)\rd t.
\]
For any $z\in \mathbb{H}$, we define
\begin{equation}\label{eqn:zt}
z_t=\frac{e^{t/2}(z+\sqrt{z^2-4})+e^{-t/2}(z-\sqrt{z^2-4})}{2},
\end{equation}
where $\sqrt{z^2-4}$ is defined using a branch cut in the segment $[-2,2]$, as with $\msc$ in \eqref{e:msc}. For $z\in\mathbb{R}$, we define $z_t=\lim_{\eta\to 0^+} (z+\ii\eta)_t$.
The following key estimate on the difference between $\bla(t)$ and $\bmu(t)$ follows from the main result in \cite{Bou2018}.  We recall the notations $\varphi$ from (\ref{eqn:phi2}) and $\gamma_k$ from \eqref{quantiles}, and let $L^H$ and $L^{\rm GOE}$ denote the observable \eqref{def:L_N} defined using the eigenvalues of $H$ and ${\rm GOE}$, respectively.

\begin{proposition}\label{prop:homog} Fix $\kappa,\e>0$. Then for any $D>0$ there exist $C(\eps, \kappa, D)>0$ such that for all 
$t \in (  \varphi^C/N, 1)$, $E \in [-2+\kappa,2-\kappa]$, and $k \in \unn{1}{N}$ such that $ \gamma_k \in[-2+\kappa,2-\kappa]$, we have
\begin{equation}\label{eqn:homogenization}
\mathbb{P}\Big(\Big|\la_k(t)-\mu_k(t)-\frac{\Im L^H_N(E_t)-\Im L^{\rm GOE}_N(E_t)}{N\im \msc(E_t)}\Big|>\frac{N^{1+\e}\max(|E-\gamma_k|,N^{-1})}{N^2 t}\Big)\leq C N^{-D}.
\end{equation}
\end{proposition}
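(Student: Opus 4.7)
The plan is to derive this proposition as a variant of the short-time homogenization theorem for coupled Dyson Brownian motions, \cite[Theorem 3.1]{Bou2018}. That theorem yields a pointwise approximation of $\lambda_k(t) - \mu_k(t)$ by a discrete Poisson-type integral against the initial spectral displacements $\lambda_j(0) - \mu_j(0)$, with explicit error bounds valid with probability $1 - C N^{-D}$ for any $D > 0$, provided the initial data satisfy the bulk local semicircle law and optimal rigidity. Both $H_0 = H$ and the GOE initial condition satisfy these inputs via \Cref{l:lscl} on all time scales $t \ge \varphi^C / N$, so the theorem applies. The heart of the work will then be to convert the identity in \cite{Bou2018} into the specific form appearing on the right-hand side of \eqref{eqn:homogenization}.

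The kernel appearing in \cite{Bou2018} is a discrete analogue of the Poisson kernel on $\mathbb{H}$ evaluated at the characteristic $E_t$ of \eqref{eqn:zt}, with effective width proportional to $t \, \Im m_{\rm sc}(E_t)$. The choice of $E_t$ is natural because the semicircle flow has characteristics given precisely by \eqref{eqn:zt}, so an energy $E$ at time $0$ flows to $E_t$ at time $t$. I would integrate this kernel against the signed measure $\sum_j (\delta_{\lambda_j(0)} - \delta_{\mu_j(0)})$ and perform a summation by parts to convert the result into a weighted difference of counting functions at $E_t$; by \eqref{eqn:logz} these counting functions are precisely $\Im L^H_N(E_t)$ and $\Im L^{\rm GOE}_N(E_t)$, up to common deterministic contributions that cancel in the difference. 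The normalization $N \Im m_{\rm sc}(E_t)$ is the local density of states at the characteristic, which converts a count of eigenvalues into a spatial displacement.

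The main obstacle is tracking the error in this rewriting to match the claimed bound, while preserving the probability $1 - C N^{-D}$ uniformly in $E$ and $k$. The factor $\max(|E - \gamma_k|, N^{-1})$ reflects the local nature of the Poisson kernel: if $E$ is far from the quantile $\gamma_k$ of the eigenvalue being tracked, approximating the kernel by its peak value at $\gamma_k$ costs a multiplicative factor proportional to $|E - \gamma_k|$, while the $N^{-1}$ floor corresponds to microscopic scales where the discreteness of the spectrum saturates the bound; this will be controlled by combining the rigidity estimate \eqref{e:rigidity} with the Poisson-kernel dispersion estimates in \cite{Bou2018}. Uniformity in $E$ and $k$ is then obtained by a union bound over a polynomial-size mesh in $E$, using that both sides of \eqref{eqn:homogenization} are polynomially Lipschitz in $E$ (for the right-hand side, through \eqref{e:gtrivial} at scale $\eta \sim N^{-1}$). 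The ability to take $t$ as small as $\varphi^C / N$ rather than $N^{-1+\varepsilon}$ is a key input from \cite{Bou2018}, enabled by the sharper rigidity of \cite{LanSosYau2016}.
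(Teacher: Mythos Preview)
Your high-level plan is correct: the proposition follows from \cite[Theorem~3.1]{Bou2018} together with a conversion of the kernel representation $\bar{\frak u}_k(t)$ into the $\Im L_N$ form. However, several of your technical details differ from, and are less direct than, the paper's argument.

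First, the source of the $\max(|E-\gamma_k|,N^{-1})$ factor is not a Poisson-kernel approximation error as you suggest. The paper introduces an intermediate index $\ell$ with $\gamma_\ell$ closest to $E$, uses \cite[Lemma~3.4]{Bou2018} to bound $|\bar{\frak u}_k(t)-\bar{\frak u}_\ell(t)|\le C\varphi\,|k-\ell|/(N^2 t)$, and notes $|k-\ell|\le CN|E-\gamma_k|$. This cleanly isolates the $|E-\gamma_k|$ cost. One then only needs to compare $\bar{\frak u}_\ell(t)$, already centered near $E_t$, with $(\Im L_N^H(E_t)-\Im L_N^{\rm GOE}(E_t))/(N\Im m_{\rm sc}(E_t))$, and that comparison costs only $O(\varphi^2/(N^2 t))$.

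Second, the conversion from $\bar{\frak u}_\ell(t)$ to the $\Im L_N$ quantity is done by Taylor-expanding $\Im\log\bigl(1+(\lambda_j(0)-\mu_j(0))/(\mu_j(0)-E_t)\bigr)$ to first order and using rigidity \eqref{e:rigidity} to control the quadratic remainder and to replace $\mu_j(0)$ by $\gamma_j$ and $E_t$ by $\gamma_\ell^t$ in the kernel. Summation by parts against counting functions is not used and would be less direct.

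Third, the statement \eqref{eqn:homogenization} is pointwise in $E$ and $k$, so the union bound over a mesh in $E$ that you propose is unnecessary. Finally, the timescale $t\ge\varphi^C/N$ is already the hypothesis of \cite[Theorem~3.1]{Bou2018}; there is no separate input from \cite{LanSosYau2016} at this step.
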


\begin{proof}
The key to the proof is \cite[Theorem 3.1]{Bou2018}, which states that there exists $ C(D) >0$ such that
\begin{equation}\label{eqn:thm41}
\mathbb{P}\left( \Big|\big(\la_{k}(t)-\mu_k(t)\big)-\bar {\frak u}_k(t)\Big|>\frac{N^{\varepsilon}}{N^2 t}\right)\leq C N^{-D}
\end{equation}
for $t \in (  \varphi^C/N, 1)$, where we define
\begin{equation}
\bar {\frak u}_k(t)=\frac{1}{N\im \msc(\gamma_k^t)}\sum_{j=1}^N\im\left(\frac{1}{\gamma_j-\gamma_k^t}\right)\big(\la_j(0)-\mu_j(0)\big), \qquad \gamma_k^t = (\gamma_k)_t. \label{eqn:aver}
\end{equation}
Moreover, from \cite[Lemma 3.4]{Bou2018}, for all $\gamma_k,\gamma_\ell\in[-2+\kappa,2-\kappa]$ we have
\begin{equation}
\P \left( |\bar{\frak u}_k(t)-\bar{\frak u}_\ell(t)|\ge C\varphi \frac{|k-\ell|}{N^2t} \right) \le C N^{-D} \label{elem1}.
\end{equation}
Let $E \in [-2 +\kappa, 2-\kappa]$ be given, and fix some $\ell = \ell(E,N)$ such that  
$
|E-\gamma_\ell | =  \min_{j \in \unn{1}{N}} |E-\gamma_j|.
$
The definition of $\gamma_k$ in \eqref{quantiles} (with $\nu=\rho_{\rm sc}$) gives
\begin{equation}
| k - \ell| <   C N | \gamma _k - \gamma_\ell | \le 
CN \left( | \gamma _k - E | + | E - \gamma_\ell | \right)
\le 2 CN  | \gamma _k - E | ,
\end{equation}
for some constant $C>0$.
Then equations (\ref{eqn:thm41}) and (\ref{elem1}) together with the previous line imply that
\begin{equation}\label{eqn:intermold}
\mathbb{P}\left(\ \Big|\big(\la_{k}(t)-\mu_k(t)\big)-\bar {\frak u}_\ell(t)\Big|>  \frac{C N^{1+\varepsilon}\max(|E-\gamma_k|,N^{-1})}{N^2 t}\right)\leq CN^{-D},
\end{equation}
where we increased $C$ if necessary and used $N^\eps \ge \phi$ for sufficiently large $N$ (depending on $\eps$).
We  therefore just need to bound
$
\left|\frac{\Im L^H_N(E_t)-\Im L^{\rm GOE}_N(E_t)}{N\im \msc(E_t)}-\bar {\frak u}_\ell(t)\right|.
$
We write
\begin{equation}\label{320}
\frac{\Im L^H_N(E_t)-\Im L^{\rm GOE}_N(E_t)}{N\im \msc(E_t)}
= 
\frac{1}{{N\im \msc(E_t)}}
\Im \sum_{j=1}^N \log\left(1 + \frac{\lambda_j(0) -\mu_j(0)}{\mu_j(0) - E_t}  \right).
\end{equation}
On the rigidity event from (\ref{e:rigidity}),  a Taylor expansion of the logarithm gives, with overwhelming probability,
\begin{equation}\label{412}
\Im \sum_{j=1}^N \log\left(1 + \frac{\lambda_j(0) -\mu_j(0)}{\mu_j(0) - E_t}  \right) = \Im \sum_{j=1}^N  \frac{\lambda_j(0) -\mu_j(0)}{\mu_j(0) - E_t}   + \OO  \left( \sum_{j=1}^N \left| \frac{ \lambda_j(0) -\mu_j(0)}{\mu_j(0) - E_t} \right|^2 \right).
\end{equation}
For the error term,  on the rigidity event from (\ref{e:rigidity}) we can write
\begin{equation}
 \sum_{j=1}^N \left| \frac{ \lambda_j(0)-\mu_j(0) }{\mu_j(0) - E_t} \right|^2 \le \frac{C\phi^2}{N t}\cdot \frac{1}{N} 
\sum_{j=1}^N   \frac{\Im E_t}{|\mu_j(0) - E_t|^2}=
 \frac{C\phi^2}{N t}\ \im m_N(E_t)\leq  \frac{C\phi^2}{N t},
\end{equation}
where we used $c t \le \Im E_t\le Ct$ to bound $\im m_N(E_t)\le C$ using \eqref{e:sclaw}. This estimate on $\im m_N(E_t)$ also shows that the second term in \eqref{412} is negligible when inserted in \eqref{320}.

Finally, we need to bound 
\begin{align*}
\frac{1}{{N \Im \msc (E_t)}} \Im \sum_{j=1}^N  \frac{\lambda_j(0) -\mu_j(0)}{\mu_j(0) - E_t} -\bar {\frak u}_\ell(t)
&=\frac{1}{N}\sum_{j=1}^N (\lambda_j(0)-\mu_j(0))\left(
\frac{1}{{ \Im \msc (E_t)}}-\frac{1}{{\Im \msc (\gamma_\ell^t)}}\right)\im\frac{1}{\mu_j(0)-E_t} \\
+&
\frac{1}{N}\sum_{j=1}^N \frac{\lambda_j(0)-\mu_j(0)}{\Im \msc (\gamma_\ell^t)}\, \im\left(
  \frac{1}{\mu_j(0) -E_t}-\frac{1}{\gamma_j -\gamma_\ell^t}\right).
\end{align*}
For the first sum, from $|\im \msc(E_t)-\im \msc(\gamma_\ell^t)| \le C |E_t-\gamma_\ell^t|\le C N^{-1}$, 
$\im \msc(E_t) \ge c$, and $\im \msc(\gamma_\ell^t)\ge c$,   on the rigidity event from \eqref{e:rigidity} we obtain
\[
\frac{1}{N}\sum_{j=1}^N |\lambda_j(0)-\mu_j(0)|\left(
\frac{1}{{ \Im \msc (E_t)}}-\frac{1}{{\Im \msc (\gamma_\ell^t)}}\right)\im\frac{1}{\mu_j(0)-E_t}\leq \frac{C\varphi}{N^3}\cdot \sum\frac{t}{|\gamma_j-E_t|^2}\leq \frac{C\varphi}{N^2}.
\]
On the same rigidity event, the second sum is bounded by
\[
\frac{1}{N}\sum_{j=1}^N |\mu_j(0) - \lambda_j(0)| \left|\im\frac{E_t-\gamma_\ell^t}{(\mu_j(0) -E_t)(\gamma_j -\gamma_\ell^t)}\right|\leq \frac{C\varphi}{N^3}
\sum_j
\left(\frac{1}{|\mu_j(0) -E_t|^2} + \frac{1}{|\gamma_j -\gamma_\ell^t|^2}
\right)\leq \frac{C\varphi}{N^2 t}.
\]
We have thus obtained
\begin{equation}\label{eqn:MesoApprox}
\left|\frac{\Im L^H_N(E_t)-\Im L^{\rm GOE}_N(E_t)}{N\im \msc(E_t)}-\bar {\frak u}_\ell(t)\right|\leq C\frac{\varphi^2}{N^2t},
\end{equation}
which concludes the proof.
\end{proof}

\begin{remark}\label{rem:betterprobabilitybound}
A stronger result than (\ref{eqn:homogenization}) actually holds, in terms of the probability bound:
for any $\kappa,\e>0$ there exists $C,\delta,N_0>0$ (depending on $\kappa$ and $\e$) such that for any 
$t \in (  \varphi^C/N, 1)$, $E \in [-2+\kappa,2-\kappa]$,  $k \in \unn{1}{N}$ satisfying $ \gamma_k \in[-2+\kappa,2-\kappa]$  and $N\geq N_0$, we have
\[
\mathbb{P}\Big(\Big|\la_k(t)-\mu_k(t)-\frac{\Im L^H_N(E_t)-\Im L^{\rm GOE}_N(E_t)}{N\im \msc(E_t)}\Big|>\frac{N^{1+\e}\max(|E-\gamma_k|,N^{-1})}{N^2 t}\Big)\leq e^{-\delta\varphi^\delta}.
\]
Indeed the proof of (\ref{eqn:thm41}) from \cite{Bou2018} relies on the rigidity estimate (\ref{e:rigidity}), which holds with probability $e^{-c\varphi^c}$, so that the probability bound $N^{-D}$ in  \cite[Theorem 3.1]{Bou2018} can be strengthened to $e^{-\delta\varphi^\delta}$ for a fixed, small enough $\delta$, by elementary changes in the proof. 

This improved probability bound is not necessary for the proofs of Theorem \ref{thm:maxWigner} and Theorem \ref{thm:Wigner} (i). It will be used in the proof of Theorem \ref{thm:Wigner} (ii).
\end{remark}

\subsection{The characteristic polynomial.}\  \label{subsec:Re}
After the relaxation of individual eigenvalues in the previous subsection, we study the relaxation of $L_N$, an a priori more intricate problem as $\Re L_N$ depends on the full spectrum.
Our results are of two types: relaxation of the full characteristic polynomial in the rectangle $[-2+\kappa,2-\kappa]\times [N^{-1},1]$ and $t\in[\varphi^{-K},1]$,  up to an error $(\log N)^{1/2}$ (Proposition \ref{l:relax}), 
and relaxation of its maximum on $[-2+\kappa,2-\kappa]$ for $t=\Omega(1)$,  up to tightness (Proposition \ref{l:relax2})  .

\begin{proposition}\label{l:relax}
For all $K>10$, $\e,\kappa>0$, the following holds.   For $z=E+\ii\eta$,  uniformly in  $\eta \in (N^{-1}, 1)$ and  $t\in[\varphi^{-K},1]$, we have
\begin{equation}\label{eqn:relax}
\mathbb{P}\left(\max_{-2+\kappa<E<2+\kappa}
\left|\sum\log\big(z-\mu_k(t)\big)-\sum\log\big(z-\la_k(t)\big)\right| > (\log N)^{1/2+\e}\right)=\oo(1).
\end{equation}
\end{proposition}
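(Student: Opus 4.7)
The plan is to control $D(z,t) := \sum_k[\log(z-\mu_k(t))-\log(z-\lambda_k(t))]$ uniformly in $E \in [-2+\kappa,2-\kappa]$ by combining \Cref{prop:homog} with the overcrowding estimates of \Cref{thm:Hoi} and the a priori rigidity of \Cref{l:lscl}. The key mechanism is to identify a scale $M/N$ that uniformly bounds $\max_k|\lambda_k(t)-\mu_k(t)|$, and then to decompose the sum defining $D(z,t)$ according to whether an index $k$ is close to or far from $E$ at this scale.

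I would first establish $M := N\cdot\max_{k:\,\gamma_k\in[-2+\kappa/2,2-\kappa/2]}|\lambda_k(t)-\mu_k(t)| \leq C\sqrt{\log N}\log\log N$ with overwhelming probability. By \Cref{prop:homog} applied at $E = \gamma_k$, this reduces to bounding $|V(y)| := |\Im L^H_N(y_t)-\Im L^{\rm GOE}_N(y_t)|/(N\Im \msc(y_t))$ uniformly in $y$ in the bulk. Since $\Im y_t \asymp t \geq \phi^{-K}$ is mesoscopic, the pointwise variance of $\Im L_N(y_t)$ is $O(\log(1/t)) = O((\log\log N)^2)$; a Chebyshev estimate via \Cref{p:moments} (and its GOE analog) combined with a union bound over a polynomial-size $y$-mesh yields $\sup_y|V(y)| \leq C\sqrt{\log N}\log\log N/N$. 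The homogenization remainder $N^\epsilon/(N^2 t)$ is absorbed since $t \geq \phi^{-K}$ gives $N^\epsilon/(N^2 t) \leq \phi^K N^{\epsilon-1} = o(N^{-1})$.

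Next, fix $(E,\eta,t)$ and decompose $\{k\} = \mathcal{C} \cup \mathcal{F}$, where $\mathcal{C}$ collects indices with $\min(|\mu_k(t)-E|,|\lambda_k(t)-E|) \leq 2M/N$. Applying \Cref{thm:Hoi} with target cardinality $k_0 := 4M$ and window $2M/N$, so that the overcrowding ratio is $1/2$, the probability bound $(C/2)^{(1-\gamma)k_0^2/2} \leq e^{-c M^2}$ is smaller than $N^{-D}$ for any $D$, and a union bound gives $|\mathcal{C}| \leq CM$ uniformly in $E$ with overwhelming probability. For $k \in \mathcal{C}$, both $|z-\mu_k|$ and $|z-\lambda_k|$ lie in $[\eta, C\max(M/N,\eta)]$, so
\[
\absb{\log|z-\mu_k| - \log|z-\lambda_k|} \,\leq\, \log(CM/(N\eta)) \,\leq\, C\log M \,\leq\, C\log\log N
\]
for $\eta \geq N^{-1}$, while the corresponding $\arctan$-difference from the imaginary part is bounded trivially by $\pi$ per index. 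Summing, $|D_\mathcal{C}(z,t)| \leq C M\log\log N \leq C\sqrt{\log N}(\log\log N)^2$, which is at most $(\log N)^{1/2+\epsilon}$ for large $N$. For the far contribution, since $|(\lambda_k-\mu_k)/(z-\mu_k)| \leq 1/2$, the Taylor expansion of $\log(1+\cdot)$ converges with its linear term dominating. The quadratic remainder $\sum_{\mathcal{F}}|\lambda_k-\mu_k|^2/|z-\mu_k|^2$ is bounded by $(M/N)^2 \sum_{\mathcal{F}}|z-\mu_k|^{-2}$; restricting to $|\mu_k-E| \geq M/N$ gives $\sum_{\mathcal{F}}|z-\mu_k|^{-2} \leq CN^2/M$, yielding a contribution of $CM = O(\sqrt{\log N}\log\log N)$. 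Substituting $\lambda_k-\mu_k = V(\gamma_k) + O(N^\epsilon/(N^2 t))$ in the linear term $\sum_{\mathcal{F}}(\lambda_k-\mu_k)/(z-\mu_k)$, the homogenization error contributes $o(1)$ after summation against $1/(z-\mu_k)$ by the estimate $\sum |z-\mu_k|^{-1} \leq CN\log N$ and $t \geq \phi^{-K}$. The main term $\sum_{\mathcal{F}} V(\gamma_k)/(z-\mu_k)$ is compared to $N\int V(\gamma)\rho_{\rm sc}(\gamma)/(z-\gamma)\,d\gamma$, whose modulus is bounded by $N\sup_y|V(y)| \cdot |m_{\rm sc}(z)| = O(\sqrt{\log N}\log\log N)$. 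Combining the close and far bounds yields $|D(z,t)|\leq (\log N)^{1/2+\epsilon}$ for each fixed $E$; a union bound over an $N^{-10}$-mesh of $E$-values using the Lipschitz estimate $|\partial_E D(z,t)| \leq CN/\eta$ (from \eqref{e:gtrivial}) promotes this to the desired uniform statement.

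The main technical obstacle will be justifying the sum-to-integral approximation for $\sum_{\mathcal{F}} V(\gamma_k)/(z-\mu_k)$ uniformly down to $\eta \asymp N^{-1}$, since naive replacement of $\mu_k$ by $\gamma_k$ incurs rigidity errors of order $\phi/\eta$ that overwhelm the target. This step likely requires either summation by parts exploiting the smoothness of $V$ at the scale $t$, or direct bounds on the weighted linear statistic that extract cancellations through the local law from \Cref{l:lscl} at all microscopic scales, rather than merely invoking triangle inequality.
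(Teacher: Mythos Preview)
Your overall architecture---homogenization via \Cref{prop:homog}, overcrowding via \Cref{thm:Hoi} for the close indices, Taylor expansion for the far indices---matches the paper's, and the close contribution is handled correctly. The far contribution, however, contains two genuine gaps. First, the claimed bound $\big|N\int V(\gamma)\rho_{\rm sc}(\gamma)/(z-\gamma)\,d\gamma\big|\le N\sup|V|\cdot|m_{\rm sc}(z)|$ is false: pulling out $\sup|V|$ leaves $\int\rho_{\rm sc}/|z-\gamma|\,d\gamma\asymp\log(1/\eta)$, not $|m_{\rm sc}(z)|$, so at $\eta=N^{-1}$ you are off by a factor $\log N$. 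This can be repaired by splitting at $|\gamma-E|\lessgtr t$ and using the smoothness of $V$ at scale $t$ together with the oddness of $\Re[1/(z-\gamma)]$ around $E$, but that argument is not the one you wrote.

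Second, and more seriously, the sum-to-integral step you flag is indeed the crux, and neither summation by parts in $V$ nor the local law of \Cref{l:lscl} resolves it at $\eta\sim N^{-1}$: the rigidity error $|\mu_k-\gamma_k|\le\varphi/N$ already exceeds $\eta$, so replacing $\mu_k$ by $\gamma_k$ in $\sum_{\mathcal F}V(\gamma_k)/(z-\mu_k)$ is uncontrolled. The paper sidesteps this in three ways you have not used. It invokes the stochastic advection identity \cite[Proposition~2.11]{Bou2018} to handle all $\eta>\varphi^{100}/N$ at once, reducing to a quantity at mesoscopic scale $\Im z_t\asymp t$ where \Cref{lem:maxSmoothField} applies. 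For $\eta<\varphi^{100}/N$ it restricts to $|\gamma_k-E|<N^{-1/2}$ (the complementary indices contribute $o(1)$ to $D(z)-D(\tilde z)$ by rigidity and $|\tilde z-z|\le\varphi^{100}/N$) and applies \Cref{prop:homog} at the \emph{fixed} energy $E$, so the linear term becomes the scalar $V(E)$ times $\sum 1/(z-\lambda_k)$ rather than a sum with varying weights $V(\gamma_k)$. Finally, it bounds $\Re\sum_{|\gamma_k-E|<N^{-1/2}}1/(z-\gamma_k)$ by pairing $k\leftrightarrow 2k_0-k$ and using $2E-\gamma_k-\gamma_{2k_0-k}=O((k-k_0)^2/N^2)$; this symmetry is the cancellation mechanism your sketch is missing, and it is not visible from the local law.
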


\begin{proof}
If  $\varphi^{100}/N<\eta<1$, this follows from \cite{Bou2018} and \Cref{lem:maxSmoothField}. Indeed, by integrating over the parameter $\nu\in[0,1]$ in \cite[Proposition 2.11]{Bou2018}, we have
\begin{equation}\label{eqn:StochAdv}
\max_{-2+\kappa<E<2+\kappa}
\left|\sum_{k=1}^N \log \frac{ z-\mu_k(t)}{ z-\la_k(t)}
- \sum_{k=1}^N \log \frac{ z_t-\mu_k(0)}{z_t-\la_k(0)}\right|<1
\end{equation}
 with overwhelming probability. 
Together with \Cref{lem:maxSmoothField} (noting $ z_t \ge C^{-1} t$
for some $C(\kappa) >0$ by an explicit computation using \eqref{eqn:zt}), it implies that
\begin{equation}\label{eqn:abithigher}
\max_{-2+\kappa<E<2+\kappa}
\left|\sum\log\big( z-\mu_k(t)\big)-\sum\log\big( z-\la_k(t)\big)\right|<(\log N)^\e
\end{equation}
with probability $1-\oo(1)$.

We now consider the case where $N^{-1} <\im \eta <\varphi^{100}/N$, and denote $\tilde z= E +\ii\varphi^{100}/N$. Given that \eqref{eqn:abithigher} holds for $\tilde z$, we only need to prove that
\begin{equation}\label{eqn:enough}
\max_{-2+\kappa<E<2+\kappa}
\left| \sum_{k=1}^N  \log \frac{z-\mu_k(t)}{z-\la_k(t)} 
- \sum_{k=1}^N \log \frac{ \tilde z-\mu_k(t)}{\tilde z-\la_k(t)}
\right|<(\log N)^{1/2+\e}
\end{equation}
with probability $1-\oo(1)$ to complete the proof.

We divide the sum in \eqref{eqn:enough} into two parts, depend on whether $| \gamma_k  - E | > N^{-1/2}$ or not. For the terms such that $| \gamma_k  - E | > N^{-1/2}$,  by the rigidity bound \eqref{e:rigidity} and the Taylor expansion for $x \mapsto \log (1 +x)$ we have
\begin{multline*}
\sum_{|\gamma_k-E|>N^{-1/2}} \left|\sum\log(\tilde z-\mu_k(t))-\sum\log(z-\mu_k(t))\right|  
 =  \sum_{|\gamma_k-E|>N^{-1/2}} \left| \log \left( 1 + \frac{\widetilde z  - z + \mu_k(t) - \lambda_k(t)}{z - \lambda_k(t)}  \right) \right|
\\  \le C \sum_{|\gamma_k - E | > N^{-1/2} } \left| \frac{\tilde z - z }{z - \gamma_k} \right| + \left|  \frac{\mu_k(t) - \lambda_k(t)  }{z - \gamma_k}  \right|
\le \frac{C\varphi^{200}}{N} \frac{1}{|z - \gamma_k|} \le \frac{\varphi^{300}}{N^{1/2}} = \oo(1).
\end{multline*}
The same holds when replacing $\bmu$ with $\bla$. Hence  to prove (\ref{eqn:enough}) we only need to obtain the following bounds for the terms such that $| \gamma_k  - E | < N^{-1/2}$:
\begin{equation}\label{eqn:enough2}
\max_{-2+\kappa<E<2+\kappa}
\left|\sum_{|\gamma_k-E|<N^{-1/2}}\log(z-\mu_k(t))-\sum_{|\gamma_k-E|<N^{-1/2}}\log(z-\la_k(t)) \right|<(\log N)^{1/2+\e},
\end{equation}
and the same bound with $z$ replaced by $\widetilde z$. In the following we prove only \eqref{eqn:enough2}, as the proof for $\widetilde z$ is the same.
For this,  we now bound 
\begin{align}
\bigg|&\sum_{|\gamma_k-E|<N^{-1/2}}\log(z-\mu_k(t))-\sum_{|\gamma_k-E|<N^{-1/2}}\log(z-\la_k(t) ) \bigg|\\ 
&\leq 
\sum_{\substack{{|\gamma_k-E|<N^{-1/2}}\\|\mu_k(t)-E|\leq(\log N)^{\eps} N^{-1}}}\left|\log\frac{z-\mu_k(t)}{z-\lambda_k(t)}\right|
+ \sum_{\substack{{|\gamma_k-E|<N^{-1/2}}\\(\log N)^{\eps} N^{-1} \le |\mu_k(t)-E| \le \phi^3 N^{-1} }}\left|\log\frac{z-\mu_k(t)}{z-\lambda_k(t)}\right|
\label{eqn:dec}
\\
&+\label{lasttt}
\Bigg|\sum_{\substack{{|\gamma_k-E|<N^{-1/2}}\\ \phi^{100} N^{-1} \le |\mu_k(t)-E|}}\log\frac{z-\mu_k(t)}{z-\lambda_k(t)}\Bigg|.
\end{align}
We begin by considering the contribution from the first sum in \eqref{eqn:dec}. 
 First, suppose that $|z-\mu_k|\geq |z-\lambda_k|$. Using $|\log |1+z||<\log(1+|z|)$ for $|1+z|>1$ and Proposition \ref{prop:homog},
we obtain that with high probability
\begin{multline}\label{335}
\left|\log\frac{|z-\mu_k(t)|}{|z-\lambda_k(t)|}\right|\leq \log\left(1+\left|
\frac{\mu_k(t)-\lambda_k(t)}{z-\mu_k(t)}
\right|\right)\\
\leq 
\log\left(1+\left|
\frac{\Im L^H_N(E_t)-\Im L^{\rm GOE}_N(E_t)}{N\im \msc(E_t)\, (z-\mu_k(t))}+\OO\left(\frac{N^{1+\e}\max(|E-\gamma_k|,N^{-1})}{N^2 t|z-\mu_k(t)|}\right)\right|
\right).
\end{multline}
By  \Cref{lem:maxSmoothField}, we have for all $\eta' \in (0,1)$ that $\P( \mathcal G_{\eta'}) = 1 - \oo(1)$, where 
\begin{equation}
\mathcal G_{\eta'} = \left\{ \max_{-2+\kappa<E<2+\kappa}
\left|\sum\log\big( z'_t-\mu_k(0)\big)-\sum\log\big( z'_t-\la_k(0)\big)\right| \le (\log N)^\e \right\}, \quad z'= E + \ii \eta'
\end{equation}
and the implicit constant in the $\oo(1)$ term depends only on the choices of $K$, $\epsilon$, and $\kappa$. 
On $\mathcal G_\eta$, using $\im z\ge N^{-1}$ and $|N(z-\mu_k(t))|\ge1$ in \eqref{335}, we obtain 
\begin{equation}\label{e:neark}
\left|\log\frac{|z-\mu_k(t)|}{|z-\lambda_k(t)|}\right|\leq
\log\left(1+2 (\log N)^{\e}+\frac{N^{\e}\max(|E-\gamma_k|, N^{-1})}{ t}\right)
<C\e\log\log N.
\end{equation}
In \eqref{e:neark}, we used that $|\mu_k(t)-E|\le (\log N)^\e/N$ implies $|E-\gamma_k|<C \varphi/N$ when rigidity \eqref{e:rigidity} holds. 
The same bound as \eqref{e:neark} naturally holds for $|z-\mu_k|> |z-\lambda_k|$. Thus for the first sum in \eqref{eqn:dec} we conclude that
\begin{equation}\label{lastfactor}
\sum_{\substack{{|\gamma_k-E|<N^{-1/2}}\\|\mu_k(t)-E|\leq(\log N)^{\eps} N^{-1}}}
\left|\log\frac{|z-\mu_k(t)|}{|z-\lambda_k(t)|}\right|
\leq
C\e\log\log N\,
\left|\left\{|\mu_k-E|\leq \frac{(\log N)^\e}{N}\right\}\right|.
\end{equation}
Theorem \ref{thm:Hoi} applied with $k = ( \log N)^{1/2}$ gives
\begin{equation}\label{smallsum}
\P \left( \left|\left\{|\mu_k-E|\leq \frac{(\log N)^\e}{N}\right\}\right|  > (\log N)^{1/2} \right)  =\oo(1).
\end{equation}
Then \eqref{lastfactor} and \eqref{smallsum} together imply that, 
with probability $ 1 - \oo(1)$,
\begin{equation}\label{e:smallk}
\sum_{\substack{{|\gamma_k-E|<N^{-1/2}}\\|\mu_k(t)-E|\leq(\log N)^{\eps} N^{-1}}}
\left|\log\frac{z-\mu_k(t)}{z-\lambda_k(t)}\right| \le C ( \log N)^{1/2+\e}.
\end{equation}

We next consider 
the second sum on
the left side of \eqref{eqn:dec}, and work on the event $\mathcal G_\eta$. 
By Taylor expansion and \Cref{prop:homog},
\begin{align}\label{theexpansion}
\left|\log\frac{z-\mu_k(t)}{z-\lambda_k(t)}\right| &\le \left| \log \left(1+\frac{L^H(E,t)-L^{\rm GOE}(E,t)}{N(z-\mu_k(t))}+\OO\left(\frac{N^{1+\e}\max(|E-\gamma_k|,N^{-1})}{N^2 t|z-\mu_k(t)|}\right) \right)\right|
\\ &=
\label{343}
\OO\left( \frac{L^H(E,t)-L^{\rm GOE}(E,t)}{N(z-\mu_k(t))}\right)+\OO\left(\frac{N^{1+\e}\max(|E-\gamma_k|,N^{-1})}{N^2 t|z-\mu_k(t)|}\right)
\\
 &= \frac{C( \log N)^{\eps/2}}{N |z-\mu_k(t)|}
+  \frac{ CN^{-1 + \e} t^{-1}\varphi }{N |z-\mu_k(t)|}
%+ \frac{1}{N |z-\mu_k(t)|} 
\le   \frac{ C ( \log N)^{\eps/2}}{N |z-\mu_k(t)|}.
\label{334}
\end{align}
In the second term in \eqref{343}, 
we used rigidity and $|\mu_k(t)-E| < \varphi N^{-1}$ to show that $|E - \gamma_k | \le 2 \varphi N^{-1}$.

We now bound
\begin{equation}\label{e:mediumk}
\sum_{(\log N)^\e/N<|\mu_k-E|<\varphi^{100}/N} \frac{1}{\left|z-\mu_k(t)\right|}\leq
\sum_{1\le j\leq A(\log\log N)^2}\frac{1}{(2^j/N)}\left|\big\{| \mu_k - E|\in [2^j/N,2^{j+1}/N]\big\}\right|,
\end{equation}
where $A>0$ is a constant depending only on the constant $C_0$ used to define $\varphi$ in \eqref{eqn:phi2}.
Set $\tau = (\log N)^{-1/2}$ and define the event
\begin{equation}
\mathcal A_j^E=\left\{\left|\{|\mu_k-E|\in [2^j/N,2^{j+1}/N]\}\right|\leq \frac{2^j}{\tau}\right\}.
\end{equation}
From Theorem \ref{thm:Hoi} applied with $\gamma = 1/2$ and $k = \tau^{-1} 2^j$, we have
\begin{equation}\label{e:above1}
\mathbb{P}\left(\left(\mathcal A_j^E\right)^{\rm c}\right)\leq \exp\left(\frac{1}{4}\log(C\tau)(2^j/\tau)^2\right) +\exp\left(-N^c\right)
\end{equation}
for all $j \le A (\log \log N)^2$. Define
\begin{equation}
\mathcal A = \bigcap_{\substack{1\leq k\leq N\\0\leq j\leq A (\log\log N)^2}} \mathcal A_j^{\gamma_k}.
\end{equation}
From \eqref{e:above1}, we obtain using a union bound that
\begin{equation}\label{eqn:overcrowding}
\mathbb{P}(\mathcal A )=1-\oo(1).
\end{equation}
On the event where both $\mathcal A$ and the rigidity estimate \eqref{e:rigidity} hold, by \eqref{e:mediumk} we have
\begin{equation}\label{e:mediumsum}
\frac{1}{N}\sum_{(\log N)^\e/N<|\mu_k-E|<\varphi^{100}/N} \frac{1}{\left|z-\mu_k(t)\right|}
\le A (\log N)^{1/2} (\log \log N)^2 \le (\log N)^{1/2 + \eps/4}.
\end{equation}
Combining  \eqref{334}, and \eqref{e:mediumsum}, we find 
\begin{equation}\label{e:concludemediumk}
\sum_{\substack{{|\gamma_k-E|<N^{-1/2}}\\(\log N)^{\eps} N^{-1} \le |\mu_k(t)-E| \le \phi^{100} N^{-1} }}\left|\log\frac{z-\mu_k(t)}{z-\lambda_k(t)}\right|
 \le ( \log N)^{1/2 + \eps}.
\end{equation}

Finally, we consider the sum in \eqref{lasttt}. By Taylor expansion and rigidity \eqref{e:rigidity}, we have
\begin{multline}\label{may18}
\sum_{\substack{{|\gamma_k-E|<N^{-1/2}}\\ \phi^{100} N^{-1} \le |\mu_k(t)-E|}}\log\left(\frac{z-\mu_k(t)}{z-\lambda_k(t)}\right)\\
=\sum_{\substack{{|\gamma_k-E|<N^{-1/2}}\\ \phi^{100} N^{-1} \le |\mu_k(t)-E|}}
(\lambda_k(t)-\mu_k(t))\frac{1}{z-\lambda_k}+\OO\left(\sum_{\substack{{|\gamma_k-E|<N^{-1/2}}\\ \phi^{100} N^{-1} \le |\mu_k(t)-E|}}\frac{|\lambda_k(t)-\mu_k(t)|^2}{|z-\lambda_k(t)|^2}\right).
\end{multline}
The above error term is again bounded by rigidity,  as it is of order 
$\frac{\varphi^2}{N^2}\sum_{k\geq \varphi^{100}}\frac{N^2}{k^2}=\oo(1)$.
For the first term on the left side of \eqref{may18},  we use Proposition~\ref{prop:homog} to write it as
\begin{equation}\label{october11}
\frac{L^H(E,t)-L^{\rm GOE}(E,t)}{N}\sum_{\substack{{|\gamma_k-E|<N^{-1/2}}\\ \phi^{100} N^{-1} \le |\mu_k(t)-E|}}\frac{1}{z-\lambda_k}+\OO\left(\frac{N^\e}{N^{3/2}t}\right)\sum_{\substack{{|\gamma_k-E|<N^{-1/2}}\\ \phi^{100} N^{-1} \le |\mu_k(t)-E|}}\frac{1}{|z-\gamma_k|},
\end{equation}
with overwhelming probability.
The second error term above is again $\oo(1)$ by rigidity.
For the first term in \eqref{october11}, we work on $\mathcal G_\eta$ to obtain the high-probability bound
\begin{align}\label{may182}
\frac{(\log N)^\e}{N}\bigg|\sum_{\substack{{|\gamma_k-E|<N^{-1/2}}\\ \phi^{100} N^{-1} \le |\mu_k(t)-E|}}\frac{1}{z-\lambda_k}\bigg|
&\leq
\frac{(\log N)^\e}{N}\bigg|\sum_{\substack{{|\gamma_k-E|<N^{-1/2}}\\ \phi^{100} N^{-1} \le |\mu_k(t)-E|}}\frac{1}{z-\gamma_k}\bigg|\\ &+
\frac{(\log N)^\e}{N}\bigg|\sum_{\substack{{|\gamma_k-E|<N^{-1/2}}\\ \phi^{100} N^{-1} \le |\mu_k(t)-E|}}\frac{|\lambda_k-\gamma_k|}{|z-\gamma_k|^2}\bigg|.
\end{align}
The second term above is again negligible by rigidity.
Let $k_0$ be the index that minimizes $|E - \gamma_k|$ for $k \le N$. In the right side of \eqref{may182}, the contribution from $|k-k_0|>N^{1/4}$ is negligible by rigidity. In this term, we may then replace the summation over indices such that $|\gamma_k-E|<N^{-1/2}$ and $\phi^{100} N^{-1} \le |\mu_k(t)-E|$ with one over indices $k$ such that $N\varphi^2<|k-k_0|<N^{1/4}$. In this replacement, any indices with $|\mu_k(t)-E| \le \phi^{100} N^{-1}$ may be restored as necessary using the arguments leading to \eqref{e:concludemediumk}.

Note that 
$
\Im \frac{1}{z-\gamma_k}  = \frac{\eta}{|z-\gamma_k|^2}
$
and
\beq\label{kbdds}
 \frac{c | k - k_0|}{N} \le |E -\gamma_k| \le  \frac{c^{-1}| k - k_0|}{N}
\eeq  for $k \ge \phi^2 N$ by rigidity and $|\gamma_{k_0} - E| \le C N^{-1}$. Then
\beq\label{bigkimag}
\sum_{\substack{{|\gamma_k-E|<N^{-1/2}}\\ \phi^{100} N^{-1} \le |\mu_k(t)-E|}} \Im \frac{1}{z-\gamma_k}
\le 
\sum_{N\varphi^2<k-k_0<N^{1/4}}
\frac{C N^2 \eta }{| k - k_0|^2} 
\le  \frac{C N \eta }{ \phi^2}.
\eeq

Further,
\begin{multline}\label{may184}
\re\sum_{N\varphi^2<k-k_0<N^{1/4}}\left(\frac{1}{z-\gamma_k}+\frac{1}{z-\gamma_{2k_0-k}}\right)=\\
\sum_{\varphi^2 N <k-k_0<N^{1/4}}\left(\frac{2E-\gamma_k-\gamma_{2k_0-k}}{|z-\gamma_k|^2}+(E-\gamma_{2k_0-k})\left(\frac{1}{|z-\gamma_k|^2}-\frac{1}{|z-\gamma_{2k_0-k}|^2}\right)\right).
\end{multline}
By a direct computation, we have 
\beq\label{cancelE}
2E-\gamma_k-\gamma_{2k_0-k}=\mathrm{O}\big((k-k_0)^2/N^2\big).
\eeq
  Together with \eqref{kbdds} this gives 
\beq\label{part3a}
\sum_{\varphi^2 N <k-k_0<N^{1/4}}\frac{2E-\gamma_k-\gamma_{2k_0-k}}{|z-\gamma_k|^2} = \OO( N^{1/4} ).
\eeq
Also, using \eqref{cancelE} and \eqref{kbdds}, we compute
\begin{align}\label{part3b}
\frac{1}{|z-\gamma_k|^2}-\frac{1}{|z-\gamma_{2k_0-k}|^2}
&= \frac{(|z-\gamma_{2k_0-k}|
-|z-\gamma_k|
)(|z-\gamma_{2k_0-k}|
+|z-\gamma_k|)}{|z-\gamma_k|^2|z-\gamma_{2k_0-k}|^2}\\
&=\OO\left(  \frac{C N^{-3 } |k - k_0|^3 }{N^{-4} (k - k_0)^4}\right)
 = \OO\left(\frac{N}{|k - k_0|}\right).
\end{align}
Putting \eqref{part3a} and \eqref{part3b} into \eqref{may184}, and combining this with \eqref{bigkimag} and our previous bounds, we find
\beq\label{e:largek}
\Bigg|\sum_{\substack{{|\gamma_k-E|<N^{-1/2}}\\ \phi^{100} N^{-1} \le |\mu_k(t)-E|}}\log\left(\frac{z-\mu_k(t)}{z-\lambda_k(t)}\right)\Bigg| = \oo(1).
\eeq
We finish the proof by combining \eqref{e:smallk}, \eqref{e:concludemediumk},  and  \eqref{e:largek}. 
\end{proof}

The following Proposition directly implies Theorem \ref{thm:maxWignerDivisible}.  In the statement  and its proof, we abbreviate
\begin{equation}\label{eqn:LNx}
L_N^{\bx}(z)=\sum_{k=1}^N\log(z-x_k)-N\int\log(z-\lambda)\rho_{\rm sc}(\lambda)\rd\lambda.
\end{equation}
Note that the proposition below uses Theorem \ref{thm:maxWigner} in its proof,  but there is no circularity in the sense that the proposition is not used in proof of Theorem  \ref{thm:maxWigner}.
\begin{proposition}\label{l:relax2}
Let  $\kappa,t>0$ be fixed.  Then tightness holds for the sequence of random variables 
\[
\Big(\sup_{|E|<2-\kappa}\re L_N^{\bmu_t}(E)-\sup_{|E|<2-\kappa}\re L_N^{\bla_t}(E)\Big)_{N\geq 1}.
\]
\end{proposition}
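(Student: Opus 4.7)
The uniform comparison from Proposition~\ref{l:relax} produces an error of size $(\log N)^{1/2+\varepsilon}$, far larger than the $O(1)$ needed for tightness. My plan is to absorb the dominant part of this error into a random spectral shift $X_N$ applied to $E$, and then compare the two shifted fields pointwise up to $O(1)$.

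More concretely, I will fix a deterministic reference energy $E_0 \in (-2+\kappa, 2-\kappa)$ and define
\[
X_N := \frac{\Im L_N^{\bla_0}((E_0)_t) - \Im L_N^{\bmu_0}((E_0)_t)}{N\,\Im\msc((E_0)_t)}.
\]
Since $(E_0)_t$ has imaginary part of order $t = \Theta(1)$, both terms in the numerator are macroscopic linear statistics, and the macroscopic CLT for Wigner/GOE matrices gives $X_N = O_{\P}(1/N)$. Proposition~\ref{prop:homog} applied at $E = E_0$ identifies $X_N$, up to a truly microscopic correction, with the leading nearly $k$-independent component of $\lambda_k(t) - \mu_k(t)$ for $\gamma_k$ in the bulk.

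The central technical step is to prove
\[
\P\!\left(\sup_{|E| < 2-\kappa} \bigl|\re L_N^{\bla_t}(E+X_N) - \re L_N^{\bmu_t}(E)\bigr| > C \right) \xrightarrow[N \to \infty]{} 0
\]
for some $C = C(\kappa, t)$. Writing the difference as $\sum_k \log \frac{E + X_N - \lambda_k(t)}{E - \mu_k(t)}$, I will follow the three-zone decomposition used in the proof of Proposition~\ref{l:relax}, except that now the $k$-independent piece of $\lambda_k(t) - \mu_k(t)$ has been subtracted. In the macroscopic and mesoscopic zones, Taylor expansion together with Proposition~\ref{prop:homog} will yield only an $O(1)$ fluctuation (rather than $(\log N)^{1/2+\varepsilon}$), because the residual sums $\sum_k (\lambda_k(t) - \mu_k(t) - X_N)/(E - \mu_k(t))$ arise from centered, smoothed linear statistics. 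In the microscopic zone $|E - \mu_k(t)| \lesssim \varphi^C/N$ the analysis combines (i) the tight pairing between $\lambda_k(t)$ and $\mu_k(t)$ produced by their shared driving Brownian motions, (ii) the $O(1/N)$ size of $X_N$, and (iii) the overcrowding estimate of Theorem~\ref{thm:Hoi} together with the sharpened probability bound of Remark~\ref{rem:betterprobabilitybound}, in order to show that the $O(1)$ exceptional eigenvalues each contribute only an $O(1)$ logarithm.

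Once this shifted uniform comparison is in hand, the proposition follows quickly: since $|X_N| < \kappa/2$ with overwhelming probability, evaluating at $E^* := \arg\max_{|E| < 2-\kappa} \re L_N^{\bmu_t}(E)$ gives
\[
\sup_{|E| < 2-\kappa} \re L_N^{\bmu_t}(E) = \re L_N^{\bmu_t}(E^*) \leq \re L_N^{\bla_t}(E^* + X_N) + O(1) \leq \sup_{|E| < 2-\kappa/2} \re L_N^{\bla_t}(E) + O(1),
\]
and the symmetric argument yields the reverse inequality. As the claim holds for every $\kappa>0$, tightness of the difference follows. The main obstacle is the microscopic-zone bound above: obtaining an $O(1)$ pointwise control uniformly in $E$ at the $1/N$ resolution forces one to go beyond the $1-o(1)$-probability rigidity used in Proposition~\ref{l:relax}, to exploit the pairing of the two Dyson Brownian motions at the finest scale, and to track subtle cancellations between the shift $X_N$ and the local homogenization corrections.
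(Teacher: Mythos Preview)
Your central technical claim is false as written: on the real axis $\re L_N^{\bmu_t}(\mu_k(t))=-\infty$ for each $k$, while $\re L_N^{\bla_t}(\mu_k(t)+X_N)$ is almost surely finite, so $\sup_{|E|<2-\kappa}\bigl|\re L_N^{\bla_t}(E+X_N)-\re L_N^{\bmu_t}(E)\bigr|=+\infty$ almost surely. Even if you retreat to scale $\eta=1/N$, a \emph{fixed} shift $X_N=\ell_{E_0}$ is too coarse. The paper uses the $E$-dependent $\ell_E=(\Im L_N^{\bla_0}(E_t)-\Im L_N^{\bmu_0}(E_t))/(N\Im\msc(E_t))$, and the reason is visible in its proof of event~$B$: Proposition~\ref{prop:homog} yields $|\lambda_k(t)-\mu_k(t)-\ell_E|\le C(N^{\tilde\e}+|k-k_0(E)|)/N^2$, which for $k$ near $k_0(E)$ is $\ll\eta$, so each $\log(1+\cdot)$ linearises and the full sum is $O(1)$. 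With a fixed $X_N$ one only has $|\lambda_k(t)-\mu_k(t)-X_N|\approx|\ell_E-\ell_{E_0}|$, which is of order $1/N$ uniformly in $E$ (tightness of macroscopic linear statistics) and hence merely \emph{comparable} to $\eta$; the logarithm does not linearise near $k_0(E)$, and since the local-law window around $E$ may contain $O(\varphi)$ eigenvalues uniformly in $E$, the resulting bound is $O(\varphi)$ rather than $O(1)$. Overcrowding does not repair this uniformly in $E$.

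The paper therefore never attempts a uniform comparison on the real axis. It proves the one-sided inequality $\sup_{|E|<2-\kappa}\re L_N^{\bla_t}\ge m_N-C$ (and symmetrically) by chaining events: (A) a uniform $O(1)$ match at scale $\tilde\eta=\varphi^{100}/N$ via stochastic advection (\ref{eqn:StochAdv}); (B) a uniform $O(1)$ match of the \emph{increments} from $\tilde\eta$ down to $\eta=1/N$, using the $E$-dependent $\ell_E$; (C) negligibility of the $\ell_E$-shift at scale $\tilde\eta$, where the Lipschitz constant of $\re L_N$ is $O(N)$ rather than $O(N\varphi)$; (E) the Poisson-kernel inequality to transfer control from scale $\eta$ back to the real axis; and (F) smallness of the maximum on a \emph{mesoscopic} boundary strip of width $\delta=N^{-1+\theta}$, via (\ref{eqn:meso}). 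Your outline lacks analogues of (E) and (F): the sentence ``As the claim holds for every $\kappa>0$'' does not close the gap between $\sup_{|E|<2-\kappa/2}$ and $\sup_{|E|<2-\kappa}$, since both maxima are $\sqrt{2}\log N(1+o(1))$ and nothing a priori prevents the argmax from landing in the macroscopic strip.
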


\begin{proof}
For the proof we define
\[
m_N=\sup_{|E|<2-\kappa}\re L_N^{\bmu_t}(E).
\]
We will prove that for any $\e>0$ there exists $C>0$ such that for all $N$ we have
\[
\mathbb{P}\left(\sup_{|E|<2-\kappa}\re L_N^{\bla_t}(E) \geq m_N-C\right)\geq 1- \e,
\]
and the  same result holds when permuting $\bmu$ and $\bla$. The claimed tightness then follows directly.

In the following, we  denote
\[
\ell_E=\frac{\Im L_N^{\bla_0}(E_t)-\Im L_N^{\bmu_0}(E_t)}{N\im \msc(E_t)},\ \ \eta=\frac{1}{N},\ \ {\tilde\eta}=\frac{\varphi^{100}}{N}.
\]
Consider the following events depending on $N$ and sometimes additional parameters $M,\delta>0$:
\begin{align}
A&=A(M)=\bigcap_{|E|<2-\kappa}\left\{\big|
\re L_N^{\bmu_t}(E+\ii{\tilde\eta})-\re L_N^{\bla_t}(E+\ii{\tilde\eta})
\big|\leq M\right\},\\
B&=B(M)\\
&=\!\!\bigcap_{|E|<2-\kappa}\!\!\left\{
\big|(\re L_N^{\bmu_t}(E\!+\!\ii\eta)-\re L_N^{\bmu_t}(E\!+\!\ii{\tilde\eta}))
\!-\!
(\re L_N^{\bla_t}(E\!+\!\ii\eta\!+\!\ell_E)\!-\!\re L_N^{\bla_t}(E\!+\!\ii{\tilde\eta}\!+\!\ell_E))
\big|\leq M\right\},\\
C&=C(M)=\bigcap_{|E|<2-\kappa}\left\{
\big|\re L_N^{\bla_t}(E+\ii{\tilde\eta}+\ell_E)
-\re L_N^{\bla_t}(E+\ii{\tilde\eta})
\big|\leq M\right\},\\
D&=D(M)=\left\{\sup_{E\in[-2+\kappa,2-\kappa]}(|L_N^{\bla_0}(E_t)|+|L_N^{\bmu_0}(E_t)|)\leq M\right\},\\
E&=E(\delta)=\left\{\sup_{E\in[-2+\kappa-\delta,2-\kappa+\delta]}\re L_N^{\bla_t}(E+\ii\eta)\leq(1+\frac{10}{N\delta})| \sup_{E\in[-2+\kappa-2\delta,2-\kappa+2\delta]}\re L_N^{\bla_t}(E)|+3\right\},\\
F&=F(\delta)=\left\{\sup_{E\in[-2+\kappa-2\delta,-2+\kappa]\cup[2-\kappa,2-\kappa+2\delta]}\re L_N^{\bla_t}\leq \frac{\log N}{10}\right\},\\
G&=\left\{\sup_{E\in[2+\kappa,2-\kappa]}\re L_N^{\bla_t}\geq \frac{\log N}{2}\right\}\cap\left\{\sup_{E\in[2+\frac{\kappa}{2},2-\frac{\kappa}{2}]}\re L_N^{\bla_t}\leq 10\log N\right\}.
\end{align}
From (\ref{eqn:UBRe1}),  there is some fixed  $C_1>0$  such that,  for any $N$, there exists $|E_0|<2-\kappa$ such that
\[
\re L_N^{\bmu_t}(E_0+\ii\eta)\geq m_N-C_1.
\]
Therefore,  on $A$ we have 
\[
\re L_N^{\bmu_t}(E_0+\ii\eta)-\re L_N^{\bmu_t}(E_0+\ii{\tilde\eta})+\re L_N^{\bla_t}(E_0+\ii{\tilde\eta})\geq m_N-C_1-M,
\]
and on $A\cap B$ we can write
\[
\re L_N^{\bla_t}(E_0+\ii\eta+\ell_{E_0})-\re L_N^{\bla_t}(E_0+\ii{\tilde\eta}+\ell_{E_0})+\re L_N^{\bla_t}(E_0+\ii{\tilde\eta})\geq m_N-C_1-2M.
\]
On $A\cap B\cap C$ we therefore have
\[
\re L_N^{\bla_t}(E_0+\ii\eta+\ell_{E_0})\geq m_N-C_1-3M.
\]
Assuming our parameters satisfy $M/N=\oo(\delta)$ and $\frac{\log N}{N\delta}=\oo(1)$, on
$A\cap B\cap C\cap D\cap E\cap G$ this yields
\[
\sup_{|E|<2-\kappa+2\delta}\re L_N^{\bla_t}(E)\geq m_N-C_1-4M-10.
\]
Finally on $A\cap B\cap C\cap D\cap E\cap F\cap G$ we obtain
\[
\sup_{|E|<2-\kappa}\re L_N^{\bla_t}(E)\geq m_N-C_1-4M-10.
\]
The proof will therefore be complete if we obtain that,  for fixed $\e>0$,  each one of the ensembles $A,B,C,D,E,F,G$ has probability larger than $1-\e$ for large enough $N$.
For $A(M), C(M),D(M), E(\delta)$ and $F(\delta)$ this will be true for the choice $\delta=N^{-1+\theta}$,  $\theta\in(0,\frac{1}{10})$ arbitrary,  and $M$ fixed, large enough.

First,  Corollary \ref{lem:maxSmoothField} gives $\mathbb{P}(D(M))>1-\e$ for some fixed $M=M(\e,
\kappa)$ and large enough $N$.
Then from (\ref{eqn:StochAdv}) we have $\mathbb{P}(A(M+1))\geq \mathbb{P}(A(M+1)\cap D(M))\geq 1-\e$ for large enough $N$.

To bound $\mathbb{P}(C)$, note that $\inf_{|E|<2-\kappa}\im \msc(E_t)>c(\kappa)>0$ so that on $D(M)$ we have $\ell_{E}=\OO(1/N)$.  Therefore  on the rigidity event from \eqref{e:rigidity} and on $D(M)$,  
a Taylor expansion gives
\begin{multline}\left|\re L_N^{\bla_t}(E+\ii{\tilde\eta}+\ell_{E})
-\re L_N^{\bla_t}(E+\ii{\tilde\eta})\right|\leq\ell_{E}\left|\sum_i\frac{1}{E+\ii{\tilde\eta}-\lambda_i(t)}\right|+\OO\left(\ell_{E}^2\sum_i\frac{1}{|E+\ii{\tilde\eta}-\lambda_i(t)|^2}\right)\\
\leq C_2 N\ell_{E}.
\end{multline}
for some fixed $C_2=C_2(\kappa)$.
Hence there exists $\tilde M=\tilde M(\kappa,\e)$ such that $\mathbb{P}(C(\tilde M))>1-\e$ for large enough $N$.

Moreover,  Theorem \ref{thm:maxWigner} implies  $\mathbb{P}(G)\geq 1-\e$ for large enough $N$, and similarly (\ref{eqn:meso}) implies $\mathbb{P}(F)\geq 1-\e$ for $\theta<1/10$.

We now prove that for $\theta>0$ we have $\mathbb{P}(E)>1-\e$ for large enough $N$.  
First,  similarly to (\ref{eqn:Harmo3}),  with probability $ 1-\OO(N^{-20})$ we have, for any $|E|<2-\kappa+\delta$,  for our choice  $\delta=N^{-1+\theta}$, 
\begin{equation*}
\left|\Re L_N^{\bla_t}(E+\ii\eta)-\int_{[-2+\frac{\kappa}{2},2-\frac{\kappa}{2}]}\Re L_N^{\bla_t}(u)\cdot \frac{\eta}{\eta^2+(E-u)^2}\frac{\rd u}{\pi}\right|\leq 1.
\end{equation*}
Moreover,  from Theorem \ref{thm:maxWigner},  with probability $1-\oo(1)$ we have
\[
\int_{[-2+\frac{\kappa}{2},-2+\kappa-2\delta]\cup[2-\kappa+2\delta,2-\frac{\kappa}{2}]}\Re L_N^{\bla_t}(u)\cdot \frac{\eta}{\eta^2+(E-u)^2}\frac{\rd u}{\pi}\leq 10\log N\cdot \frac{\eta}{\delta}\leq 1
\]
for any $|E|<2-\kappa+\delta$.  With above two equations we obtain,  with probability $1-\oo(1)$, for any $|E|<2-\kappa+\delta$,
\begin{multline*}
\Re L_N^{\bla_t}(E+\ii\eta)\leq \int_{[-2+\kappa-2\delta,2-\kappa+2\delta]}\Re L_N^{\bla_t}(u)\cdot \frac{\eta}{\eta^2+(E-u)^2}\frac{\rd u}{\pi}+2\\
\leq
\sup_{|u|<2-\kappa+2\delta}\Re L_N^{\bla_t}(u)\cdot \int_{[-2+\kappa-\delta,2-\kappa+\delta]}\frac{\eta}{\eta^2+(E-u)^2}\frac{\rd u}{\pi}+2\leq(1+10\frac{\eta}{\delta})|\sup_{|u|<2-\kappa+2\delta}\Re L_N^{\bla_t}(u)|+2,
\end{multline*}
which concludes the proof that $\mathbb{P}(E)>1-\e$ for large enough $N$.

Finally, we prove that $\mathbb{P}(B)>1-\e$ for large enough $N$.
First,  we can easily ignore the contribution from eigenvalues close to the edge, because for any $|E|<2-\kappa$, we have
\begin{align*}
&  \sum_{j:||\gamma_j|-2|<\frac{\kappa}{10}}
\!\!\!\!\!
\!\!
\left|
(\log(E\!+\!\ii\eta\!-\!\mu_j(t))
\!-\!\log(E\!+\!\ii{\tilde\eta}\!-\!\mu_j(t)))
\!-\!
(\log(E\!+\!\ii\eta\!+\!\ell_{E}\!-\!\la_j(t))
\!-\!\log(E\!+\!\ii{\tilde\eta}\!+\!\ell_{E}\!-\!\la_j(t)))
\right|\\
&\leq 
\sum_{j:||\gamma_j|-2|<\frac{\kappa}{10}}{\tilde\eta}\left|
\frac{1}{E+\ii\eta-\mu_j(t)}-\frac{1}{E+\ii\eta+\ell_{E}-\la_j(t)}
\right|\\
&\qquad \qquad +C
\sum_{j:||\gamma_j|-2|<\frac{\kappa}{10}}{\tilde\eta}^2\left|
\frac{1}{|E+\ii\eta-\mu_j(t)|^2}+\frac{1}{|E+\ii\eta+\ell_{E}-\la_j(t)|^2}
\right| \\
&\leq N^{-1+\tilde \e},
\end{align*}
where the last inequality holds on $D$ and the rigidity event, for any fixed $\tilde\e>0$ and $N$ large enough.

Now fix $\alpha$ such that $\gamma_\alpha-2<\kappa/10$. From Proposition \ref{prop:homog}, with high probability we have, for any $k \in\llbracket\alpha N,(1-\alpha)N\rrbracket$ ,
\[
\Big|\la_k(t)-\mu_k(t)-\ell_{\gamma_k}\Big|<N^{-2+\tilde\e}.
\]
Thus choosing $k_0$ such that  $|E-\gamma_{k_0}|\leq \frac{C_3}{N}$,   $C_3=C_3(\kappa)$,  and $\ell_{\gamma_k} - \ell_E=\OO(\varphi^2(|k-k_0|+1)/N^2)$ (from (\ref{elem1}) and (\ref{eqn:MesoApprox})) we have
\[
\Big|\la_k(t)-\mu_k(t)-\ell_{E}\Big|<C_4\frac{N^{\tilde\e}+|k-k_0|}{N^2}
\]
for some $C_4=C_4(\kappa)$.
Note that for $k$ close to $k_0$, the above error term is much smaller than the regularization scale $\eta$, so that by Taylor expansion we obtain
\[
\sum_{\hat j\geq \alpha N}\left|
\log(E+\ii\eta-\mu_j(t))
-
\log(E+\ii\eta+\ell_{E}-\la_j(t))
\right|
\leq 
\frac{C_5}{N^2}\sum_{\hat j\geq \alpha N}\left|
\frac{N^{\tilde\e}+|j-k_0|}{\frac{|j-k_0|+1}{N}}
\right| 
\leq C_6
\]
for some constants  $C_5(\kappa,\e)$, $C_6(\kappa,\e)$.
This concludes the proof with the choice $M=C_6+1$.
\end{proof}

\subsection{Large moments.}\label{s:relaxLarge}\ 
We now prove quantitative relaxation for large moments of $\Im L_N$.  We denote $\overline{{\rm Tr}}h(M)={\rm Tr}h(M)-N\int h(x)\rho_{\rm sc}(x)\rd x$

\begin{lemma} \label{l:newcomparisonGD}
Let $H=H_0$ be a symmetric Wigner matrix and $\kappa,\e,A>0$.  Then there exists $N_0=N_0(\kappa,\e,A)$ such that for any $E\in[-2+\kappa,2-\kappa]$,  $1 \le p \le A\log N$ and  $t\in[\varphi^A,1]$ we have
\[
 \E \left[(\overline{{\rm Tr}}\mathds{1}_{[E,\infty)}(H_t))^{2p}\right]\leq \left(\frac{2}{e\pi^2}+\e\right)^p p^{p} (\log N)^{p}.
\]
The same bound holds for Hermitian Wigner matrices, with the prefactor $(\frac{1}{e\pi^2}+\e)^p$.
\end{lemma}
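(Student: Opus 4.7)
The sharp prefactor $\tfrac{2}{e\pi^2}$ is precisely the leading Gaussian constant in the identity $\E Z^{2p}\sim\sqrt{2}(2p/e)^p\sigma^{2p}$ for $Z\sim\mathcal N(0,\sigma^2)$ with $\sigma^2=\log N/\pi^2$, which is the asymptotic variance of $\overline{\mathrm{Tr}}\mathds 1_{[E,\infty)}(\mathrm{GOE})=\pi^{-1}\Im L^{\mathrm{GOE}}_N(E)$. The plan is to prove a sub-Gaussian tail estimate
\[
\P\bigl(|\Im L_N^{H_t}(E)|>x\bigr)\leq e^{-x^2(1-o(1))/(2\log N)}\qquad \text{for }(\log N)^{\tfrac12+2\e}\leq x\leq (\log N)^{1+\e},
\]
and then to integrate it against $x^{2p-1}$ to obtain the sharp $2p$-th moment. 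The identity $\pi\,\overline{\mathrm{Tr}}\mathds 1_{[E,\infty)}(H_t)=\Im L_N^{H_t}(E)$ reduces the lemma to this tail bound.

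I would first treat $\mathrm{GOE}$, which by invariance of \eqref{OU} has the same law as $\mathrm{GOE}_t$. \Cref{thm:GaussFluct} at $\beta=1$ with quadratic potential, applied at the regularized energy $z=E+\ii\eta_0$ with $\eta_0=(\log N)^{1000}/N$, gives
\[
\log\E\!\left[e^{\zeta\,\Im L_N^{\mathrm{GOE}}(z)}\right]=\tfrac12\zeta^2\log N(1+o(1))+O(|\zeta|),\qquad |\zeta|\leq C,
\]
where the sharp variance $\log N$ comes from the explicit kernel analysis in \Cref{lem:varshift}. A Chernoff optimization then yields the Gaussian tail for $\Im L_N^{\mathrm{GOE}}(z)$. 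To pass from $z=E+\ii\eta_0$ to the real energy, I would control the smoothing defect
\[
R(E)=\sum_k \arctan\!\tfrac{\eta_0}{\lambda_k-E}-N\!\int\arctan\!\tfrac{\eta_0}{x-E}\sc(x)\,dx,
\]
using rigidity (\Cref{l:lscl}) via Taylor expansion for $|\lambda_k-E|\gg\eta_0$ and overcrowding (\Cref{thm:Hoi}) for the near-$E$ contributions, establishing $|R(E)|\leq(\log N)^{\tfrac12+\e}$ with probability at least $1-e^{-\varphi^c}$. The transfer from $\mathrm{GOE}_t$ to $H_t$ is then provided by the DBM coupling: \Cref{prop:homog} in its overwhelming-probability form (\Cref{rem:betterprobabilitybound}), combined with the arguments of \Cref{l:relax} upgraded to the same overwhelming probability (once the inputs are taken at their sharp strength), yields
\[
\bigl|\Im L_N^{H_t}(E)-\Im L_N^{\mathrm{GOE}_t}(E)\bigr|\leq (\log N)^{\tfrac12+\e}
\]
with probability $\geq 1-e^{-\varphi^c}$ for $t\geq\varphi^C/N$. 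This shift is absorbed by the cutoff $x\geq(\log N)^{\tfrac12+2\e}$ in the tail, so the Gaussian bound for $\mathrm{GOE}$ propagates to $H_t$.

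To conclude, I would use $\E[X^{2p}]=2p\int_0^\infty x^{2p-1}\P(|X|>x)\,dx$ for $X=\Im L_N^{H_t}(E)$, split at $x_0=(\log N)^{\tfrac12+2\e}$: the trivial contribution on $[0,x_0]$ is bounded by $x_0^{2p}=(\log N)^{(1+4\e)p}$, negligible against $p^p(\log N)^p$ when $p\leq A\log N$ and $\e$ is small (a Stirling comparison shows $(\log N)^{4\e p}/p^p\to 0$ once $p\geq\log N$); on $[x_0,\infty)$, Laplace's method applied to the Gaussian tail yields the integral $\sqrt{p}\,(2p/e)^p(\log N)^p$ up to polynomial-in-$p$ factors, which after division by $\pi^{2p}$ and absorption of $\sqrt p$ into $(1+\e)^p$ gives the claimed bound $(\tfrac{2}{e\pi^2}+\e)^p p^p(\log N)^p$. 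The main obstacle is the \emph{sharpness} of the variance constant $\log N$ in the tail exponent: a weaker Laplace estimate with $C\log N$ in place of $\log N$ would only give $(C/\pi^2)^p p^p(\log N)^p$, insufficient for the downstream comparison in \Cref{l:newcomparison} and the Gaussian decay in \Cref{thm:Wigner}(ii). Preserving the exact constant demands the Gaussian (rather than merely exponential) moment growth in Theorem \ref{thm:GaussFluct} and \Cref{p:moments}, the sharp kernel computation in \Cref{lem:varshift}, and the passage through $\eta_0$ with factorial-decay overcrowding from \Cref{thm:Hoi}, each contributing only a factor $(1+\e)^p$ of slack.
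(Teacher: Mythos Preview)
Your strategy—sharp sub-Gaussian tail for $\mathrm{GOE}$ then DBM coupling to $H_t$—is sound in outline, but both halves are executed differently in the paper, and your coupling step hits a concrete obstruction. For the $\mathrm{GOE}$ step the paper does not go through Theorem~\ref{thm:GaussFluct} and de-regularization; it uses the exact $\mathrm{GUE}$ Laplace transform of the indicator at \emph{real} $E$ from \cite{ItsKra2008}, and transfers to $\mathrm{GOE}$ via the interlacing identity $\overline{\mathrm{Tr}}\mathds{1}_{[E,\infty)}(\mathrm{GUE})=\tfrac12\bigl[\overline{\mathrm{Tr}}\mathds{1}_{[E,\infty)}(G_1)+\overline{\mathrm{Tr}}\mathds{1}_{[E,\infty)}(G'_1)\bigr]+O(1)$ with $G_1,G'_1$ independent $\mathrm{GOE}$ \cite{Oro2010}, yielding $\P_{\mathrm{GOE}}(|X|>x)\leq Ce^{-\pi^2x^2/(2\log N)}$ directly, with no smoothing defect.

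The gap in your transfer is the claim that Proposition~\ref{l:relax} can be upgraded to overwhelming probability. Its proof rests on the event $\mathcal G_\eta=\{\max_E|L_N(z_t)|\leq(\log N)^\e\}$, whose complement by Corollary~\ref{lem:maxSmoothField} has probability only $\exp\bigl(-c(\log N)^{4\e/3+o(1)}\bigr)$. On the rigidity set one has $|X|\leq C\varphi$, so the contribution of $\mathcal G_\eta^c$ to $\E[X^{2p}]$ is at best $\varphi^{2p}e^{-(\log N)^{4\e/3}}\asymp\exp\bigl(2pC_0(\log\log N)^2-(\log N)^{4\e/3}\bigr)$, which for $p\asymp A\log N$ and small $\e$ diverges; raising the threshold in $\mathcal G_\eta$ toward $(\log N)^{3/4}$ pushes the exponent only to $(\log N)^{1-o(1)}$, still insufficient against $(\log N)(\log\log N)^2$. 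The paper circumvents this by coupling at the \emph{eigenvalue} level: on the set where Proposition~\ref{prop:homog} and rigidity hold (probability $\geq 1-e^{-\delta\varphi^\delta}$ by Remark~\ref{rem:betterprobabilitybound}), the event $\{\overline{\mathrm{Tr}}\mathds{1}_{[E,\infty)}(H_t)>u\}$ is translated via \eqref{eqn:equivRig} into an eigenvalue deviation and then split as
\[
\P_{\mathrm{GOE}}\bigl(\mu_j-\gamma_j>(1-\theta)(\gamma_k-\gamma_j)\bigr)+\P\bigl(|\Im L_N^{H}(\gamma_j^t)|>c\theta u\bigr)+\P\bigl(|\Im L_N^{\mathrm{GOE}}(\gamma_j^t)|>c\theta u\bigr).
\]
The first term carries the sharp Gaussian constant; the last two are \emph{mesoscopic} (at scale $\Im\gamma_j^t\asymp t\geq\varphi^{-A}$) and, via the pointwise bound \eqref{eqn:pointBD}, contribute at most $(C/\theta)^{2p}(\log\log N)^{2p}p^{3p/2}$ to the moment, which is $o\bigl(p^p(\log N)^p\bigr)$ after taking $\theta=(\log N)^{-1/100}$. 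The key structural point is that these correction terms have effective scale $\log\log N$, not $\log N$, so one never needs to control $\Im L_N$ at real $E$ for two coupled ensembles simultaneously with $e^{-\varphi^\delta}$ probability.
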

\begin{proof}
We first prove an equivalent concentration result for the Gaussian ensembles,  and then extend it to the Gaussian divisible ensemble thanks to our results on relaxation of the Dyson Brownian motion.\\

\noindent{\it First step: Concentration for GUE and GOE.} 
The proof first requires some concentration estimates for the GUE and GOE.
From \cite[Theorem 1.1]{ItsKra2008},  there is a $C_1$ such that uniformly in $\gamma$ in any compact set and $x\in[-2+\kappa,2-\kappa]$ we have
\begin{equation}\label{eqn:ItsKra}
\mathbb{E}_{{\rm GUE}}[e^{\gamma \pi \overline{{\rm Tr}}\mathds{1}_{[x,\infty)}(G)}]\leq C_1 e^{\frac{\gamma^2}{4}\log N}.
\end{equation}
With Markov's inequality, optimization in $\gamma$ gives
\begin{equation}\label{eqn:optimum}
\mathbb{P}_{\rm GUE}(|\overline{{\rm Tr}}\mathds{1}_{[E,\infty)}(G)|>x)\leq C_1e^{\frac{\gamma^2}{4}\log N-\gamma \pi x}\leq
C_1e^{-\frac{\pi^2x^2}{\log N}}\mathds{1}_{x\leq \frac{B\log N}{2}}+C_1e^{\frac{B^2\pi^2}{4}\log N-B\pi^2x}\mathds{1}_{x\geq \frac{B\log N}{2}}
\end{equation}
for any fixed, arbitrarily large $B$,  uniformly in $-2+\kappa\leq E\leq 2-\kappa$, with $C_1=C_1(B,\kappa)$.

For the GOE, we follow \cite[Proof of Lemma 23]{Oro2010} and write the equality in law
\begin{equation}\label{eqn:magic}
\overline{{\rm Tr}}\mathds{1}_{[x,\infty)}(G_2)=\frac{1}{2}[\overline{{\rm Tr}}\mathds{1}_{[x,\infty)}(G_1)+\overline{{\rm Tr}}\mathds{1}_{[x,\infty)}(G'_1)]+X
\end{equation}
where $G_2$ is a GUE,  $G_1$ and $G'_1$ are independent GOE, and $X$ is a random variable satisfying $|X|\leq 2$ almost surely.
Together with (\ref{eqn:ItsKra}) this implies 
\[
\left(\mathbb{E}_{{\rm GOE}}[e^{\gamma \pi\overline{{\rm Tr}}\mathds{1}_{[x,\infty)}(G)}]\right)^2\leq e^{4\gamma\pi}\mathbb{E}_{{\rm GUE}}[e^{2\gamma \overline{{\rm Tr}}\mathds{1}_{[x,\infty)}(G)}]\leq C_2e^{\gamma^2\log N}.
\]
Similarly to (\ref{eqn:optimum}), we conclude that 
\begin{equation}\label{eqn:optimum2}
\mathbb{P}_{\rm GOE}(|\overline{{\rm Tr}}\mathds{1}_{[E,\infty)}(G)|>x)\leq 
C_2e^{-\frac{\pi^2x^2}{2\log N}}\mathds{1}_{x\leq B\log N}+C_2e^{\frac{B^2\pi^2}{2}\log N-B\pi^2x}\mathds{1}_{x\geq B\log N}
\end{equation}
for any fixed, arbitrarily large $B$,  uniformly in $-2+\kappa\leq E\leq 2-\kappa$, with $C_2=C_2(B,\kappa)$.\\

\noindent {\it Second step: The weakly Gaussian-divisible ensemble.}\ We only consider the symmetric universality class (the proof for the Hermitian one is identical from now),  a weakly Gaussian divisible matrix $H_t$ with spectrum $\bla(t)$ coupled with the spectrum $\bnu(t)$ of $G$, a GOE matrix.  Define the good set
\begin{multline*}
\mathscr{G}=\bigcap_{\kappa N\leq k\leq (1-\kappa)N}\left\{\Big|\la_k(t)-\mu_k(t)-\frac{\Im L^H_N(\gamma_k^t)-\Im L^G_N(\gamma_k^t)}{N\im \msc(\gamma_k^t)}\Big|\leq\frac{N^{\e}}{N^2 t}\right\}\\
\bigcap_{1\leq j\leq N,s\in\{0,t\}}
\left\{
| \lambda_j(s)- \gamma_j|+|\mu_j(s)-\gamma_j|
 \le 2 \varphi\, \hat j^{-\frac{1}{3}} N^{-\frac{2}{3}} 
\right\}
.
\end{multline*}
From Remark \ref{rem:betterprobabilitybound} and (\ref{e:rigidity}) we have $\mathbb{P}(\mathscr{G})\geq 1-e^{-\delta\varphi^\delta}$ for a fixed $\delta>0$.
For $p=\OO(\log N)$ this implies
\[
\E \left[ |\overline{\rm Tr}\mathds{1}_{[x,\infty)}(H_t)|^{2p} \mathds{1}_{\mathscr{G}^{\rm c}}\right]\leq N^{2p}\cdot e^{-\delta \varphi^\delta/2}\ll1.
\]
We now bound
\[
\E[|\overline{\rm Tr}\mathds{1}_{[x,\infty)}(H_t)|^{2p}\mathds{1}_{\mathscr{G}}]=2p\int_0^{100\varphi}u^{2p-1}
\left(\mathbb{P}\left(\overline{\rm Tr}\mathds{1}_{[x,\infty)}(H_t)>u,\mathscr{G}\right)+\mathbb{P}\left(\overline{\rm Tr}\mathds{1}_{[x,\infty)}(H_t)<-u,\mathscr{G}\right)\right)\rd u.
\]
We  only consider $\mathbb{P}\left(\overline{\rm Tr}\mathds{1}_{[x,\infty)}(H_t)>u,\mathscr{G}\right)$,  as the same proof applies to the other term.  We define $\gamma_k$ the quantile closest to $x$,  $n=\lfloor  u/\pi\rfloor$ and
$j=k-n+2$. 
With (\ref{eqn:equivRig}) and the definition of $\mathscr{G}$, there is some $c=c(\kappa)>0$ such that for any $\theta\in[0,1/10]$ (eventually we will choose $\theta\to 0$) we have, 
\begin{multline*}
\mathbb{P}\left(\overline{\rm Tr}\mathds{1}_{[x,\infty)}(H_t)>u,\mathscr{G}\right)\leq
\mathbb{P}\left(\lambda_{j}(t)-\gamma_{j}>\gamma_k-\gamma_{j},\mathscr{G}\right)\\
\leq
\mathbb{P}_{\rm GOE}\left(\mu_{j}(t)-\gamma_{j}>(1-\theta)(\gamma_k-\gamma_{j})\right)
+
\mathbb{P}(|\Im L^H_N(\gamma_{j}^t)|>c\theta u)
+
\mathbb{P}(|\Im L^G_N(\gamma_{j}^t)|>c\theta u).
\end{multline*}
We first bound the above GOE probability for different ranges of $u$ (remember $j=j(u)$).  From (\ref{eqn:equivRig2}) and  (\ref{eqn:optimum2}) we have the following: For any $\e,\kappa,B>0$ there exists $C_3(\e,\kappa,B)>0$ such that for any $x\in[-2+\kappa,2-\kappa]$ and $u\in[0,100\varphi]$ we have 
\[
\mathbb{P}_{\rm GOE}\left(\mu_{j}(t)-\gamma_{j}>(1-\theta)(\gamma_k-\gamma_{j})\right)\leq 
C_3 e^{-(1-\e)(1-\theta)^2 \frac{u^2\pi^2}{2\log N}}\mathds{1}_{u<B\log N}+
C_3 e^{\frac{B^2\pi^2}{2}\log N-B(1-\theta)(1-\e)\pi^2u}\mathds{1}_{u\geq B\log N}.
\]
This implies
\begin{multline}
2p\int_0^{100\varphi}u^{2p-1}\mathbb{P}_{\rm GOE}\left(\mu_{j}(t)-\gamma_{j}>(1-\theta)(\gamma_k-\gamma_{j})\right)\rd u\\
\leq C_3 2p\int_0^\infty   u^{2p-1}e^{-(1-\e)(1-\theta)^2 \frac{u^2\pi^2}{2\log N}}\rd u
+
C_3 2p\int_{B\log N}^{\infty}u^{2p-1}e^{\frac{B^2\pi^2}{2}\log N-B(1-\theta)(1-\e)\pi^2u}\mathds{1}_{x\geq B\log N}\rd u.
\end{multline}
The first term is bounded with  $C_4(B,\kappa,\e)\left(\frac{2}{e\pi^2}+\alpha(\e,\theta)\right)^p p^{p} (\log N)^{p}$ by induction on $p$,
where $\alpha(\e,\theta)\to 0$ as $\e,\theta\to 0$.

For the second term, if $p<B(\log N)/10$ it is bounded with
\[C_32pe^{-\frac{B^2\pi^2(1-\theta)(1-\e)\log N}{2}}(B\log N)^{2p}\leq 2 C_3\Big(\frac{2}{e\pi^2}+\nu(\theta,\e)\Big)^p p^{p+1} (\log N)^{p},\]
where $\nu(\theta,\e)\to 0$ as $\theta,\e\to 0$,  and we have used $\sup_{x>0}x^{p}e^{-\frac{x\pi^2}{2}}=p^{p}(2/(e\pi^2))^{p}$. 
We have therefore proved that for any $\alpha>0$,  for $\e\leq \e_0(\kappa,\alpha)$ and $\theta\leq \theta_0(\kappa,\alpha)$,  $p<B(\log N)/10$ and $N\geq N_1(\alpha,\kappa,B)$ we have
\[
2p\int_0^{100\varphi}u^{2p-1}\mathbb{P}_{\rm GOE}\left(\mu_{j}(t)-\gamma_{j}>(1-\theta)(\gamma_k-\gamma_{j})\right)\rd u\leq \big(\frac{2}{e\pi^2}+\alpha\big)^pp^p(\log N)^p.
\]
We now consider
$
2p\int_0^{100\varphi}u^{2p-1}\mathbb{P}(|\Im L^{\rm GOE}_N(\gamma_{j}^t)|>c\theta u)\rd u.
$
Note that this could not be directly interpreted as a moment because $j=j(u)$.
A direct calculation based on (\ref{eqn:pointBD}) gives, for any $p<D(\log N)/10$,
\[
2p\int_0^{100\varphi}u^{2p-1}\mathbb{P}(|\Im L^{\rm GOE}_N(\gamma_{j}^t)|>c\theta u)\rd u\leq \frac{C_7^p}{\theta^{2p}}(\log\log N)^{2p}p^{\frac{3p}{2}},
\]
where $C_7=C_7(\kappa,D)$. The same estimate holds for $
2p\int_0^{100\varphi}u^{2p-1}\mathbb{P}(|\Im L^H_N(\gamma_{j}^t)|>c\theta u)\rd u.
$
We choose $\theta\to 0$ satisfying $\theta\geq (\log N)^{-1/100}$, so that for any $p<D(\log N)/10$ and $N\geq N_2(\kappa,\alpha,D)$ we have
\[
\frac{C_7^p}{\theta^{2p}}(\log\log N)^{2p}p^{\frac{3p}{2}}\leq \big(\frac{2}{e\pi^2}+\alpha\big)^pp^p(\log N)^p.
\]
This concludes the proof.
\end{proof}

\section{Moment Matching}\label{sec:MomMat}

This section contains moment matching lemmas that are used in the next section to establish our main results for Wigner matrices. \Cref{s:mmreal} provides a comparison result for the real part of the log-characteristic polynomial. \Cref{s:mmdeviations} establishes results for the deviations of the eigenvalues from their classical locations. 

\subsection{Real part of log-characteristic polynomial.}\label{s:mmreal}\ 
%For $z \in \mathbb{C}$, we use the notation $z = E + \I \eta$ for $E, \eta \in \RR$. 
Given parameters $r>0$ and $x \in [N^{-1}, 1]$, we define the line segment
\begin{equation}
 \mathcal L_{r,x } = \{ z = E + \I \eta \in \mathbb{H} : |E| < 2 -r, \eta = x   \}.
\end{equation}
Given $M \in \matn$ with eigenvalues $\{ \lambda_i \}_{i=1}^N$, 
we will study the observable
\begin{equation}
	\label{e:maxa}
	\max_{i \in J} \left( \sum_{j} \log | z_i  - \lambda_j | - N \int_{\R} \log |z_i - \lambda|\, \rd\sc(\lambda) \right),
\end{equation} 
where $J$ is an index set satisfying $|J| \le CN$ for some constant $C>0$ and the points $\{z_i \}_{i\in J}$ satisfy $z_i \in \mathcal L_{r,x}$.
 We set
\begin{equation}\label{alphadef}
	\alpha_i   =  \sum_{j} \log | z_i  - \lambda_j | - N \int_{\R} \log |z_i - \lambda|\, d\sc(\lambda),\qquad 
	\bm \alpha = (\alpha_i)_{i \in J}.
\end{equation} 
 Using the fundamental theorem of calculus, we write \begin{equation}
\log | z_i  - \lambda_j |   = \log | \lambda_j - E - \I N^{100}| - \Im \int_{x}^{N^{100}} \frac{d\eta}{\lambda_j - E - \I\eta }.
\end{equation}
Then 
\begin{align}\label{may183}
\alpha_i  &= \sum_{j} \log | \lambda_j - E - \I N^{100}| - \Im \int_{x}^{N^{100}} N m_N(E + \I \eta) \, d\eta- N \int_{\R} \log |z_i - \lambda|\, d\sc(\lambda).
\end{align}
We also define a regularized version of $\alpha_i$: 
\begin{align*}
\tilde \alpha_i &= \sum_{j} \log | - E - \I N^{100}| - \Im \int_{x}^{N^{100}} N m_N(E + \I \eta) \, d\eta- N \int_{\R} \log |z_i - \lambda|\, d\sc(\lambda).
\end{align*}
As before, we write $\bm {\tilde \alpha} = (\tilde \alpha_i)_{i \in J}$, and suppress the dependence of $\bm {\tilde \alpha}$ on $x$ and $r$ in the notation. 
The following lemma shows that $\max_{i\in J}\tilde \alpha_i$ is a good substitute for $\max_{i \in J}  \alpha_i$ (that is, \eqref{e:maxa}).

\begin{lemma}\label{alphaapprox}
Let $H$ be a Wigner matrix and fix $r >0$. 
Then there exists $C(r)>0$ such that for all $x \in [N^{-1}, 1]$,
\begin{equation}\label{e:maxapprox}
\sup_{z \in \mathcal L_{r,x} } \P \left(\| \bm \alpha- \bm {\tilde \alpha} \|_\infty  > C N^{-10} \right) \le C N^{-D}.
\end{equation}
\end{lemma}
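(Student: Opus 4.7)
The plan is to observe that the difference $\alpha_i - \tilde\alpha_i$ telescopes into an essentially trivial sum, whose smallness follows from the fact that the regularization scale $N^{100}$ is enormous compared to the size of the eigenvalues. Concretely, subtracting the definitions of $\alpha_i$ and $\tilde\alpha_i$, every term involving $m_N$ and the integral against the semicircle density cancels, and we are left with
\[
\alpha_i - \tilde\alpha_i
= \sum_j \bigl(\log|\lambda_j - E_i - \ii N^{100}| - \log|E_i + \ii N^{100}|\bigr)
= \sum_j \log\left|1 - \frac{\lambda_j}{E_i + \ii N^{100}}\right|,
\]
where $z_i = E_i + \ii x$. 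Note the right-hand side depends on $z_i$ only through its real part.

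The main step is then to control the summands deterministically on a high-probability event. Since $|E_i + \ii N^{100}| \geq N^{100}$, it suffices to bound $\max_j |\lambda_j|$. By the rigidity bound \eqref{e:rigidity} (applied at indices $k=1$ and $k=N$), together with the tail assumption \eqref{eqn:tail}, we have that $\max_j|\lambda_j| \leq 3$ holds with overwhelming probability, i.e.\ with probability at least $1 - C(D) N^{-D}$ for any $D>0$. On this event, $|\lambda_j/(E_i + \ii N^{100})| \leq 3 N^{-100}$, and by the elementary inequality $|\log|1-w|| \leq 2|w|$ for $|w|\le 1/2$, each summand is bounded by $6 N^{-100}$. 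Summing over the $N$ eigenvalues yields $|\alpha_i - \tilde\alpha_i| \leq 6 N^{-99}$ uniformly in $i \in J$, which is much smaller than $N^{-10}$.

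I do not expect any real obstacle: the statement is essentially a sanity check that shifting the large regularization parameter $N^{100}$ to $\infty$ in the first term of \eqref{may183} costs nothing, and the bound holds in fact deterministically on the overwhelming-probability event $\{\max_j|\lambda_j|\le 3\}$. The only mild subtlety is that the $\|\cdot\|_\infty$ norm runs over $|J| \leq CN$ indices, but since the deterministic bound already holds simultaneously for all $i \in J$ on the same event, no union bound over $J$ is needed; the probability estimate \eqref{e:maxapprox} follows directly, with the constant $C$ absorbing the dependence on $r$ (which does not actually enter the computation).
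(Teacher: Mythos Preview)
Your proof is correct and essentially identical to the paper's: the paper's one-line proof says to differentiate $y\mapsto \log|y - E - \ii N^{100}|$, use rigidity \eqref{e:rigidity}, and integrate (fundamental theorem of calculus), which is just an infinitesimal rephrasing of your direct bound $|\log|1-w||\le 2|w|$ applied to $w=\lambda_j/(E_i+\ii N^{100})$. The only cosmetic remark is that the reference to the tail assumption \eqref{eqn:tail} is unnecessary, since rigidity \eqref{e:rigidity} alone already gives $\max_j|\lambda_j|\le 3$ with overwhelming probability.
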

\begin{proof}
This follows from differentiating $y\mapsto \log |  y - E - \I N^{100}|$ in $y$, then using the eigenvalue rigidity estimate \eqref{e:rigidity} and the fundamental theorem of calculus.
\end{proof}

Given a vector $\bm w \in \R^{|J|}$ and parameters $\delta, \nu >0$, we introduce the regularized maximum observable denoted
\beq
F(\bm w) = F _{\delta,\nu}(\bm w) = \frac{1}{\delta} \log \left( \sum_{i \in J} \exp \left( \delta  \nu w_i \right)\right).
\eeq
The unusual notation $\delta$ for an inverse temperature aims at avoiding confusion with the $\beta$-ensembles.
The following lemma is elementary and its proof is omitted.
\bel\label{l:entropy} For any $\bm w \in \R^{|J|}$, we have 
\beq\label{e:entropy}
\left|  \sup_{i \in J } \nu w_i  -  F_{\delta}(\bm w) \right|  < \frac{2 \log N}{\delta}.\eeq
\eel

For the rest of this section, we fix $\delta = (\log N)^2$ and $\nu = 1/ \log N$, so that
\begin{equation}
	F (\bm {\tilde \alpha}) = \frac{1}{\delta} \left(  \sum_{i \in J} \exp(\delta \nu \tilde \alpha_i)  \right)
\end{equation} 
 approximates \eqref{e:maxa} with $O\left((\log N)^{-1}\right)$ error, with high probability, by \eqref{e:entropy} and \eqref{e:maxapprox}.

\begin{definition}
For any $w \in [0,1]$, $M = \left( m_{ij} \right)_{1\le i,j \le N} \in \matn$, and indices $a,b \in \unn{1}{N}$, we define $\Theta^{(a,b)}_w M  \in \matn$ as follows. 
If $(i,j) \notin \{ (a,b), (b,a) \}$, let the $(i,j)$ entry of $\Theta^{(a,b)}_w M$ {be equal to $m_{ij}$.}  If $(i,j) \in \{ (a,b), (b,a) \}$, then let the $(i,j)$ entry equal  $w m_{a,b} = w m_{b,a}$. 
We also set $\Theta^{(a,b)}_w  G(z)  = (\Theta^{(a,b)}_w M - z)^{-1}$.
\end{definition}
We recall that $\phi$ was defined in \eqref{eqn:phi2}.
\begin{lemma}
Let $H$ be a Wigner matrix and fix $D,r>0$.
There exists $C(D,r)>0$ such that 
\begin{equation}\label{e:Gbelow}
\sup_{x\in [N^{-1}, 1]} \sup_{z \in \mathcal L_{r,x}} \P \left(\sup_{w \in [0,1]} \sup_{a,b, i \in \unn{1}{N}} | \Theta^{(a,b)}_w G_{ii}(z)| >  C  \varphi^{10} \right) \le C N^{-D}.
\end{equation} 

\end{lemma}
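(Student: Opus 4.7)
The plan is to express $\Theta^{(a,b)}_w G$ as an explicit rational function of $G$ via a Sherman--Morrison--Woodbury-type identity, using the fact that the perturbation $\Theta^{(a,b)}_w H - H$ has rank at most two, and then combine this with the local law \eqref{e:gentries} and the tail bound \eqref{eqn:tail}. The bounds will be uniform in $w \in [0,1]$ directly from the explicit formula, so no discretization of $w$ is required.

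First I would introduce the event $\mathcal{E}$ on which
\[
\max_{a,b \in \unn{1}{N}} |H_{ab}| \le N^{-1/2} \varphi \quad \text{and} \quad \max_{i,j \in \unn{1}{N}} |G_{ij}(z)| \le C\varphi
\]
both hold. The first bound follows from \eqref{eqn:tail} with a union bound over $N^2$ entries, and the second follows from \eqref{e:gentries} combined with $|\msc(z)| \le 1$ and $\Im\msc(z) \le C$ from \Cref{l:sc} (valid since $z \in \mathcal{L}_{r,x}$ with $\eta = x \ge N^{-1}$). Both estimates fail with probability at most $c^{-1}N^2 \exp(-\varphi^c)$, which is bounded by $CN^{-D}$ for any fixed $D$ and $N$ large enough, since $\varphi^c = \exp(cC_0(\log\log N)^2)$ outgrows every polynomial in $\log N$.

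Next, on $\mathcal{E}$, for $a \ne b$ I would apply \eqref{resolventexpansion} with $M = H$ and $\widetilde M = \Theta^{(a,b)}_w H = H - (1-w)H_{ab}(e_a e_b^\top + e_b e_a^\top)$, obtaining, with $c := (1-w)H_{ab}$,
\[
\Theta^{(a,b)}_w G = G + c\, G (e_a e_b^\top + e_b e_a^\top)\, \Theta^{(a,b)}_w G.
\]
Reading off the $(a,i)$ and $(b,i)$ entries and using $G_{ab}=G_{ba}$ yields the closed $2 \times 2$ linear system
\[
\begin{pmatrix} 1 - c G_{ab} & -cG_{aa} \\ -cG_{bb} & 1 - cG_{ab} \end{pmatrix} \begin{pmatrix} \Theta^{(a,b)}_w G_{ai} \\ \Theta^{(a,b)}_w G_{bi} \end{pmatrix} = \begin{pmatrix} G_{ai} \\ G_{bi} \end{pmatrix}.
\]
On $\mathcal{E}$ one has $|cG_{ij}| \le N^{-1/2}\varphi^2 = o(1)$ uniformly in $w \in [0,1]$, so the determinant is at least $1/2$ for large $N$ and the inverse has norm $O(1)$. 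Solving gives $|\Theta^{(a,b)}_w G_{ai}|, |\Theta^{(a,b)}_w G_{bi}| \le C\varphi$; substituting into the companion identity
\[
\Theta^{(a,b)}_w G_{ii} = G_{ii} + c\bigl(G_{ia}\, \Theta^{(a,b)}_w G_{bi} + G_{ib}\, \Theta^{(a,b)}_w G_{ai}\bigr)
\]
produces $|\Theta^{(a,b)}_w G_{ii}| \le C\varphi + CN^{-1/2}\varphi^3 \le C\varphi^{10}$ for $N$ large enough. The diagonal case $a=b$ is handled identically using the rank-one Sherman--Morrison formula applied to $\Theta^{(a,a)}_w H = H - (1-w)H_{aa} e_a e_a^\top$, with denominator $1 - cG_{aa}$ bounded away from zero by the same estimates.

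These bounds depend on $a, b, i, w$ only through quantities controlled on the single event $\mathcal{E}$, so taking $\sup_{w \in [0,1]} \sup_{a,b,i}$ preserves the inequality and yields the claimed probability estimate $\P(\mathcal{E}^c) \le CN^{-D}$. There is no serious obstacle beyond the bookkeeping above; the key structural observation is that the rank-two nature of the perturbation produces an exact rational representation of $\Theta^{(a,b)}_w G$ whose denominator stays away from zero uniformly in $w$ on $\mathcal{E}$, making the $w$-dependence trivial to handle.
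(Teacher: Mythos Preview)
Your proof is correct and takes essentially the same approach as the paper: the paper simply invokes \eqref{e:gentries} for $w=1$ and then says the general case ``can be deduced from the unperturbed case using a resolvent expansion,'' referring to \cite[(4.54)]{LanLopMar2018} for details. What you have done is spell out that resolvent expansion explicitly, setting up and solving the $2\times 2$ linear system arising from the rank-two structure of $\Theta^{(a,b)}_w H - H$, and verifying that its determinant stays bounded away from zero on the high-probability event $\mathcal{E}$; this is precisely the content of the cited reference, and your execution is clean.
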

\begin{proof}
For the unperturbed matrices, $w=1$, this is an immediate consequence of \eqref{e:gentries}. The statement for a general rank-one perturbations can be deduced from the unperturbed case using a resolvent expansion; see \cite[(4.54)]{LanLopMar2018} and the following material for details.
\end{proof}

\begin{lemma}
Let $H$ be a Wigner matrix and fix $D,r>0$. There exists $C(D,r) >0$ such that for all $x \in [N^{-1}, 1]$, 
\begin{equation}\label{e:dop}
 \P \left(\sup_{w \in [0,1]} \sup_{k \in \unn{1}{5}} \sup_{a,b, c,d \in \unn{1}{N}} \left|\partial^k_{ab} \tilde \alpha_i( \Theta^{(c,d)}_w H) \right| > C \varphi^{ 11 k }\right) \le C N ^{-D}, \end{equation}
 and
\begin{equation}\label{e:das}
\sup_{w \in [0,1]} \sup_{k \in \unn{1}{5}} \sup_{a,b, c,d \in \unn{1}{N}} \left|\partial^k_{ab} \tilde \alpha_i( \Theta^{(c,d)}_w H) \right| \le C N^{ C}
\end{equation}
almost surely for all $N$.

\end{lemma}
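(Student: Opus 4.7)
The plan is to reduce to differentiating $\log\det$ and bound the resulting resolvent polynomial via the isotropic local law. Recall
\[
\tilde\alpha_i = N\log|{-E-\I N^{100}}| - \Im\int_x^{N^{100}} Nm_N(E+\I\eta)\,d\eta - N\int_{\R}\log|z_i-\lambda|\,d\sc(\lambda),
\]
whose first and third summands are independent of $H$. The antiderivative identity $\partial_\eta \tfrac12\log((\lambda-E)^2+\eta^2)=\Im[(\lambda-E-\I\eta)^{-1}]$ gives
\[
-\Im\int_x^{N^{100}} Nm_N(E+\I\eta)\,d\eta = \sum_{j=1}^N\log|z_i-\lambda_j| - \sum_{j=1}^N\log|z_\infty-\lambda_j|,\qquad z_\infty \deq E+\I N^{100}.
\]
By Jacobi's formula, $\partial_{ab}\log\det(zI-M) = \tr(G(z)\Delta^{(ab)}) = 2G_{ab}(z)/(1+\delta_{ab})$, and iterating \eqref{e:gpartial} shows that $\partial_{ab}^k\sum_j\log|z-\lambda_j|$ equals the real part of a sum of at most $C^kk!$ products of $k$ entries of $G(z)$ whose row and column indices all lie in $\{a,b\}$. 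When differentiating $\tilde\alpha_i(\Theta^{(c,d)}_w H)$ in the direction $\Delta^{(ab)}$, the chain rule contributes only an overall factor of $w^k\in[0,1]$ if $\{a,b\}=\{c,d\}$, so it suffices to bound the same polynomial expressions with $G$ replaced by the resolvent of $\Theta^{(c,d)}_w H$, evaluated at $z_i$ and at $z_\infty$.

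For \eqref{e:dop}, I would extend \eqref{e:Gbelow} to off-diagonal entries of $\Theta^{(c,d)}_w G$. The rank-two identity
\[
\Theta^{(c,d)}_w G(z) = G(z) + (w-1)H_{cd}\,G(z)\Delta^{(cd)}\Theta^{(c,d)}_w G(z),
\]
combined with the isotropic local law \eqref{e:gentries} at scale $\eta = x\ge N^{-1}$ and the Gaussian tail \eqref{eqn:tail} on $H_{cd}$, yields $|\Theta^{(c,d)}_w G_{pq}(z)|\le C\varphi^{10}$ uniformly in $p,q\in\unn{1}{N}$, $w\in[0,1]$ and $z\in\mathcal{L}_{r,x}$ on an event of probability $\ge 1-CN^{-D-4}$ (by the argument following \cite[(4.54)]{LanLopMar2018}). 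The $z_\infty$ contribution is $O(C^kk!N^{-100k}) = O(N^{-50})$ since $|G_{pq}(z_\infty)|\le N^{-100}$ trivially. Hence for $k\le 5$ and $N$ sufficiently large,
\[
\bigl|\partial_{ab}^k\tilde\alpha_i(\Theta^{(c,d)}_w H)\bigr|\le C^kk!\varphi^{10k} + N^{-50}\le C\varphi^{11k},
\]
and a union bound over $a,b,c,d\in\unn{1}{N}$ loses only a factor $N^4$, yielding \eqref{e:dop} for arbitrary $D$.

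For \eqref{e:das}, no probabilistic input is required: the elementary bound \eqref{e:gtrivial} gives $|\Theta^{(c,d)}_w G_{pq}(z_i)|\le (\Im z_i)^{-1} = x^{-1}\le N$ for every realization, and substitution into the same polynomial expressions yields $|\partial_{ab}^k\tilde\alpha_i(\Theta^{(c,d)}_w H)|\le C^kk!N^k\le N^C$ almost surely for $k\le 5$, with $C$ depending only on $r$. The main step to execute carefully is the extension of \eqref{e:Gbelow} from diagonal to off-diagonal entries of $\Theta^{(c,d)}_w G$; given the isotropic local law this is a short Woodbury-type computation, but it is the only place where the rank-two perturbation structure of $\Theta^{(c,d)}_w$ needs nontrivial handling.
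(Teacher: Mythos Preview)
Your argument is correct but takes a different route from the paper. The paper keeps the integral representation and differentiates under the integral: it computes $\partial_{ab}^k(Nm_N)$ as a sum over a free index, applies the Ward identity \eqref{e:ward} to collapse that sum into $\eta^{-1}(|G_{aa}|+|G_{bb}|+|G_{ab}|)^k$, and then integrates $\eta^{-1}$ over $[x,N^{100}]$ to pick up the extra factor $\log N\le\varphi$. You instead evaluate the integral first via the fundamental theorem of calculus, turning $\tilde\alpha_i$ (up to constants) into the difference of $\sum_j\log|z-\lambda_j|$ at the two endpoints $z_i$ and $z_\infty$, and then differentiate $\log\det$ directly. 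This eliminates the free summation index, so no Ward identity is needed and no integration in $\eta$ is performed; the bound comes purely from the pointwise control of finitely many resolvent entries at the single scale $\Im z_i = x$. Your route is slightly more elementary and gives the marginally sharper bound $C\varphi^{10k}$ rather than $C\varphi^{1+10k}$. Both approaches ultimately need the off-diagonal entries of $\Theta^{(c,d)}_w G$ bounded by $C\varphi^{10}$ (the paper invokes $|G_{ab}|$ for the higher derivatives without comment); you correctly flag this as the one step requiring care, and the Woodbury-type perturbation argument you outline is exactly what is needed. One small remark: the chain-rule aside is unnecessary, since in this paper $\partial_{ab}^k\tilde\alpha_i(\Theta^{(c,d)}_w H)$ denotes the $k$-th derivative of $\tilde\alpha_i$ as a function on $\mathrm{Mat}_N$, evaluated at the point $\Theta^{(c,d)}_w H$, so no composition with $\Theta^{(c,d)}_w$ is being differentiated.
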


\begin{proof} The first and third terms in the definition of $\tilde \alpha_i$ are constants and have derivative zero.
For the second, we see using \eqref{e:gpartial} that
\begin{equation}
\partial_{ab} N m_N = \partial_{ab} \sum_i G_{ii}  =  - \sum_i G_{ia} G_{bi}.
\end{equation} 
Therefore
\begin{equation}
  N \left|\partial_{ab} m_N \right| \le \sum_i \left|G_{ia} G_{bi}\right| \le C \sum_{i} \left(|G_{ia}|^2 + |G_{bi}|^2\right) \le \frac{C}{\eta} \left(
|G_{aa}| + |G_{bb}| \right),
\end{equation}
where we used \eqref{e:ward} in the last inequality. Similarly, for the higher derivatives we have 
\begin{equation}
N \left|\partial^k_{ab} m_N \right| \le \frac{C}{\eta}
 \left( |G_{aa}| + |G_{bb}| + |G_{ab}|  \right)^k \le \frac{C\varphi^{k 10} }{\eta}
\end{equation}
by \eqref{e:Gbelow}.
Then
\begin{equation}
\left|\partial^k_{ab} \tilde \alpha_i \right| = \left|\partial^k_{ab} \Im \int_{N^{-1}}^{N^{100}} N m_N(E + \I \eta) \, d\eta \right| \le C \varphi^{k 10}  \int_{N^{-1}}^{N^{100}} \frac{1}{\eta}\, d \eta \le C \varphi^{1 + k 10},
\end{equation}
where we increased the value of $C$. The remaining claim is similar and uses the trivial bound $|G_{ij}| \le \eta^{-1}$ from \eqref{e:gtrivial}. This completes the proof.
\end{proof}

\begin{lemma}\label{Fderiv}
Let $H$ be a Wigner matrix and fix $D,r>0$. Then there exist $C(D,r) >0$ such that for all $x \in [N^{-1}, 1]$
\beq \label{e:TF}
\P \left(\sup_{ k \in \unn{1}{5}} \sup_{a,b,c,d \in \unn{1}{N} } \sup_{w \in [0,1]} \left|  \partial_{ab}^k F \left(  \bm {\tilde \alpha} (  \Theta^{(c,d)}_w H) \right) \right| > C \varphi^{12 k }\right) \le C N^{-D}.
\eeq
Also, we have almost surely that for all $x \in [N^{-1}, 1]$
\beq \label{e:as}
\sup_{ k \in \unn{1}{5}} \sup_{a,b,c,d \in \unn{1}{N} }\sup_{w \in [0,1]}  \left|  \partial_{ab}^k F \left(  \bm {\tilde \alpha} (  \Theta^{(c,d)}_w H) \right)\right| \leq C N^{C}. \eeq
\end{lemma}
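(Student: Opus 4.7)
The plan is to combine the iterated chain rule for $F \circ \bm{\tilde\alpha}$ with the bounds on derivatives of $\bm{\tilde\alpha}$ already established in \eqref{e:dop} and \eqref{e:das}. Throughout, fix indices $c,d \in \unn{1}{N}$ and $w \in [0,1]$, and write $\widetilde H = \Theta^{(c,d)}_w H$; all bounds below will be uniform in these parameters, so we suppress them.

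First I would analyze the $\bm w$-derivatives of $F$ itself. Writing $p_i(\bm w) = e^{\delta \nu w_i}/\sum_j e^{\delta \nu w_j}$ for the softmax weights, direct computation gives $\partial_{w_i} F = \nu p_i$, and by induction the $r$-th mixed partial $\partial_{w_{i_1}}\cdots\partial_{w_{i_r}} F$ is a linear combination, with bounded integer coefficients, of terms of the form $\nu (\delta\nu)^{r-1} \prod_{\ell} p_{j_\ell}$ with $\sum_\ell 1 \le r$. Since $0 \le p_j \le 1$ and $\sum_j p_j = 1$, we obtain the uniform estimate
\begin{equation}\label{e:Fwbound}
\sup_{\bm w \in \R^{|J|}} \sum_{i_1,\ldots,i_r \in J} \absb{\partial_{w_{i_1}}\cdots\partial_{w_{i_r}} F(\bm w)} \cdot \prod_{s=1}^r a_{i_s} \leq C_r \, \nu (\delta\nu)^{r-1} \max_{i \in J} a_i^r
\end{equation}
for any non-negative $(a_i)_{i\in J}$; here the point is that the $p_i$'s already absorb one summation via $\sum_j p_j = 1$, so no factor of $|J| \leq CN$ appears.

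Next I would apply Faà di Bruno's formula to $\partial_{ab}^k [F \circ \bm{\tilde\alpha}(\widetilde H)]$. Each term is indexed by a partition of $\unn{1}{k}$ into $r$ blocks of sizes $k_1,\dots,k_r$ with $\sum k_s = k$, and takes the form
\begin{equation*}
\sum_{i_1,\dots,i_r \in J} \pB{\partial_{w_{i_1}}\cdots\partial_{w_{i_r}} F}\big|_{\bm w = \bm{\tilde\alpha}(\widetilde H)} \cdot \prod_{s=1}^{r} \partial_{ab}^{k_s} \tilde\alpha_{i_s}(\widetilde H).
\end{equation*}
On the intersection $\mathcal E$ of the events appearing in \eqref{e:dop} for $k=1,\dots,5$, which has probability $1 - O(N^{-D})$ by a union bound, one has $|\partial_{ab}^{k_s}\tilde\alpha_i(\widetilde H)| \le C\varphi^{11 k_s}$ uniformly in $i \in J$. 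Substituting $a_{i_s} = C\varphi^{11 k_s}$ into \eqref{e:Fwbound} yields the bound
\begin{equation*}
\absb{\partial_{ab}^k F(\bm{\tilde\alpha}(\widetilde H))} \leq C_k \, \nu(\delta\nu)^{r-1} \varphi^{11k} \leq C_k (\log N)^{k-2} \varphi^{11 k}
\end{equation*}
on $\mathcal E$, where we used $\nu = 1/\log N$ and $\delta\nu = \log N$. Since $\log \varphi = C_0 (\log\log N)^2$ grows faster than $\log\log N$, we have $\log N \leq \varphi^{\epsilon}$ for any fixed $\epsilon>0$ and sufficiently large $N$, so $(\log N)^{k-2}\varphi^{11k} \le \varphi^{12 k}$ eventually. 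This gives \eqref{e:TF} after increasing $C$ to handle small $N$.

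For the deterministic bound \eqref{e:as}, the same Faà di Bruno expansion applies, this time using the almost sure estimate $|\partial_{ab}^{k_s}\tilde\alpha_i(\widetilde H)| \le N^C$ from \eqref{e:das} and the trivial bound $\nu(\delta\nu)^{r-1} \le (\log N)^{k}$. The resulting estimate $N^{Ck} (\log N)^k \le N^{C'}$ completes the proof. The only mildly delicate point in this argument is verifying \eqref{e:Fwbound}, i.e., that all the $|J|$-size summations inside Faà di Bruno's formula are absorbed by the softmax weights rather than producing an unwanted factor of $N^r$; once this is in hand, the rest is bookkeeping.
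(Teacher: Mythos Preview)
Your proof is correct and takes essentially the same approach as the paper: the key step is the $\ell^1$ bound on the partial derivatives of the log-sum-exp, $\sum_{i_1,\ldots,i_r}|\partial_{w_{i_1}}\!\cdots\partial_{w_{i_r}}F|\le C_r\,\nu(\delta\nu)^{r-1}$ (the paper quotes this as \eqref{e:bre4} from \cite{LanLopMar2018}), which absorbs the $J$-summations in the chain rule, after which one plugs in \eqref{e:dop} and \eqref{e:das}. One cosmetic remark: your weighted formulation \eqref{e:Fwbound} and its ``substitution $a_{i_s}=C\varphi^{11k_s}$'' are notationally off (the $k_s$ depend on the Fa\`a di Bruno block $s$, not the coordinate $i_s$); you only actually use the case $a_i\equiv 1$ after pulling out the uniform bound $|\partial_{ab}^{k_s}\tilde\alpha_i|\le C\varphi^{11k_s}$, which is exactly the paper's version.
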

\begin{proof}
First, we claim that the partial derivatives of $F_\delta (w)$ with respect to the entries of the vector $ w \in \R^N$ satisfy
\begin{equation} \label{e:bre4}
 \sum_{\underline{j}} \left|\frac{\partial^d F_\delta ( w)}{\partial_{j_1}\dots \partial_{j_d}}\right| \le C \delta^{d-1} ,
 \end{equation}
for any $d \in \N$. Here the sum runs over all multi-indices $\underline{j} = (j_1, \cdots, j_d )$ with values in $[1, N]^d$,  $\partial_j = \partial_{w_j}$, and $C = C(d)>0$ is a constant. This inequality follows by straightforward differentiation, and complete details are given in \cite[Lemma 3.4]{LanLopMar2018}.

Using the chain rule, \eqref{e:bre4} and \eqref{e:dop} imply \eqref{e:TF}. Similarly, \eqref{e:bre4} and \eqref{e:das} imply \eqref{e:as}.
\end{proof}
\begin{theorem}\label{t:main}
Fix $r >0$. Let $H$ and $M$ be Wigner matrices such that $\E [H_{11}^k] = \E [m_{11}^k]$ for $1 \le k \le 3$ and $\left| \E [H_{11}^4] - \E [m_{11}^4] \right| \le K_1 N^{-2} \varphi^{-K_2}$ for some $K_1, K_2 \ge 0$. Let $S \colon \R \rightarrow \R$ be a smooth function satisfying $\| S^{(k)} \|_\infty \le K_1$ for $k \in \unn{0}{5}$. Then there exists $C(K_1)>0$ such that if $K_2 > C$, then for all $x\in[N^{-1},1]$ we have 
\begin{equation}
\left| \E_{H} S\left( F_\delta(\bm{\tilde \alpha}) \right) -  \E_{M} S \left( F_\delta(\bm{\tilde \alpha}) \right) \right| \le  C \varphi^{- K_2/2}.
\end{equation}
\end{theorem}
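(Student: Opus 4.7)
The plan is to run a standard Lindeberg-type entry-by-entry swap argument between $H$ and $M$, using Taylor expansion in each matrix entry to order five and exploiting Lemma \ref{Fderiv} to control the derivatives of the functional $S \circ F \circ \bm{\tilde\alpha}$ in the matrix entries. Concretely, fix an ordering of upper-triangular indices $(a_r,b_r)_{r=1}^{N(N+1)/2}$, and let $X_r$ be the matrix which agrees with $H$ on the indices $(a_s,b_s)$ for $s>r$ and with $M$ on the first $r$ indices (with symmetric entries taken correspondingly). Then telescope
$$\E_H\, S(F(\bm{\tilde\alpha})) - \E_M\, S(F(\bm{\tilde\alpha})) = \sum_{r=1}^{N(N+1)/2} \Bigl(\E\, S(F(\bm{\tilde\alpha}(X_{r-1}))) - \E\, S(F(\bm{\tilde\alpha}(X_r)))\Bigr),$$
and for each $r$ write $X_{r-1}=Y_r+\xi\,\Delta^{(a_rb_r)}$, $X_r=Y_r+\zeta\,\Delta^{(a_rb_r)}$, where $Y_r$ is obtained from either $X_{r-1}$ or $X_r$ by zeroing the $(a_r,b_r)$ and $(b_r,a_r)$ entries, and $\xi$, $\zeta$ are the corresponding entries of $H$ and $M$, independent of $Y_r$.

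The second step is to Taylor expand $u\mapsto S(F(\bm{\tilde\alpha}(Y_r+u\Delta^{(a_rb_r)})))$ around $u=0$ to order five and take expectations in $\xi$, $\zeta$. The $k=0,1,2,3$ terms cancel between the two ensembles because the first three moments of the entries match exactly. The $k=4$ contribution differs by at most $K_1 N^{-2}\varphi^{-K_2}$ times the fourth derivative evaluated at $Y_r=\Theta^{(a_r,b_r)}_0 X_{r-1}$, to which Lemma \ref{Fderiv} applies with $(c,d)=(a_r,b_r)$ and $w=0$, yielding $C\varphi^{48}$ with probability $1-CN^{-D}$. The integral-form fifth-order remainder is evaluated at an intermediate point $Y_r+u^*\Delta^{(a_rb_r)}$ with $|u^*|\leq\max(|\xi|,|\zeta|)$; on the event that $|u^*|$ does not exceed the corresponding entry it is of the form $\Theta^{(a_r,b_r)}_w X_{r-1}$ (resp.\ $X_r$) for some $w\in[0,1]$, so Lemma \ref{Fderiv} bounds the derivative by $C\varphi^{60}$ with high probability. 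On the complementary event, the deterministic bound \eqref{e:as} of $N^C$ combined with \eqref{eqn:tail} controls the small remaining contribution.

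The third step is the bookkeeping. Using \eqref{eqn:tail} one has $\E|\xi|^k+\E|\zeta|^k \le C_k N^{-k/2}(\log N)^{C_k}$ for all fixed $k$, so the per-swap contribution is
$$O\bigl(N^{-2}\varphi^{-K_2+48}\bigr) \text{ (from } k=4\text{)} + O\bigl(N^{-5/2}\varphi^{60}\bigr)\text{ (from } k=5\text{)} + O\bigl(N^{-D+C}\bigr)\text{ (from exceptional events)},$$
where the last term uses \eqref{e:as} and the tail \eqref{eqn:tail}. Summing over the $O(N^2)$ swaps gives a total error of $O(\varphi^{-K_2+48}+N^{-1/2}\varphi^{60}+N^{-D+C})$, which is bounded by $C\varphi^{-K_2/2}$ provided $K_2\geq C(K_1)$ is sufficiently large and $D$ is chosen large enough.

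The main subtlety is identifying the intermediate matrices appearing in the Taylor remainder with matrices of the form $\Theta^{(c,d)}_w H$ to which Lemma \ref{Fderiv} applies; this requires splitting the expectation according to whether $|u^*|$ exceeds $|\xi|$ or $|\zeta|$ and handling the latter, small-probability regime via the crude but deterministic bound \eqref{e:as}. Other than this, the argument is standard and mirrors the Lindeberg swap used in \cite{LanLopMar2018}; indeed, the role of Lemma \ref{Fderiv} here parallels the role of the derivative bounds established in that work, and the entropy smoothing via $F_\delta$ is precisely what allows the non-local supremum observable to fit into the standard moment-matching framework.
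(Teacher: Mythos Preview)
Your proposal is correct and takes essentially the same approach as the paper: a Lindeberg entry-by-entry swap, fifth-order Taylor expansion in each entry, cancellation of orders $1$--$3$ by moment matching, and control of the fourth- and fifth-order terms via Lemma~\ref{Fderiv} together with the deterministic bound \eqref{e:as} on the exceptional set. The ``main subtlety'' you flag is in fact a non-issue: by the Lagrange form of the remainder, the intermediate point $u^*$ always lies between $0$ and $\xi$ (respectively $\zeta$), so one may always write $u^*=w\xi$ with $w\in[0,1]$ and the matrix is automatically of the form $\Theta^{(a_r,b_r)}_w X_{r-1}$; the paper simply records this by writing ``$w_1(\gamma)\in[0,1]$ is a random variable depending on $H_{ij}$'' without further comment.
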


\begin{proof}

We fix $z\in \mathcal L_{r,x} $ and omit it from the notation. Fix any bijection
\begin{equation}
\psi \colon \{ ( i, j) : 1 \le i \le j \le N\} \rightarrow \unn{1}{\gamma_N},
\end{equation}
where $\gamma_N = N ( N + 1) /2$, and define the matrices $H^1, H^2, \dots , H^{\gamma_N}$ by 
\begin{equation}
H_{ij}^\gamma = 
\begin{cases}
H_{ij} & \text{if } \psi(i,j) \leq \gamma
\\
m_{ij} & \text{if } \psi(i,j) > \gamma
\end{cases}
\end{equation}
for $i \le j$.

Fix some $\gamma \in \unn{1}{\gamma_N}$ and consider the indices $(i,j)$ such that $\psi(i,j) = \gamma$. Define $T\colon \matn \rightarrow \R$ by $T(X) = S\left( F_\delta(\bm {\tilde \alpha}(X)) \right)$. We Taylor expand $T \left(H^\gamma\right)$ in the $(i,j)$ entry and write $\partial = \partial_{ij}$ to find

\begin{align*}
 T \left( H^\gamma \right) - T \left(  \Theta^{(i,j)}_0 H^\gamma \right) &= \partial T \left(  \Theta^{(i,j)}_0 H^\gamma \right) H_{ij} + \frac{1}{2!} \partial^2 T \left(  \Theta^{(i,j)}_0 H^\gamma \right) H_{ij}^2 
+ \frac{1}{3!}\partial^3 T \left(  \Theta^{(i,j)}_0 H^\gamma \right) H_{ij}^3\\
& 
+ \frac{1}{4!}\partial^4 T \left(  \Theta^{(i,j)}_0 H^\gamma \right) H_{ij}^4 + \frac{1}{5!}\partial^5 T \left(  \Theta^{(i,j)}_{w_1(\gamma)} H^\gamma \right) H_{ij}^5,
\end{align*}
where $w_1(\gamma) \in [0,1]$ is a random variable depending on $H_{ij}$. Similarly, we expand $T \left( H^{\gamma -1} \right)$ in the $(i,j)$ entry to obtain
\begin{align}
\label{e:taylorb11} T \left( H^{\gamma -1} \right) - T \left(  \Theta^{(i,j)}_0 H^\gamma \right) 
&= \partial T \left(  \Theta^{(i,j)}_0 H^\gamma \right) m_{ij} + \frac{1}{2!} \partial^2 T \left(  \Theta^{(i,j)}_0 H^\gamma \right) m_{ij}^2 
+ \frac{1}{3!}\partial^3 T \left(  \Theta^{(i,j)}_0 H^\gamma \right) m_{ij}^3 \\
& \label{e:taylorb21} + \frac{1}{4!}\partial^4 T \left(  \Theta^{(i,j)}_0 H^\gamma \right) m_{ij}^4 + \frac{1}{5!}\partial^5 T \left(  \Theta^{(i,j)}_{w_2(\gamma)} H^\gamma \right) m_{ij}^5,
\end{align}
where $w_2(\gamma) \in [0,1]$ is a random variable depending on $m_{ij}$. Subtracting the previous two equations and taking expectation, we obtain
\begin{align}\label{e:4th}
  \E \left[ T \left( H^{\gamma } \right)  \right]- \E \left[ T \left(   H^{\gamma -1} \right) \right] &= \frac{1}{4!} \E \left[ \partial^4 T \left(  \Theta^{(i,j)}_0 H^\gamma \right) H_{ij}^4 \right] - \frac{1}{4!}\E \left[ \partial^4 T \left(  \Theta^{(i,j)}_0 H^\gamma \right) m_{ij}^4 \right] \\
  \label{e:fifthorder1old} &+ \frac{1}{5!} \E \left[ \partial^5 T \left(  \Theta^{(i,j)}_{w_1(\gamma)} H^\gamma \right) H_{ij}^5\right]
  - \frac{1}{5!} \E \left[ \partial^5 T \left(  \Theta^{(i,j)}_{w_2(\gamma)} H^\gamma \right) m_{ij}^5 \right],\end{align}
where we used that $\Theta^{(i,j)}_0 H^\gamma$ is independent from $H_{ij}$ and $m_{ij}$, and that $\E [ h^k_{ij} ] = \E [ m^k_{ij} ]$ for $k \in \unn{1}{3}$.

Because $H_{ij}$ and $m_{ij}$ are independent from $\Theta^{(i,j)}_0 H^\gamma$, we have
\begin{equation}
 \E \left[ \partial^4 T \left(  \Theta^{(i,j)}_0 H^\gamma \right) H_{ij}^4 \right] - \E \left[ \partial^4 T \left(  \Theta^{(i,j)}_0 H^\gamma \right) m_{ij}^4 \right]  =  \E \left[ \partial^4 T \left(  \Theta^{(i,j)}_0 H^\gamma \right) \right] \E\left[ H_{ij}^4 -m_{ij}^4 \right].
\end{equation}
By \eqref{e:TF}, \eqref{e:as}, and the assumptions on $S$, there exists $C(K_1)>0$ such that
\begin{equation}
\left| \E \left[ \partial^4 T \left(  \Theta^{(i,j)}_0 H^\gamma \right) \right] \right| \le C \varphi^{50}.
\end{equation}
We conclude using \eqref{e:4th} and the assumption on the fourth moments of $H_{ij}$ and $m_{ij}$ that
\begin{equation}
\left| \E \left[ \partial^4 T \left(  \Theta^{(i,j)}_0 H^\gamma \right) H_{ij}^4 \right] - \E \left[ \partial^4 T \left(  \Theta^{(i,j)}_0 H^\gamma \right) m_{ij}^4 \right] \right| \le CK_1 N^{-2} \varphi^{50 -K_2}.
\end{equation}
 The fifth order terms may be bounded similarly, and in fact are lower order since the fifth powers $h^5_{ij}$ and $m^5_{ij}$ contribute an additional factor of $N^{-1/2}$. Summing the Taylor expansions over all $O(N^2)$ indices $(i,j)$, we conclude that 
\begin{equation}
\left| \E \big[ T(H) \big] - \E \big[ T(M) \big] \right|  \le CK_1 \varphi^{50 -K_2}.
\end{equation}
 This completes the proof.
\end{proof}

\subsection{Maximal deviation from classical location.}\label{s:mmdeviations}\ 
 Using the rigidity and local law from Theorem \ref{l:lscl}, the proof of the following lemma is nearly identical to \cite[Lemma 3.2]{LanLopMar2018}.

\begin{lemma} \label{l:ev}
Fix $\kappa >0$. For all $i \in \unn{ \kappa N} {(1-\kappa)N}$, there exist smooth functions $\tilde \lambda_i \colon \matn \rightarrow \R$ such that the following holds. Suppose that $H$ is a real symmetric Wigner matrix.  There exist constants $C_1, C_2 >0$ such that, uniformly in $i$ and $k \in \unn{1}{5}$,
	\begin{equation} \label{e:evreg}
		\big|\tilde \lambda_i (H) - \lambda_i (H) \big| \leq \frac{ 1 }{ N \varphi^{C_2}}, \qquad \sup_{w \in [0,1]} \sup_{a,b,c,d \in \unn{1}{N}} \big| \partial^k_{ab} \tilde \lambda_i ( \Theta^{(c,d)}_w H ) \big| \leq \frac{ \varphi^{C_1  }}{N}
	\end{equation}
	 with  probability at least $1 - c^{-1} \exp( -c \phi)$. Here $\partial_{ab}=\partial_{X_{ab}}$ denotes the partial derivative with respect to the $(a,b)$-th matrix element.  

	Further, uniformly in $i$ and $k \in \unn{1}{5}$,  we have the deterministic bound
	 \begin{equation} \sup_{w \in [0,1]} \sup_{a,b,c,d \in \unn{1}{N}} \big| \partial^k_{ab} \tilde \lambda_i (\Theta^{(c,d)}_w H ) \big| \leq C_1 N^{C_1}.\end{equation}
\end{lemma}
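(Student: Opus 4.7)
The plan is to follow the construction of \cite{LanLopMar2018} closely: build $\tilde\lambda_i$ as a smooth functional of $H$ via a regularized eigenvalue counting function whose inverse acts as a smooth substitute for $\lambda_i$. Fix a regularization scale $\eta_0 = \varphi^{-K}/N$, with $K$ chosen large depending on $C_2$ and on the number of derivatives $k\leq 5$ we wish to control. Let $\theta\colon \R \to [0,1]$ be a fixed smooth function equal to $1$ on $(-\infty,-1]$ and to $0$ on $[0,\infty)$, and define the smooth counting function
\[
n(E,H) = \sum_{j=1}^N \theta\!\left(\frac{\lambda_j(H)-E}{\eta_0}\right)
= \tr\,\theta_{E,\eta_0}(H),
\]
where $\theta_{E,\eta_0}(x)=\theta((x-E)/\eta_0)$. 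Via the Helffer--Sj\"ostrand representation, $n(E,H)$ is smooth in $H$ and its derivatives with respect to $H_{ab}$ may be expressed as contour integrals against polynomials in the entries of $G(z)$ for $z$ with $\Im z \asymp \eta_0$. Define $\tilde\lambda_i(H)$ implicitly, on the good event from \Cref{l:lscl}, as the unique solution in a window of size $\varphi^{C_2+2}/N$ around $\gamma_i$ of the equation $n(\tilde\lambda_i,H) = i - \tfrac12$, and extend smoothly by interpolation to $\gamma_i$ off the good event.

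\textbf{Existence and approximation.} On the event from \Cref{l:lscl}, the rigidity estimate \eqref{e:rigidity} places $\lambda_i$ within $\varphi\hat\imath^{-1/3}N^{-2/3}$ of $\gamma_i$, and the local law \eqref{e:sclaw} at scale $\eta_0$ guarantees that $\partial_E n(E,H) = N\rho_{\mathrm{sc}}(\gamma_i)(1+\mathrm{o}(1))\geq cN$ on the prescribed window. Consequently $\tilde\lambda_i$ is well-defined there, and elementary quantitative comparison of $n(E,H)$ with the sharp counting function yields $|\tilde\lambda_i-\lambda_i|\leq C\eta_0\varphi$, which is $\leq 1/(N\varphi^{C_2})$ provided $K \geq C_2 + 2$.

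\textbf{Random derivative bounds.} By the implicit function theorem,
\[
\partial_{ab}\tilde\lambda_i = -\frac{\partial_{ab}n}{\partial_E n}\bigg|_{E=\tilde\lambda_i},
\]
with analogous expressions for $k\leq 5$ obtained by iterated differentiation, producing rational combinations of $\partial^j_{ab}n$ and $\partial_E n$. Each derivative $\partial^j_{ab}n$ expands, via the resolvent identity \eqref{e:gpartial}, into polynomials in $G_{ij}(\cdot+\I\eta_0)$; the analogous statement under the rank-one perturbation $\Theta^{(c,d)}_w H$ uses the resolvent $\Theta^{(c,d)}_w G$. The uniform estimate \eqref{e:Gbelow} bounds every entry by $C\varphi^{10}$ on the good event; each derivative produces at most an extra power of $\eta_0^{-1}$, bounded by $N\varphi^K$. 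Combined with the lower bound $\partial_E n \geq cN$, this yields $|\partial^k_{ab}\tilde\lambda_i(\Theta^{(c,d)}_w H)| \leq \varphi^{C_1}/N$ for a suitable $C_1(K,k)$.

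\textbf{Deterministic bound and extension.} Off the good event, the smooth extension to $\gamma_i$ is performed via a smooth bump multiplier of the indicator that $H$ lies in the good set; each derivative of this multiplier is controlled by $N^C$ through the trivial bound $|G_{ij}|\leq\eta_0^{-1}$ from \eqref{e:gtrivial}, yielding $|\partial^k_{ab}\tilde\lambda_i|\leq C_1 N^{C_1}$ almost surely. The main obstacle is maintaining the monotonicity $\partial_E n \geq cN$ uniformly on the window, even under the rank-one perturbations $\Theta^{(c,d)}_w$; this is the reason \eqref{e:Gbelow} is stated uniformly in $(a,b,w)$, and is the only place the rank-one stability of the local law is crucial. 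Once this is in place, all remaining estimates reduce to a careful bookkeeping of resolvent expansions as in \cite[Lemma 3.2]{LanLopMar2018}.
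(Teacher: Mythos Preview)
Your overall construction---defining $\tilde\lambda_i$ implicitly through a regularized counting function $n(E,H)=\tr\,\theta_{E,\eta_0}(H)$ and bounding its derivatives via the implicit function theorem and resolvent estimates---is precisely the construction of \cite[Lemma~3.2]{LanLopMar2018} that the paper defers to. One step in your sketch is misstated, however: at your scale $\eta_0=\varphi^{-K}/N\ll 1/N$, the local law \eqref{e:sclaw} gives only $|m_N-\msc|\leq\varphi/(N\eta_0)=\varphi^{K+1}$, which is vacuous, and in fact $\partial_E n(E,H)$ vanishes except within distance $\eta_0$ of an eigenvalue. So the assertion $\partial_E n=N\rho_{\rm sc}(\gamma_i)(1+\oo(1))$ uniformly on the window is false at this scale.

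The construction nonetheless survives, for a different reason: $n(\cdot,H)$ is monotone in $E$, and the elementary bounds $n(\lambda_i,H)\leq i-1$ and $n(\lambda_i+\eta_0,H)\geq i$ hold \emph{deterministically}, which already pins $\tilde\lambda_i\in(\lambda_i,\lambda_i+\eta_0)$ and yields the first inequality in \eqref{e:evreg} without any probabilistic input. The nonvanishing of $\partial_E n$ at $E=\tilde\lambda_i$ is then forced by $n(\tilde\lambda_i)=i-\tfrac12\notin\Z$, which guarantees that some eigenvalue lies in the support of $\theta'$; the derivative bounds follow by observing that $\partial_{ab}n/\partial_E n$ is a convex combination of quantities $2u_j(a)u_j(b)$ over the eigenvalues in that window and invoking eigenvector delocalization (a standard consequence of \eqref{e:gentries}), rather than the Stieltjes-transform local law at scale $\eta_0$. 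Higher derivatives and the rank-one perturbations $\Theta^{(c,d)}_w$ are then handled as in \cite{LanLopMar2018}. With this correction your sketch is accurate.
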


We write $\bm \lambda = (\lambda_i)_{i \in \unn{1}{N}}$ and $\bm {\tilde \lambda} = (\lambda_i )_{i \in \unn{ \kappa N} {(1-\kappa)N}}$, using the notation of the previous lemma.  Set $J\subset \unn{ \kappa N }{ ( 1 - \kappa)N } $ and define the smoothed maximal deviation of a vector $\bm v \in \R^{|J|}$ from the classical eigenvalue locations $\gamma_i$ by
\begin{equation}
	\widehat F_{\delta}(\bm v)  = \frac{1}{\delta} \log \left( \sum_{i\in J} \exp\left(    \delta \nu_i (  v_i - \gamma_i ) \right)  +  \exp\left(    \delta \nu_i (  \gamma_i - v_i) \right)\right),
\end{equation}
where we set 
\beq
\nu_k =
\sqrt{\frac{\pi}{2}}\cdot \frac{k \rho_{\rm sc}(\gamma_k) }{\log N}
, \qquad\delta = (\log N)^2.
\eeq

We omit the proof of the following derivative bounds, since it is similar to the proof of Lemma~\ref{Fderiv}.

\begin{lemma}\label{Fhatderiv}
Let $H$ be a Wigner matrix and fix $D,r>0$. Then there exist $C(D,r) >0$ such that
\beq \label{e:TFnew}
\P \left(\sup_{ k \in \unn{1}{5}} \sup_{a,b,c,d \in \unn{1}{N} } \sup_{w \in [0,1]} \left|  \partial_{ab}^k \widehat  F \left(  \bm {\tilde \lambda} (  \Theta^{(c,d)}_w H) \right) \right| > C \varphi^{C j }\right) \le C N^{-D}.
\eeq
Also, we have almost surely that
\beq \label{e:asnew}
\sup_{ k \in \unn{1}{5}} \sup_{a,b,c,d \in \unn{1}{N} }\sup_{w \in [0,1]}  \left|  \partial_{ab}^k \widehat F \left(  \bm {\tilde \lambda} (  \Theta^{(c,d)}_w H) \right)\right| \leq C N^{Cj}. \eeq
\end{lemma}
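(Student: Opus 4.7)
The plan is to mirror the strategy used for Lemma \ref{Fderiv}, replacing the ingredients $\bm{\tilde\alpha}$ and the bounds of Lemma \ref{Fderiv} by $\bm{\tilde\lambda}$ and the bounds of Lemma \ref{l:ev}. The argument has two steps: first, a combinatorial estimate on the pure partial derivatives of the log--sum--exp $\widehat F_\delta$ with respect to its vector argument, and second, an application of Faà di Bruno's formula that combines this estimate with the bounds on the derivatives of $\tilde\lambda_i$ with respect to the matrix entries.

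For the first step, write $f_i^\pm(v_i)=\exp(\delta\nu_i(\pm v_i\mp\gamma_i))$ and $G(\bm v)=\sum_{i\in J}(f_i^++f_i^-)$, so that $\widehat F_\delta(\bm v)=\delta^{-1}\log G(\bm v)$. Each $f_i^\pm$ depends on a single coordinate, and $\partial_{v_j}f_i^\pm=\pm\delta\nu_i\,\delta_{ij}\,f_i^\pm$. A direct induction, entirely analogous to \cite[Lemma 3.4]{LanLopMar2018}, shows that for every $d\ge 1$,
\[
\sum_{j_1,\dots,j_d}\Big|\frac{\partial^d\widehat F_\delta(\bm v)}{\partial v_{j_1}\cdots\partial v_{j_d}}\Big|\le C\,\delta^{d-1}\,\max_{i\in J}\nu_i^{\,d},
\]
with $C=C(d)$. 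The point is that the quantities $\partial_{v_j}\widehat F_\delta$ are expressible in terms of the probability weights $(f_j^++f_j^-)/G$, and each further differentiation brings down at most a factor of $\delta\nu$ while the sum over indices is controlled by the total mass of these weights.

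For the second step, write $Q(H):=\widehat F_\delta(\bm{\tilde\lambda}(\Theta^{(c,d)}_wH))$. Faà di Bruno's formula expresses $\partial_{ab}^kQ$ as a finite sum (with $k\le 5$) of terms of the form
\[
\sum_{i_1,\dots,i_r}\Big(\partial^r_{v_{i_1}\cdots v_{i_r}}\widehat F_\delta\Big)(\bm{\tilde\lambda})\cdot\prod_{s=1}^{r}\partial_{ab}^{m_s}\tilde\lambda_{i_s},
\]
where $1\le r\le k$ and $m_1+\cdots+m_r=k$. On the high-probability event of Lemma \ref{l:ev} we have $|\partial_{ab}^{m_s}\tilde\lambda_{i_s}|\le\varphi^{C_1}/N$; combined with the displayed estimate of Step 1 and the crude bound $\max_i\nu_i\le CN/\log N$, each such term is controlled by
\[
C\,\delta^{r-1}\Big(\max_i\nu_i\Big)^{r}\Big(\frac{\varphi^{C_1}}{N}\Big)^{r}\le C\,\varphi^{C_1 r}\,(\log N)^{r-2},
\]
which is at most $\varphi^{Ck}$ for a suitable $C$. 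This gives \eqref{e:TFnew}. For the deterministic bound \eqref{e:asnew} we use instead $|\partial_{ab}^{m_s}\tilde\lambda_{i_s}|\le C_1 N^{C_1}$ of Lemma \ref{l:ev}, and the same Faà di Bruno expansion produces a polynomial bound in $N$.

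The main bookkeeping point, and the only place where some care is needed, is that the free sum over the index set $J$ inside the chain rule is tamed by the summability property in Step 1 rather than by any cancellation; everything else is a routine consequence of the estimates already recorded in Lemma \ref{l:ev} and of the log--sum--exp structure of $\widehat F_\delta$.
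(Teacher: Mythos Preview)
Your proposal is correct and follows exactly the route the paper indicates: the paper omits the proof of Lemma~\ref{Fhatderiv}, stating only that it is ``similar to the proof of Lemma~\ref{Fderiv},'' and you have carried out precisely that adaptation---the log--sum--exp derivative bound of \cite[Lemma 3.4]{LanLopMar2018} (now with the extra factor $(\max_i\nu_i)^d$ to account for the large, index-dependent scalings $\nu_i\sim N/\log N$), combined via the chain rule with the eigenvalue-derivative bounds of Lemma~\ref{l:ev}. The crucial observation that the $N^{-r}$ from $\prod_s|\partial_{ab}^{m_s}\tilde\lambda_{i_s}|\le(\varphi^{C_1}/N)^r$ exactly cancels the $N^r$ from $(\max_i\nu_i)^r$ is the only new bookkeeping beyond Lemma~\ref{Fderiv}, and you handle it correctly.
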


Using the observable $F_{\delta}(\bm {\tilde \lambda})$ and Lemma~\ref{Fhatderiv}, 
the proof of the following comparison result is similar to Lemma~\ref{t:main}.

\begin{theorem}\label{t:main2}
Fix $\kappa >0$.
Let $H$ and $M$ be Wigner matrices such that $\E [H_{11}^k] = \E [m_{11}^k]$ for $1 \le k \le 3$ and $\big| \E [H_{11}^4] - \E [m_{11}^4] \big| \le K_1 N^{-2} \varphi^{-K_2}$ for some $K_1, K_2 \ge 0$. Let $S \colon \R \rightarrow \R$ be a smooth function satisfying $\| S^{(k)} \|_\infty \le K_1$ for $k \in \unn{0}{5}$. Then there exists $ C(K_1)$ such that, if $K_2 > C$, then 
\begin{equation}
\left| \E_{H} S\left( \widehat  F_{\delta}(\bm {\tilde \lambda}) \right) -  \E_{M} S \left( \widehat  F_{\delta}(\bm {\tilde \lambda}) \right) \right| \le
 C \varphi^{- K_2/2},
\end{equation}
and for any $i  \in \unn{ \kappa N} {(1-\kappa)N}$.
\end{theorem}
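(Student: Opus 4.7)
The proof will closely parallel that of Theorem~\ref{t:main}, with the observable $F_{\delta}(\bm{\tilde\alpha})$ replaced by $\widehat F_{\delta}(\bm{\tilde\lambda})$ and with the derivative bounds from Lemma~\ref{Fderiv} replaced by those from Lemma~\ref{Fhatderiv}. The plan is as follows. First, define $T(X) = S\bigl(\widehat F_{\delta}(\bm{\tilde\lambda}(X))\bigr)$, fix a bijection $\psi \colon \{(i,j): 1\le i\le j\le N\}\to\unn{1}{\gamma_N}$ with $\gamma_N = N(N+1)/2$, and introduce the interpolating sequence $H^0, H^1, \dots, H^{\gamma_N}$ where $H^0 = M$, $H^{\gamma_N} = H$, and $H^\gamma$ differs from $H^{\gamma-1}$ only in the single entry $(i,j)$ with $\psi(i,j)=\gamma$ (which is replaced from the $M$-distribution to the $H$-distribution). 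The goal is then to bound $\bigl|\E[T(H^\gamma)] - \E[T(H^{\gamma-1})]\bigr|$ uniformly in $\gamma$ and sum.

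Next, for each such $\gamma$, perform the Taylor expansion of $T$ to fifth order in the entry at position $(i,j)$ around the matrix $\Theta^{(i,j)}_0 H^\gamma$, exactly as in the proof of Theorem~\ref{t:main}. Because $\Theta^{(i,j)}_0 H^\gamma$ is independent of both $H_{ij}$ and $m_{ij}$, and because the first three moments of $H_{ij}$ and $m_{ij}$ agree, the contributions of the terms of order $\le 3$ cancel. The fourth-order contribution factors as
\begin{equation*}
\tfrac{1}{4!}\E\bigl[\partial^4_{ij} T(\Theta^{(i,j)}_0 H^\gamma)\bigr]\cdot\bigl(\E[H_{ij}^4] - \E[m_{ij}^4]\bigr),
\end{equation*}
and the fifth-order terms carry an extra factor $N^{-1/2}$ from the additional matrix entry. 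We then control $|\partial^4_{ij}T|$ and $|\partial^5_{ij}T|$ using Lemma~\ref{Fhatderiv}: on an event of probability $1 - CN^{-D}$ these are bounded by $C\varphi^{C'}$, and on the complementary event we use the deterministic bound $CN^{C'}$. Choosing $D$ large enough makes the exceptional contribution negligible (using also $\|S\|_\infty\le K_1$).

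Putting these ingredients together, we obtain
\begin{equation*}
\bigl|\E[T(H^\gamma)] - \E[T(H^{\gamma-1})]\bigr| \le C K_1\, N^{-2}\varphi^{C'}\bigl(\varphi^{-K_2} + N^{-1/2}\bigr) + C N^{-D},
\end{equation*}
and summing over the $O(N^2)$ indices yields the claimed bound $C\varphi^{-K_2/2}$, provided $K_2$ is larger than some $C(K_1)$ absorbing the polynomial factor $\varphi^{C'}$ and the fifth-order remainder.

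The main obstacle, as in the proof of Theorem~\ref{t:main}, is to verify that the observable $\widehat F_{\delta}(\bm{\tilde\lambda})$ admits the needed high-probability bounds on partial derivatives up to order five with polylogarithmic (i.e., $\varphi^{O(1)}$) constants, together with polynomial deterministic bounds off a negligible event; this is exactly what Lemma~\ref{Fhatderiv} supplies via the regularization $\tilde\lambda_i$ provided by Lemma~\ref{l:ev}, together with the chain-rule estimate \eqref{e:bre4} for the log-sum-exp function. Once this input is in place, the remainder of the argument is a straightforward adaptation of the Lindeberg swapping scheme used in Theorem~\ref{t:main}, which is why we omit the routine repetition of those computations.
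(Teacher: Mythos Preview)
Your proposal is correct and follows precisely the approach the paper indicates: the paper itself omits the detailed proof, noting only that it is ``similar to [Theorem]~\ref{t:main}'' using the observable $\widehat F_\delta(\bm{\tilde\lambda})$ and Lemma~\ref{Fhatderiv} in place of $F_\delta(\bm{\tilde\alpha})$ and Lemma~\ref{Fderiv}. Your outline of the Lindeberg swapping scheme, the Taylor expansion to fifth order, the cancellation from three-moment matching, and the control of fourth- and fifth-order terms via Lemma~\ref{Fhatderiv} exactly mirrors the proof of Theorem~\ref{t:main}.
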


\subsection{Large moments.}\ 
For some parameter $\eta_1>0$ we define the function $f = f_E$ by
\beq\label{f}
f = 0 \text{ on } (-\infty, E] \cup [3, \infty), \qquad f = 1 \text{ on } [E + \eta_1, 2.5]
\eeq
\beq\label{fbounds}
\| f^{(k)} \|_{L^\infty(-\infty, 2)} 
\le 100 \cdot \eta_1^{-k}, \quad \| f^{(k)} \|_{L^\infty(2, \infty)} \le 100,
\eeq
for $k=1,2$.  
All results in this section hold for $\eta_1\in[1/N,c]$, but we now fix
\[
\eta_1=\frac{\sqrt{\log N}}{N},
\]
which will be enough to prove  part (ii) of Theorem \ref{thm:Wigner}, in Subsection \ref{subsec:rigidity}.

Moreover,  given $M\in \matn$ we define 
\begin{equation}\label{eqn:Xp}
 X(M) = \sum_{i=1}^N f_E (\tilde\lambda_i) - N \int_\R f_E(x) \, \rd \sc(x), \ \ \ X_p(M)=X(M)^{2p}.
\end{equation}

\begin{lemma}\label{Xderiv}
Let $H$ be a Wigner matrix, and fix $A,\kappa>0$. Then there exists $C(A,\kappa)>0$ such that
\beq \label{e:TFX}
\P \left(\sup_{ k \in \unn{1}{5}} \sup_{a,b,c,d \in \unn{1}{N} } \sup_{w \in [0,1]} \left|  \partial_{ab}^k X    \big(  \Theta^{(c,d)}_w H\big)  \right| > C \varphi^{C k }\right) \le C \exp( - C^{-1} \phi). % N^{-D}
\eeq
Also, we have 
\beq \label{e:asX}
\sup_{ k \in \unn{1}{5}} \sup_{a,b,c,d \in \unn{1}{N} }\sup_{w \in [0,1]}  \left|  \partial_{ab}^k X   \big(  \Theta^{(c,d)}_w H\big) \right| \leq C N^{C}. \eeq
\end{lemma}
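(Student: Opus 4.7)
The plan is to combine Faà di Bruno's formula with the derivative bounds on $\tilde\lambda_i$ from Lemma \ref{l:ev}, the pointwise bounds on $f_E^{(j)}$ from \eqref{fbounds}, and an eigenvalue counting argument from rigidity. Two opposing effects balance: the blow-up of $f_E^{(j)}$ by $\eta_1^{-j}$ in the bulk is compensated by the number of eigenvalues in the transition window $[E, E+\eta_1]$, which is of size $N\eta_1 = \sqrt{\log N}$. This compensation yields the claimed polylogarithmic bound $\varphi^{Ck}$; choosing a smaller $\eta_1$ would not suffice.

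The centering term in \eqref{eqn:Xp} is deterministic, so only the eigenvalue sum requires differentiation. Faà di Bruno expands each summand as
\[
\partial_{ab}^k f_E(\tilde\lambda_i) = \sum_{\pi \in \mathcal P_k} c_\pi\, f_E^{(|\pi|)}(\tilde\lambda_i) \prod_{B \in \pi} \partial_{ab}^{|B|}\tilde\lambda_i,
\]
where $\mathcal P_k$ is the set of set-partitions of $\{1,\dots,k\}$ and the $c_\pi$ are combinatorial constants. On the good event from Lemma \ref{l:ev} (of probability at least $1 - c^{-1}\exp(-c\varphi)$, with the required uniformity in $k \le 5$, $a,b,c,d$, and $w$ already built in), every factor $|\partial_{ab}^{|B|}\tilde\lambda_i|$ is at most $\varphi^{C_1}/N$. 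Combined with \eqref{fbounds}, each summand is then bounded by $C(\varphi^{C_1}/\sqrt{\log N})^{|\pi|}$ when $\tilde\lambda_i \in (-\infty, 2)$ and by $C(\varphi^{C_1}/N)^{|\pi|}$ otherwise.

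To sum over $i$, one counts the indices with $\tilde\lambda_i$ in the support of $f_E^{(j)}$, which is contained in $[E, E+\eta_1] \cup [2.5, 3]$. Using $|\tilde\lambda_i - \lambda_i| \le 1/(N\varphi^{C_2})$ from Lemma \ref{l:ev}, interlacing (to absorb the rank-at-most-two perturbation $\Theta^{(c,d)}_w H - H$), and the rigidity estimate \eqref{e:rigidity}, these counts are respectively $O(\sqrt{\log N})$ and $O(\varphi)$ on an event of overwhelming probability (the second region is in fact empty for $N$ large, since rigidity confines the spectrum to $[-2-o(1), 2+o(1)]$). Each partition of size $|\pi|$ then contributes at most $O(\sqrt{\log N})\cdot(\varphi^{C_1}/\sqrt{\log N})^{|\pi|} \le \varphi^{Ck}$ from the bulk plus a negligible edge term; summing the $O(1)$ partitions of $\{1,\dots,k\}$ proves \eqref{e:TFX}. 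For the deterministic bound \eqref{e:asX}, the unconditional estimates $|\partial_{ab}^{|B|}\tilde\lambda_i| \le C_1 N^{C_1}$ from Lemma \ref{l:ev} together with $\|f_E^{(j)}\|_\infty \le N^j$ bound each Faà di Bruno term by $(C_1 N^{C_1+1})^k$, and summing over the $\le N$ relevant indices yields $N^{Ck}$.

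The only obstacle is a mild bookkeeping one: the eigenvalue count must be transferred from $H$ to the rank-two perturbation $\Theta^{(c,d)}_w H$ uniformly in $a,b,c,d,w$. Since interlacing guarantees that the count in any interval changes by at most two between $H$ and $\Theta^{(c,d)}_w H$, this is immediate from the standard rigidity of $H$ stated in Theorem \ref{l:lscl}, combined with the fact that Lemma \ref{l:ev} already provides its regularization estimates with the required uniformity.
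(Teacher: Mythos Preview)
Your approach is essentially the same as the paper's: both expand $\partial_{ab}^k f_E(\tilde\lambda_i)$ by the chain rule (you phrase it as Fa\`a di Bruno), bound the factors $\partial_{ab}^{|B|}\tilde\lambda_i$ via Lemma~\ref{l:ev}, bound $f_E^{(j)}$ by $\eta_1^{-j}$ on the transition window, and then count how many eigenvalues land in $\supp f_E'$ using rigidity. One minor slip: your claim that the bulk window $[E,E+\eta_1]$ contains only $O(\sqrt{\log N})$ eigenvalues is too optimistic, since the rigidity scale $\varphi/N$ from \eqref{e:rigidity} already exceeds $\eta_1=\sqrt{\log N}/N$; the correct count is $O(\varphi)$ (the paper's own ``$N\eta_1(\log N)^C$'' is similarly loose). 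This is harmless, as $\varphi$ is absorbed into the target bound $\varphi^{Ck}$, so your argument goes through with this adjustment.
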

\begin{proof} Since the proof is similar to previous estimates, we give details only in the $j=2$ case to illustrate the general principles involved.  We have 
\beq
\partial^2_{ab} f(\tilde \lambda) = f''( \tilde \lambda) (\partial_{ab} \tilde \lambda)^2 + 
f'(\tilde \lambda )\partial^2_{ab} \tilde \lambda .
\eeq
The $f''$ term is the most dangerous. As in the proof of the previous lemma, there are at most $N \eta_1 (\log N)^C$ eigenvalues in the interval $I_A$ where $f'$ is nonzero. By Lemma~\ref{l:ev} we have
\beq
f''_0( \tilde \lambda) (\partial_{ab} \tilde \lambda)^2 \le \eta_1^{-2} N^{-2} \varphi^{2C} \le C \phi^{4C} .
\eeq
where we used $\log N \le \phi$ and increased the constant $C$ if necessary. The claim then follows after redefining $C$. This shows \eqref{e:TFX} for $j=2$; the other $j$ are similar. The bound \eqref{e:asX} follows from the second inequality in \eqref{fbounds}.
\end{proof}
\bel Let $H$ be a Wigner matrix, and fix $A,\kappa >0$. There exists $c(A,\kappa)>0$ such that for all $E \in [ -2 + \kappa, 2 -\kappa]$, 
\beq\label{absX}
\sup_{c,d \in \unn{1}{N}} \sup_{w \in [0,1]}
\P\left(
 \left| X \big(\Theta^{(c,d)}_w H \big) 
 \right|  \ge c^{-1} \phi \right) 
\le c^{-1} \exp ( - c (\log N)^{c \log \log N})
\eeq
\eel
\begin{proof}
We give the details only for $w=0$, since the case of general $w$ follows by a straightforward perturbation argument. By Equation \ref{lemma:regLin}, it suffices to bound $\sum_{i=1}^N f (\lambda_i) - N \int_\R f \, \rd \sc$, and this is immediate from \eqref{e:rigidity}. 
\end{proof}

\bel Let $H$ be a Wigner matrix, and fix $A, \kappa>0$. There exists $c(\kappa) >0$ such that, for all $E\in [-2 + \kappa, 2-\kappa]$, 
\beq\label{lemma:regLin}
\P\left( \left|\sum_{i=1}^N f_E(\tilde \lambda_i) - \sum_{i=1}^N f_E( \lambda_i) \right| > (\log N)^{1/2} \right) 
\le c^{-1} \exp( - c \phi^c).
\eeq
\eel
\begin{proof}
Note that $f'$ is supported on the interval $I_A = [E, E + \eta_1]$. 
For $\lambda_i$ outside the support of $f'$, it is straightforward to replace $\lambda_i$ with $\tilde \lambda_i$.  There are at most $N$ such eigenvalues, and $f_0(\lambda_i) - f_0(\tilde \lambda_i) = O(N^{-1}\varphi^{-c})$ by Lemma~\ref{l:ev}, so the overall error from all such $\lambda_i$ is $O(\varphi^{-c})$. 

For the other eigenvalues, we know that $f'$ can be as large as $\eta_1^{-1} = N (A \log N)^{-1}$. In the interval $I_A$ there are at most $N \eta_1 (\log N)^C$ eigenvalues with probability at least $1-c^{-1} \exp( - c \phi^c)$ by the rigidity estimate  \eqref{e:rigidity}. We have $| \tilde \lambda - \lambda| \le N^{-1}\varphi^{-c}$. Then the accumulated error is 
\beq
N^{-1} \varphi^{-c} \eta_1^{-1} N \eta_1 (\log N)^C = \varphi^{-c} (\log N)^C = o(1).
\eeq
which is acceptable. 
\end{proof}

The next lemma considers a moment-matching argument for diverging moments. Such estimates for large moments appeared first in random matrix theory in
\cite{KnoYin2013,TaoVu2013}.  For the application to the accurate Gaussian decay exponent in Theorem \ref{thm:Wigner} (ii),  optimal upper bounds sharper  than in \cite{KnoYin2013,TaoVu2013} are required, which correspond to the best possible $\theta$ below.
A similar result in this direction was obtained in the context of eigenvectors in \cite{BenLop2022}.

\bel \label{l:newcomparison}
There exists $M_0>0$ such that the following holds.   
Let $H$ and  $R$ be two Wigner matrices satisfying $\E[H_{ij}^k] = \E[R_{ij}^k]$ for $k\in\unn{1}{3}$ and $\left| \E[H_{ij}^4 ]  -  \E[R_{ij}^4 ] \right| \le N^{-2} s$ where $s<\varphi^{-M_0}$.  Assume that there is $N_0(A,\kappa)$ and $\theta(A,\kappa)$ such that
\[
\E\big[ X_{p}(R)\big]  \le \theta^p (\log N)^{p}p^p
\]
for all $E \in [ -2 + \kappa, 2 -\kappa]$, $ p\leq A\log N$ and $N\geq N_0$. Then
there is $N_1(A,\kappa)$ such that for all $N\geq N_1$ we have 
\begin{equation}
  \label{eq-ElAl1}
\E\big[ X_{p}(H)\big]  \le {(1+\varphi^{-5})^p\theta^p (\log N)^{p}p^p
\leq 3\theta^p (\log N)^{p}p^p.}
\end{equation}
\eel

\begin{proof}
Fix any bijection
\beq
\phi \colon \{ ( i, j) : 1 \le i \le j \le N\} \rightarrow \unn{1}{\gamma_N},
\eeq
where $\gamma_N = N ( N + 1) /2$, and define the matrices $H^1, H^2, \dots , H^{\gamma_N}$ by 
\beq
H_{ij}^\gamma = 
\begin{cases}
H_{ij} & \text{if } \phi(i,j) \leq \gamma
\\
R_{ij} & \text{if } \phi(i,j) > \gamma
\end{cases}
\eeq
for $i \le j$. We also fix $z$ throughout the argument.

Fix some $\gamma \in \unn{1}{\gamma_N}$ and consider the indices $(i,j)$ such that $\phi(i,j) = \gamma$. For any $m \ge 1$, we may Taylor expand $X_m \left(H^\gamma\right)$ in the $(i,j)$ entry, write $\partial = \partial_{ij}$, and find

\begin{align}
\label{e:taylora11} X_m \left( H^\gamma \right) - X_m \left(  \Theta^{(i,j)}_0 H^\gamma \right) &= \partial X_m \left(  \Theta^{(i,j)}_0 H^\gamma \right) H_{ij} + \frac{1}{2!} \partial^2 X_m \left(  \Theta^{(i,j)}_0 H^\gamma \right) H_{ij}^2 
+ \frac{1}{3!}\partial^3 X_m \left(  \Theta^{(i,j)}_0 H^\gamma \right) H_{ij}^3 \\
& \label{e:taylora21} + \frac{1}{4!}\partial^4 X_m \left(  \Theta^{(i,j)}_0 H^\gamma \right) H_{ij}^4 + \frac{1}{5!}\partial^5 X_m \left(  \Theta^{(i,j)}_{w_1(\gamma)} H^\gamma \right) H_{ij}^5,
\end{align}
where $w_1(\gamma) \in [0,1]$ is a random variable depending on $H_{ij}$. Similarly, we may expand $X_m \left( H^{\gamma -1} \right)$ in the $(i,j)$ entry to obtain
a similar expansion with $w_2(\gamma) \in [0,1]$, a random variable depending on $R_{ij}$. Subtracting the expansion of $X_m \left(H^\gamma\right)$ from \eqref{e:taylora11} and \eqref{e:taylora21}, and taking expectation, we find
\begin{align}
 \label{e:fourthorder1} \E \left[ X_m \left( H^{\gamma } \right)  \right]- \E \left[ X_m \left(   H^{\gamma -1} \right) \right] &= \frac{1}{4!} \E \left[ \partial^4 X_m \left(  \Theta^{(i,j)}_0 H^\gamma \right) H_{ij}^4 \right] - \frac{1}{4!}\E \left[ \partial^4 X_m \left(  \Theta^{(i,j)}_0 H^\gamma \right) R_{ij}^4 \right] \\
  \label{e:fifthorder1} &+ \frac{1}{5!} \E \left[ \partial^5 X_m \left(  \Theta^{(i,j)}_{w_1(\gamma)} H^\gamma \right) H_{ij}^5\right]
  - \frac{1}{5!} \E \left[ \partial^5 X_m \left(  \Theta^{(i,j)}_{w_2(\gamma)} H^\gamma \right) R_{ij}^5 \right],\end{align}
where we used that $\Theta^{(i,j)}_0 H^\gamma$ is independent from $H_{ij}$ and $R_{ij}$, and that $\E [ h^k_{ij} ] = \E [ r^k_{ij} ]$ for $k \in \unn{1}{3}$.

We now proceed by induction, with the induction hypothesis at step $m \in \N$ being that 
\beq\label{e:inductionhypo1}
\E X_{n} \left(  \Theta^{(a,b)}_{w } H^\gamma \right)  \le {(1+\varphi^{-5})^n} \theta^n (\log N)^{n}n^n
\eeq
holds for all $0 \le n \le m$ and choices of  $w \in [ 0 ,1 ]$ and $(a,b) \in \unn{1}{N}^2$.

 The base case $m = 0$ is trivial. Assuming the induction hypothesis holds for $m -1$, we will show it holds for $m$. Using the independence of $H_{ij}$ and $R_{ij}$ from $ \Theta^{(i,j)}_0 H^\gamma$, we may rewrite the first two terms terms on the right side of \eqref{e:fourthorder1} as 
\beq\
\E \left[ \partial^4 X_m\left(  \Theta^{(i,j)}_0 H^\gamma \right) H_{ij}^4 \right] - \E \left[ \partial^4 X_m\left(  \Theta^{(i,j)}_0 H^\gamma \right) R_{ij}^4 \right] =  \E \left[ \partial^4 X_m\left(  \Theta^{(i,j)}_0 H^\gamma \right)  \right]  \E \left[  H_{ij}^4 - R_{ij}^4  \right].
\eeq
For the second factor, we recall that $\left|  \E \left[ H_{ij}^4\right] - \E \left[ R_{ij}^4 \right]  \right|  \le N^{-2}s =  N^{-2 } \varphi^{-M}$.  For the first, we {abbreviate $X_m=X_m\left(  \Theta^{(i,j)}_0 H^\gamma \right)$, 
write $X_m^{(\ell)}$ for the $\ell$th derivative of $X_m$ with respect to the
$(i,j)$ coordinate, and} compute 
\begin{align}\label{e:4thcompute1}
  \partial^4 X_m = \partial^4 \left( X^{2m} \right) &= 2m X^{2m-1} X^{(4)}  
  + 3 (2m) (2m - 1 ) X^{2m-2} (X^{(2)})^2\\ &+ 2m (2m-1) (2m-2) (2m-3) X^{2m-4} (X^{{(1)}})^4 \nonumber\\
  & + 4 (2m) (2m-1) X^{2m-2} X^{(1)} X^{(3)} + 6 (2m) (2m-1) (2m-2) X^{2m-3} (X^{{(1)}})^2 X^{(2)}.\nonumber
\end{align}
The terms with even powers of $X$ may be bounded using the induction hypothesis \eqref{e:inductionhypo1} for $ n \le m -1$ and Lemma~\ref{Xderiv}. The bound the odd powers, we additionally use \eqref{absX} to show 
\begin{equation}
\E | X^{2p -1 } | \le  \varphi\E X^{2p -2 }  + (N)^{2p-1} c^{-1} \exp ( - c (\log N)^{c \log \log N}),
\end{equation}
where we observe that the second term is $\oo(1)$ for $p\le A\log N$. Let $\mathcal B$ be the set where \eqref{e:TFX} holds.
We find\footnote{We note that the constants in the probability bound given by Lemma~\ref{Xderiv} do not depend on $\gamma$, since the matrices $H^\gamma$ verify \Cref{def:wig} simultaneously for the appropriate choice of constants. Therefore, the $C$ in \eqref{e:4analogytrivial1} is uniform in $\gamma$.}

\beq\label{e:4analogy1}
 \E \left| \one_{\mathcal B} \partial^4 X_m\left(  \Theta^{(i,j)}_0 H^\gamma \right)  \right|  \le   C  \varphi^{C } \theta^m (\log N)^{m}m ^m.
\eeq
Here $C>0$ is a constant that  is independent of $m$.
We also have by Lemma~\ref{Xderiv} that\beq\label{e:4analogytrivial1}
\left|\E \left[  \one_{\mathcal B^c}  \partial^4 X_m\left(  \Theta^{(i,j)}_0 H^\gamma \right)  \right]  \right| \le \tilde C N^{-100}.
\eeq
for some $\tilde C$ which does not depend on $m\leq A\log N$,  $i,j,N$.

 It follows from \eqref{e:4analogy1}, \eqref{e:4analogytrivial1}, and $m \le \varphi$ that, if $M_0$ is chosen large enough relative to $C$, then
\begin{equation}\label{e:4final2}
  \left|\frac{1}{4!} \E \left[ \partial^4 X_m\left(  \Theta^{(i,j)}_0 H^\gamma \right) H_{ij}^4 \right] -\frac{1}{4!} \E \left[ \partial^4 X_m\left(  \Theta^{(i,j)}_0 H^\gamma \right) R_{ij}^4 \right] \right| \le  {\varphi^{-10}}
  N^{-2} \theta^m (\log N)^{m}m ^m
\end{equation}
holds uniformly in $N\geq N_0$ and $m\leq A\log N$,  where $N_0$ does not depend on $m$.

Let $\mathcal D$ be the event where $\sup_{i,j} | R_{ij} | + | H_{ij} |\le  N^{-1/2 + \delta_1}$ holds. Since the variables $R_{ij}$ and $H_{ij}$ are subexponential, we have
\beq\label{e:Csubexp}
\P\left( \mathcal D^c \right) \le D_1 \exp\left({-d_1(\log N)^{d_1\log\log N}}\right), 
\eeq
for some constants $D_1(\delta_1), d_1(\delta_1)>0$.

For the terms in \eqref{e:fifthorder1}, we compute
\begin{align}
\left| \E \left[ \partial^5 X_m\left(  \Theta^{(i,j)}_{w_1(\gamma)} H^\gamma \right) H_{ij}^5\right] \right| 
&\le \left| \E  \left[  \one_{\mathcal D} \partial^5 X_m\left(  \Theta^{(i,j)}_{w_1(\gamma)} H^\gamma \right) H_{ij}^5\right] \right| + \left| \E \left[ \one_{\mathcal D^c} \partial^5 X_m\left(  \Theta^{(i,j)}_{w_1(\gamma)} H^\gamma \right) H_{ij}^5\right] \right|\\
&\le C N^{-5/2 + 5 \delta_1} \left( \E \left[ \left| \partial^5 X_m\left(  \Theta^{(i,j)}_{w_1(\gamma)} H^\gamma \right) \right| \right]  + 1\right),
\end{align}
where in the last line we used \eqref{e:Csubexp} and the constant $C$ comes from Lemma~\ref{Xderiv}.
Then repeating the previous argument for the fourth order term given in \eqref{e:4analogy1} and \eqref{e:4analogytrivial1}, we find that there exists $N_1(A)$ such that, for $\delta_1<1/100$,  $m\leq A\log N$ and $N\geq N_1$ we have
\begin{multline}\label{e:5final2a}
\Bigg|\frac{1}{5!}
\E \left[ \partial^5 X_m\left(  \Theta^{(i,j)}_{w_1(\gamma)} H^\gamma \right) H_{ij}^5\right]
  \Bigg|
\le  C m^5  N^{-5/2 + 5\delta_1} \varphi^{C} \theta^m (\log N)^{m}m ^m\\
\le C  N^{-2 - 1/4} \theta^m (\log N)^{m}m ^m\leq 
%\frac{1}{2} 
{N^{-2-1/8}} \theta^m (\log N)^{m}m ^m.
\end{multline}
Combining \eqref{e:4final2} and \eqref{e:5final2a} yields
\beq\label{e:gammainc}
\left| \E \left[ X_m\left( H^{\gamma } \right)  \right]- \E \left[ X_m\left(   H^{\gamma -1} \right) \right] \right|  \le {\varphi^{-5}}
N^{-2} \theta^m (\log N)^{m}m ^m,
\eeq
and summing \eqref{e:gammainc} over all $\gamma_N$ pairs $(i,j)$, we find
\beq\label{e:4momentconclude1}
\left| \E \left[ X_m\left( R \right)  \right]- \E \left[ X_m\left( H^\gamma \right) \right] \right| \le {\varphi^{-5}}
\theta^m (\log N)^{m}m ^m
\eeq
for any $\gamma$.  Together with our hypothesis on $\E\big[ X_{p}(R)\big]$ this gives
\beq
\E\left[ X_m\left( H^\gamma \right) \right] \le {(1+\varphi^{-5})^m}
\theta^m (\log N)^{m}m ^m.
\eeq
 This verifies the induction hypothesis \eqref{e:inductionhypo1} when $w = 1$.  

To address other values of $w$, we consider the following expansion:
\begin{align}
\label{e:taylorc1a}X_m\left( H^\gamma \right) - X_m\left(  \Theta^{(a,b)}_w H^\gamma \right)=&\sum_{\ell=1}^4\frac{1}{\ell!}\partial^\ell X_m\left(  \Theta^{(a,b)}_0 H^\gamma \right) H_{ij}^\ell(1-w^\ell)\\&
 + \frac{1}{5!}\partial^5 X_m\left(  \Theta^{(a,b)}_{\tau_1} H^\gamma \right) H_{ij}^5-
  \frac{1}{5!}\partial^5 X_m\left(  \Theta^{(a,b)}_{\tau_w} H^\gamma \right) H_{ij}^5\label{eqn:secondB}w^5.
\end{align}
Here $\tau_1,\tau_w \in [0, 1]$ are random variables. The same argument that gave the bound \eqref{e:4final2} shows that the expectation of the right side of \eqref{e:taylorc1a} is bounded with $\frac{1}{2}\theta^m (\log N)^{m}m ^m$. 
Note this bound holds because of the additional factors of $N^{-1/2}$ coming from moments of $H_{ij}$, which are enough even for $\ell=1,2,3$ as we don't sum over $N^2$ terms.
The expectation of  \eqref{eqn:secondB} is also bounded by 
${(1+\varphi^{-5})^m}
\theta^m (\log N)^{m}m ^m$ by the reasoning leading to \eqref{e:5final2a}. 

This proves
\beq
\sup_{w \in [0,1]} \sup_{a,b \in \unn{1}{n}} \E \left[ X_m\left(  \Theta^{(a,b)}_{w } H^\gamma \right)\right]  \le {(1+\varphi^{-5})}^m \theta^m (\log N)^{m}m ^m.
\eeq
and completes the induction.
{The second inequality in \eqref{eq-ElAl1} follows because $p\leq A\log N$.}
\end{proof}

\section{Maximum for Wigner Matrices}\label{s:wigmax}

This section proves Theorem \ref{thm:maxWigner} and Theorem \ref{thm:Wigner}  by combining the dynamics from Section 
\ref{sec:Relax} and the moment matching results from Section \ref{sec:MomMat}.  It also relies heavily on Section \ref{sec:maxgauss},  both its results (as the GOE serves as the base point of our comparison), and for methods used there to smooth the corresponding fields.%, and for the results because the GOE serves as the base point in our interpolations.

As we proceed by comparison,  we will need to specify the matrix ensembles related to the characteristic polynomials: We will write
$L_N^H$ for the quantity  (\ref{def:L_N}), when considering Wigner matrices as in Definition \ref{def:wig},  and we will write $L_N^{\GOE}$ for the same quantity when the eigenvalues of $H$ are replaced by those of $\GOE_N$.

We wil first prove Theorem \ref{thm:maxWigner}  for the real part, and then part (i) of Theorem \ref{thm:Wigner} on the deviations of $\lambda_i-\gamma_i$, which is equivalent to Theorem \ref{thm:maxWigner}  for the imaginary part (see Section \ref{sec:elementary}).
Indeed,  while the proof for $\Re L_N$ will go through a regularization similarly to the proof of Theorem \ref{thm:Beta}  in  Section \ref{sec:maxgauss}, we cannot directly follow the same path for $\Im L_N^H$: for the upper bound,  a priori smoothing $\Im L_N(E)$ into $\Im L_N(E+\frac{\ii}{N})$ as in 
%(\ref{eqn:UBIm1}) 
\eqref{eqn:UBIm1bis}
is not possible because a local law allowing (\ref{eqn:microBound}) is not known in the case of Wigner matrices.

In all the following proofs, we will need an intermediate weakly Gaussian-divisible random matrix ensemble as in the following result, 
which is an immediate consequence of \cite[Lemma 16.2]{erdos2017dynamical}.

\begin{lemma}\label{l:momentmatch}
Let $H$ be a Wigner matrix. Then there exist constants $C,c>0$ such that the following holds for any $t \in (0,c)$. There exists a Wigner matrix $\wt H$
such that 
\begin{equation}\label{e:wt1}
\wt H(t) = \sqrt{1 - t} \wt H + \sqrt{t} W
\end{equation}
is a Wigner matrix satisfying 
\begin{equation}
\E \left[ \wt H_{ij}(t)^k \right] = \E \left[ H_{ij}^k \right] ,\qquad  \left| \E \left[ \wt H_{ij}(t)^4 \right]  - \E \left[ H^4_{ij} \right] \right| \le C t N^{-2}
\end{equation}
for $k \in \unn{1}{3}$.
Here $W \in \matn$ is a a Wigner matrix and each $W_{ij}$ is a mean zero  Gaussian random variable (independent from $\wt H$). 
\end{lemma}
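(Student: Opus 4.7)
The plan is to construct the law of each entry of $\wt H$ independently so that, after mixing with $\sqrt{t}\,W$, the first three moments of $H$ are reproduced exactly and the fourth moment is reproduced up to an $O(t)$ correction. Because $\wt H$ and $W$ are taken to be independent and each $W_{ij}$ is a centered Gaussian (so that odd moments of $W_{ij}$ vanish), expanding $\wt H(t)_{ij}^k = (\sqrt{1-t}\,\wt H_{ij} + \sqrt{t}\,W_{ij})^k$ with the binomial theorem yields
\begin{align*}
\E[\wt H(t)_{ij}^2] &= (1-t)\,\E[\wt H_{ij}^2] + t\,\E[W_{ij}^2],\\
\E[\wt H(t)_{ij}^3] &= (1-t)^{3/2}\,\E[\wt H_{ij}^3],\\
\E[\wt H(t)_{ij}^4] &= (1-t)^2\,\E[\wt H_{ij}^4] + 6t(1-t)\,\E[\wt H_{ij}^2]\,\E[W_{ij}^2] + 3t^2\,\E[W_{ij}^2]^2.
\end{align*}

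These identities prescribe the moments that $\wt H_{ij}$ must have. Matching the variance of $\wt H(t)_{ij}$ to $N^{-1}$ forces $\E[\wt H_{ij}^2] = N^{-1} = \E[W_{ij}^2]$; matching the third moment to $\E[H_{ij}^3]$ forces $\E[\wt H_{ij}^3] = (1-t)^{-3/2}\,\E[H_{ij}^3]$; and choosing the target $\E[\wt H_{ij}^4] = \E[H_{ij}^4]$ makes the fourth line above equal to $\E[H_{ij}^4] + O(t/N^2)$ by a direct Taylor expansion in $t$, which gives the claimed bound after absorbing the scaling into the constant $C$. The diagonal case is identical, with $\E[W_{ii}^2] = C N^{-1}$ replacing $\E[W_{ij}^2] = N^{-1}$.

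Having specified the moment targets, the remaining task is to realize them by an actual probability distribution satisfying the subexponential tail bound \eqref{eqn:tail}, so that $\wt H$ is itself a Wigner matrix in the sense of Definition \ref{def:wig}. This is exactly the content of \cite[Lemma 16.2]{erdos2017dynamical}: given a reference distribution (here the law of $\sqrt{N}\,H_{ij}$, for which all moments are finite by \eqref{eqn:tail}) and sufficiently small perturbations of its second, third and fourth moments, it produces a random variable with exactly those first four moments and with tail behavior inherited from the reference distribution. Concretely, one writes $\sqrt{N}\,\wt H_{ij}$ as a small mixture of $\sqrt{N}\,H_{ij}$ with an auxiliary finite-support distribution whose three free parameters can be tuned to correct the second, third and fourth moments independently.

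The only point requiring verification is that the perturbations are indeed small enough for Lemma 16.2 to apply uniformly in $i,j$ and $N$. The second-moment target is unchanged; the third-moment perturbation is $|(1-t)^{-3/2}-1|\,|\E[H_{ij}^3]| = O(t\, N^{-3/2})$; the fourth-moment perturbation is zero by choice; all remain well within the regime of the lemma once $c$ is chosen small enough. This completes the construction, with the constants $C,c$ depending only on the constant $c$ from \eqref{eqn:tail}.
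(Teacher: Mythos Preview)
Your proof is correct and follows the same approach as the paper, which simply states that the lemma ``is an immediate consequence of \cite[Lemma 16.2]{erdos2017dynamical}''; you have filled in the explicit moment computations showing what is required of the law of $\wt H_{ij}$ and explained how that cited lemma furnishes it. One small remark: the bound you obtain on the fourth-moment discrepancy is actually $O(t/N^2)$, strictly stronger than the $Ct$ stated in the lemma (and indeed the $N^{-2}$ factor is used elsewhere in the paper), so there is no ``absorbing'' needed---your estimate already implies the claim.
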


\subsection{Upper bound for the real part in Theorem \ref{thm:maxWigner}.}\ 
We start with the deterministic bound (\ref{eqn:UBRe1}) in the particular case $\nu=\rho_{\rm sc}(x)\rd x$, so that 
\begin{equation}\label{eqn:UBReW1}
\sup_{E\in [A+\kappa,B-\kappa]}\Re L^H_N(E)\leq \sup_{E\in J}\Re L^H_N\left(E+\frac{\ii}{N}\right)+C_1
\end{equation}
where $J\subset[A+\kappa,B-\kappa]$ has cardinality at most $C_2N$, and $C_1,C_2$ are absolute constants.

Set $t=\varphi^{-K}$, where $K>0$ is a parameter. Let $\wt H(t)$ be the matrix \eqref{e:wt1} given by Lemma~\ref{l:momentmatch}.  For any $\e>0$, let $f_\e$ be a smooth function such that $0 \le f_\e(x) \le 1$ for $x\in \RR$, $f_\e(x) = 0$ for $ x \le \sqrt{2} + \eps/2$, and $f_\e(x) =1$ for $x \ge \sqrt{2}+\e$. 
By Theorem \ref{t:main},  Lemma \ref{alphaapprox} and  Lemma \ref{l:entropy}, if $K$ is chosen large enough, we have
\begin{equation}\label{eqn:upper3}
\E\left[ f_\e\left((\log N)^{-1}\sup_{z\in J+\frac{\ii}{N}} \Re L_N^{\wt H(t)}(z)\right) \right] -  \E \left[f_\e \left( (\log N)^{-1}\sup_{z\in J+\frac{\ii}{N}} \Re L_N^{H}(z) \right) \right]=\OO\left( \frac{1}{\log N} \right).
\end{equation}
By Proposition \ref{l:relax}, there exists a coupling of $\wt H(t)$ and $\GOE_N$ such that 
\begin{equation}\label{eqn:upper2}
\mathbb{P}\left(\sup_{z\in J+\frac{\ii}{N}} \Big|L_N^{\wt H(t)}(z)- L_N^{\GOE}(z)\Big|>(\log N)^{\frac{1}{2}+\e}\right)= \oo(1).
\end{equation}
Finally, from (\ref{eqn:UBRe2}) and (\ref{eqn:UBRe3}), and recalling that the GOE is a $\beta$-ensemble with $\beta=1$ and a quadratic potential (see, e.g., \cite[(4.4)]{erdos2017dynamical}), we have
\begin{equation}\label{eqn:upper4}
\mathbb{P}\left((\log N)^{-1}\sup_{z\in J+\frac{\ii}{N}} \Re L_N^{\GOE}(z)>\sqrt{2}+\frac{\e}{2}\right) = \oo(1).
\end{equation}
To conclude, we observe that from \eqref{eqn:UBReW1},  \eqref{eqn:upper3},  \eqref{eqn:upper2}, \eqref{eqn:upper4} we have
\[
\mathbb{P}\left((\log N)^{-1}\sup_{E\in [A+\kappa,B-\kappa]}\Re L^H_N(E)>\sqrt{2}+2\e\right) = \oo(1).
\]

\subsection{Lower bound for the real part in Theorem \ref{thm:maxWigner}.}\
Let $I=[-2+2\kappa,2-2\kappa]\cap N^{-1}\mathbb{Z}$.
We start with a direct analogue of (\ref{eqn:LBstep1}), with identical proof:
\begin{equation}\label{eqn:LBWstep1}
\mathbb{P}\Big(\sup_{z\in I+\ii\eta_0}\Re L^H_N(z)\leq \sup_{E\in[-2+\kappa,2-2\kappa]} \Re L^H_N(E)+1\Big)=1-\oo(1),
\end{equation}
where $\eta_0$ is defined in (\ref{eqn:etao1}).
We take $t=\varphi^{-K}$, where $K>0$ is a parameter,  and $\wt H(t)$ as in Lemma~\ref{l:momentmatch}. 
Let $f_\e$ be a smooth function such that $0 \le f_\e(x) \le 1$ for $x\in \RR$, $f_\e(x) = 0$ for $ x \ge 1 - \e/2$, and $f_\e(x) = 1$ for $x \le 1 - \e$. 
By Theorem \ref{t:main} and Lemma \ref{l:entropy}, if $K$ is chosen large enough, we have
\begin{equation}\label{eqn:lower3}
\E \left[f_\e \left((\log N)^{-1}\sup_{z\in I+\ii\eta_0} \Re L_N^{\wt H(t)}(z)\right)\right] -  \E \left[f_\e \left( (\log N)^{-1}\sup_{z\in I+\ii\eta_0} \Re L_N^{H}(z) \right)\right]=\OO\left( \frac{1}{\log N} \right).
\end{equation}
By Proposition~\ref{l:relax}, there exists a coupling of $\wt H(t)$ and $\GOE_N$ such that 
\begin{equation}\label{eqn:lower2}
\mathbb{P}\left(\sup_{z\in I+\ii\eta_0}\Big| L_N^{\tilde H(t)}(z)- L_N^{\GOE}(z)\Big|>(\log N)^{\frac{1}{2}+\e}\right) = \oo(1).
\end{equation}
Assuming that 
\begin{equation}\label{eqn:lower4}
\mathbb{P}\left((\log N)^{-1}\sup_{z\in I+\ii\eta_0} L_N^{\GOE}(z)<\sqrt{2}-\frac{\e}{2}\right) = \oo(1),
\end{equation}
the desired lower bound  follows from  \eqref{eqn:LBWstep1}, \eqref{eqn:lower3},  \eqref{eqn:lower2}, \eqref{eqn:lower4}:
\[
\mathbb{P}\left((\log N)^{-1}\sup_{E\in [2+\kappa,2-\kappa]}\Re L^H_N(E)<\sqrt{2}-2\e\right)= \oo(1).
\]
We now prove \eqref{eqn:lower4}.   Equation (\ref{eqn:LBstep2}) yields
\begin{equation}\label{eqn:LBstepW2}
\mathbb{P}\left((\log N)^{-1}\max_{z\in[-2+2\kappa,2-2\kappa]+\ii\eta_0}\Re  L_N^{\rm GOE}(z)\leq \sqrt{2}-\frac{\e}{4}\right)= \oo(1).
\end{equation}
Moreover,
\begin{multline}\label{eqn:LBstepW3}
\mathbb{P}\left(\exists z,w\in [-2+2\kappa,2-2\kappa]+\ii\eta_0: |z-w|<\frac{1}{N}, |L_N^{\rm GOE}(z)-L_N^{\rm GOE}(w)|> (\log N)^{9/10}\right)\\
\leq \mathbb{P}\left(\exists z\in [-2+2\kappa,2-2\kappa]+\ii\eta_0: |s(z)|> (\log N)^{7/10}\right)=\oo(1),
\end{multline}
where the last inequality follows from a union bound, Theorem \ref{th:local_law_bulk}, Markov's inequality, and a straightforward mesh argument (similar to the one before \eqref{e:int0}).
Equations (\ref{eqn:LBstepW2}) and (\ref{eqn:LBstepW3}) give \eqref{eqn:lower4} and conclude the proof.

\subsection{Extremal deviation with optimal constant}.  We now prove part (i) of Theorem \ref{thm:Wigner}.

As before,  set $t=\varphi^{-K}$, where $K>0$ is a large parameter and $\wt H(t)$ be the matrix \eqref{e:wt1} given by Lemma~\ref{l:momentmatch}.  Let $f_\e$ be a smooth function such that $0 \le f_\e(x) \le 1$ for $x\in \RR$, $f_\e(x) = 0$ for $|x| \in [\sqrt{2}-\frac{\e}{2},\sqrt{2} + \eps/2]$, and $f_\e(x) =1$ for $x \in[0,\sqrt{2}-\e]\cup [\sqrt{2}+\e,\infty)$. 
By Theorem \ref{t:main2}, Lemma \ref{l:ev} and Lemma \ref{l:entropy},  we have
\begin{equation}\label{eqn:upper3C}
\Big(\E^{\wt H(t)}-\E^{H}\Big) \left[ f_\e\left(\frac{\pi N}{\log N}\max_{k\in\llbracket \kappa N,(1-\kappa)N\rrbracket} \rho_{\rm sc}(\gamma_k)|\lambda_k-\gamma_k| \right) \right] =\OO\left( \frac{1}{\log N} \right).
\end{equation}
By Proposition \ref{prop:homog} and \Cref{lem:maxSmoothField}, there exists a coupling of $\wt H(t)$ (with eigenvalues $\bla$) and $\GOE_N$ (with eigenvalues $\bmu$) such that 
%\cor[Actually this lemma gives $\oo(1)$ not $N^{-10}$, and it's enough]\nc
\begin{equation}\label{eqn:upper2C}
\mathbb{P}\left(\max_{k\in\llbracket\kappa N,(1-\kappa)N\rrbracket}|\lambda_k-\mu_k|>\frac{(\log N)^{\frac{1}{2}+\e}}{N}\right)= \oo(1).
\end{equation}
Finally, from Corollary \ref{cor:Beta} we have
\begin{equation}\label{eqn:upper4C}
\mathbb{P}^{\rm GOE}\left(\frac{\pi N}{\log N}\max_{k\in\llbracket \kappa N,(1-\kappa)N\rrbracket} \rho_{\rm sc}(\gamma_k)|\lambda_k-\gamma_k|\not\in\left[\sqrt{2}-\frac{\e}{2},\sqrt{2}+\frac{\e}{2}\right]\right)= \oo(1).
\end{equation}
From \eqref{eqn:upper3C},  \eqref{eqn:upper2C},  and \eqref{eqn:upper4C} we have
\[
\mathbb{P}^H\left(\frac{\pi N}{\log N}\max_{k\in\llbracket \kappa N,(1-\kappa)N\rrbracket} \rho_{\rm sc}(\gamma_k)|\lambda_k-\gamma_k|\not\in[\sqrt{2}-\e,\sqrt{2}+\e]\right)= \oo(1).
\]

\subsection{Rigidity with optimal order}.\label{subsec:rigidity}
 We finally prove part (ii) of Theorem \ref{thm:Wigner}, building on the key relaxation and moment matching results, namely lemmas \ref{l:newcomparisonGD} and \ref{l:newcomparison}. \\

\noindent{\it First step: smoothed indicator for Gaussian divisible ensemble.} We specify $f=f_E$ from (\ref{f}) to be of type  $f=\int\eta_1^{-1}h((x-E)/\eta_1)\mathds{1}_{[x,\infty)}\rd x$ where $h$ is positive,  smooth, compactly supported on $[0,1]$ and $\int h=1$.  Then  $f$ satisfies the bounds (\ref{fbounds}).
Moreover, for $t$ and $H$ as in  Lemma \ref{l:newcomparisonGD},  we have 
\begin{equation}\E[|\overline{{\rm Tr}}f(H_t)|^{2p}]\leq
\int\eta_1^{-1}h((x-E)/\eta_1)\E[|\overline{{\rm Tr}}\mathds{1}_{[x,\infty)}(H_t)|^{2p}]\rd x\leq \left(\frac{2}{e\pi^2}+\e\right)^p p^{p} (\log N)^{p},\label{eqn:smoothed1}
\end{equation}
where we have used convexity of $x\mapsto x^{2p}$ in the first inequality and the result of Lemma \ref{l:newcomparisonGD} in the second inequality.\\

\noindent{\it Second step: smoothed spectrum for Gaussian divisible ensemble.}\ We now prove that the actual spectrum $\bla$ can be replaced by the smoothed one $\tilde\bla$ in
(\ref{eqn:smoothed1}).

Let $A=\{\left|\sum_{i=1}^N f(\tilde \lambda_i) - \sum_{i=1}^N f( \lambda_i) \right| > \sqrt{\log N} \}$, and remember the notation (\ref{eqn:Xp}).
From Cauchy Schwarz and Lemma \ref{lemma:regLin} we have
\[
\E\left[X_p(H_t)\mathds{1}_{A}\right]\ll N^{2\log N} \exp( - c \phi/2)\to 0.
\]
Moreover,  by definition of $A$,  for any $\e>0$ and $\lambda>0$ to be chosen we bound
\begin{multline*}
\E\left[X_p(H_t)\mathds{1}_{A^c}\right]\leq \E\left[(|\overline{{\rm Tr}}f(H_t)|+\sqrt{\log N})^{2p}\right]\\\leq 
 \E\left[(|\overline{{\rm Tr}}f(H_t)|+\frac{|\overline{{\rm Tr}}f(H_t)|}{\lambda})^{2p}\mathds{1}_{|\overline{{\rm Tr}}f(H_t)|>\lambda\sqrt{\log N}}\right]
+
 \E\left[(\lambda\sqrt{\log N}+\sqrt{\log N})^{2p}\mathds{1}_{|\overline{{\rm Tr}}f(H_t)|\leq \lambda\sqrt{\log N}]}\right]\\
\leq(1+\lambda^{-1})^{2p} \left(\frac{2}{e\pi^2}+\e\right)^p p^{p} (\log N)^{p}+(\lambda+1)^{2p}(\log N)^p\leq  \left(\frac{2}{e\pi^2}+2\e\right)^p p^{p} (\log N)^{p}
\end{multline*}
for $N\geq N_0(\e,\kappa,A)$. We have used the definition of $A$ in the first inequality,  (\ref{eqn:smoothed1}) in the third one and the choice $\lambda=p^{1/10}$ in the third one.

We have therefore proved that for any $\e,\kappa,A>0$ there is a  $N_1(\e,\kappa,A)$ such that
\begin{equation}\label{eqn:interm11}
\E\left[X_p(H_t)\right]\leq  \left(\frac{2}{e\pi^2}+\e\right)^p p^{p} (\log N)^{p}
\end{equation}
for any $p\leq A\log N$,  $E\in[-2+\kappa,2-\kappa]$, and $t>\exp(-(\log N)^{1/10})$.\\

\noindent{\it Third step: moment matching.}\
Let $H$ be the Wigner matrix of interest in  part (ii) of Theorem \ref{thm:Wigner}.  Considering the dynamics  \eqref{OU},
the moment matching lemma \cite[Lemma 16.2]{erdos2017dynamical} gives existence of
a Wigner matrix $H_0$ such that the matrix $R= H_{t}$ satisfies $\E[H_{ij}^k] = \E[R_{ij}^k]$ for $k\in\unn{1}{3}$ and $\left| \E[H_{ij}^4 ]  -  \E[R_{ij}^4 ] \right| \le C N^{-2} t$, where $C>0$ depends only on the constants from Definition~\ref{def:wig}. 

We choose $t=\varphi^{-M}$ for a fixed $M>M_0$, with $M_0$ the constant from Lemma \ref{l:newcomparison}.  Note that $t>\exp(-(\log N)^{1/10})$ so that (\ref{eqn:interm11}) holds.  We can therefore
apply Lemma \ref{l:newcomparison} with $R=H_t$ and obtain that there is a $N_2(\e,\kappa,A)$ such that
\begin{equation}\label{eqn:interm12old}
\E\left[X_p(H)\right]\leq  \left(\frac{2}{e\pi^2}+\e\right)^p p^{p} (\log N)^{p}
\end{equation}
for any $p\leq A\log N$,  $E\in[-2+\kappa,2-\kappa]$ and $N\geq N_2$.
As in the previous step but in the reverse direction now,  with  Lemma \ref{lemma:regLin} we obtain that the same property holds for the actual eigenvalues: for any choice of the parameters,
there is a $N_3(\e,\kappa,A)$ such that
\begin{equation}\label{eqn:interm12}
\E\left[|\overline{{\rm Tr}}f(H)|^{2p}\right]\leq  \left(\frac{2}{e\pi^2}+\e\right)^p p^{p} (\log N)^{p}
\end{equation}
for any $p\leq A\log N$,  $E\in[-2+\kappa,2-\kappa]$ and $N\geq N_3$. 

To conclude,  note that for any fixed $\e$ there is a $N_4(\e,\kappa)$ such that for any $u>1$, the inequality
$\lambda_k-\gamma_k>u\cdot\frac{\sqrt{2}}{\pi\rho_{\rm sc}(\gamma_k)}\cdot\frac{\sqrt{\log N}}{N}$ implies
that for $E=\gamma_k+u\cdot(\frac{\sqrt{2}}{\pi\rho_{\rm sc}(\gamma_k)}-1)\cdot\frac{\sqrt{\log N}}{N}$ we have
$\overline{{\rm Tr}}f_E(H)>(1-\e) \frac{u\sqrt{2}}{\pi}\sqrt{\log N}$.
With (\ref{eqn:interm12}) this gives
\[
\mathbb{P}\left(\lambda_k-\gamma_k>u\cdot\frac{\sqrt{2}}{\pi\rho_{\rm sc}(\gamma_k)}\cdot\frac{\sqrt{\log N}}{N}\right)\leq 
 \left(\frac{2}{e\pi^2}+\e\right)^p p^{p} (\log N)^{p}\cdot ((1-\e) \frac{u\sqrt{2}}{\pi}\sqrt{\log N})^{-2p}\leq (\frac{p}{e}+10\e)^{p}u^{-2p}.
\]
Optimization in $p$ then concludes the proof.

\subsection{Gaussian divisible ensemble: universality up to tightness.}\  Theorem \ref{thm:maxWignerDivisible} follows immediately from Proposition \ref{l:relax2}.

\appendix

\section{High Moments of Linear Statistics for Wigner Matrices}
\label{sec:moments}

The main goal of this appendix is to prove \Cref{p:moments}, which provides estimates on large moments (growing in $N$) of the semicircle law. This proposition is used in the proof of \Cref{l:relax}, via its  \Cref{lem:maxSmoothField}, and \Cref{l:newcomparisonGD}. While a weaker result would suffice for the proof of \Cref{l:relax}, for example a bound on a fixed but large moment, it is indeed necessary to control growing moments for the application in \Cref{l:newcomparisonGD}.

\begin{proposition}\label{p:moments}
Let $H$ be a real symmetric Wigner matrix. Fix $K, \kappa,A >0$. 
For every $E \in [-2 + \kappa, 2 - \kappa]$, $\eta \in \left[ \phi^{-K}, \phi^K \right]$, and $p\in \N$ with $p \le A\log N$, there exists a  constant $C(K, \kappa,A) >0$ such that \beq\label{eqn:momSt}
\E   \left[ \big| m_N(z) - \msc (z) \big|^{p} \right]
\le \left(\frac{C}{N\eta} \right)^p p^{3p/4}.
\eeq
\end{proposition}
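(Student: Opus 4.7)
\textbf{Proof plan for Proposition~\ref{p:moments}.} The strategy is to set up a recursive inequality for $Y_p \deq \E[|m_N(z) - m_{\rm sc}(z)|^p]$ via a cumulant expansion in the entries of $H$, and then to close it so that each induction step costs a combinatorial factor of order $p^{3/2}$ rather than the naive $p^2$.

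\textbf{Step 1 (reduction to a centered moment).} First split $m_N - m_{\rm sc} = (m_N - \E m_N) + (\E m_N - m_{\rm sc})$. The deterministic bias $|\E m_N(z) - m_{\rm sc}(z)| \le C/(N\eta)$ follows from taking expectation in the approximate self-consistent equation $1 + z m_N + m_N^2 = \OO(|m_N - m_{\rm sc}|^2) + \text{(fluctuation)}$, together with the pointwise local law \eqref{e:sclaw} and the resolvent identities \eqref{e:ward}--\eqref{e:gtrivial}. Since $(C/(N\eta))^p$ is already of the target order, it suffices to prove the bound for the centered quantity $\tilde m \deq m_N - \E m_N$.

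\textbf{Step 2 (cumulant expansion).} Apply the generalized Stein identity adapted to subexponential laws to the expression $\E[\tilde m\cdot F(H)]$ with $F = \tilde m^{\,p-1}\overline{\tilde m}^{\,p}$. Using the derivative identity \eqref{e:gpartial}, each integration-by-parts produces derivatives of $G$ and of $F$, weighted by normalized cumulants $\kappa_k(H_{ij})$. The subexponential tail \eqref{eqn:tail} gives $|\kappa_k(H_{ij})|\le (Ck)^{k/c}N^{-k/2}$, while the improved local law \eqref{e:gentries} together with Ward's identity \eqref{e:ward} provides the resolvent inputs
\[
|G_{ab}(z)| \le C\varphi, \qquad \sum_{b} |G_{ab}(z)|^2 = \frac{\im G_{aa}(z)}{\eta} \le \frac{C\varphi}{\eta},
\]
uniformly on the relevant spectral domain.

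\textbf{Step 3 (closing the recursion).} Organizing the terms produced in Step~2 by the number of derivatives falling on $F$ versus on $G$, one obtains a schematic inequality
\begin{equation*}
Y_{2p} \;\le\; \frac{C\, p^{3/2}}{(N\eta)^2}\, Y_{2p-2} \;+\; \sum_{k\ge 3}\frac{(Cp)^{k/2}}{(N\eta)^{k}}\, Y_{2p-k}
\;+\; \text{negligible tail},
\end{equation*}
where the leading $p$ comes from differentiating the product $\tilde m^{\,p-1}\overline{\tilde m}^{\,p}$, an additional $p^{1/2}$ factor is recovered by a Cauchy--Schwarz split that pairs a mixed moment against $Y_{2p-2}$, and the $(N\eta)^{-2}$ gain arises from one Ward-type summation. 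Iterating produces $Y_{2p}\le (C/(N\eta))^{2p}(p^{3/2})^{p}$, equivalently $Y_p \le (C/(N\eta))^p p^{3p/4}$, as claimed. The base of the induction (small $p$) is handled by combining \eqref{e:sclaw} with the trivial deterministic bound $|\tilde m|\le 2/\eta$ to absorb the event of probability $\le c^{-1}\exp(-\varphi^c)$, which is affordable since $(2/\eta)^{2p} e^{-\varphi^c} = o(1)$ in the regime $p\le A\log N$ and $\eta\ge \varphi^{-K}$.

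\textbf{Main obstacle.} The nontrivial content of the proposition is the exponent $3/4$: a routine application of the cumulant expansion together with Schwarz yields only $p^p$, which is why the bound must be obtained by carefully separating, inside each group of cumulant indices, the "diagonal" contributions that yield a single factor of $\tilde m$ from the "off-diagonal" ones which are paired and controlled by $\im m_N\lesssim 1$ via \eqref{e:ward}. Saving a factor of $p^{1/2}$ per iteration in this way—rather than losing it—is the delicate combinatorial point, and it must be executed uniformly in $\eta\in[\varphi^{-K},\varphi^{K}]$ and all $p\le A\log N$, without accumulating powers of $\varphi$ that would eventually overwhelm the factor $p^{3p/4}$.
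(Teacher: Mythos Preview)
Your high-level plan---cumulant expansion plus induction on the moment order---matches the paper's, but Step~2 contains a structural gap that prevents the argument from running as written.

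The cumulant expansion (Lemma~\ref{cumulant}) applies to expressions of the form $\E[H_{ik}\,F(H)]$: you need an explicit matrix entry as prefactor. The centered transform $\tilde m=m_N-\E m_N$ carries no such factor, and you do not explain how to produce one. The standard route is the resolvent identity $1+zG_{ii}=(HG)_{ii}$, which after summation reads $1+zm_N=N^{-1}\sum_{i,k}H_{ik}G_{ki}$; this feeds the cumulant expansion naturally, but it generates $1+zm_N$, not $\tilde m$, and subtracting $\E m_N$ does not interact well with the expansion (the centering is destroyed by each derivative $\partial_{ik}$). This is why the paper works not with $\tilde m$ but with the self-consistent polynomial $P=1+zm_N+m_N^2$, which vanishes at $m_N=m_{\rm sc}$. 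Writing
\[
\E\big[|P|^{2D}\big]=\E\Big[\Big(\tfrac1N\textstyle\sum_{i,k}H_{ik}G_{ki}\Big)P^{D-1}\bar P^{D}\Big]+\E\big[m_N^2\,P^{D-1}\bar P^{D}\big]
\]
and expanding the first term in cumulants, the leading Gaussian contribution $I_{1,0}$ equals $-\E[m_N^2\,P^{D-1}\bar P^{D}]$ up to $(N\eta)^{-1}$ errors (equation~\eqref{I1}) and cancels the second term exactly. This cancellation is the algebraic heart of the proof; it has no analogue if one expands $\tilde m$ directly. Only at the very end does one pass from $|P|$ back to $|m_N-m_{\rm sc}|$ via the stability of the quadratic $u^2+zu+1=0$.

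A second, smaller issue: your Step~3 asserts a recursion with leading coefficient $Cp^{3/2}/(N\eta)^2$, with the extra $p^{1/2}$ coming from an unspecified ``Cauchy--Schwarz split that pairs a mixed moment against $Y_{2p-2}$.'' In the actual scheme the two dominant survivors after the cancellation are $(C/(N\eta))\,\E[|P|^{2D-1}]$ and $D\,(C/(N\eta))^2\,\E[|P|^{2D-2}]$, a \emph{linear} factor of $D\sim p$. The exponent $3/4$ emerges not from a single $p^{3/2}$-per-step cost but from balancing these two terms against the full family of higher-cumulant remainders $I_{r,s}$ of Lemma~\ref{mainterms}, each carrying powers $(CD/(N\eta))^{a-1}$; the bookkeeping required so that the ansatz $\E[|P|^q]\le(C_1/(N\eta))^q q^{3q/4}$ reproduces itself under this hierarchy is where the substance of the argument lies, and your proposal does not engage with it.
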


\begin{remark}
A natural approach  to bounds such as (\ref{eqn:momSt}) relies on concentration  for random matrices.
However,  even on close-to-macroscopic scales and for bounded or log-concave matrix entries,   this method would not give accurate enough bounds for
\Cref{l:relax} and \Cref{l:newcomparisonGD}.
Indeed the concentration from \cite{GuiZei2000} yields
$
\mathbb{P}\left(N|m_N(z)-\E[m_{N}(z)]|>\lambda\right)\leq c^{-1}e^{-c\lambda^2/\eta^4}$ ($\eta={\rm Im} z$),
so that 
\[
\E   \left[ \big| m_N(z) -\E[m_{N}(z)] \big|^{p} \right]
\le \left(\frac{C}{N\eta^2} \right)^p p^{p/2}.
\]
Integration of  $|m_N-\E[m_{N}(z)]|\lesssim C/(N\eta^2)$ gives $|L_N|\lesssim \varphi^{C}$ for $\eta\asymp \varphi^{-C}$,  an error bigger than the order of magnitude
 $\max_{|E|<2-\e}L_N\asymp \log N$ that we aim at.
\end{remark}

\begin{remark}
For our application to \Cref{l:newcomparisonGD}, it is critical that the exponent $3/4$ in (\ref{eqn:momSt}) is smaller than $1$, i.e.  one could not afford the exponential tail error  $\left(\frac{C}{N\eta} \right)^p p^{p}$.
\end{remark}
We defer the proof of \Cref{p:moments} to \Cref{s:momentproof}, after establishing various preliminary results in the following subsections. Throughout, we use the notations defined in \Cref{s:relaxprelim}.

\subsection{High-probability bound on the log-characteristic polynomial.}\ 
We begin with an application of \Cref{p:moments} that provides estimates on the maximum of the log-characteristic polynomial smoothed at almost-macroscopic scale.

\begin{corollary}\label{lem:maxSmoothField} Let  $\bla$ be the spectrum of a $N\times N$ Wigner matrix and remember the notation (\ref{eqn:LNx}).  Let $\kappa>0$ and  denote $z=E+\ii\eta$. 

\begin{enumerate}[(i)]
\item
Fix $C_1>10$.
There exists a constant $c(C_1, \kappa)>0$ such that for every $N\geq 1$ and $u\in[1,c\, (\log N)^{3/4}]$, we have
\beq\label{sufficeclm}
\mathbb{P}\left( \max_{|E|<2-\kappa,\eta \in [\phi^{-C_1},1]} \left|
L_N^{\bla}(z)\right|>u\right)\leq c^{-1} e^{c^{-1}(\log\log N)^2- c(\frac{u}{\log_2 N})^{4/3}}.
\eeq
Moreover, for any $u\leq C_1\varphi$, denoting $q=C_1\log n$ we have
\beq\label{eqn:pointBD}
\mathbb{P}\left( \left|
L_N^{\bla}(z)\right|>u\right)\leq c^{-1} e^{- c(\frac{u}{\log_2 N})^{4/3}}\mathds{1}_{u<c^{-1} (\log N)^{3/4}}+c^{-q}q^{\frac{3}{2}q}u^{-2q}\mathds{1}_{u>c^{-1} (\log N)^{3/4}}
\eeq
\item Let $0<\eta_1<\eta_2<0$ be fixed and $\mathscr{C}$ be a fixed smooth path in $[-2+\kappa,2-\kappa]\times [\eta_1,\eta_2]$, of finite length.  Then for every $\e>0$ there exists $M>0$ such that for any $N\geq 1$,
\begin{equation}
\mathbb{P}\left( \max_{\mathscr{C}} \left|
L_N^{\bla}(z)\right|>M\right)\leq 1-\e.
\end{equation}
\end{enumerate}
\end{corollary}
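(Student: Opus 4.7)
The plan is to combine Proposition \ref{p:moments} with the analyticity of $L_N^{\bla}(z)$ on the upper half-plane and a Lipschitz mesh argument. Since $\partial_z L_N^{\bla}(z) = -N(m_N(z)-\msc(z))$ and $L_N^{\bla}(E+\ii T)\to 0$ as $T\to\infty$ (extracted from the large-$|z|$ expansion of the logarithm together with eigenvalue rigidity \eqref{e:rigidity}), one obtains the representation
\begin{equation*}
L_N^{\bla}(E+\ii\eta) \;=\; \ii\int_\eta^\infty N\big(m_N - \msc\big)(E+\ii\eta')\,\rd\eta'.
\end{equation*}

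For part (i), I would first bound $p$-th moments at a fixed $z=E+\ii\eta$ with $\eta\geq\eta_0 = \varphi^{-C_1}$. Minkowski's inequality splits the integral into $[\eta_0,\varphi^K]$ and $[\varphi^K,\infty)$; on the first interval Proposition \ref{p:moments} gives $(\E|m_N-\msc|^p)^{1/p}\leq (C/(N\eta'))p^{3/4}$, contributing $Cp^{3/4}\log(\varphi^K/\eta_0) = Cp^{3/4}(\log\log N)^2$, and on the second interval the expansion $N(m_N-\msc)(z) = -\sum_{j\geq 1}(\tr H^j - Nm_j^{\mathrm{sc}})z^{-j-1}$ together with $O(\sqrt{p})$ bounds on the centered polynomial trace moments (standard for Wigner matrices) yields a negligible $O(\sqrt{p})$ contribution after termwise integration. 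Hence $(\E|L_N(z)|^p)^{1/p} \leq Cp^{3/4}(\log\log N)^2$ for all $p\leq A\log N$. Markov's inequality with $p$ optimized around $(u/(\log\log N)^2)^{4/3}$ produces $\P(|L_N(z)|>u) \leq C\exp(-c(u/(\log\log N)^2)^{4/3})$ in the moderate range, while saturating $p$ at $q=A\log N$ (the maximal value allowed in Proposition \ref{p:moments}) yields the large-deviation form $c^{-q}q^{3q/2}u^{-2q}$ of \eqref{eqn:pointBD}.

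To upgrade this to the uniform bound \eqref{sufficeclm}, I would use the local law \eqref{e:sclaw} to obtain $|\partial_z L_N| = N|m_N-\msc| \leq \varphi/\eta \leq \varphi^{C_1+1}$ throughout $[-2+\kappa,2-\kappa]\times[\eta_0,1]$ with overwhelming probability. A mesh of spacing $\varphi^{-C_1-2}$ then contains only $\varphi^{O(1)} = \exp(O((\log\log N)^2))$ points, and a union bound of the pointwise tail over this mesh together with Lipschitz interpolation produces precisely the $\exp(c^{-1}(\log\log N)^2)$ prefactor in \eqref{sufficeclm}. Part (ii) follows by the same argument with $\eta_0$ replaced by the fixed constant $\eta_1>0$: the integration distance $\log(1/\eta_1)$ is then $O_{\eta_1}(1)$, so the moment bound becomes $\E|L_N|^p \leq C^p p^{3p/4}$, giving $\P(\max_\mathscr{C}|L_N|>M) \leq C\exp(-cM^{4/3})$, which is less than $\varepsilon$ for $M$ large enough, uniformly in $N$.

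The principal obstacle is the sublinear exponent $3/4$ on $p$ in Proposition \ref{p:moments}: this is strictly stronger than what naive subexponential concentration would provide and is indispensable for the stretched-exponential tail in \eqref{sufficeclm}---with exponent $1$ instead, Markov would yield only polynomial decay in $u$. A secondary technical issue is controlling the tail $\eta'\in[\varphi^K,\infty)$ of the integral representation, which cannot be handled by Proposition \ref{p:moments} itself and must be treated separately using the series expansion of $m_N-\msc$ at large $|z|$ and CLT-type moment bounds on $\tr H^j - Nm_j^{\mathrm{sc}}$.
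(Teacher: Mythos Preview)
Your overall strategy---integral representation $L_N(z)=\ii\int_\eta^\infty N(m_N-\msc)(E+\ii\eta')\rd\eta'$, Proposition~\ref{p:moments} on the finite range, Markov with $p$ optimized at $(u/(\log\log N)^2)^{4/3}$, and a $\varphi^{O(1)}$-mesh via the local law---matches the paper's proof in structure. The paper also reduces to a mesh using $|\partial_z L_N|\le\varphi^{C_1+4}$ from \eqref{e:sclaw}, applies Proposition~\ref{p:moments} with Minkowski to $\int_\eta^{\eta_1}$, and optimizes Markov; the $(\log\log N)^2$ cost from $\int\rd\eta'/\eta'$ and the $|\mathcal M|=\varphi^{O(1)}$ union-bound prefactor arise exactly as you describe.

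The one substantive difference is how the large-$\eta'$ tail is handled. The paper does \emph{not} integrate to infinity: it stops at $\eta_1=\varphi^{C_2}$ and bounds $L_N(E+\ii\eta_1)$ directly by rigidity \eqref{e:rigidity}, writing $\sum_i\log(E+\ii\eta_1-\lambda_i)-\sum_i\log(E+\ii\eta_1-\gamma_i)=O(\varphi^{1-C_2})$ via Taylor expansion (equation \eqref{eqn:far}), and similarly for the $\gamma_i$-to-integral comparison. This is a one-line deterministic bound on the rigidity event. Your route through the Laurent expansion and ``$O(\sqrt p)$ bounds on centered polynomial trace moments'' is not wrong in spirit, but the claim you call ``standard'' is not quite: you need $(\E|X_j|^p)^{1/p}\le C_j\sqrt p$ for $X_j=\tr H^j-Nm_j^{\rm sc}$ with $C_j$ growing slowly enough in $j$ to sum against $|z|^{-j-1}$, uniformly for $p$ up to $\log N$, and you must also restrict to the event $\max|\lambda_i|<|z|$ for the series to converge. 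This can be done but is considerably more work than the paper's rigidity shortcut, which you should use instead.

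For part (ii) your argument is actually stronger than needed and differs from the paper's: the paper anchors at a fixed $z_0$ using tightness of $L_N(z_0)$ from the known CLT \cite{LytPas2009}, then controls $\max_{\mathscr C}|L_N(z)-L_N(z_0)|$ by $\int_{\mathscr C}|\Delta_N|\,|\rd w|$ with only the $p=2$ case of Proposition~\ref{p:moments}. Your approach (repeat part~(i) with $\eta_0$ a fixed constant) also works and gives a quantitative tail, but the paper's softer argument avoids reproving any moment bound.
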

\begin{proof} We start with (i).  
By the local semicircle law \eqref{e:sclaw} we have
\begin{equation}\label{e:fieldregularity}
\partial_z \left( \sum_{k=1}^N\log(z - \la_k )
\right)=  \sum_{k=1}^N \frac{1}{z - \la_k}=  -  N m_N(z) =  
\OO(\varphi^{C_1+4}),
\end{equation}
with probability $1 - \OO(\exp( - \varphi^c ))$,  uniformly in $|E|<2-\kappa$ and $\eta\in[\varphi^{-C_1},1]$.

Let $\delta  = \varphi^{- C_1 -4}$, and let $\mathcal M  = \mathcal M_\delta = \{ m_i \}_{i=1}^{ \lfloor 4 \varphi^4\delta^{-1} \rfloor}$ be a collection of points in $[ -2 + \kappa, 2 - \kappa]\times [\varphi^{-C_1},1]$
such that any $z\in[ -2 + \kappa, 2 - \kappa]\times [\varphi^{-C_1},1]$ there exists $m_i$ such that $|z-m_i|\leq 10\delta$.
 By \eqref{e:fieldregularity} and the definition of $\mathcal M$, we have
\begin{equation}
\max_{E \in[ -2 + \kappa, 2 - \kappa] }|L_N^{\bla}(z)|=  \max_{E \in \mathcal M }|L_N^{\bla}(z)|+ \OO(\delta \varphi^{C_1+4}).
\end{equation}
Since $\delta \varphi^{C_1+4} = \OO(1)$, to establish \eqref{sufficeclm}, it therefore suffices to show that,  for any $z\in\mathcal{M}$,
\begin{equation}\label{e:a2conclusion}
\P\left(\left| L_N^{\bla}(z) \right| > u \right) \le {c}^{-1}e^{- c(\frac{u}{\log_3 N})^{4/3}},
\end{equation}
and then use a union bound on $|\mathcal{M}|=\OO(\varphi^{C_1+8})$ points.
To prove (\ref{e:a2conclusion}), for some parameter $\eta_1>\eta$ we first write
\begin{equation*}
L_N^{\bla}(z) =   N\int_{s = \eta }^{\eta_1}  (m_N(E + \iu s) - \msc(E + \iu s)) \rd s
+ L_N^{\bla}(E+\ii \eta_1).
\end{equation*}
To bound the above terms, we compute using the rigidity bound \eqref{e:rigidity} and Taylor expansion of $\log$
that 
\begin{multline}
\left|\sum_{i =1}^N\log(E+\ii \eta_1 -\lambda_i)-\sum_{i =1}^N\log(E+\ii \eta_1 -\gamma_i)\right| =\left|\sum_{i =1}^N\log\left(1+\frac{\lambda_i-\gamma_i}{E+\ii \eta_1 -\gamma_i}\right)\right|
\leq \sum_{i =1}^N C |\lambda_i-\gamma_i|/\eta_1\\
\leq C \varphi^{1-C_2}\sum_{i=1}^N \min(i,N+1-i)^{-1/3}N^{-2/3}\leq C  \varphi^{1-C_2},\label{eqn:far}
\end{multline}
where we choose $\eta_1=\varphi^{C_2}$ for some $C_2>0$.
Similar reasoning shows that 
\begin{equation*}
\left|N \int \log(E  + \iu \eta_1 - u) \sc(u) \rd u-\sum_{i =1}^N\log(E+\ii \eta_1 -\gamma_i)\right| \le C \phi^{1 - C_2},
\end{equation*}
where $C=C(C_1, \kappa)>0$ may change from line to line below.
The previous three equations give (for fixed, large enough $C_2$), for arbitrary $D>0$ and $N\geq N_0(D)$,
\[
\mathbb{P}\left(|L^{\bla}_N(z)- \int_{\eta}^{\eta_1} N\big(m_N(E + \iu s) - \msc(E + \iu s)\big) \, \rd s|>1\right)\leq N^{-D}.
\]
We denote $\Delta_N(z)=N(m_N(z) - \msc(z))$. By Markov's inequality, for any $p\geq 1$,
\begin{multline}\label{eqn:close}
\mathbb{P}\left(\left| \int_{\eta}^{\eta_1} \Delta_N(E+\ii s) \rd s \right|>u\right)
\le u^{-2p} \E\left[ \left|\int_{\eta}^{\eta_1} \Delta_N(E+\ii s)  \rd s  \right|^{2p}\right]\\
\leq
u^{-2p}
\left|\int_{\eta}^{\eta_1} \left(  \E 
\left| \Delta_N(E+\ii s)\right|^{2p}\right)^{\frac{1}{2p}} \rd s
\right|^{2p}
\leq u^{-2p}C^{2p} p^{\frac{3p}{2}} \left( \int_{\eta}^{\eta_1} \frac{\rd\eta}{\eta} \right)^{2p},
\end{multline}
where the second inequality is obtained by expansion and H\"{o}lder's inequality,  and the third inequality relies on Proposition \ref{p:moments} for $p=\OO(\log N)$.
We now recall that $\log \varphi = \mathrm{O}\big( (\log \log N)^2 \big)$,  so that the above probability is also bounded with
\[
u^{-2p}C^{2p} p^{\frac{3p}{2}} \left( C\log\log N\right)^{2p}
\]
The choice $p=e(u/\log_2N)^{4/3}$  proves \eqref{e:a2conclusion} and concludes the proof of  (\ref{sufficeclm}). The proof of (\ref{eqn:pointBD}) is the same, with no need of discretization.

The proof of (ii) is simpler as it does not need any discretization and only requires finite moment estimates.  Indeed it follows form the following the following two facts. First,  $(L_N(z_0))_N$ is tight, where 
$z_0\in[-2+\kappa,2-\kappa]\times [\eta_1,\eta_2]$ is fixed.  This follows from convergence in distribution of this series (see e.g.\ \cite{LytPas2009}). Then
$\max_{\mathscr{C}}|L_N(z)-L_N(z_0)|$ is also tight because it dominated with $\int_{\mathscr{C}}|\Delta_N(w)|\cdot|\rd w|$, which is tight by Proposition \ref{p:moments} with $p=2$.
\end{proof}

\subsection{Preliminaries.}\
We first list some preliminary results necessary for the proof of  
\Cref{p:moments}. 
We begin with a power counting lemma for resolvent entries. Given a parameter $A>0$, we set
\begin{equation}\label{DA}
\mathcal D_{A} = \left\{ z =E + \iu \eta \in \mathbb{H} :  |E| < 2 - A^{-1}, \eta \ge \phi^{-A}   \right\}.
\end{equation}
Throughout, we let $H$ be a Wigner matrix and let $G$ denote its resolvent. 
\bel\label{l:pc}
Fix $A>0$. There exists $C(A)>0$ and $N_0(A)> 0$ such that the following holds for all $z \in \mathcal D_{A}$  and $N \ge N_0$.
For any $i,k \in \unn{1}{N}$ and random variable $F(z)$ such that $|F| \le (A  \eta^{-1} )^{A \log N}$ almost surely,
\begin{equation}\label{smallpowercount}
\E \left[ \left| F \right| \cdot \frac{1}{N} \sum_{j=1}^N | G_{ij} | \right] \le \left( \frac{ C }{N\eta} \right)^{1/2} \E  \big[ |F | \big ] 
+ N^{- A \log N}
,\quad 
\E \left[  \left| F \right| \cdot \frac{1}{N} \sum_{j=1}^N | G_{ij} | |G_{j k}| \right] \le \frac{ C }{N\eta}  \E  \big[ |F|  \big] + N^{- A \log N},
\end{equation}
where we set $G=G(z)$, $F=F(z)$, and $z = E + \ii \eta$. 
More generally, for any $n \le A \log N$,
\begin{equation}\label{powercount}
\E \left[  \left| F \right| \cdot  \frac{1}{N^n} \sum_{j_1,\dots,j_n = 1}^N \left| G_{ij_1} \cdots G_{j_n k} \right| \right] \le \left( \frac{ C }{N\eta} \right)^{n/2} \E  \left|F \right| 
+ N^{- A \log N}
.
\end{equation}
\eel
\begin{proof}
We give only the proof of \eqref{powercount}, as the proofs of the remaining statements are similar.
Suppose that $n$ is odd. We apply the elementary inequality 
\begin{equation}
\sum_{j=1}^N | G_{aj} G_{jb} | \le \frac{1}{2} \sum_{j=1}^N | G_{aj}|^2 + |G_{jb} |^2
\end{equation}
for $j=j_1,j_3,j_5\dots$ and the Ward identity \eqref{e:ward} to show that the left side of \eqref{powercount} is bounded by 
\begin{equation}
\E\left[\left| F \right| \cdot 
\frac{1}{N^n} \sum_{j_2,j_4,j_6,\dots} \frac{1}{2\eta}(\im G_{ii}+\im G_{j_2j_2})%\cdot \frac{1}{2\eta}(\im G_{j_2j_2}+\im G_{j_4j_4})
\cdots \frac{1}{2\eta}(\im G_{j_{n-1} j_{n-1}}+\im G_{kk})\right],
\end{equation}
where there are $(n+1)/2$ factors in the sum. 

Let $\mathcal A$ be the high-probability set from \eqref{e:gentries}.
For $\eta>N^{-1/2}$, we have $\im G_{jj}<C$ on $\mathcal A$, which implies
\begin{align}\label{onA}
&\E\left[\mathds{1}_{\mathcal A}\left| F \right| \cdot  \frac{1}{N^n} \sum_{j_2,j_4,j_6,\dots} \frac{1}{2\eta}(\im G_{kk}+\im G_{j_2j_2})\cdot \frac{1}{2\eta}(\im G_{j_2j_2}+\im G_{j_4j_4})\dots \right]\\
&\le \frac{N^{(n-1)/2}}{N^n}\left( \frac{ C }{\eta} \right)^{(n+1)/2}
\E  \big[|F |\big]  
 = \left(\frac{C}{N\eta }\right)^{(n+1)/2}\E  \big[|F |\big] .
\end{align}
using $C (N\eta)^{-1} \le 1$.  On $\mathcal{A}^c$ we use the trivial bound $\im G_{ii}<\eta^{-1}$ and the strong probability estimate on $\P(\mathcal{A}^c)$ from \eqref{e:gentries}.
This gives
\begin{align}\label{offA}
&\E\left[\left| F \right| \cdot  \mathds{1}_{A^c}\frac{1}{N^n} \sum_{j_2,j_4,j_6,\dots} \frac{1}{2\eta}(\im G_{kk}+\im G_{j_2j_2})\cdot \frac{1}{2\eta}(\im G_{j_2j_2}+\im G_{j_4j_4})\dots \right]\\
& \le\frac{N^{(n+1)/2}}{N^n\eta^{n+1}}  \cdot (A  \eta^{-1} )^{A \log N}\cdot c^{-1} \exp ( - c (\log N)^{C_0 \log \log N})
\le N^{- A \log N},
\end{align}
by the assumptions on $\eta$ and $n$ (recall $z\in \mathcal D_A$), for sufficiently large $N$. The claim follows by combining \eqref{onA} and \eqref{offA}.

 The proof for even $n$ is similar, using  $|G_{j_n k}| \le C$ on $\mathcal A$ to bound the left side of \eqref{powercount} by 
\begin{equation}
\E\left[ \left| F \right| \cdot \frac{1}{N^n} \sum_{j_2,j_4,j_6,\dots} \frac{1}{2\eta}(\im G_{ii}+\im G_{j_2j_2})
\cdots \frac{1}{2\eta}(\im G_{j_{n-2} j_{n-2}}+\im G_{j_n j_n}) |G_{j_n k}| \right].
\end{equation}
\end{proof}

Given a random variable $X$, we let $\kappa^{(j)}(X)$ denote the $j$-th cumulant of $X$. The following lemma is  \cite[Lemma 3.2]{lee2018local}.
Known as a cumulant expansion, it provides an extension of the well-known Gaussian integration by parts formula to non-Gaussian random variables. (Observe that \eqref{cumulantexpansion} reduces to a single term when $Y$ is Gaussian, as all higher cumulants of $Y$ vanish in this case.)
\bel\label{cumulant}
Fix $\ell \in \N$, $Q>0$, and  $F \in C^{\ell+1}(\R; \C^+)$. 
Let $Y$ be a random variable such that $\E[Y]=0$ with finite moments to order $\ell+2$. Then
\begin{equation}\label{cumulantexpansion}
\E \big[ YF(Y) \big] = \sum_{r=1}^\ell  \frac{\kappa^{(r+1)}(Y)}{r!}
 \E \left[ F^{(r)}(Y) \right] 
+
 \E \Big[ \Omega_\ell \big( YF(Y) \big) \Big],
\end{equation}
where $\Omega_\ell(YF(Y))$ is an error term that satisfies
\begin{equation}
\left|  \E\Big[ \Omega_\ell \big(Y F(Y) \big) \Big]   \right| \le 
C_\ell \E \left[  | Y |^{\ell+2} \right]   \sup_{|t| \le Q } \left|  F^{(\ell+1)} (t) \right|
+ 
C_\ell \E\left[ | Y|^{\ell+2} \one(|Y| > Q) \right] 
\sup_{t \in \R} \left|  F^{(\ell+1)} (t) \right|.
\end{equation}
The constant $C_\ell$ satisfies $C_\ell \le (C \ell)^\ell / \ell !$ for some $C>0$ that does not depend on $Q$, $F$, or $\ell$. 
\eel
\bel\label{l:moments}
Let $H$ be a Wigner matrix in the sense of Definition~\ref{def:wig}.
Then there exists a constant $C >0$ such that
\begin{equation}\label{cgrowth}
\big| \kappa^{(k)}(H_{ij}) \big| \le \frac{ (C k)^{Ck} } { N^{k/2}}, \quad k \ge 3
\end{equation}
for all $i,j \in \unn{1}{N}$.
\eel
\begin{proof}
The claim follows by expressing the cumulants in terms of moments, and using the moment bound 
\begin{equation}\label{momentgrowth}
\E \big[ | H_{ij}|^k  \big] \le \frac{ (C k)^{Ck} } { N^{k/2}}, \quad k \ge 3.
\end{equation}
The bound \eqref{momentgrowth} follows from the subexponential decay hypothesis
\eqref{eqn:tail}.
\end{proof}

\subsection{Main Calculation.}\
Let $H=H(N)$ be a $N\times N$ Wigner matrix. We introduce the shorthand
$m = m_N(z)$, where $m_N$ denotes the Stieltjes transform of $H$.
The proof of \Cref{p:moments} proceeds by bounding the moments 
\begin{equation}\label{mmoments}
\E\big[ | 1 + zm + m^2 |^{2D} \big], \qquad D \in \mathbb{N}.
\end{equation}
To explain this strategy, observe that by the explicit form of $\msc$ in \eqref{e:msc}, we have  
$1 + z\msc + \msc^2 =0$.
Then, since  $m \approx \msc$ for large $N$ (by \eqref{e:gentries}), we have $1 + zm + m^2 \approx 0$. We will see later that the reverse implication also holds, so that sufficiently strong bounds on the moments in \eqref{mmoments} imply the bounds on the moments of $|\msc - m|$ claimed in \Cref{p:moments}. Therefore, we focus for now on \eqref{mmoments}. 

Let $G = (H -z)^{-1}$ be the resolvent of $H$. To bound the moments \eqref{mmoments}, we use the definition of $m$ in \eqref{e:st} to write 
\begin{equation}
\E\big[ | 1 + zm + m^2 |^{2D} \big]
= \E \left[ \left(\frac{1}{N} \sum_{i=1}^N (1 + z G_{ii}) + m^2  \right) (1+zm + m^2)^{D-1} ( \overline{1+zm+m^2})^D  \right],
\end{equation}
which holds for any $D \in \mathbb{N}$.
Set 
\begin{equation}
  \label{eq-of2}
   P=P(m)=1+zm + m^2.
\end{equation}
By the definition of the resolvent, we have $1 + z G_{ii} = (HG)_{ii}$, which implies
\begin{equation}\label{twoterms}
\E \big[ | P |^{2D} \big]
= \E \left[ \left(\frac{1}{N} \sum_{i,k=1}^N H_{ik} G_{ki} \right)   P^{D-1} \overline{P}^D  \right]
+
\E \left[  m^2   P^{D-1} \overline{P}^D  \right].
\end{equation}
Let $\ell \in N$ be a parameter, which will be fixed later.
By a cumulant expansion using Lemma~\ref{cumulant} (setting $Y=H_{ik}$), we find that 
\begin{equation}\label{exp1}
\E \left[ \left(\frac{1}{N} \sum_{i,k} H_{ik} G_{ki} \right)   P^{D-1} \overline{P}^D  \right] = 
\frac{1}{N} \sum_{r=1}^\ell \frac{\kappa_{r+1} }{r!} 
\E \left[\sum_{i,k} (1 + \delta_{ik}\Delta_{r+1}) \partial^r_{ik} \left(G_{ki} P^{D-1} \overline{P}^D  \right)  \right] + \E [\Omega_{\ell}].
\end{equation}
Here $\Omega_\ell=\Omega_\ell(z)$ is an error term that we will examine later, $\partial_{ik}$ is the partial derivative in the matrix entry $H_{ik}$, $\kappa_{r+1}$ is the $(r+1)$-th cumulant of $H_{ij}$ for $i\neq j$, and $\Delta_r$ is equal to $(\kappa^{(r)}(H_{11}) - \kappa_r )/\kappa_r$.

We introduce the notation $I_{r,s}$ to denote the component of the $r$-th term of \eqref{exp1} where $r-s$ derivatives fall on $G_{ik}$ and $s$ derivatives fall on $P^{D-1} \overline{P}^D$:
\beq\label{Irs}
I_{r,s} =  \frac{\kappa_{r+1} }{N}  \E \left[\sum_{i,k}(1 + \delta_{ik}\Delta_{r+1}) \left( \partial^{r-s}_{ik} G_{ki} \right) \partial^s_{ik}\left( P^{D-1} \overline{P}^D  \right)  \right] .
\eeq
Then \eqref{exp1} becomes 
\beq
\sum_{r=1}^{\ell}
\sum_{s=0}^r w_{r,s} \E [I_{r,s}]
+ \E [\Omega_{\ell}], \qquad w_{r,s} = \frac{1}{r!}\binom{r}{s}.
\eeq

We begin by bounding $\E [\Omega_{\ell}]$, then proceed to the $I_{r,s}$ terms.

\subsubsection{Truncation.}\
Let $E^{[ik]}$ denote the $N\times N$ matrix with entries 
\beq
(E^{[ik]})_{a,b} = \delta_{ia}\delta_{kb} + \delta_{ib}\delta_{ka}\text{ if }i \neq k, \quad (E^{[ik]})_{a,b} = \delta_{ia}\delta_{ib} \text{ if } i = k.
\eeq
For $i,k \in \unn{1}{N}$, we define $H^{(ik)} =   H - H_{ik} E^{[ik]}$, 
which sets the $(i,k)$ and $(k,i)$ entries of $H$ to zero.

\bel\label{l:Fperturb}
Suppose $i,k\in \unn{1}{N}$, $D \le \log N$, $A>0$, and $z \in \mathcal D_A$. Define the function $F\colon \matn \rightarrow \C$ by 
\beq
F_{ki}(M) = R_{ki}(M) P^{D-1} \overline{P}^D,
\eeq
where here $R(M)$  denotes the resolvent of $M$. 
Choose $\ell \in \N$ such that $\ell < A \log N$. 
Then there exist constants $C, C_1(A) >0$ such that 
\beq\label{Fperturb}
\E \left[  \sup_{x \in \R, |x| \le N^{-1/4}} \left| \partial^\ell_{ik} F_{ki} \left(
H^{(ik)} + x E^{[ik]}
 \right)  \right| \right] \le
 (4D + 2 \ell)^{\ell} C^{4D + \ell} + C_1 N^{-2 \log N}.
\eeq
\eel
\begin{proof}
Fix index pairs $(a,b)$ and $(i,k)$. By resolvent expansion \eqref{resolventexpansion}, we have
\beq\label{rr1}
\tilde G_{ab} = G_{ab} + xH_{ik} \tilde G_{ai} G_{kb} + 
x H_{ik} \tilde G_{ak} G_{ib},
\eeq
where $\tilde G$ is the resolvent of $H^{(ik)} + x E^{[ik]}$.
By \eqref{eqn:tail}, we have $|H_{ik}| \le 1$ with probability at least $1 - c^{-1} \exp(-  N^{c/2})$. Combining this bound with \eqref{e:gentries} and \eqref{rr1}, we obtain uniformly in $z\in \mathcal D_A$, with high probability, that
\beq\label{perturbationbound}
\sup_{|x| \le N^{-1/4} } \max_{a,b} \left| \tilde G_{ab} \right| \le C.
\eeq

Since $P$ is a quadratic polynomial in $m$, $F_{ki}$ is a polynomial of degree $4D-1$ in $G_{ki}$ and $m$, with at most $2^{2D-1}$ terms. When $\partial_{jk}$ acts on $P$ or $\overline{P}$ it generates a new factor 
\beq\label{newf}
2N^{-1} \sum_{i_l} G_{j i_l} G_{i_l k},
\eeq
with a new summation index $i_{l}$.
Then $\partial^\ell_{ik} F_{ki}$ has degree
$4D-1 + \ell$ when considered as a polynomial in Green's function entries $G_{ii}$, $G_{kk}$, $G_{ik}$, $m$, and terms of the form \eqref{newf}. The number of such terms in $\partial^\ell_{ik} F_{ki}$ is bounded by $2^{2D-1} \times (4D-1 + 2\ell)^{\ell}$. 
Using \eqref{perturbationbound}, the contribution to the left side of \eqref{Fperturb} from the expectation on the set where \eqref{e:gentries} holds is bounded by the first term of \eqref{Fperturb} (after increasing $C$). On the low probability set where \eqref{perturbationbound} does not hold, we use the trivial bound $|G_{ij}| \le \eta^{-1}$ (from \eqref{e:gtrivial}) and the assumed lower bound $\eta > \phi^{-A}$, which produces the second term of $\eqref{Fperturb}$.
\end{proof}

\bel\label{l:truncate}
Let $A >0$, $4 \log N \le \ell \le A \log N$, $D \le \log N$, and $z \in \mathcal D_{A}$. There exists a constant $ C(A) >0$ such that the term $\Omega_\ell$ from \eqref{exp1} satisfies 
$ \sup_{z \in \mathcal D_{A} } \big| \E[ \Omega_{\ell}] \big| \le C N^{-\ell /4}$ .
\eel
\begin{proof}
By Lemma~\ref{cumulant} with $Q = N^{-1/4}$, 
\begin{align}
\Big|  \E \big[ \Omega_\ell (H_{ik} F_{ki})  \big]   \Big| &\le 
C_\ell \E \left[  | H_{ik} |^{\ell+2} \right]  \E\left[ \sup_{|x| \le N^{-1/4} } \left|  F^{(\ell+1)} (H^{(ik)} + x E^{[ik]}) \right|\right]\\
&+ 
C_\ell \E \left[ | H_{ik} |^{\ell+2} \one\big(|H_{ik} | >  N^{-1/4}\big) \right] 
\E \left[ \sup_{x \in \R} \left|  F_{ik}^{(\ell+1)} (H^{(ik)} + x E^{[ik]}) \right|\right].
\end{align}
The first term may be bounded using Lemma~\ref{l:Fperturb}, Lemma~\ref{cumulant}, and the moment bound \eqref{momentgrowth}:
\beq
C_\ell \E \left[  | H_{ik} |^{\ell+2} \right] \E\left[  \sup_{|x| \le N^{-1/4} } \left|  F^{(\ell+1)} (H^{(ik)} + x E^{[ik]}) \right|\right]
\le \frac{ (C \ell)^{C\ell } } { \ell! N^{\ell/2}}\left( (4D + 2\ell)^{\ell} C^{4D + \ell} + C N^{-2 \log N}\right).
\eeq
For the second term, we use the trivial bound $|G_{ij}| \le \eta^{-1}$ (from \eqref{e:gtrivial}) to obtain the deterministic bound
\beq
\sup_{x \in \R} \left|  F_{ik}^{(\ell+1)} (H^{(ik)} + x E^{[ik]}) \right| \le 
2^{2D} (4D + 2\ell)^{\ell} \left( \frac{C}{\eta}\right)^{4D + \ell},
\eeq
and using H\"older's inequality and \eqref{eqn:tail} we have 
\beq
\E \left[ | H_{ik} |^{\ell+2} \one(|H_{ik} | >  N^{-1/4}) \right] 
\le
\frac{ (C \ell)^{C\ell } } { N^{\ell/2}}\exp( -c N^{c/4} ).
\eeq
Combining our estimates for the first and second terms of $\Omega_\ell$ yields the conclusion after using $\ell \ge 4\log N$.
\end{proof}
\subsubsection{Main Terms.}\ We will need to analyze the  terms $I_{r,s}$ from \eqref{Irs} explicitly for $s\le 1$. For the others, we can proceed on general combinatorial grounds. The following lemma collects our estimates on these terms. We set $P'= P'(m) = 2m + z$.
\bel\label{mainterms}
Fix $A>0$ and suppose $D \le \log N$. For all $z \in \mathcal D_{A}$, we have 
\begin{equation}\label{I1}
\E [I_{1,0}] + \E [I_{1,1}] = - \E[m^2 P^{D-1} \overline{P}^{D} ]+ \Omega,
\end{equation}
where $\Omega = \Omega(z) \ge 0$ is an error term satisfying
\begin{equation}
\Omega \le 
 \left( \frac{C}{N \eta} \right) \E \left[ \left| P \right|^{2D-1}   \right] 
+
D \left(\frac{C}{N \eta}\right)^2\E \left[  \left| P\right|^{2D-2}   \right] + C N^{-2 \log N}.
\end{equation}
Further, we have
\begin{equation}\label{I2}
\big| \E [I_{2,0}]  \big|  \le  C(\log N) N^{-1/2}
\sum_{a=1}^{2D} \left( \frac{1}{N\eta} \right)^a \E\left[ |P|^{2D-a}  \right]  + N^{-2 \log N}
\end{equation}
and
\begin{equation}\label{l25}
\big| \E [I_{2,1}]  \big|  \le C(\log N) N^{-1/2}
\sum_{a=1}^{2D} \left( \frac{1}{N\eta} \right)^a \E\left[ |P|^{2D-a}  \right]  + N^{-2 \log N}.
\end{equation}
For $r \ge 3$ we have 
\beq\label{I3}
\big| \E [I_{r,0}] \big|  + \big| \E [I_{r,1}] \big| \le  \frac{ (Cr)^{Cr} } { N^{(r-1)/2}}  \E \left[  |P|^{2D-1}   \right] +  D C^r \frac{ (Cr)^{Cr} } { N^{(r-1)/2}}  
 \frac{C}{N\eta} \E \left[  |P|^{2D-2} \right]  + C N^{-2 \log N}.
\eeq
Finally, for $r\ge 2 $, 
\beq\label{I4a}
\big| \E [I_{r,r}]  \big|
\le
\frac{ (Cr)^{Cr} } { N^{(r-1)/2}}
\sum_{s_0 = 2}^{r+1}   \left(\frac{C}{N\eta} \right)^{1/2}
  \left(\frac{CD}{N\eta}  \right)^{s_0 - 1}  \E \left[  |P |^{2D - s_0} \right] + C N^{-2\log N},
\eeq
and for pairs $(r,s)$ such that $r>s \ge 2$, 
\beq\label{I4}
\big| \E [I_{r,s}] \big|
\le
\frac{ (Cr)^{Cr} } { N^{(r-1)/2 }}
\sum_{a = 2}^{r+1} \left(\frac{CD}{N\eta}  \right)^{a - 1} \E \left[ 
 | P |^{2D - a} \right] + C N^{-2\log N} .
\eeq
\eel
For clarity we prove the claims \eqref{I1}, \eqref{I2}, \eqref{l25}, \eqref{I3}, \eqref{I4a}, and \eqref{I4} separately.
\begin{proof}[Proof of \eqref{I1}]
We write, using $\kappa_2 = N^{-1}$,
\beq
\E [I_{1,0}]  = \frac{1 }{N^2}  \E \left[\sum_{i,k}(1 + \delta_{ik}\Delta_{r+1}) \left( \partial_{ik} G_{ki} \right) P^{D-1} \overline{P}^D   \right].
\eeq
We first bound the terms with $i=k$, which are sub-leading. Using Theorem \ref{l:lscl}, we have
\beq\label{diagonalremainder}
 \frac{1 }{N^2} \left|  \E \left[\sum_{i}(1 + \Delta_{r+1}) \left( \partial_{ii} G_{ii} \right) P^{D-1} \overline{P}^D   \right] \right| \le C N^{-1} \E \left[ |P|^{2D-1} \right] + CN^{-2 \log N}.
\eeq
For $i\neq k$, we have
\begin{align}\label{I1terms}
\left( \partial_{ik} G_{ki} \right) P^{D-1} \overline{P}^D   = -G_{ii} G_{kk} P^{D-1} \overline{P}^{D} 
- G_{ki} G_{ki} P^{D-1} \overline{P}^{D}.
\end{align}
Considering the first term, we have 
\beq
\frac{1}{N^2}\sum_{i,k}  - G_{ii} G_{kk} P^{D-1} \overline{P}^{D} = - m^2 P^{D-1} \overline{P}^{D},
\eeq
which matches the first term of \eqref{I1}. 
Considering the second term of \eqref{I1terms} and using \eqref{smallpowercount}, we have 
\begin{equation}
\frac{1}{N^2}
\left|\sum_{i,k} G_{ki} G_{ki} P^{D-1} \overline{P}^{D} \right| \le  \frac{C}{N \eta}  \E \left[ \left| P \right|^{2D-1}   \right] + N^{-2 \log N} \eer.
\end{equation}
This completes the analysis of $I_{1,0}$. 
Next, we have
\beq
\E [I_{0,1}]  = \frac{1 }{N^2}  \E \left[\sum_{i,k}(1 + \delta_{ik}\Delta_{r+1}) G_{ki} \cdot \partial_{ik} \left( P^{D-1} \overline{P}^D \right)  \right],
\eeq
and 
\begin{align}\label{apple}
\partial_{ik} \left( P^{D-1} \overline{P}^D \right) = -&\frac{2(D-1)}{N} G_{ki} P' \sum_{j=1} G_{jk} G_{ij} P^{D-2}  \overline{P}^{D}%\\
-\frac{2D}{N} G_{ki} \overline{P}' \sum_{j=1}^N 
\overline{G_{jk} G_{ij}} P^{D-1} \overline{P}^{D-1}.
\end{align}
As in \eqref{diagonalremainder}, we can remove the $i=k$ terms with negligible error.
For the first term of \eqref{apple}, we use \eqref{e:gentries} and \eqref{powercount} to get
\begin{equation}
\frac{1}{N^3}\left| \E \left[ \sum_{i \neq k} G_{ki} P' \sum_{j=1} G_{jk} G_{ij} P^{D-2}  \overline{P}^{D} \right]\right|
\le \left(\frac{C}{N \eta}\right)^2\E \left[  | P' | | P|^{2D-2}   \right] + N^{-2 \log N}.
\end{equation}
The bound on the second term of \eqref{apple} is similar.
\end{proof}

\begin{proof}[Proof of \eqref{I2}]
We write
%\begin{equation}\label{i2a}
 $I_{2,0} =  I^{(1)}_{2,0} + I^{(3)}_{2,0} +R_1$,
%\end{equation}
where $I^{(1)}_{2,0}$ contains all terms with exactly one off-diagonal resolvent entry, $I^{(1)}_{2,0}$ contains all terms with three off-diagonal resolvent entries, and $R_1$ contains all other terms. Reasoning as in \eqref{diagonalremainder}, we see that $R_1$ is negligible:
\beq
\big| \E [R_1] \big| \le N^{-1} \E \left[ |P|^{2D-1} \right] + C N^{-2 \log N}.
\eeq
By Theorem \ref{l:lscl}, we get
\begin{equation}\label{i2b}
\left| \E [I^{(3)}_{2,0}] \right| 
=
\frac{\kappa_{3} }{N} \left| \E \left[ \sum_{i \neq k} (G_{ik})^3 P^{D-1} \overline{P}^{D}  \right] \right|
\le N^{-1/2} \left( \frac{C}{N\eta}\right)^{3/2} \E \left[ |P|^{2D-1} \right] + C N^{-2 \log N}.
\end{equation}
To study $ I^{(1)}_{2,0}$, we perform another cumulant expansion using \eqref{cumulantexpansion}:
\beq\label{i2c}
z \E [I^{(1)}_{2,0}] = \sum_{r=1}^{\ell'} \sum_{s=0}^r w_{r,s} \E[ \hat I_{r,s}] + \hat \Omega_{\ell'},
\eeq
where
\beq
\E [\hat I_{r,s}] = N \kappa_{r+1} N \kappa_{3} \E \left[ \frac{1}{N^3} \sum_{ i \neq k, j} \left( \partial^{r-s}_{kj}
 \left( G_{ji} G_{kk} G_{ii} \right)\right)
\left( \partial^s_{kj} \left( P^{D-1} \overline{P}^{D} \right) \right)\right].
\eeq
We take $\ell' = 20 \log N$ and see that 
\beq\label{i2d}
| \hat \Omega_{\ell'} | \le C N^{- 2 \log N}
\eeq
by a straightforward modification of the proof of Lemma~\ref{l:truncate}.

We begin with the terms $\hat I_{r,0}$ for $r\ge 1$. When $r=1$, we define terms $\hat I^{(i)}_{1,0}$ for $i = 1,2,3$ by the decomposition
\begin{align}
\E [\hat I_{1,0}]  = 
& - N \kappa_3 \E \left[ N^{-3} \sum_{i_1\neq i_2 , i_3} G_{i_2 i_1} G_{i_3 i_3} G_{i_2 i_2} G_{i_1 i_1} P^{D-1} \overline{P}^{D} \right]\\
& -3 N \kappa_3\E \left[  N^{-3}\sum_{i_1\neq i_2 , i_3} 
G_{i_2 i_3} G_{i_3 i_1} G_{i_2 i_2} G_{i_1 i_1} P^{D-1} \overline{P}^{D} 
 \right]\\
& -2 N \kappa_3 \E \left[ N^{-3} \sum_{i_1\neq i_2 , i_3}
G_{i_1 i_2} G_{i_2 i_3} G_{i_3 i_1} G_{i_2 i_2} P^{D-1} \overline{P}^{D} 
  \right] \\
& = \E [\hat I^{(1)}_{1,0}] + 3\E [\hat I^{(2)}_{1,0}] + 2 \E [\hat I^{(3)}_{1,0} ].
\end{align}
where we have decomposed the sum according to the number of off-diagonal resolvent entries
in each product, 
Using Lemma~\ref{l:pc} on the off-diagonal resolvent entries, we find
\beq\label{i2f}
\big| \E [\hat I^{(2)}_{1,0}]\big| + | \E [\hat I^{(3)}_{1,0}]| \le N^{-1/2}\left( \frac{C}{N\eta} \right) \E \left|  P\right|^{2D-1} + N^{-2 \log N}.
\eeq
Further, by using \eqref{e:gentries} and incurring a negligible error, we may replace $\E [\hat I^{(1)}_{1,0}]$ by 
\beq\label{i2e}
- \msc \E \left[ N^{-2} \sum_{i_1 \neq i_2} G_{i_2 i_1} G_{i_2 i_2} G_{i_1 i_1} P^{D-1} \overline{P}^{D} \right] = - \msc \E [I_{2,0}^{(1)}].
\eeq
We now turn to terms $\hat I_{r,0}$ with $r>1$. 
We observe that Lemma~\ref{l:moments} implies
\beq\label{r13}
\left( N \kappa_{r+1}\right) \left( N \kappa_3  \right) \le (Cr)^{Cr} N^{ - r/2}
\eeq
and recall that $|w_{r,0}| \le 1$. Since every product in $\hat I_{r,0}$ has at least one off-diagonal entry, by \eqref{e:gentries} we have the bound 
\beq
w_{r,0} \left| \E [\hat I_{r,0}]\right| \le (Cr)^{Cr} N^{ - r/2}
\left(  \frac{1}{N\eta} \right)^{1/2} \E \left| P \right|^{2D-1} + N^{-2 \log N}.
\eeq
We next consider the terms $\hat I_{r,s}$ with $s=1$. In this case, we first note that the order $r-1$ derivative of the product of resolvent entries in $\hat I_{r,1}$ contributes at least one off-diagonal resolvent entry. Next, we see that  each $\partial_{ij} \left( P^{D-1} \overline{P}^D \right)$ contains either two factors of \eqref{newf}, or the derivative of \eqref{newf}. 
Using \eqref{r13}, this leads to the bound 
\beq
\left| \E [\hat I_{r,1}] \right| \le  D (Cr)^{Cr} N^{ - r/2} \left(\frac{1}{N\eta}\right)^{3/2} \E \left[ |P'| |P|^{2D-2} \right].
\eeq
We absorbed the combinatorial factor corresponding to the number of terms coming from the derivatives, which is bounded by $C^r$, into the prefactor.

Now we consider the case of $\hat I_{r,s}$ with $2 \le s \le r$. We begin by noting that \eqref{e:gentries} gives
\begin{align}
\left|  \E [\hat I_{r,s}] \right| &\le 
(Cr)^{Cr} N^{ - r/2} \left|  \E \left[\frac{1}{N^3} \sum_{ i \neq k, j} \left( \partial^{r-s}_{jk}
 \left( G_{ji} G_{kk} G_{ii} \right)\right)
\left( \partial^s_{jk} \left( P^{D-1} \overline{P}^{D} \right) \right) \right] \right|\\
& \le (Cr)^{Cr} N^{ - r/2} \left(\frac{1}{N\eta}\right)^{1/2}  \E \left[\frac{1}{N^2} \sum_{ k, j} 
\left| \partial^s_{jk} \left( P^{D-1} \overline{P}^{D} \right) \right| \right].
\end{align}
Here we used the fact that $ \partial^{r-s}_{jk}
 \left( G_{ji} G_{kk} G_{ii} \right)$ always contains at least one off-diagonal resolvent entry, and again absorbed the combinatorial factor corresponding to the number of terms in this derivative into the prefactor.

For $s \ge 2$, the derivative $\partial^s_{jk} \left( P^{D-1} \overline{P}^{D} \right)$ is a sum of terms that may contain factors of $P$ and $P'$, and their conjugates. Any such term is a constant times a product of the form $
P^{D-s_1} \overline{P}^{D-s_2} (P')^{s_3}\left( \overline{P'}\right)^{s_4}$, 
with $s_i \ge 0$ for $i \in \unn{1}{4}$. 
A term with such a product came from $\partial_{jk}$ acting $s_1 - 1$ times on $P$ and $s_2$ times on $\overline{P}$, so we must have $s_1 - 1 \ge s_3$, $s_2 \ge s_4$, and $s_1 - 1 + s_2 \le s$. We further see that $\partial_{jk}$ acted $s_1 - 1 - s_3$ times on $P'$ and $s_2  - s_4$ times on $\overline{P'}$. 

When $\partial_{jk}$ acts on a power of $P$, $\overline{P}$, or their derivatives, it generates a new factor of
$\partial_{jk}m = 2N^{-1} \sum_{i_l} G_{j i_l} G_{i_l k}$,
where $i_l$ is a new summation index (not appearing elsewhere), and a constant prefactor no greater than $D$ (by the chain rule applied to $P^k$ and analogous terms for $k \le D$). The number of new summation indices is then
\be\label{octindex}
s_1 -1 + s_2 + (s_1 -1 - s_3) + (s_2 - s_4) = 2s_1 + 2s_2 - s_3 - s_4  -2.
\ee
Further, this number does not decrease when $\partial_{jk}$ acts on resolvent entries instead of $P$, $\overline{P}$, or their derivatives. We introduce $a= s_1 + s_2$ and $b = s_3 + s_4$. Then, using $a \le s + 1$, \eqref{octindex} yields
\begin{align}
\left|  \E [\hat I_{r,s}] \right| & 
\le (Cr)^{Cr} D^r N^{ - r/2}  \sum_{a=2}^{s+1}
\sum_{b = 0}^{a-2}
 \left(\frac{1}{N\eta}\right)^{1/2 + 2a - b - 2}
 \E \left[   |P'|^{b} |P|^{2D-a} \right]\\
 & + (Cr)^{Cr} D^r N^{ - r/2} \sum_{a = 2}^{s+1}
  \left(\frac{1}{N\eta}\right)^{1/2 + 2a - 1}
 \E \left[   |P'|^{a-1} |P|^{2D-a} \right],
\end{align}
where the second term comes from terms such that $b = a-1$.
After increasing $C$, we obtain
\begin{align}
\left|  \E [\hat I_{r,s}] \right|  \le (Cr)^{Cr} D^r N^{ - r/2} 
 \sum_{a=2}^{r+1} \left( \frac{1}{N\eta} \right)^a \E\left[ |P|^{2D-a}  \right] + C N^{-5\log N}.
\end{align}
Combining the definition of $I_{2,0}$, \eqref{i2b}, \eqref{i2c}, \eqref{i2d}, and the estimates on the $\hat I_{r,s}$ terms, we obtain 
\begin{align}
\left| (z +  \msc) \E [I_{2,0}^{(1)}]\right| \le&  \sum_{r=2}^{8 \log N}  (Cr)^{Cr} D^r N^{ - r/2} 
\sum_{a=1}^r \left( \frac{1}{N\eta} \right)^a \E\left[ |P|^{2D-a}  \right] \\
&+  \sum_{r=2}^{8 \log N} (Cr)^{Cr} D N^{ - r/2} \left(\frac{1}{N\eta}\right)^{3/2} \E \left[|P|^{2D-2} \right]\\
&+  \sum_{r=2}^{8 \log N}  (Cr)^{Cr} N^{ - r/2}
\left(  \frac{1}{N\eta} \right)^{1/2} \E \left| P \right|^{2D-1}\\
&+ N^{-1/2}\left( \frac{C}{N\eta} \right) \E \left|  P\right|^{2D-1} + N^{-2 \log N}.
\end{align}
After some simplification and increasing the value of $C$, this implies
\begin{align}
\left| (z +  \msc) \E [I_{2,0}^{(1)}]\right| &\le  C(\log N) N^{-1/2}
\sum_{a=1}^{2D} \left( \frac{1}{N\eta} \right)^a \E\left[ |P|^{2D-a}  \right]  + N^{-2 \log N}.
\end{align}
Using $|z + \msc(z) | > c(A)>0$ on $\mathcal D_A$ (see \eqref{e:msc}), we obtain the conclusion.
\end{proof}
\begin{proof}[Proof of \eqref{l25}]
We have 
\begin{align}
 \E [I_{2,1 }] &=   N \kappa_{3}  
\E \left[ N^{-2}\sum_{i,k}(1 + \delta_{ik}\Delta_{3}) \left( \partial_{ik} G_{ki} \right)   \partial_{ik} \left(P^{D-1} \overline{P}^D \right)    \right].
\end{align}
Using the logic of the previous proof, it is straightforward to see that the leading-order contribution is given by 
\begin{align}
J =& N \kappa_3 \E\left[ \frac{2(D-1)}{N^2} \sum_{i_1 \neq i_2 } 
G_{i_1 i_1} G_{i_2 i_2} \left(\frac{1}{N} \sum_{i_3 =1}^N G_{i_2 i_3} G_{i_3 i_1}   \right) P' P^{D-2}
\overline{P}^D
 \right]\\
 & + N \kappa_3 \E\left[ \frac{2D}{N^2} \sum_{i_1 \neq i_2 } 
G_{i_1 i_1} G_{i_2 i_2} \left(\frac{1}{N} \sum_{i_3 =1}^N G_{i_2 i_3} G_{i_3 i_1}   \right) P' P^{D-1}
\overline{P}^{D-1}
\right].
\end{align}
For the first term, %after removing a negligible remainder using \eqref{e:gentries}, 
we use the resolvent expansion to write it as
 \beq
 2(D-1) N \kappa_3 \E\left[ \frac{1}{N^3} \sum_{i_1 \neq i_2 , i_3  , i_4} 
H_{i_2 i_4} G_{i_4 i_3} G_{i_1 i_1} G_{i_2 i_2} G_{i_3 i_1}  P' P^{D-2}
\overline{P}^D \right].
 \eeq
 As in the previous proof, we now use the cumulant expansion \eqref{cumulantexpansion} to calculate this term (expanding each term in the sum in the variable $H_{i_2 i_4}$). The leading term in the expansion is 
 \beq
  2(D-1) N \kappa_3 \E\left[ \frac{1}{N^3} \sum_{i_1 \neq i_2, i_3 } 
m  G_{i_2 i_3} G_{i_2 i_2} G_{i_3 i_1}  P' P^{D-2}
\overline{P}^D \right],
\eeq
and we obtain
\begin{multline}\label{109}
\left| z + \msc(z) \right|  \left| 2(D-1) N  \kappa_3  \right|\E\left[ \frac{1}{N^3} \sum_{i_1 \neq i_2, i_3 } 
G_{i_2 i_3} G_{i_2 i_2} G_{i_3 i_1}  P' P^{D-2}
\overline{P}^D \right]\\ \le C(\log N) N^{-1/2}
\sum_{a=1}^{2D} \left( \frac{1}{N\eta} \right)^a \E\left[ |P|^{2D-a}  \right]  + N^{-2 \log N}.
\end{multline}
This controls the first term of $J$.
By nearly identical reasoning, a similar bound holds for the second term of $J$. Using $|z + \msc(z) | \ge c$ for $ z \in \mathcal D_A$ in \eqref{109}, we obtain the result.
\end{proof}
\begin{proof}[Proof of \eqref{I3}]
By Theorem \ref{l:lscl} we have
\begin{align}
| \E [I_{r,0}] |&= \left| N \kappa_{r+1}  
\E \left[ N^{-2}\sum_{i,k}(1 + \delta_{ik}\Delta_{r+1}) \left( \partial^{r}_{ik} G_{ki} \right)  P^{D-1} \overline{P}^D    \right] 
 \right|\\
 &\le \frac{ (Cr)^{Cr} } { N^{(r-1)/2}}  \E \left[  |P|^{2D-1}   \right] + C N^{-2 \log N} .
\end{align}
We absorbed the combinatorial factor $4^r$ representing the number of different terms coming from the derivatives of $G_{ki}$ into the constant.

For $I_{r,1}$, we have 
\beq
\left| \E [I_{r,1}] \right| \le \frac{ (Cr)^{Cr} } { N^{(r-1)/2}} \left|  \E \left[ N^{-2}\sum_{i,k}(1 + \delta_{ik}\Delta_{r+1}) \left( \partial^{r-1}_{ik} G_{ki} \right) \partial_{ik}\left( P^{D-1} \overline{P}^D  \right)  \right] \right|.
\eeq
From the derivative on $P^{D-1} \overline{P}^D$ we get $|P'| |P|^{2D-2}$, a factor of $2 N^{-1} \sum_{a} G_{ia} G_{ak}$, a factor of $D$, and some constant that is bounded uniformly in $r$. For the terms with at least $3$ off-diagonal $G_{ab}$, we can use Theorem \ref{l:lscl} and Lemma~\ref{l:pc} to get the bound

\begin{align}
DC^r \frac{ (Cr)^{Cr} } { N^{(r-1)/2}}  
& \E \left[ N^{-3}\sum_{i,k,a }  |G_{ki} G_{ia}G_{ak}| |P'| |P|^{2D-2}  \right] \\
&\le 
DC^r \frac{ (Cr)^{Cr} } { N^{(r-1)/2}}  
\left( \frac{C}{N\eta} \right)^{3/2} \E \left[  |P'| |P|^{2D-2}  \right]  + CN^{-5\log N} \\
&\le 
DC^r \frac{ (Cr)^{Cr} } { N^{(r-2)/2}}  
\left( \frac{C}{N\eta} \right)^{2} \E \left[  |P|^{2D-2}  \right] + CN^{-5\log N}.
\end{align}
For terms with only two off-diagonal entries, we have
\begin{align}
DC^r \frac{ (Cr)^{Cr} } { N^{(r-1)/2}}  
&\E \left[ N^{-2}\sum_{i,k} G^{r/2}_{ii} G^{r/2}_{kk} \left( \frac{1}{N} \sum G_{ia} G_{ak} \right) P' P^{D-2} \overline{P}^D\right] \\
&\le D C^r \frac{ (Cr)^{Cr} } { N^{(r-1)/2}}  
 \frac{C}{N\eta} \E \left[  |P|^{2D-2} \right]  + C N^{-2 \log N}
\end{align}
and the same bound for the term where the derivative falls on $ \overline{P}^D$. This completes the proof.
\end{proof}

\begin{proof}[Proof of \eqref{I4a} and \eqref{I4}] 
We treat all terms $I_{r,s}$ with $2 \le s \le r$  the same way. From the order $s$ derivatives in $I_{r,s}$, we get a sum of monomials of the form
\beq\label{Pmonomial}
P^{D-s_1} \overline{P^{D-s_2}} \left( P'  \right)^{s_3} \overline{\left( P'  \right)}^{s_4}
\prod_{d=1}^n Q_d,
\eeq
with $1 \le s_1 \le D$ and $0 \le s_2 \le D$ depending on how the derivatives fall, and we have omitted the constant prefactor. Each $Q_d$ represents a ``fresh summation index'' $i_d$ in the following way. Any derivative on $P$, $\overline{P}$ generates a new summation index $a$ with a factor $\partial_{ik} m = 2 N^{-1} \sum_{a} G_{ia} G_{ak}$. Similarly a derivative on $P'$ or $\overline{P}'$ gives $4 N^{-1} \sum_{a} G_{ia} G_{ak}$. Each $Q_d$ represents a sum corresponding to one of these new indices, potentially differentiated further. For example, $Q_1$ could be 
$
2 N^{-1} \sum_{a} G_{ia} G_{ak}$, 
or
(applying a derivative $\partial_{ik}$)
\beq\label{above1}
2 N^{-1} \sum_{a} \left( G_{ii} G_{ak} G_{ak}  + G_{ik} G_{ia} G_{ak}  + G_{ia}G_{ia} G_{kk} + G_{ia} G_{ak} G_{ik} \right),
\eeq
or any higher derivative. We consider any constant factors that are produced when a new index is generated, or when a $Q_d$ term is differentiated, as part of the corresponding $Q_d$ term. For example, in \eqref{above1}, we consider the factor $2$ as part of $Q_d$.

We see that the monomial \eqref{Pmonomial} came from $P^{D-1} \overline{P^D}$ because $s_1-1$  derivatives $\partial_{ik}$ derivatives fell on $P$ and $s_2$ on $\overline{P}$. This implies $s_3 \le s_1 - 1$ and $s_4 \le s_2$. The number of derivatives on $P'$ was $s_1 -1 - s_3$, and on $\overline{P'}$ it was $s_2 - s_4$. Then the total number of new indices is 
\beq
n= 2s_1 + 2s_2 - s_3 - s_4 - 2.
\eeq
 Let the number of derivatives that fall on some on some $Q$-type term be $s_5$. Note that $n+ s_5 = s$. 

We next consider the constant factor associated to \eqref{Pmonomial}. There are two contributions to this: the number of times such a monomial appears through differentiation, and a factor from the derivatives of powers of $P$, $P'$, and their conjugates. We bound the first contribution by the total number of monomials produced, which is crudely bounded by $(s+4)^s$, because a derivative of a term of the form \eqref{Pmonomial} produces $n+4 \le s +4 $ new monomials of the same form, one for each choice of factor to differentiate. For the second, we see that the $P$-type terms appear with power at most $D$, and there are $n$ total derivatives applied to them, so this contribution is bounded by $D^n$. We therefore see that the constant factor is no larger than $(Cr)^{Cr} D^n$.

Now consider bounding each monomial. We will bound the $P$ and $P'$ terms (and their conjugates) by their absolute values. For the $Q$ terms, we will use Theorem \ref{l:lscl} to bound the $G_{ab}$ terms with no new index, and then invoke \eqref{powercount} in the $m=2$ case. We must further track the constant pre-factors coming from derivatives of $Q$ terms. Each $Q$ term starts as $
2 N^{-1} \sum_{a} G_{ia} G_{ak}
$
or
$
4 N^{-1} \sum_{a} G_{ia} G_{ak},
$
and each successive derivative multiplies the number of terms by $4$. Recall there are $s_5$ such derivatives. Combining the bound on the number of new terms and the semicircle law bound, we get a $C^{s_5}$ factor, which we bound by $C^r$. 

We first consider the case $r=s$. Set $s_0 = s_1 + s_2$ and $s' = s_3 + s_4$. We recall there are $n=2s_0 - s'  -2$ new indices. Using power counting \eqref{powercount} and noting the isolated off-diagonal $G_{ik}$ term, which is not differentiated, we get 
\begin{align}\label{Irr}
\left|  \E I_{r,r} \right|
&\le  \frac{ (Cr)^{Cr} } { N^{(r-1)/2}}
\sum_{s_0 = 2}^{r+1} \sum_{s' =0}^{s_0-1}  \left(\frac{C}{N\eta} \right)^{1/2}
\left(\frac{CD}{N\eta}  \right)^{2s_0 - s' - 2}  \E \left[ 
| P ' |^{s'} | P |^{2D - s_0} \right]\\
& \le \label{Irrold}
  \frac{ (Cr)^{Cr} } { N^{(r-1)/2}}
\sum_{s_0 = 2}^{r+1}   \left(\frac{C}{N\eta} \right)^{1/2}
  \left(\frac{CD}{N\eta}  \right)^{s_0 - 1}  \E \left[  P |^{2D - s_0} \right] + C N^{-2\log N},
\end{align}
where we increased $C$ in the second line.

For $s \neq r$ we get 

\begin{align}
\left|  \E I_{r,s} \right|
&\le \frac{ (Cr)^{Cr} } { N^{(r-1)/2 }}
\sum_{s_0 = 2}^{r+1} \sum_{s' =1}^{s_0-2} 
\left(\frac{CD}{N\eta}  \right)^{2s_0 - s' - 2}
\E \left[ 
| P ' |^{s'} | P |^{2D - s_0} \right]\\
&+
\frac{ (Cr)^{Cr} } { N^{(r-1)/2 }}
\sum_{s_0 = 2}^{r+1} \left(\frac{CD}{N\eta}  \right)^{s_0 - 1} \E \left[ 
| P ' |^{s_0-1} | P |^{2D - s_0} \right]
\end{align}
The second term bounds the terms with $s_0 - 1 = s'$ that come from when $\partial_{ik}$ lands $s_1 - 1$  times on $P$ and $s_2$ times on $\overline{P}$, and their derivatives are not hit. By Theorem \ref{l:lscl}, we obtain
the desired bound
\begin{align}
\left|  \E I_{r,s} \right|
\le
\frac{ (Cr)^{Cr} } { N^{(r-1)/2 }}
\sum_{s_0 = 2}^{r+1} \left(\frac{CD}{N\eta}  \right)^{s_0 - 1} \E \left[ 
 | P |^{2D - s_0} \right] + C N^{-5\log N}.
\end{align}
%The conclusion follows.
\end{proof}

\subsection{Proof of Moment Bound.}\ 
\label{s:momentproof}
\begin{proof}[Proof of \Cref{p:moments}]
We proceed by induction to bound the powers $\E\big[ | P |^{p}\big]$, with $P$ as in \eqref{eq-of2}.
 The base case $p=0$ is trivial. 
For the induction step, suppose $D \le (\log N)/2$, and that there exists $C_1(K,\kappa) > 0$ such that 
\begin{equation}\label{inductionhypo}
\E\left[ | P |^{p}\right] \leq \left(\frac{C_1}{N\eta} \right)^p p^{3p/4}
\end{equation}
for all $p \le 2D-2$. We will show that if $C_1$ is chosen large enough, in a way that does not depend on $D$, then \eqref{inductionhypo} also holds for $p=2D$ and $p={2D-1}$. 

 Set $\ell = 20 \log N$. Combining \eqref{twoterms}, \eqref{exp1}, Lemma~\ref{l:truncate}, and Lemma~\ref{mainterms}, we have
 \begin{align}
 \E\left[\left|  P \right|^{2D} \right] \le&  \left( \frac{C}{N \eta} \right) \E \left[ \left| P \right|^{2D-1}   \right] 
+
D \left(\frac{C}{N \eta}\right)^2\E \left[  | P|^{2D-2}   \right]\\
&+C(\log N) N^{-1/2}
\sum_{a=1}^{2D} \left( \frac{1}{N\eta} \right)^a \E\left[ |P|^{2D-a}  \right]  \\
&+ \sum_{r=3}^{8 \log N}  \frac{ (Cr)^{Cr} } { N^{(r-1)/2}}  \E \left[  |P|^{2D-1}   \right] +  \sum_{r=3}^{8 \log N} D C^r \frac{ (Cr)^{Cr} } { N^{(r-1)/2}}  
 \frac{C}{N\eta} \E \left[  |P|^{2D-2} \right] \\
&+ \sum_{r=2}^{8 \log N} \frac{ (Cr)^{Cr} } { N^{(r-1)/2}}
\sum_{a = 2}^{r+1}   \left(\frac{C}{N\eta} \right)^{1/2}
  \left(\frac{CD}{N\eta}  \right)^{a - 1}  \E \left[  P |^{2D - a} \right]\\
 &+ \sum_{r=3}^{8 \log N} \frac{ (Cr)^{Cr} } { N^{(r-1)/2 }}
\sum_{a = 2}^{r+1} \left(\frac{CD}{N\eta}  \right)^{a - 1} \E \left[ 
 | P |^{2D - a} \right] + C N^{-2\log N}.
 \end{align}
 We now use \eqref{inductionhypo} in the above inequality. After increasing $C$, 
 and choosing $C_1>C$ in a way that only depends on $C$, $K$, and $\kappa$, we obtain
 \begin{equation}\label{inductionhyponew}
\E\left[ | P |^{2D}\right] \leq \left(\frac{C_1}{N\eta} \right)^{2D} (2D)^{3D/2}.
\end{equation}
Further, from H\"older's inequality, we obtain as desired that 
\beq
\E\left[ | P |^{2D-1}\right] \le \E\left[ | P |^{2D}\right]^{\frac{2D-1}{2D}} = 
\left(\frac{C_1}{N\eta} \right)^{2D-1} (2D)^{3(2D-1)/4},
\eeq
which completes the induction step.

We now recall the stability of the defining equation $u^2 = zu + 1 =0$ for $\msc$ (see \cite[Lemma 5.5]{benaych2016lectures}). Set $\mathcal A_z = \{ | P(z) | \le 1 \}$. Then $m$ satisfies $
\one_{\mathcal A_z} \big| m(z) - \msc(z) \big| \le C \big|P(z)\big|$ 
for some $C(\kappa) >0$. The claimed result follows from \eqref{inductionhypo} and the bound $ | m(z) - \msc(z) | \le 2 \eta^{-1}$ on the set $\mathcal A^c_z$, which has negligible probability by Markov's inequality and 
\eqref{inductionhypo}.
\end{proof}

\section[Mesoscopic Fluctuations for  $\beta$-ensembles]
{Mesoscopic Fluctuations for $\beta$-ensembles}
\label{sec:gauss}

This appendix considers $\beta$-ensembles as defined in (\ref{eq:beta_ensembles_density}).
We follow Johansson's loop equations method from \cite{Joh1998} to establish Gaussian fluctuations of the characteristic polynomial at any mesoscopic scale larger than $(\log N)^C/N$. The main result is Theorem~\ref{thm:GaussFluct} below.  It is used in Section~\ref{sec:maxgauss}.

Compared to \cite{Joh1998}, our work presents two novelties. First, \cite{Joh1998} considered macroscopic scales, while we prove a result for all mesoscopic scales. Second,  \cite{Joh1998} considers the Laplace transform, while we give asymptotics of the mixed Fourier--Laplace transform.  We note that for the proof on the leading order of the maximum in  Section~\ref{sec:maxgauss}, only the Laplace transform is needed,  but the Fourier transform may be of independent interest (for example for future finer  estimates).

We face the following difficulties in proving these generalizations.
\begin{enumerate}
\item For the Laplace transform, to prove rigidity of the measures (\ref{eq:beta_ensembles_density}) perturbed on mesoscopic scales,  we 
need precise a priori bounds.  Our main tool is the local law with Gaussian tail  from \cite{BouModPai2020},  as stated in Theorem~\ref{th:local_law_bulk}.
\item For the Fourier transform, the loop equation method requires handling complex measures and the partition function may vanish. Despite this difficulty, asymptotics of characteristic functions were obtained in \cite[Appendix A]{BouErdYauYin2016}. We follow the argument developed there,  which proceeds through a  Gronwall lemma.
\end{enumerate}

\subsection{Preliminary facts and notations.}\ 
We consider the  probability density (\ref{eq:beta_ensembles_density}), with $V$ satisfying the assumptions of Section \ref{subsec:Results}, i.e. (A1), (A2) (i), (A3) and (A4) (see Subsection \ref{s:otherpotentials} regarding the Assumption (A2) (ii)). 
In this section we abbreviate the corresponding probability measure by $\mu=\mu_N$.  We recall that the equilibrium density \eqref{eqn:r} is assumed to lie on a single interval $[A,B]$  and defines a function $r(E)$.%, which has an analytic extension to $\C$.
We will also need the notations 
\begin{equation}\label{e:taunotations}
\tau(s)=\sqrt{(s-A)(B-s)},\ \ 
	b(z) =\sqrt{z-A}\sqrt{z-B},
\end{equation}
where we use the principal branch of the square root, extended to negative real numbers by
$\sqrt{-x}=\ii\sqrt{x}$ for $x>0$.
We will use the formula
\begin{equation}\label{eqn:linear}
\int_A^B \frac{\tau(s)}{s-z} \diff s = \pi \left(\frac{A+B}{2} - z + b(z)\right),
\end{equation}
which is just the usual formula for the Stieltjes transform of the semicircle law from \eqref{e:msc}, up to an affine change of variables. Then the Stieltjes transform $m_V$ from (\ref{e:szdef}) satisfies the equation 
\begin{equation}\label{eqn:2mplusV}
2 m_V(z) + V'(z) = 2r(z)b(z)
\end{equation}
for any $z\not\in[A,B]$, where we recall that $r$ from (\ref{eqn:r}) is assumed to admit an analytic extension to $\mathbb{C}$.

Given a function  $g\colon\RR\to\RR$, we consider the linear statistics
\[
S_N(g):=\sum_{k=1}^N g(\lambda_k)-N\int g\, \rd\mu_V.
\]
The functions $g$ considered in this appendix are 
\begin{equation}\label{eqn:logz2}
\rlog_z(\lambda)={\rm Re}\log(z-\lambda),\ \  
\ilog_z(\lambda)={\rm Im}\log(z-\lambda),
\end{equation}
where $\Re\log$ and $\Im\log$ are defined in (\ref{eqn:logz}). The limiting covariance for these test functions will be written in terms of 
\[v(z)=\frac{1}{2}\bigg(\frac{A+B}{2}-z+b(z)\bigg),\ \ 
\gamma=\frac{(A-B)^2}{16},\ \ 
c(z,w)=\log\bigg(1-\frac{v(z)v(w)}{\gamma}\bigg),\]
where $\log$ is the usual complex logarithm.  

The mesoscopic central limit theorem proved in this section will hold on any scale greater than a %fixed 
parameter 
\begin{equation}\label{eqn:eta0}
\eta_0\in\Big[\frac{(\log N)^{1000}}{N},\tilde \eta\Big],
\end{equation}
where $\tilde \eta$  is given by Theorem \ref{th:local_law_bulk}.

\subsection{Mixed Fourier-Laplace transform.}\ 
Given  points $\bz=(z_i)_{i=1}^p$ in $\mathbb{C}\setminus \mathbb{R}$, we define the following quadratic form in complex vectors $\bzeta=(\zeta_i)_{i=1}^p$, $\boxi=(\xi_j)_{j=1}^p$, which will represent an asymptotic covariance:
\begin{multline}\label{eqn:sigma}
\sigma(\bzeta,\boxi,\bz)=-\frac{1}{2\beta}\sum_{i,j=1}^p\big[(\zeta_j-\ii\xi_j)(\zeta_i-\ii\xi_i)c(z_j,z_i)+(\zeta_j-\ii\xi_j)(\zeta_i+\ii\xi_i)c(z_j,\bar z_i)\\
+(\zeta_j+\ii\xi_j)(\zeta_i-\ii\xi_i)c(\bar z_j,z_i)
+(\zeta_j+\ii\xi_j)(\zeta_i+\ii\xi_i)c(\bar z_j,\bar z_i)\big].
\end{multline}
We also define the following function, which will represent an asymptotic shift:
\begin{multline}\label{eqn:shift}
\mu(\bzeta,\boxi,\bz)=\sum_{j=1}^p(\zeta_j-\ii\xi_j)\int_{z_j}^{z_j+\ii\infty} 
	 \pa{\frac{1}{4}-\frac{1}{2\beta}} 
	\left( (b'(z)-1) + \int_A^B \frac{r'(s) \tau(s)}{r(s)(s-z)}\frac{ \diff s}{\pi} \right)\frac{\rd z}{b(z)}\\
-\sum_{j=1}^p(\zeta_j+\ii\xi_j) \int_{z_j}^{z_j+\ii\infty}
	 \pa{\frac{1}{4}-\frac{1}{2\beta}} 
	\left(  (b'(\bar z)-1) + \int_A^B \frac{r'(s) \tau(s)}{r(s)(s-\bar z)}\frac{\diff s}{\pi}\right) \frac{\rd z}{b(\bar z)}.
\end{multline}
Note that $\mu$ depends on the external potential $V$ through $r$, while $\sigma$ is independent of $V$.

\begin{theorem}\label{thm:GaussFluct} With the notation (\ref{eqn:logz2}), let
\begin{equation}\label{eqn:fchoice}
h=\sum_{i=1}^p(\zeta_i \rlog_{z_i}+\xi_i \ilog_{z_i}),
\end{equation}
where $p\geq 1$ is fixed.  Let $\kappa,M>0$.
Then, uniformly in $\Re(\bzeta,\boxi)\in[-M,M]^{2p}$,  $\Im(\bzeta,\boxi)\in\sqrt{\beta}\cdot[-\frac{1}{10p},\frac{1}{10p}]^{2p}$, and $\bz\in([A+\kappa,B-\kappa]\times [\eta_0,\tilde \eta])^p$,  we have
\[
\E_{\mu}\left[e^{S_N(h)}\right]
=
e^{\frac{\sigma(\bzeta,\boxi,\bz)}{2}+\mu(\bzeta,\boxi,\bz)}\cdot\left(1+\OO_{\kappa,M,p}\left(\frac{1}{\sqrt{N\eta_0}}\right)\right).
\]
\end{theorem}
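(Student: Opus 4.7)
My plan is to adapt Johansson's loop-equation strategy \cite{Joh1998} to the mesoscopic scale $\eta_0 \geq (\log N)^{1000}/N$, using Theorem~\ref{th:local_law_bulk} as the essentially-sharp a priori input. The complex parameters $\bzeta,\boxi$ will be handled at the end by analytic continuation (Vitali--Porter) from the real case, in the spirit of \cite[Appendix A]{BouErdYauYin2016}.

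\textbf{Interpolation and loop equation.} First I would treat real $\bzeta,\boxi \in [-M,M]^{2p}$. Define the tilted $\beta$-ensemble $\mu_t$ with potential $V_t := V - \frac{2t}{\beta N} h$, $t\in[0,1]$. Since each $z_i$ has $\Im z_i \geq \eta_0 \gg (\log N)/N$, the test function $h$ is analytic in a fixed complex tube around $[A,B]$ with $\|h\|_\infty = \OO(\log N)$, so $V_t$ still satisfies assumptions \ref{assumption:analytic}--\ref{assumption:large_dev} for large $N$. Setting $\phi(t) := \log \E_\mu[e^{tS_N(h)}]$, one has $\phi(0)=0$ and $\phi'(t)=\E_{\mu_t}[S_N(h)]$, so the theorem reduces to computing $\int_0^1 \phi'(t)\,\rd t$ to leading order. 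Integration by parts in \eqref{eq:beta_ensembles_density} yields the Dyson--Schwinger identity
\[
\E_{\mu_t}\!\Big[s(z)^2 - V_t'(z)\,s(z) + \tfrac{1}{N}\textstyle\sum_k\tfrac{V_t'(z)-V_t'(\lambda_k)}{z-\lambda_k}\Big] = \Big(\tfrac{1}{\beta N}-\tfrac{1}{2N}\Big)\E_{\mu_t}[s'(z)].
\]

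\textbf{Solving the loop equation and extracting $\sigma,\mu$.} Writing $s=m_V+(s-m_V)$ and linearizing the quadratic term around $V$ using \eqref{eqn:2mplusV}, then inverting by the edge factor $r(z)b(z)$, I obtain
\[
\E_{\mu_t}[s(z)-m_V(z)] = \frac{1}{N}\big[t\,K_h(z) + R_V(z)\big] + \OO\!\Big(\tfrac{\log N}{(N\eta_0)^{3/2}}\Big),
\]
where $K_h(z):=-(2r(z)b(z))^{-1}\cdot\frac{1}{2\pi\ii}\oint \frac{h'(w)}{w-z}\,\rd w$ is the linearization of $m_{V_t}-m_V$, and $R_V$ collects the classical $(\tfrac12-\tfrac1\beta)$-term of the loop equation. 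The remainder is controlled by Theorem~\ref{th:local_law_bulk}: its Gaussian moment bound $\E[|s-m_V|^q]\le (Cq)^{q/2}/(N\eta)^q+e^{-cN}$ transfers from $\mu$ to $\mu_t$ since $V_t-V$ is $\OO(\log N/N)$ in analytic norm on a fixed neighborhood of $[A,B]$. Recovering $\phi'(t)$ as a contour integral of $h$ against $N\E_{\mu_t}[s-m_V]$ and integrating in $t$, the $R_V$-term yields the shift $\mu(\bzeta,\boxi,\bz)$, while the $t$-linear $K_h$-term yields, after $\int_0^1 t\,\rd t=\tfrac12$, the Gaussian variance $\tfrac12\sigma(\bzeta,\boxi,\bz)$; the explicit kernel $c(z,w) = \log(1-v(z)v(w)/\gamma)$ emerges from the resulting double contour integral by an elementary computation using \eqref{eqn:linear}.

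\textbf{Complex regime and main obstacle.} The extension to complex $\bzeta,\boxi$ with $|\Im|\leq\sqrt{\beta}/(10p)$ proceeds by analytic continuation: $(\bzeta,\boxi)\mapsto \E_\mu[e^{S_N(h)}]$ is entire, and the trivial bound $|e^{S_N(h)}|\leq e^{S_N(\Re h)}$ (noting that $\Re h$ depends only on $\Re\bzeta,\Re\boxi$) together with the real-case result gives uniform control on the polydisk $|\Re\bzeta|,|\Re\boxi|\leq M+1$; pointwise convergence on the real slice then upgrades to uniform convergence on the complex strip by the Vitali--Porter theorem, preserving the error $\OO((N\eta_0)^{-1/2})$. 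The main technical hurdle is checking that Theorem~\ref{th:local_law_bulk} applies \emph{uniformly in} $t\in[0,1]$ to $\mu_t$ --- that is, that assumptions \ref{assumption:off-criticality}--\ref{assumption:large_dev} (single-cut support, positive square-root density, confining variational identity) are stable under analytic perturbations of $V$ of size $\OO(\log N/N)$ in a fixed complex neighborhood of $[A,B]$. This is precisely where the scale requirement $\eta_0\geq(\log N)^{1000}/N$ is used: it guarantees a fixed analyticity radius for $h$ around $[A,B]$, so that the function $r(z)$ from \eqref{eqn:r} and the single-cut structure persist for $V_t$ with the same quantitative constants up to uniformly small perturbations.
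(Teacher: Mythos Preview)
Your approach follows the right broad outline (interpolation plus loop equation), but there is a genuine gap at the point you yourself flag as the ``main technical hurdle.'' You assert that $\eta_0 \geq (\log N)^{1000}/N$ ``guarantees a fixed analyticity radius for $h$ around $[A,B]$,'' and this is false: the analytic extension of $\rlog_{z_i}(\lambda) = \tfrac12\log[(E_i-\lambda)^2+\eta_i^2]$ has branch points at $\lambda = z_i, \bar z_i$, at distance exactly $\eta_i \geq \eta_0$ from $\mathbb{R}$. Since $\eta_0 \to 0$, the analyticity strip shrinks, and $V_t - V = -\tfrac{2t}{\beta N}h$ is \emph{not} a small analytic perturbation in any fixed neighborhood (indeed $\|V_t'' - V''\|_\infty \asymp (N\eta_0^2)^{-1}$ diverges). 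Hence Theorem~\ref{th:local_law_bulk} cannot be ``transferred to $\mu_t$'' by stability of \ref{assumption:analytic}--\ref{assumption:large_dev}; those assumptions simply fail for $V_t$ at mesoscopic scales. This is exactly the difficulty that distinguishes the mesoscopic case from Johansson's macroscopic one.

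The paper circumvents this by never invoking a local law for $\mu_t$. The local law and rigidity are used only for the original measure $\mu$, to prove the a priori bound of Lemma~\ref{cor:momentsBound}; all expectations under the biased measure $\mu_h^t$ are then controlled via the partition-function ratio $Z_{\Re h}(t)/|Z_h(t)|$ (Lemma~\ref{lem:rigandcons}), and the loop equation is solved with this ratio appearing explicitly in the error (Lemma~\ref{lem:estimate_varphi}). For the complex parameters the paper also does not use Vitali--Porter, which is qualitative and would not preserve the rate $\OO((N\eta_0)^{-1/2})$. Instead it runs a Gronwall-type bootstrap on the ODE for $\log Z_h(t)$, tracking $|Z_h(t)|$ directly: the hypothesis $\Im(\bzeta,\boxi)\in\sqrt{\beta}\cdot[-\tfrac{1}{10p},\tfrac{1}{10p}]^{2p}$ is used precisely to ensure $\sigma(\Im\bzeta,\Im\boxi,\bz)\leq\tfrac15\log(N\eta_0)$, which keeps the error term integrable and shows by continuity that $|Z_h(t)| > e^{-(\log N)^2}$ persists up to $t=1$.
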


We now state an elementary lemma about the size of the variance and shift terms occurring in the above central limit theorem.

\begin{lemma}\label{lem:varshift} Fix $M,\kappa>0$ and  $p\in \mathbb{N}$.  Then
uniformly in $|\bzeta|,|\boxi|\in[-M,M]^{p}$,  $\bz\in([A+\kappa,B-\kappa]\times [0,1])^p$ we have
$
\mu(\bzeta,\boxi,\bz)=\OO(1).
$

Moreover,  uniformly in $z,w\in[A+\kappa,B-\kappa]\times [0,1]$, we have $c(z,w)=\OO_\kappa(1)$, while for 
$z\in[A+\kappa,B-\kappa]\times [0,1]$, and $w\in[A+\kappa,B-\kappa]\times [-1,0]$, we have
\[
c(z,w)=\log(z-w)+f(z,w),
\]
where $f$ is a continuous function that satisfies $f(z,w) = \OO_\kappa(1)$ uniformly for such $z,w$. 
\end{lemma}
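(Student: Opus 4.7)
\emph{Bound on $\mu$.} The plan is to show the contour integral $\int_{z_j}^{z_j + \ii\infty} \frac{1}{b(z)}\bigl[(b'(z) - 1) + \int_A^B \frac{r'(s)\tau(s)}{r(s)(s-z)}\frac{ds}{\pi}\bigr]\, dz$ is bounded uniformly for $z_j \in [A+\kappa, B-\kappa] + \ii[0,1]$. Along the ray $z = z_j + \ii t$, $t \ge 0$, one has $\Re z \in [A+\kappa, B-\kappa]$, so $|b(z)| \ge \sqrt{\kappa(B-A-\kappa)} > 0$; near $t = 0$, the inner integral $\int_A^B g(s)/(s-z)\,ds$ with $g = r'\tau/r$ (continuous on $[A,B]$ by (A1), (A3), with square-root vanishing at the endpoints) is $O(1)$ uniformly on the closed upper half-plane bounded away from $\{A,B\}$, so the outer integrand is $O(1)$ near $z_j$. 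At infinity, the expansion $b(z) = z - (A+B)/2 + O(|z|^{-1})$ gives $b'(z)-1 = O(|z|^{-2})$ and $|b(z)|^{-1} = O(|z|^{-1})$, while the Cauchy integral is $O(|z|^{-1})$, so the outer integrand is $O(|z|^{-2})$ and hence integrable. Together these give $\mu(\bzeta,\boxi,\bz) = O(1)$ on the stated range, since $|\bzeta|,|\boxi|$ are bounded.

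\emph{Bound on $c$: setup and Case (a).} The key algebraic identity is $u(z)v(z) = \gamma$, where $u(z) := ((A+B)/2 - z - b(z))/2$; this follows from $((A+B)/2 - z)^2 - b(z)^2 = (A-B)^2/4 = 4\gamma$. It yields
\[
1 - v(z)v(w)/\gamma = (u(z) - v(w))/u(z) = -\bigl[(z-w) + b(z) + b(w)\bigr]/(2 u(z)).
\]
Applying the maximum principle to $v/\sqrt{\gamma}$, holomorphic on $\bC^+$, with boundary modulus bounded by $1$ (since $|v(x+\ii 0)|^2 = \gamma$ for $x \in [A,B]$ and $v \to 0$ at infinity) gives $|v(z)| \le \sqrt{\gamma}$ on $\overline{\bC^+}$, hence $|u(z)| \ge \sqrt{\gamma}$. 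In Case (a) (both arguments in the upper half-plane), $|v(z)v(w)/\gamma| \le 1$, with equality only when $z,w \in [A,B]$ real. Parameterizing $v(x+\ii 0) = \sqrt{\gamma}\, e^{\ii \theta(x)}$ via $x = (A+B)/2 - 2\sqrt{\gamma} \cos \theta(x)$, with $\theta$ strictly increasing from $0$ to $\pi$ on $[A,B]$, the restriction to $[A+\kappa,B-\kappa]$ keeps $\theta \in [\theta_0, \pi - \theta_0]$ for some $\theta_0(\kappa) > 0$. Thus $\arg(v(z)v(w)/\gamma)$ on the real boundary lies in $[2\theta_0, 2\pi - 2\theta_0]$, bounded away from $0 \pmod{2\pi}$. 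By compactness, $1 - v(z)v(w)/\gamma$ is bounded away from $0$, so $c(z,w) = O_\kappa(1)$.

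\emph{Case (b) and the main obstacle.} For Case (b), set $G(z,w) := (1 - v(z)v(w)/\gamma)/(z - w) = -(1 + (b(z) + b(w))/(z-w))/(2 u(z))$. Off the diagonal $\{z = \bar w \in [A+\kappa,B-\kappa]\}$, nonvanishing of $1 - v(z)v(w)/\gamma$ follows by an analogue of Case (a) applied to $v(z)\,\overline{v(\bar w)}$. The main technical step is continuity of $G$ at the diagonal. There, $b$ extends analytically into the closed upper (resp.\ lower) half-plane up to the branch cut $[A,B]$, with boundary values $b(z_0 \pm \ii 0) = \pm \ii \tau(z_0)$ and boundary derivatives $b'(z_0 + \ii 0) = \tfrac{\ii}{2}\bigl[\sqrt{(B-z_0)/(z_0-A)} - \sqrt{(z_0-A)/(B-z_0)}\bigr]$, which is purely imaginary, with $b'(z_0 - \ii 0) = -b'(z_0 + \ii 0)$. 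Separate Taylor expansions from each side of the real axis give
\[
b(z) + b(w) = (z - w)\, b'(z_0 + \ii 0) + O(|z-z_0|^2 + |w - z_0|^2),
\]
so $(b(z) + b(w))/(z - w) \to b'(z_0 + \ii 0)$, and $G$ extends continuously with a nonzero limit because $\Re(1 + b'(z_0+\ii 0)) = 1$ and $|u(z_0 + \ii 0)| \ge \sqrt{\gamma}$. By compactness, $|G|$ and $|G|^{-1}$ are bounded; simple connectedness of the product domain then lets one choose a continuous branch $f(z,w) := \log G(z,w)$, and setting $c(z,w) := \log(z - w) + f(z,w)$ with the principal branch of $\log(z-w)$ (continuous on $\{\Im(z-w) \ge 0,\ z \ne w\}$) yields the decomposition. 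The main obstacle is this diagonal analysis: the cancellation $b(z) + b(w) = O(|z-w|)$ requires careful matching of boundary expansions of $b$ from opposite half-planes, and the purely imaginary character of $b'(z_0 + \ii 0)$ is essential for $G$ to have a nonzero limit; once these are in place, the branch-choice and boundedness assertions are routine.
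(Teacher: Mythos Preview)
Your proof is correct and follows the same overall strategy as the paper: for $\mu$, split the contour into a bounded piece near $z_j$ and a tail with $O(|z|^{-2})$ decay; for $c$, argue geometrically that $1-v(z)v(w)/\gamma$ is bounded away from zero in Case~(a), and Taylor expand near the diagonal in Case~(b) to isolate the $\log(z-w)$ singularity.

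The one substantive difference is your use of the algebraic identity $u(z)v(z)=\gamma$ with $u(z)=\tfrac12\bigl(\tfrac{A+B}{2}-z-b(z)\bigr)$, which yields the closed form
\[
1-\frac{v(z)v(w)}{\gamma}=-\frac{(z-w)+b(z)+b(w)}{2u(z)}.
\]
This lets you Taylor expand $b$ (rather than $v$) from each side of the cut; the key input is then that $b'(z_0+\ii 0)$ is purely imaginary, so $1+b'(z_0+\ii 0)$ has nonzero real part and $G$ has a nonzero diagonal limit. The paper instead expands $v(E+\epsilon_1)v(E+\epsilon_2)$ directly, using the relation $v(E^+)v'(E^-)=-v(E^-)v'(E^+)=1/b(E^+)$ to collapse the first-order terms. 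These are equivalent computations; your identity organizes the cancellation more transparently and makes the nonvanishing of $G$ immediate, while the paper's version stays closer to the original variable $v$. For the bound on $\mu$, you invoke Plemelj-type boundedness of the Cauchy integral of $r'\tau/r$ to get $O(1)$ directly, whereas the paper uses the cruder $O(\log(1/\eta))$ bound and integrates it; both suffice.
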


\begin{proof}
We start with the asymptotics for (\ref{eqn:shift}), which is a linear combination of terms of type
\begin{align}
&\int_{w}^{w+\ii\infty}\frac{b'(z)-1}{b(z)}\rd z,\label{eqn:1stshift}\\
&\int_{w}^{w+\ii\infty}\bigg(\int_{A}^B\frac{r'(s)}{r(s)(s-z)}\bigg)\cdot \frac{1}{b(z)}\rd z\label{eqn:2ndshift},
\end{align}
with bounded coefficients.
Since $\Re w\in[A+\kappa,B-\kappa]$, we have 
$
\int_{w}^{w+\ii}\frac{b'(z)-1}{b(z)}\rd z=\OO(1).
$
Moreover $b'(z)-1=\OO(1/|z|)$ and $b(z)\sim|z|$ as $\im z\to\infty$,  so  
$
\int_{w+\ii}^{w+\ii\infty}\frac{b'(z)-1}{b(z)}\rd z=\OO(1)$.
We have proved that (\ref{eqn:1stshift}) is $\OO(1)$ as expected.

For the term (\ref{eqn:2ndshift}),  from our non-vanishing assumption on $r$ it is of order at most
\[
\int_{w}^{w+\ii\infty}\Big(\int_{A}^B\frac{1}{|s-z|}\Big)\cdot \frac{1}{|b(z)|}\rd |z|\le 
 C \int_{w}^{w+\ii}\frac{\log\eta_z}{|b(z)|}\rd |z|+C \int_{w_j+\ii}^{w_j+\ii\infty}\frac{1}{|z|\cdot|b(z)|}\rd |z|.
\]
The first term is $\OO(1)$,  since we have $|b(z)|\ge c$ for some $c>0$ by $\Re w_j\in[A+\kappa,B-\kappa]$.
The second term is $\OO(1)$ because of the quadratic decay at $\ii\infty$.

For the asymptotics of $c(z,w)$,  to simplify notations we can assume without loss of generality that $A=-2$  and $B=2$. 
Note that $v(z)=\frac{-z+\sqrt{z^2-4}}{2}$ conformally maps $\mathbb{C}\cup\{\infty\}\setminus [-2,2]$ into $\mathbb{D}$, so  $c(z,w)$ is well-defined.
Moreover,  the image of  $[2+\kappa,2-\kappa]\times [0,1]$ by $v$ is a subset of $\mathbb{H}\cap\mathbb{D}$
which has positive distance to $-1$ and $1$. Then for any $z,w\in[2+\kappa,2-\kappa]\times [0,1]$ we have
$|v(z)v(w)|<1-\e$ for some fixed $\e>0$, and hence $c(z,w)=\OO(1)$.

For the case $\Im z>0,\Im w<0$,
we denote $v(E^+)=\lim_{\eta\to 0^+}v(E+\ii\eta)=\frac{-E+\sqrt{4-E^2}\ii}{2}$ and similarly
$v(E^-)=\frac{-E-\sqrt{4-E^2}\ii}{2}$.
We have $v'(z)=-\frac{v(z)}{b(z)}=-\frac{1}{2}+\frac{z}{2\sqrt{z^2-4}}$.  We therefore denote $v'(E^+)=-\frac{1}{2}-\frac{E}{2\sqrt{4-E^2}}\ii$ and $v'(E^-)=-\frac{1}{2}+\frac{E}{2\sqrt{4-E^2}}\ii$.

Note that $v(E^+)v(E^-)=1$ and $v(E^+)v'(E^-)=-v(E_-)v'(E^+)=\frac{1}{b(E_+)}=-\frac{\ii}{\sqrt{4-E^2}}$. This implies, for $\im \e_1>0$ and $\Im \e_2<0$, that $v(E+\e_1)v(E+\e_2)$ is equal to
\[
\big(v(E^+)+v'(E^+)\e_1+\OO(\e_1)\big)\cdot\big(v(E^-)+v'(E^-)\e_2+\OO(\e_2)\big)
=1+\frac{\ii}{\sqrt{4-E^2}}(\e_1-\e_2)+\OO(|\e_1|^2+|\e_2|^2),
\]
which  implies $\log(1-v(z)v(w))=\log(z-w)+\OO(1)$  for 
$z\in[A+\kappa,B-\kappa]\times [0,1]$, and $w\in[A+\kappa,B-\kappa]\times [-1,0]$.
\end{proof}

\subsection{Rigidity under biased measures.} \label{subsec:rgidity}\ 
We start with some important preliminary bound on the Laplace transform, relying on \cite{BouModPai2020}.

\begin{lemma}\label{cor:momentsBound}
For any fixed $\kappa,\beta>0$, there exist $N_0(V,\kappa, \beta) \in \mathbb{N}$ and $C(V,\kappa,\beta) >0$ such that the following holds. Let $\tilde \eta$ be given by Theorem \ref{th:local_law_bulk}, and fix any $M >0$. For any $z\in \mathbb{C}$ such that $\re z\in[A+\kappa,B-\kappa]$ and $\im z\in [N^{-1},\tilde\eta]$, and any $N>N_0$ and  $\zeta\in[-M,M]$, 
\begin{equation}\label{e:b3}
\log \E_{\mu}\left[e^{\zeta\left(\sum_{k=1}^N 
\rlog_z(\lambda_k)-N\int \rlog_z\rd\rho_V\right)}\right]\in [- C M (\log N)^2,C M^2(\log N)^2].
\end{equation}
Moreover, the same estimate holds when considering $\ilog$ instead of $\rlog$.
\end{lemma}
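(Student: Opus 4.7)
The strategy is to leverage \Cref{th:local_law_bulk}, particularly the $(Cq)^{q/2}$ growth (rather than exponential growth) of the centered moments of the Stieltjes transform, to obtain sub-Gaussian concentration for $\Re L_N(z)$ with effective variance of order $(\log N)^2$. The starting point is the identity $\partial_z L_N(z) = N(s(z) - m_V(z))$, which in particular gives $\partial_\eta \Re L_N(E + \ii\eta) = -\Im\left[N(s-m_V)(E+\ii\eta)\right]$. Integrating from $\eta$ to $\tilde\eta$ yields
\begin{equation*}
|\Re L_N(E + \ii\eta)| \le |\Re L_N(E + \ii\tilde\eta)| + Y, \qquad Y \deq \int_\eta^{\tilde\eta} N|s-m_V|(E+\ii t)\, dt.
\end{equation*}

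\textbf{Moment estimates and exponential moments.} By Minkowski's integral inequality and \Cref{th:local_law_bulk}, for any $q \ge 1$,
\begin{equation*}
\pa{\E[Y^q]}^{1/q} \le \int_\eta^{\tilde\eta} \pa{\E\qa{(N|s-m_V|(E+\ii t))^q}}^{1/q} dt \le \int_\eta^{\tilde\eta} \frac{C\sqrt q}{t}\, dt + \text{(exponentially small)} \le C'\sqrt q\, \log N,
\end{equation*}
so that $\E[Y^q] \le (C''q)^{q/2} (\log N)^q$. Expanding the exponential moment as a power series and using Stirling,
\begin{equation*}
\E[e^{\zeta Y}] \le \sum_{k \ge 0} \frac{M^k (C''k)^{k/2} (\log N)^k}{k!} \le \sum_{k \ge 0} \pa{\frac{C''' M^2 (\log N)^2}{k}}^{k/2}.
\end{equation*}
The summand is maximized near $k^* \asymp M^2(\log N)^2$, where it reaches size $e^{O(M^2(\log N)^2)}$, and the whole series is bounded by $\exp(CM^2(\log N)^2)$ for some $C$ depending only on $V$ and $\kappa$. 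This is exactly the point where the Gaussian-type moment growth $(Cq)^{q/2}$ from \Cref{th:local_law_bulk} is essential; exponential-type growth $(Cq)^q$ (which more naive concentration results would yield) would instead produce an exponential in $M(\log N)^2$ times a divergent prefactor.

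\textbf{Boundary term, Cauchy--Schwarz, and lower bound.} The macroscopic boundary term $\Re L_N(E + \ii\tilde\eta)$ has uniformly bounded exponential moments: repeating the integration on $[\tilde\eta, \infty)$ and using the decay $|s-m_V|(E+\ii t) = \OO(1/(Nt^2))$ at large $t$ (from the moment expansion of $s - m_V$ together with rigidity of $\mu_V$ via \Cref{lem:rig}), one obtains $\E[e^{\zeta \Re L_N(E+\ii\tilde\eta)}] = e^{\OO(1)}$ for $|\zeta| \le M$. Cauchy--Schwarz then yields the claimed upper bound
\begin{equation*}
\E[e^{\zeta \Re L_N(z)}] \le \pa{\E[e^{2|\zeta||\Re L_N(E+\ii\tilde\eta)|}]}^{1/2} \pa{\E[e^{2|\zeta| Y}]}^{1/2} \le e^{CM^2 (\log N)^2}.
\end{equation*}
The lower bound follows immediately from Jensen's inequality, since $|\E[\Re L_N(z)]| \le \E[|\Re L_N(E+\ii\tilde\eta)|] + \E[Y] \le C \log N$ (the same moment estimate at $q = 1$), giving $\log \E[e^{\zeta X}] \ge \zeta \E[X] \ge -CM \log N$, which is in fact stronger than $-CM(\log N)^2$. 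The imaginary part statement is proven by the identical argument, using $\partial_\eta \Im L_N(E+\ii\eta) = \Re[N(s-m_V)(E+\ii\eta)]$. The main technical obstacle is the extension of moment bounds past the cutoff $\tilde\eta$ needed for the boundary term, which requires auxiliary large-$t$ estimates but does not affect the final constants.
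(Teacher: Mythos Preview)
Your overall strategy coincides with the paper's: integrate $N(s-m_V)$ from $\eta$ up to a macroscopic scale, feed the Gaussian-type moment bounds of \Cref{th:local_law_bulk} into the exponential series to get $\exp(CM^2(\log N)^2)$, combine with a boundary term via Cauchy--Schwarz, and use Jensen for the lower bound. The paper carries out exactly this scheme, so the core of your argument is correct and on target.

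There is, however, a genuine gap in your treatment of the boundary term $\Re L_N(E+\ii\tilde\eta)$. You propose to obtain $\E[e^{\zeta\Re L_N(E+\ii\tilde\eta)}]=e^{\OO(1)}$ by continuing the integration to $+\infty$ and invoking rigidity (\Cref{lem:rig}) to get $N|s-m_V|(E+\ii t)=\OO((\log N)^C/t^2)$. This decay is indeed valid on the rigidity event, but that event has probability only $1-e^{-c(\log N)^5}$, and on its complement $|\Re L_N(E+\ii\tilde\eta)|$ can be of order $N$ (there is no useful deterministic bound once eigenvalues may be far from their classical locations). The resulting contribution $e^{CMN}\cdot e^{-c(\log N)^5}$ to the exponential moment is not controllable. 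The paper sidesteps this entirely by quoting a macroscopic CLT for smooth linear statistics (Shcherbina \cite{Shc2013}), which directly yields $\E[e^{\xi g(\eta')}]=\OO(1)$ at the order-one scale $\eta'$; you should do the same rather than try to manufacture exponential moments from rigidity.

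A secondary issue: integrating all the way to the fixed $\tilde\eta$ from \Cref{th:local_law_bulk} causes trouble with the second (exponentially small) error term $C^qe^{-cN}$ in that theorem. After summing the exponential series this term contributes $e^{-cN}e^{CM\tilde\eta N}$, which blows up once $M$ is large. The paper handles this by integrating only up to an $M$-dependent scale $\eta'<c/(CM)$, so that this contribution stays $\oo(1)$; the macroscopic CLT is then applied at $\eta'$. Your lower bound via Jensen and the $q=1$ moment is fine in spirit (and is indeed what the paper does, though it executes the expectation bound directly via rigidity and obtains $C(\log N)^2$), but note that your claimed $|\E[\Re L_N(z)]|\le C\log N$ again leans on the boundary estimate you have not established.
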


\begin{proof}
For the lower bound, by Jensen's inequality and the hypothesis $\zeta\in[-M,M]$ we have 
\begin{multline}
\log \E_{\mu}\left[e^{\zeta\left(\sum_{k=1}^N \rlog_z(\lambda_k)-N\int \rlog_z\rd\rho_V\right)}\right]\geq \zeta\, \E_{\mu}\left[\sum_{k=1}^N \rlog_z(\lambda_k)-N\int \rlog_z\rd\rho_V\right]\\
\geq 
-M \sum_{k=1}^N\E_{\mu}\Big[ \big|\rlog_z(\lambda_k)-\rlog_z(\gamma_k)\big|\Big]-M \left|N\int \rlog_z\rd\rho_V-\sum_{k=1}^N \rlog_z(\gamma_k)\right|.\label{eqn:lower}
\end{multline}
The first summand above is easily bounded on the good set
\[\mathcal G=\bigcap_{\hat k>(\log N)^{2}}\{|\lambda_k-\gamma_k|\leq  C (\log N) N^{-2/3} \hat k^{-1/3}\}\bigcap_{\hat k\leq (\log N)^{2}}\{|\lambda_k-\gamma_k|\leq  C (\log N)^{10} N^{-2/3} \}\]
as follows. For $\hat k \le (\log N)^2$, we use that $\lambda_k, \gamma_k$ are near $A$ or $B$, where $\rlog_z$ is $C_1$-Lipschitz continuous for some constant $C_1 >0$ by the assumptions on $\Re z$ and $\Im z$. For the other values of $\hat k$, we use the mean value theorem. Then setting $E = \re z$ and $\eta = \im z$, and recalling that $E$ is in the bulk of the spectrum, we have 
\begin{align}
&\sum_{k=1}^N\E_{\mu}\Big[ \big|\rlog_z(\lambda_k)-\rlog_z(\gamma_k)\big|\mathds{1}_{\mathcal G}\Big]\notag\\ 
&\le C_1
\sum_{\hat k\leq (\log N)^2}|\lambda_k-\gamma_k|\mathds{1}_{\mathcal G}+
\sum_{\hat k\geq (\log N)^2}|\lambda_k-\gamma_k|\max_{|\lambda-\gamma_k|\leq C(\log N)N^{-2/3}(\hat k)^{-1/3}}|\rlog_z'(\lambda)|\mathds{1}_{\mathcal G}\notag\\
&\leq CN^{-1/2}+\sum_{|E-\gamma_k|>(\log N)/N} \frac{C(\log N)}{|E-\gamma_k|}N^{-\frac{2}{3}}(\hat k)^{-\frac{1}{3}}
+
\sum_{|E-\gamma_k|<(\log N)/N} \frac{C(\log N)}{\eta}N^{-1}\leq C(\log N)^2. \label{eqn:bound1}
\end{align}
Moreover,   for large enough $N$ we have 
\[\mathbb{P}(\mathcal G^{\rm c})\leq N^{-100}\]
from  \cite[Corollary 1.5]{BouModPai2020} for $\hat k>(\log N)^{2}$,  and \cite[Corollary 1.5 and Corollary 1.6]{BouErdYau2014} for $\hat k\leq (\log N)^{2}$.  This implies
\[\sum_{k=1}^N\E_{\mu}\Big[ \big|\rlog_z(\lambda_k)-\rlog_z(\gamma_k)\big|\mathds{1}_{\mathcal G^{\rm c}}\Big] \le N^{-10}\E[|\rlog_z(\lambda_1)|]^{1/2}\leq N^{-2}
\]
where we used that the latter expectation is finite,  from the estimate
$\mathbb{P}(|\lambda_1|>x)\leq (x-L)^N$ for some fixed $L$
\cite[Equation (3.3)]{de1995statistical}.

For the second, deterministic, term in (\ref{eqn:lower}), we have  by the intermediate value theorem  that $N\int \rlog_z\rd\mu_V-\sum \rlog_z(\gamma_k)=\sum (\rlog_z(\delta_k)- \rlog_z(\gamma_k))$ for some  $\delta_k\in[\gamma_{k-1},\gamma_{k+1}]$. Then the same reasoning as \eqref{eqn:bound1} applies, giving an analogous bound. This completes the proof of the lower bound in \eqref{e:b3}. 

For the upper bound, let $\eta'$ be a parameter to be fixed later. For all $x\in[\eta,\eta']$, we define 
\[g(x)=\sum_{k=1}^N\rlog_{E+\ii x}(\lambda_i)-N\int\rlog_{E+\ii x}\rd\rho_V, \qquad \delta(x)=N\big|s(E+\ii x)-m_V(E+\ii x)\big|.\]
From the local law,  Theorem \ref{th:local_law_bulk},  we have for $\eta'\in[\eta,\tilde\eta]$  that
\begin{align}
\E\left[e^{\xi\left(g(\eta)-g(\eta')\right)}\right]&\leq 2\sum_{p\geq 0}\frac{|\xi|^{2p}}{(2p)!}\E\left[\left(\int_{\eta}^{\eta'}\delta(x)\right)^{2p}\right]\notag \\
&\leq 2\sum_{p\geq 0}\frac{|\xi|^{2p}}{(2p)!}
\int_{[\eta,\eta']^{2p}}\E[ \delta(\eta_1)\dots\delta(\eta_{2p})] \rd\eta_1\dots\rd\eta_{2p}\notag \\
&\leq
2\sum_{p\geq 0}\frac{|\xi|^{2p}}{(2p)!}\,\int_{[\eta,\eta']^{2p}}(\E(\delta(\eta_1)^{2p})^{\frac{1}{2p}}\dots(\E(\delta(\eta_{2p})^{2p})^{\frac{1}{2p}})\rd\eta_1\dots\rd\eta_{2p}\notag \\
&\leq 
2\sum_{p\geq 0}\frac{|\xi|^{2p}}{(2p)!}\left(\int_{[\eta,\eta']}\left(\frac{(Cp)^p}{x^{2p}}+ (CN)^{2p} e^{-cN}\right)^{\frac{1}{2p}}\rd x\right)^{2p}\notag \\
&\leq 
\sum_{p\geq 0}\frac{|2\xi|^{2p}}{(2p)!}\left(\int_{[\eta,\eta']}\frac{(Cp)^{1/2}}{x}\rd x\right)^{2p}+\sum_{p\geq 0}\frac{|2\xi|^{2p}}{(2p)!}\left(\int_{[\eta,\eta']}CN e^{-c\frac{N}{2p}}\rd x\right)^{2p}\label{ubd3},
\end{align}
where  we successively used H\"older's inequality, the inequality $(a+b)^c\leq a^c+b^c$ for $a,b>0$,  $c\in[0,1]$ (with $c=1/(2p)$ above), and $(x+y)^{2p}\leq 2^{2p-1}(x^{2p}+y^{2p})$.
The first sum above is of order
\begin{equation}\label{upperbd1}
\sum_{p\geq 0}\frac{|C\xi|^{2p}}{p^p}(\log N)^{2p}\leq \exp(C|\xi|^2(\log N)^2),
\end{equation}
while the second one is bounded by
\begin{equation}\label{upperbd2}
\sum_{p\geq 0}\frac{|C\xi|^{2p}}{(2p)!}({\eta'}N)^{2p}e^{-cN}\leq e^{C|\xi|{\eta'} N-cN},
\end{equation}
where on the right-hand sides of \eqref{upperbd1} and \eqref{upperbd2}, $C,c>0$ are constants that depend on $V$ but not on $\eta'$. We now fix $\eta'$ small enough, as a function of $M$, so that these upper bound in \eqref{upperbd2} is $o(1)$ when $|\xi|<2M$.  In conclusion, we have shown (recalling \eqref{ubd3}) that there exist constants $C,\eta'>0$  such that
for any $|\xi|<2M$ we have
\begin{equation}\label{eqn:increment}
\E\left[e^{\xi(g(\eta)-g(\eta'))}\right]\leq \exp(C M^2(\log N)^2).
\end{equation}
Moreover,  as $\xi g(\eta')$ is a smooth linear statistic on a macroscopic scale,   uniformly in $|\xi|<2M$ we have (see e.g.\  \cite[Theorem 1(i)]{Shc2013})
\begin{equation}\label{eqn:scale1}
\E\left[e^{\xi g(\eta')}\right]=\OO(1),
\end{equation}
where the implicit constant depends only on the choice of $g$ and $V$. 
Equations (\ref{eqn:increment}) and (\ref{eqn:scale1}) conclude the proof of the upper bound when $\eta<\eta'$, after combining them with the Cauchy--Schwarz inequality to estimate the left side of \eqref{e:b3}. For $\eta\in[\eta',{\tilde \eta}]$, we can directly use 
$\E\left[e^{\xi g(\eta)}\right]=\OO(1)$, as explained before the previous equation.
\end{proof}

For fixed $\bzeta,\boxi$, and a function $h\colon \R \rightarrow \R$, we will need the following (complex) measures. They are modifications of the measure \eqref{eq:beta_ensembles_density} and depend on the parameter $0\leq t\leq 1$:
\[
\rd\mu_h^{t}(\bla):=
\frac{e^{t S_N(h)}}{Z_h(t)}\rd\mu(\bla),
\]
where we assumed 
\begin{equation}
  \label{eq-of1}
  Z_h(t):=\E_{\mu}[e^{ t S_N(h)}] \neq 0.
\end{equation}
In the next section we will use rigidity under biased measures under the following form.

\begin{lemma}\label{lem:rigandcons}
For any $t$ such that $Z_h(t)\neq 0$, and any function $f\colon \R \rightarrow \R$ and measurable set $\mathscr{G}$,
\begin{equation}\label{eqn:trivial}
|\E_{\mu_h^t}[f\mathds{1}_{\mathscr{G}}]|\leq \frac{Z_{\Re h}(t)}{|Z_h(t)|}\sup_{\mathscr{G}} |f|.
\end{equation}
Moreover, any integer $1\le k \le N$, we define $\mathscr{G}=\mathscr{G}_k=\{|\la_k-\gamma_k|<N^{-\frac{2}{3}}(\widehat k)^{-\frac{1}{3}}(\log N)^{100}\}$. Then for
 all $M,p>1$ there exists $c(M,p)>0$ such that for any  $(\bzeta,\boxi) \in \C^{2p}$ such that $\Re(\bzeta,\boxi)\in[-M,M]^{2p}$, $t\in[0,1]$ such that $Z_h(t)\neq 0$, and $N\geq 1$, we have
\begin{equation}\label{eqn:nontrivial}
|\E_{\mu_h^t}[f\mathds{1}_{\mathscr{G}_k^{\rm c}}]|\leq \frac{\E_\mu[|f|^2]^{1/2}}{|Z_h(t)|}\cdot e^{-(\log N)^{4}}.
\end{equation}
\end{lemma}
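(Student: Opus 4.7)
The first bound (\ref{eqn:trivial}) is essentially immediate. Since $t\in[0,1]$ is real and $S_N(\cdot)$ is a real-linear operator on test functions, we have $|e^{tS_N(h)}|=e^{t\Re S_N(h)}=e^{tS_N(\Re h)}$. Therefore
\[
|\E_{\mu_h^t}[f\mathds{1}_{\mathscr{G}}]|=\frac{|\E_\mu[f\mathds{1}_{\mathscr{G}}e^{tS_N(h)}]|}{|Z_h(t)|}\leq\frac{\sup_{\mathscr{G}}|f|\cdot\E_\mu[e^{tS_N(\Re h)}]}{|Z_h(t)|}=\frac{Z_{\Re h}(t)}{|Z_h(t)|}\sup_{\mathscr{G}}|f|,
\]
which is precisely (\ref{eqn:trivial}). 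The key point is that when the set $\mathscr{G}$ is chosen so $f$ is bounded there, the only remaining problem is to compare the complex partition function $Z_h(t)$ with the real one $Z_{\Re h}(t)$.

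For the second bound (\ref{eqn:nontrivial}), the plan is to apply Cauchy--Schwarz twice. First, to extract $|f|$ from the exponential tilt:
\[
|\E_{\mu_h^t}[f\mathds{1}_{\mathscr{G}_k^{\rm c}}]|\leq\frac{\E_\mu[|f|^2]^{1/2}\,\E_\mu[\mathds{1}_{\mathscr{G}_k^{\rm c}}e^{2tS_N(\Re h)}]^{1/2}}{|Z_h(t)|},
\]
so it suffices to show $\E_\mu[\mathds{1}_{\mathscr{G}_k^{\rm c}}e^{2tS_N(\Re h)}]\leq e^{-2(\log N)^4}$. A second Cauchy--Schwarz yields
\[
\E_\mu\!\bigl[\mathds{1}_{\mathscr{G}_k^{\rm c}}e^{2tS_N(\Re h)}\bigr]\leq\mu(\mathscr{G}_k^{\rm c})^{1/2}\,\E_\mu[e^{4tS_N(\Re h)}]^{1/2},
\]
now separating the probability of the bad rigidity event from an exponential moment with a fixed (larger) coupling $4t$.

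To conclude, I will bound the two factors. Since $\mathscr{G}_k^{\rm c}\subset\{|\lambda_k-\gamma_k|>N^{-2/3}(\widehat k)^{-1/3}(\log N)^{23}\}$, Lemma \ref{lem:rig} gives $\mu(\mathscr{G}_k^{\rm c})\leq c^{-1}e^{-c(\log N)^5}$. For the exponential moment, expand $\Re h=\sum_{i=1}^p(\Re\zeta_i\,\rlog_{z_i}+\Re\xi_i\,\ilog_{z_i})$ and apply H\"older with $2p$ equal exponents to obtain
\[
\E_\mu\!\bigl[e^{4tS_N(\Re h)}\bigr]\leq\prod_{i=1}^p\E_\mu\!\bigl[e^{8pt\Re\zeta_i\,S_N(\rlog_{z_i})}\bigr]^{\!1/(2p)}\E_\mu\!\bigl[e^{8pt\Re\xi_i\,S_N(\ilog_{z_i})}\bigr]^{\!1/(2p)}.
\]
Each coefficient satisfies $|8pt\Re\zeta_i|,|8pt\Re\xi_i|\leq 8pM$, so Lemma \ref{cor:momentsBound} bounds every factor by $\exp(C(8pM)^2(\log N)^2)$, giving $\E_\mu[e^{4tS_N(\Re h)}]\leq e^{C'(M,p)(\log N)^2}$. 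Combining, the $(\log N)^5$ decay from rigidity swamps the $(\log N)^2$ growth from the tilt, yielding $\E_\mu[\mathds{1}_{\mathscr{G}_k^{\rm c}}e^{2tS_N(\Re h)}]\leq e^{-c''(\log N)^5}$ for $N$ large, which is much smaller than $e^{-2(\log N)^4}$. There is no substantive obstacle; the content of the lemma is just that the rigidity tail from \cite{BouModPai2020} is stronger than the mesoscopic exponential moment growth proved in Lemma \ref{cor:momentsBound}, so even after tilting by $e^{tS_N(h)}$ the rare eigenvalues remain negligible.
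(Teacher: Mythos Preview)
Your proof is correct and follows essentially the same approach as the paper: the triangle inequality for (\ref{eqn:trivial}), and two applications of Cauchy--Schwarz for (\ref{eqn:nontrivial}), concluding via Lemma~\ref{lem:rig} and Lemma~\ref{cor:momentsBound}. Your H\"older step reducing $\E_\mu[e^{4tS_N(\Re h)}]$ to single-term exponential moments is a useful elaboration that the paper leaves implicit.
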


\begin{proof} The first statement follows directly from 
\[
|\E_{\mu_h^t}[f\mathds{1}_{\mathscr{G}}]|\leq \frac{\int e^{t S_N(\Re h)}|f|\mathds{1}_{\mathscr{G}}\rd\mu}{|Z_h(t)|}.
\]
For the second statement, we use the Cauchy--Schwarz inequality twice,  writing
\[
|\E_{\mu_h^t}[f\mathds{1}_{\mathscr{G}^{\rm c}}]|\leq
 \frac{\E_{\mu}[|f|\mathds{1}_{\mathscr{G}^{\rm c}} e^{t S_N(\Re h)}]}{|Z_h(t)|}\leq  \frac{\E_{\mu}[|f|^2]^{1/2}}{|Z_h(t)|}
 \mathbb{P}_\mu(\mathscr{G}^{\rm c})^{1/4}\E_{\mu}[e^{4t S_N(\Re h)}]^{1/4}.\]
One concludes with lemmas \ref{cor:momentsBound} and \ref{lem:rig}.
\end{proof}

\subsection{Analysis of the loop equation.}\ 
To prove Theorem \ref{thm:GaussFluct} we start with the identity
$
\frac{\rd}{\rd t}\log Z(t)=\E_{\mu_h^{t}}\left( S_N(h)\right)$, and therefore want to estimate expectation of general linear statistics for the measure $\mu_h^t$. 
The starting point for the proof of Theorem \ref{thm:GaussFluct}  will be the first order loop equation (\ref{eq:loop_equation_mu_theta}) below.

Let $\rho_1^{(N,t)}(s)$ be the 1-point correlation function for the measure $\mu_h^t$ (see e.g.\ \cite[Definition 2.4]{erdos2017dynamical}), and let $m_{N,t}(z)$ be the Stieljes transform of $\rho_1^{(N,t)}(s)$. 
We introduce 
\begin{equation}\label{eqn:phitheta}
	\varphi(z) = \varphi_{N,t}(z) 
	\coloneqq m_{N,t}(z) - m(z).
\end{equation}

For $z \in \mathbb{C} \setminus \R$, we define 
\begin{align} \label{eq:def_psiwhy}
	\widetilde{\varphi}(z)  
	\coloneqq \frac{1}{2\pi b(z)} \left(
	\frac{2t}{\beta} \int_A^B \frac{h'(s)}{s-z} \tau(s) \diff s
	- \pa{\frac{2}{\beta}-1} 
	\left( \pi (b'(z)-1) + \int_A^B \frac{r'(s) \tau(s)}{r(s)(s-z)} \diff s \right)
	\right).
\end{align}
The main result of this subsection is the following one.

\begin{lemma} \label{lem:estimate_varphi} For any $\kappa,M>0$ and $p\geq 1$ the following holds.
Uniformly in any $z = E + \ii \eta$, with $\eta_0\leq \eta\leq N^{10}$  (see (\ref{eqn:eta0})) and $A+\kappa\leq E \leq B-\kappa$, $(\bzeta,\boxi) \in \C^{2p}$ such that $\Re(\bzeta,\boxi)\in[-M,M]^{2p}$, 
and $t$ such that $ |Z_h(t)|\geq e^{-(\log N)^2}$, we have 
\begin{align}\label{eqn:phi}
	\varphi(z) 
	& =\frac{\widetilde\varphi(z)}{N}
	+\OO_{\kappa,M,p} \left(\frac{(\|\boxi\|_\infty+\|\bzeta\|_\infty)(\log N)^{500}}{N^2\eta_0\eta}\cdot \frac{Z_{\Re h}(t)^2}{|Z_h(t)|^2}\right).
\end{align}
\end{lemma}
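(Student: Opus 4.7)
The plan is to analyze the first-order loop equation (\ref{eq:loop_equation_mu_theta}) for the tilted measure $\mu_h^t$. Recall that the loop equation arises from integrating by parts the identity
$\int \sum_k \partial_{\lambda_k}\bigl(\tfrac{1}{z-\lambda_k}\bigr)\,\rd \mu_h^t=0$
under the density of $\mu_h^t$, giving a quadratic relation of the form
\[
\E_{\mu_h^t}[s_N(z)^2] \;=\; \frac{1}{N}\E_{\mu_h^t}\!\Bigl[\sum_k \tfrac{V'(\lambda_k)}{z-\lambda_k}\Bigr]
+ \Bigl(\tfrac{2}{\beta}-1\Bigr)\tfrac{1}{N}\E_{\mu_h^t}[s_N'(z)]
- \tfrac{2t}{\beta N^{2}}\E_{\mu_h^t}\!\Bigl[\sum_k \tfrac{h'(\lambda_k)}{z-\lambda_k}\Bigr].
\]
I will substitute $s_N=m_{N,t}+(s_N-m_{N,t})$ to decompose $\E[s_N^{2}] = m_{N,t}^2 + \mathrm{Var}_{\mu_h^t}[s_N(z)]$, extract the $V'(z) m_{N,t}(z)$ contribution from the first sum, and use the equilibrium identity \eqref{eqn:2mplusV}, namely $2m(z)+V'(z)=2r(z)b(z)$, together with the analogue of $m^2 = F_V(z) + V'(z)m(z)$ for the limiting Stieltjes transform. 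After these cancellations the loop equation linearizes in $\varphi = m_{N,t} - m$ as
\[
2 r(z)\,b(z)\,\varphi(z) \;=\; \tfrac{2t}{\beta N^{2}}\E_{\mu_h^t}\!\Bigl[\sum_k \tfrac{h'(\lambda_k)}{z-\lambda_k}\Bigr] - \bigl(\tfrac{2}{\beta}-1\bigr)\tfrac{1}{N}\E_{\mu_h^t}[s_N'(z)] - \mathrm{Var}_{\mu_h^t}[s_N(z)] - \varphi(z)^{2} + O(\text{smooth terms}).
\]

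Next, I would identify the leading term. In the right-hand side, replace $\rho_1^{(N,t)}$ by $\rho_V$ to leading order, so that the $h'$-sum becomes $N\int_A^B h'(s)\rho_V(s)\,\rd s/(z-s)$; after using $\rho_V(s)=\tau(s)r(s)/\pi$ and integrating by parts, this matches the first summand of $\tilde{\varphi}(z)$ from \eqref{eq:def_psiwhy}. Similarly, the $(2/\beta-1)\,s_N'/N$ piece, averaged against $\rho_V$, produces the second summand of $\tilde{\varphi}$ after recognizing that $\int_A^B \tau(s)/(s-z)\,\rd s = \pi(\tfrac{A+B}{2}-z+b(z))$ (formula \eqref{eqn:linear}) and differentiating. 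Dividing through by $2r(z)b(z)$ gives $\varphi(z) = \tilde{\varphi}(z)/N + \text{error}$, with the explicit shape of $\tilde{\varphi}$.

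The main obstacle, and the bulk of the work, is bounding the error terms — in particular, showing
\[
\bigl|\mathrm{Var}_{\mu_h^t}[s_N(z)]\bigr| + \tfrac{1}{N}\bigl|\E_{\mu_h^t}[s_N'(z)] - \text{classical}\bigr| + |\varphi(z)|^{2} \;\leq\; \tfrac{(\|\bzeta\|+\|\boxi\|)(\log N)^{500}}{N^{2}\eta_0\eta}\cdot \tfrac{Z_{\Re h}(t)^{2}}{|Z_h(t)|^{2}}
\]
after division by $2r(z)b(z)$ (which is bounded below by $c(\kappa)>0$ in the relevant domain). This is where Lemma \ref{lem:rigandcons} is crucial: I will split each expectation according to the rigidity event $\mathscr{G} = \cap_k \mathscr{G}_k$. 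On $\mathscr{G}^c$, estimate \eqref{eqn:nontrivial} yields negligible contribution because $\mathbb{P}_\mu(\mathscr{G}^c) \leq e^{-(\log N)^5}$ from Lemma \ref{lem:rig}, and the deterministic moments of $s_N$ and $s_N'$ grow only polynomially in $N$. On $\mathscr{G}$, estimate \eqref{eqn:trivial} pays the multiplicative factor $Z_{\Re h}(t)/|Z_h(t)|$ while reducing to a bound under $\mu_h^t$ of objects controlled by the unbiased local law. For the variance itself, I will apply a second application of the loop equation machinery (or the moment estimates from Theorem \ref{th:local_law_bulk}, which transfer to $\mu_h^t$ with the same $Z_{\Re h}/|Z_h|$ penalty via Cauchy–Schwarz) to obtain $\mathrm{Var}_{\mu_h^t}[s_N(z)] = O(1/(N\eta)^{2})$ up to the $Z$ factors. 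The factor $1/\eta_0$ arises because $h'(s)$ is of size $\|\boxi\|/\eta_0$ near $\Re z_i$, producing the stated error after the integration against $\tau(s)/(s-z)$ is estimated through a contour deformation.

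Finally, once the variance and $s_N'$ error bounds are in place, substituting back and using the hypothesis $|Z_h(t)|\geq e^{-(\log N)^{2}}$ — which ensures the multiplicative factor $Z_{\Re h}/|Z_h|$ stays at most $e^{O((\log N)^{2})}$ by Lemma \ref{cor:momentsBound} — yields the claimed asymptotic for $\varphi(z)$ with the prescribed error bound. The self-consistent treatment of $\varphi^{2}$ closes easily because the naive rigidity bound $|\varphi(z)|\leq (\log N)^{C}/(N\eta)$ (from the local law applied to $\mu$, then transferred) makes $\varphi^{2}$ subleading compared to the target $1/(N^{2}\eta_{0}\eta)$ whenever $\eta \geq \eta_0$.
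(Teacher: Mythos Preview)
Your proposal has a genuine gap: the step where you ``divide through by $2r(z)b(z)$'' to extract $\widetilde{\varphi}$ does not work, for two related reasons.

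First, your ``$O(\text{smooth terms})$'' hides the term
\[
\int_\R \frac{V'(s)-V'(z)}{s-z}\,\bigl(\rho_1^{(N,t)}(s)-\rho_V(s)\bigr)\,\rd s,
\]
which is part of $\psi(z)$ in \eqref{eq:def_psi}. This is an \emph{analytic} (in $s$) linear statistic under the tilted measure, so rigidity only gives it size $O\bigl((\log N)^{C} N^{-1}\cdot Z_{\Re h}/|Z_h|\bigr)$. For $\eta$ of order $1$ the target error in \eqref{eqn:phi} is of order $(\log N)^{500}/(N^2\eta_0)\ll (\log N)^{C}/N$, so this term is \emph{not} subleading and cannot be absorbed.

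Second, even ignoring that term, dividing the $h'$-contribution $\frac{2t}{\beta N}\int_A^B \frac{h'(s)\rho_V(s)}{s-z}\,\rd s$ by $2r(z)b(z)$ yields
\[
\frac{t}{\pi\beta N\, r(z)b(z)}\int_A^B \frac{h'(s)\,r(s)\,\tau(s)}{s-z}\,\rd s,
\]
whereas the first summand of $\widetilde{\varphi}/N$ in \eqref{eq:def_psiwhy} has no factor $r(s)/r(z)$ in the integrand. The same mismatch occurs in the $(2/\beta-1)$ term, where $m_V'(z)$ contains a $V''(z)/2$ piece that does not appear in $\widetilde{\varphi}$. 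So your scheme does not produce the correct leading term.

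The missing idea is Johansson's contour integration (this is exactly what the paper does): one integrates the loop equation \eqref{eq:loop_equation_mu_theta} against the kernel $\frac{\rd w}{r(w)(z-w)}$ over a narrow rectangle $\mathcal{C}$ around $[A,B]$, after restricting to the rigidity event $\mathscr{R}$ so that all particles lie inside $\mathcal{C}$. Because $z$ lies \emph{outside} $\mathcal{C}$ and $r$ has no zeros near $[A,B]$, this contour integral (i) annihilates the analytic parts of $\psi$ --- in particular the $V'$-difference term and the $V''$ piece of $m_V'$; (ii) gives exactly $4\pi\ii\, b(z)\,\varphi^{\mathscr{R}}(z)$ from the $\varphi$-term by a residue computation; and (iii) when the remaining non-analytic part of $\psi$ is evaluated by collapsing $\mathcal{C}$ onto $[A,B]$, the $1/r(w)$ in the kernel cancels the $r(s)$ in $\rho_V=r\tau/\pi$, producing precisely $4\pi\ii\,b(z)\,\widetilde{\varphi}(z)/N$. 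The $\mathrm{Err}^{\mathscr{R}}$ contribution is then bounded by deforming the contour to height $\eta/2$ and applying Lemmas \ref{lem:first_estimates_1}--\ref{lem:first_estimates_2}. Your treatment of the variance, $\varphi^2$, and rigidity-splitting is fine in spirit, but without the contour step you cannot isolate $\widetilde{\varphi}$ from $\psi$.
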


We now prove two lemmas which will be used as input to the proof of Lemma \ref{lem:estimate_varphi}.
\begin{lemma} \label{lem:first_estimates_1}
For any $\kappa,M>0$,   $p\geq 1$,   we have uniformly in $\Re(\bzeta,\boxi)\in[-M,M]^{2p}$ and $z = E + \ii \eta$ such that $\eta \neq 0$, $A+\kappa\leq E \leq B-\kappa$, that
\begin{align*}
	\varphi(z) = \OO_{\kappa,M,p} \left( \frac{(\log N)^{200}}{N\eta}\cdot\frac{Z_{\Re h}(t)}{|Z_h(t)|}  \right), 
	\qquad 
	\Var_{\mu_h^t} \left( s_N(z) \right) 
	= \OO_{\kappa,M,p} \left( \frac{(\log N)^{400}}{ (N\eta)^2}\cdot\frac{Z_{\Re h}(t)^2}{|Z_h(t)|^2} \right).
\end{align*}
\end{lemma}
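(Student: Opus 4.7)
The plan is to handle both estimates together by splitting expectations under $\mu_h^t$ along the rigidity event $\mathscr{G} = \bigcap_{k=1}^N \mathscr{G}_k$ (with the threshold $(\log N)^{100}$ fixed in Lemma~\ref{lem:rigandcons}) and its complement, then applying the two inequalities of that lemma respectively.

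For the first claim I would write $\varphi(z) = \E_{\mu_h^t}[s_N(z) - m_V(z)]$. On $\mathscr{G}$, integration by parts converts $s_N - m_V$ into an integral of the centred eigenvalue counting function against $(x-z)^{-2}$; the rigidity bound in $\mathscr{G}$ gives $O((\log N)^{100})$ control on this counting function uniformly in $x$ (edge contributions handled as in the argument leading to \eqref{eqn:far}), yielding
\[
\mathds{1}_{\mathscr{G}}\, |s_N(z) - m_V(z)| \leq \frac{C(\log N)^{100}}{N\eta}.
\]
Then \eqref{eqn:trivial} produces the desired main term with prefactor $Z_{\Re h}(t)/|Z_h(t)|$. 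For the $\mathscr{G}^c$ contribution I would use \eqref{eqn:nontrivial} combined with a union bound over $k$ (easily absorbed into the factor $e^{-(\log N)^4}$), together with the second moment estimate $\E_\mu[|s_N(z) - m_V(z)|^2]^{1/2} = O(1/(N\eta))$ coming from Theorem~\ref{th:local_law_bulk} with $q=2$.

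For the variance I would apply the same dichotomy to $\E_{\mu_h^t}[|s_N(z) - m_V(z)|^2]$, which controls $\Var_{\mu_h^t}(s_N(z))$ after adjustment by $|\varphi(z)|^2$ already bounded in the previous step. On $\mathscr{G}$, the squared pointwise bound gives $O((\log N)^{200}/(N\eta)^2)$ and \eqref{eqn:trivial} furnishes the main term with the squared prefactor $(Z_{\Re h}(t)/|Z_h(t)|)^2$. On $\mathscr{G}^c$, \eqref{eqn:nontrivial} with $f = |s_N(z) - m_V(z)|^2$ requires the fourth moment bound $\E_\mu[|s_N(z) - m_V(z)|^4]^{1/2} = O(1/(N\eta)^2)$, again directly provided by Theorem~\ref{th:local_law_bulk} with $q = 4$.

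The main obstacle is purely book-keeping: one must check that the Gaussian-type tail $e^{-(\log N)^4}$ in \eqref{eqn:nontrivial} dominates the worst-case denominator $|Z_h(t)|$ so that bad-event contributions are genuinely absorbed into the stated main terms. This is ensured by Lemma~\ref{cor:momentsBound}, which forces $Z_{\Re h}(t) \geq e^{-CM(\log N)^2}$: hence the ratio $Z_{\Re h}(t)/|Z_h(t)|$ appearing on the right-hand side of the lemma is at least a constant times $e^{-CM(\log N)^2}/|Z_h(t)|$, against which the $e^{-(\log N)^4}$ decay is super-polynomially smaller and the bad-event contribution is subleading. No further analytical input is needed.
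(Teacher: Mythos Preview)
Your approach is essentially the paper's own: split on the rigidity event, apply \eqref{eqn:trivial} on the good set, \eqref{eqn:nontrivial} on the bad set, and absorb the bad-event contribution using the lower bound on $Z_{\Re h}(t)$ from Lemma~\ref{cor:momentsBound}. Two minor points are worth noting. First, for the variance you write that \eqref{eqn:trivial} ``furnishes the main term with the squared prefactor $(Z_{\Re h}(t)/|Z_h(t)|)^2$''; in fact \eqref{eqn:trivial} gives only a single factor of $Z_{\Re h}(t)/|Z_h(t)|$ when applied to $f=|s_N-m_V|^2$. The paper obtains the squared prefactor from the $|\varphi(z)|^2$ term in the decomposition $|\Var_{\mu_h^t}(s_N(z))|\leq 2|\E_{\mu_h^t}[|s_N-m_V|^2]|+2|\varphi(z)|^2$; equivalently, one can simply upgrade one factor to two using $|Z_h(t)|\leq Z_{\Re h}(t)$. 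Second, for the bad-event moments the paper uses the trivial bound $|1/(\lambda-z)|\leq 1/\eta$ rather than Theorem~\ref{th:local_law_bulk}; this avoids any concern about the range of $\eta$ and is all that the super-polynomial decay $e^{-(\log N)^4}$ requires.
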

\begin{proof} 
These bounds are immediate consequences of Lemma \ref{lem:rigandcons}.
Indeed, rewriting \eqref{eqn:phitheta} gives
\[
\phi(z)=\E_{\mu_h^t}\left[\frac{1}{N}\sum_k \left(\frac{1}{\lambda_k-z}-\frac{1}{\gamma_k-z}\right)\right]+
\frac{1}{N}\sum_k\frac{1}{\gamma_k-z}-\int\frac{\rd\rho_V(\lambda)}{\lambda-z}.
\]
The deterministic sum in the second term is easily seen to be $\OO((N\eta)^{-1})$ by definition of the $\gamma_k$'s. Denoting 
$\mathscr{G}_k=\{|\la_k-\gamma_k|< N^{-\frac{2}{3}}(\widehat k)^{-\frac{1}{3}}(\log N)^{100}\}$,
 the expectation in the first term is bounded by
\begin{multline*}
\E_{\mu_h^t}\left[\frac{1}{N}\sum_k\Big|\frac{1}{\lambda_k-z}-\frac{1}{\gamma_k-z}\Big|\mathds{1}_{\cap_k\mathscr{G}_k}\right]+
\sum_k\E_{\mu_h^t}\left[\frac{1}{N}\sum_k\Big|\frac{1}{\lambda_k-z}-\frac{1}{\gamma_k-z}\Big|\mathds{1}_{\mathscr{G}_k^{\rm c}}\right]\\
\leq\frac{(\log N)^{200}}{N\eta}\cdot\frac{Z_{\Re h}(t)}{|Z_h(t)|}  +\frac{Ne^{-(\log N)^{4}}}{\eta|Z_h(t)|}, 
\end{multline*}
where we have used \Cref{lem:rig},   (\ref{eqn:trivial}) and  (\ref{eqn:nontrivial}).
The second term is negligible because $Z_{\Re h}(t)\geq \exp(-CM(\log N)^2)$  from Lemma \ref{cor:momentsBound}. This concludes our estimate on $\varphi$, after using the elementary estimate $|Z_h(t)| \le Z_{\Re h}(t)$. 

The bound on $\Var_{\mu_h^t}(s_N(z))$ proceeds similarly,  starting with
\[
|\Var_{\mu_h^t} \left( s_N(z) \right) |\leq 2 \big|\E_{\mu_h^t}[|s_N(z)-m(z)|^2]\big|+2|\varphi(z)|^2.
\]
One applies the same reasoning as before to $\E_{\mu_h^t}[|s_N(z)-m(z)|^2]$ and obtains the bound $\frac{(\log N)^{300}}{(N\eta)^2}\cdot\frac{Z_{\Re h}(t)}{|Z_h(t)|}$, so that the bound on $\varphi(z)^2$ dominates. 
\end{proof}

\begin{lemma} \label{lem:first_estimates_2}
For any $M>0$,   $p\geq 1$,   uniformly in  $z = E + \ii \eta$ with $\eta \neq 0$, $A\leq E \leq B$,  $A>0$,  $\Re(\bzeta,\boxi)\in[-M,M]^{2p}$, and uniformly in $h\in\mathscr{C}^2(\mathbb{R})$ we have
\begin{multline}
	\int_\R \frac{h'(s)}{s-z} \left( \rho_1^{(N,t)}(s) - \rho_V(s) \right) \diff s
	 =\\  \frac{(\log N)^{200}}{N}\cdot\frac{Z_{\Re h}(t)}{|Z_h(t)|}\OO_{M,p} \left(\int\frac{|h''(s)|}{|z-s|}\rd s
	 +\int \frac{|h'(s)|}{|z-s|^2}\rd s+\frac{e^{-(\log N)^2}}{\eta^2}(\|h'\|_\infty+\|h''\|_\infty)\right) .
	\label{eqn:elementary1}
\end{multline}
\end{lemma}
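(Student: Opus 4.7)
The plan is to interpret the integral as a difference of expectations and reduce everything to two classical estimates, one deterministic (a Riemann sum error) and one probabilistic (a fluctuation around the classical locations controlled by rigidity). Setting $f(s)=h'(s)/(s-z)$, note that $|f'(s)|\le |h''(s)|/|s-z|+|h'(s)|/|s-z|^2$, which already matches the two integral terms on the right-hand side of \eqref{eqn:elementary1}. Rewrite the left-hand side as $\E_{\mu_h^t}[N^{-1}\sum_k f(\lambda_k)]-\int f\,\rho_V$ and insert the quantiles $\gamma_k$ to decompose it as
\begin{equation*}
\underbrace{\frac{1}{N}\E_{\mu_h^t}\Big[\sum_k\big(f(\lambda_k)-f(\gamma_k)\big)\Big]}_{\text{random fluctuation}}+\underbrace{\Big(\frac{1}{N}\sum_k f(\gamma_k)-\int f\,\rho_V\Big)}_{\text{Riemann sum error}}.
\end{equation*}

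For the Riemann sum error, partition $\R$ into intervals $I_k$ of $\rho_V$-mass $1/N$ centred at $\gamma_k$ (so $|I_k|\lesssim 1/(N\rho_V(\gamma_k))$ in the bulk and $|I_k|\lesssim N^{-2/3}\hat k^{-2/3}$ near an edge); a first-order Taylor expansion of $f$ gives $\big|N^{-1}f(\gamma_k)-\int_{I_k}f\,\rho_V\big|\le N^{-1}|I_k|\max_{I_k}|f'|$, and summation, together with $\sum_k|I_k|g(\gamma_k)\approx\int g$, yields an $O(N^{-1}\int|f'|)$ error, absorbed (with large slack) into the first two terms of the claimed bound. For the random part I split against the rigidity event $\mathscr{G}_k=\{|\lambda_k-\gamma_k|<\delta_k\}$ with $\delta_k=N^{-2/3}\hat k^{-1/3}(\log N)^{100}$ from \Cref{lem:rig}. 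On $\mathscr{G}_k$, the pointwise bound $|f(\lambda_k)-f(\gamma_k)|\le \int_{I_k^*}|f'(s)|\,ds$ with $I_k^*=[\gamma_k-\delta_k,\gamma_k+\delta_k]$, combined with \eqref{eqn:trivial}, gives
\begin{equation*}
\big|\E_{\mu_h^t}[(f(\lambda_k)-f(\gamma_k))\mathds{1}_{\mathscr{G}_k}]\big|\le \frac{Z_{\Re h}(t)}{|Z_h(t)|}\int_{I_k^*}|f'(s)|\,ds.
\end{equation*}
Summing and swapping sums and integrals,
\begin{equation*}
\frac{1}{N}\sum_k \int_{I_k^*}|f'(s)|\,ds=\frac{1}{N}\int |f'(s)|\,M(s)\,ds,\qquad M(s)=\big|\{k:s\in I_k^*\}\big|,
\end{equation*}
and a case analysis (bulk versus edge) using the spacings $\gamma_{k+1}-\gamma_k$ and the explicit form of $\delta_k$ shows that $M(s)\le C(\log N)^{100}$ uniformly in $s$, which yields the first two terms of the asserted bound after using $|f'|\le|h''|/|s-z|+|h'|/|s-z|^2$.

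Off the good event I apply \eqref{eqn:nontrivial}: the pointwise estimate $|f(\lambda_k)-f(\gamma_k)|\le C(\|h'\|_\infty+\|h''\|_\infty)/\eta$ and summation give a contribution bounded by $C(\|h'\|_\infty+\|h''\|_\infty)\eta^{-1}|Z_h|^{-1}e^{-(\log N)^4}$, which (using $Z_{\Re h}/|Z_h|\ge 1$ and the lower bound $|Z_h|\ge e^{-(\log N)^2}$ available under the working hypotheses) is absorbed into the last term $\frac{(\log N)^{200}}{N}\cdot\frac{Z_{\Re h}}{|Z_h|}\cdot \frac{e^{-(\log N)^2}}{\eta^2}(\|h'\|_\infty+\|h''\|_\infty)$. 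The main technical point is the uniform multiplicity bound $M(s)\le C(\log N)^{100}$: in the bulk it is straightforward (quantile spacings $\sim 1/N$ and $\delta_k\lesssim(\log N)^{100}/N$), but near an edge one must combine $\gamma_k-A\sim(\hat k/N)^{2/3}$ with the explicit form of $\delta_k$ to rule out large contributions from small-index intervals; once this is done, all remaining steps are routine.
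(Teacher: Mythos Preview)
Your proof is correct and follows essentially the same route as the paper, which defers to \cite[Lemma 5.3]{BouErdYauYin2016} via \Cref{lem:rigandcons}: split each $f(\lambda_k)-f(\gamma_k)$ against the rigidity event $\mathscr{G}_k$, use \eqref{eqn:trivial} on the good set and \eqref{eqn:nontrivial} on the bad set, and control the overlap count $M(s)$ by $(\log N)^{100}$. One small slip: when absorbing the bad-event contribution you invoke ``$|Z_h|\ge e^{-(\log N)^2}$ available under the working hypotheses'', but this is not a hypothesis of the lemma as stated; the absorption goes through without it, using instead $Z_{\Re h}(t)\ge e^{-CM(\log N)^2}$ from \Cref{cor:momentsBound} together with $Z_{\Re h}/|Z_h|\ge 1$, since $e^{-(\log N)^4}$ beats any $e^{-C(\log N)^2}$ factor.
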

\begin{proof} 
We apply Lemma \ref{lem:rigandcons}, and the proof is almost the same as Lemma 5.3 in \cite{BouErdYauYin2016}, so we omit the details. 
The only differences are that the rigidity estimate is now known with multiplicative error $(\log N)^{100}$ instead of $N^\xi$,   the
probability error is now $e^{-(\log N)^2}$ instead of $e^{-N^\xi}$.
\end{proof}

\begin{proof}[Proof of Lemma \ref{lem:estimate_varphi}] We closely follow some steps in the proof of \cite[Lemma 4.6]{BouModPai2020},
with the  difference that we now work under complex measures. 
We first define
\begin{align} \label{eq:def_psi}
	\psi(z)  
	& \coloneqq \frac{2t}{\beta N} \int_A^B \frac{h'(s)}{s-z} \rho_{V}(s) \diff s - \frac{1}{N}\pa{\frac{2}{\beta}-1} m'_{V}(z)
	- \int_\R \frac{V'(s) - V'(z)}{s-z} \pa{\rho_1^{(N,t)}(s) - \rho_V(s)} \diff s \\
	\label{eq:def_Err}
	\mathrm{Err}(z) 
	& \coloneqq \varphi(z)^2
	- \frac{2t}{\beta N} 
		\int_\R \frac{h'(s)}{s-z}\pa{ \rho_1^{(N,t)}(s) - \rho_V(s)} \diff s 
	+ \frac{1}{N} \left( \frac{2}{\beta} - 1 \right) \varphi'(z)
	+ \Var_{\mu_h^t} \left( s_N(z) \right).
\end{align}
Then,  by the same proof as \cite[Equation (4.7)]{BouModPai2020} but with complex measures, we have
\begin{equation} \label{eq:loop_equation_mu_theta}
	\big(2 m_V(z) + V'(z)\big) \varphi(z) - \psi(z) + \mathrm{Err}(z) = 0.
\end{equation}
For the proof, we also need to work under the rigidity event $\mathscr{R} \coloneqq \bigcap_{1\leq k\leq N}\{|\lambda_k-\gamma_k|<(\log N)^{100}N^{-\frac{2}{3}}(\hat k)^{-\frac{1}{3}}\}$, by introducing the new probability measure
\begin{align*}
	\diff \mu_h^{t,\mathscr{R}}(\lambda_1,\dots,\lambda_N) 
	= \frac{\1_{\mathscr{R}}}{\P_{\mu_h^t}(\mathscr{R})} 
	\diff \mu_h^t(\lambda_1,\dots,\lambda_N).
\end{align*}
Moreover, let  $\rho_1^{(N,t,\mathscr{R})}(s)$ be the 1-point function under $\mu_h^{t,\mathscr{R}}$,  
$
\varphi^\mathscr{R}(z) \coloneqq \E_{\mu_h^{t,\mathscr{R}}} [s_N(z)] - m_V(z),
$
and $\mathrm{Err}^\mathscr{R}(z)$ be defined as $\mathrm{Err}(z)$ but with $\mu_h^{t,\mathscr{R}}$, $\rho_1^{(N,t,\mathscr{R})}(s)$ and $\varphi^\mathscr{R}(z)$ instead of $\mu_h^t$, $\rho_1^{(N,t)}(s)$ and $\varphi(z)$.
Note that 
\begin{equation}\label{scrR}
\P_{\mu_h^t}(\mathscr{R})= 1 +\OO\big(N |Z_h(t)|^{-1}e^{-(\log N)^{4}}\big) =1+\OO(e^{-(\log N)^{4}/2} )
\end{equation} 
by (\ref{eqn:nontrivial}) and our assumption on $Z_h(t)$.  This easily implies that lemmas \ref{lem:first_estimates_1} and \ref{lem:first_estimates_2} still hold under $\mu_h^{t,\mathscr{R}}$, giving
\begin{align}
&	\varphi^{\mathscr{R}}(z) = \OO \left( \frac{(\log N)^{200}}{N\eta}\cdot\frac{Z_{\Re h}(t)}{|Z_h(t)|} \right), 
	\qquad \notag
	\Var_{\mu_h^{t,\mathscr{R}}} \left( s_N(z) \right) 
	= \OO \left( \frac{(\log N)^{400}}{ (N\eta)^2}\cdot\frac{Z_{\Re h}(t)^2}{|Z_h(t)|^2} \right),\\
&	\int_\R \frac{h'(s)}{s-z} \left( \rho_1^{(N,t,\mathscr{R})}(s) - \rho_V(s) \right) \diff s\notag
	 \\ &\quad = \frac{(\log N)^{200}}{N}\cdot\frac{Z_{\Re h}(t)}{|Z_h(t)|}\OO \left(\int\frac{|h''(s)|}{|z-s|}\rd s
	+\int \frac{|h'(s)|}{|z-s|^2}\rd s %
	+\frac{e^{-(\log N)^2}}{\eta^2} \big(\|h'\|_\infty+\|h''\|_\infty\big) \right).\label{eqn:rigBias}
\end{align}

Fix some $z = E + \ii \eta$ with $\eta_0 \leq\eta$ and $A-\kappa \leq E \leq B+\kappa$. We also assume $\eta<\kappa$ first.
We consider the rectangle with vertices $A-\kappa \pm \ii e^{-(\log N)^3}$, $B+\kappa \pm \ii e^{-(\log N)^3}$, and denote by $\cC$ the corresponding closed contour with positive orientation.
We decompose this contour into $\cC_{\text{hor}}$, which consists only in the horizontal pieces, and $\cC_{\text{ver}}$, which consists only in the vertical pieces.
By the loop equation \eqref{eq:loop_equation_mu_theta} and (\ref{eqn:2mplusV}), we have
\begin{align*} 
	\int_{\cC_{\text{hor}}} \frac{2r(w)b(w) \varphi(w) - \psi(w) + \mathrm{Err}(w)}{r(w)(z-w)} \diff w
	= 0.
\end{align*}
Using \eqref{scrR}, the hypothesis that $|Z_h(t)|>e^{-(\log N)^2}$, and \eqref{eqn:nontrivial},
on $\cC_{\text{hor}}$
we have 
\begin{equation}\label{eqn:tophiR}
\varphi^\mathscr{R}(w)
	=  \varphi(w) + \OO \left(e^{-(\log N)^4/5} \right),
\end{equation}
and similarly $\mathrm{Err}^\mathscr{R}(w)=\mathrm{Err}(w)+\OO \left(e^{-(\log N)^4/5} \right)$.
Together with the facts that $|z-w|>\eta/2$ and $c\le r,b\le C$ on  $\cC_{\text{hor}}$ (remember that $r$ is continuous and has no zero on $[A,B]$) this implies
\begin{align} \label{eq:C_hor}
	\int_{\cC_{\text{hor}}} \frac{2r(w)b(w) \varphi^\mathscr{R}(w) - \psi(w) + \mathrm{Err}^\mathscr{R}(w)}{r(w)(z-w)} \diff w
	= \OO \left(e^{-(\log N)^4/10}\right).
\end{align}
On the other hand, for $w$ on $\cC_{\text{ver}}$, we have $2r(w)b(w) \varphi^\mathscr{R}(w) - \psi(w) + \mathrm{Err}^\mathscr{R}(w) = \OO(e^{(\log N)^{5/2}})$,  an easy estimate based on the following facts: (1) by the definition of, $\mu_h^{t,\mathscr{R}}$, all particles are at a distance larger than $\kappa/2$ from $\cC_{\text{ver}}$),  (2) $c \le r,b \le C$ on $\cC_{\text{ver}}$,  (3)  $|Z_h(t)|>e^{-(\log N)^2}$ by assumption,  (4) $Z_{\rm {Re} h}(t)\leq e^{CM(\log N)^2}$ by Lemma \ref{cor:momentsBound}.
This implies 
\begin{align} \label{eq:C_ver}
	\int_{\cC_{\text{ver}}} \frac{2r(w)b(w) \varphi^\mathscr{R}(w) - \psi(w) + \mathrm{Err}^\mathscr{R}(w)}{r(w)(z-w)} \diff w
	= \OO \left(e^{-(\log N)^3/2} \right).
\end{align}
Combining \eqref{eq:C_hor} and \eqref{eq:C_ver}, we get
\begin{align} \label{eq:C}
	\int_\cC \frac{2r(w)b(w) \varphi^\mathscr{R}(w) - \psi(w) + \mathrm{Err}^\mathscr{R}(w)}{r(w)(z-w)} \diff w
	= \OO \left(e^{-(\log N)^3/2}\right).
\end{align}
We now estimate each term in the integral in \eqref{eq:C} successively.

We start with the part involving $\varphi^\mathscr{R}(w)$. 
The function $w \mapsto 2b(w) \varphi^\mathscr{R}(w)/(z-w)$ is analytic on and outside $\cC$, except for the pole at $z$, and it behaves as $\OO(w^{-2})$ as $\abs{w} \to \infty$ because $b(w)=\OO(w)$ and $\varphi^\mathscr{R}(w)=\OO(w^{-2})$. Therefore, by the Cauchy integral formula with residue at infinity, we get
\begin{align} \label{eq:C_varphi}
	\int_\cC \frac{2b(w)\varphi^\mathscr{R}(w)}{(z-w)} \diff w
	= 4 \ii \pi b(z) \varphi^\mathscr{R}(z).
\end{align}

Now we evaluate the part involving $\psi(w)$. Recall the definition of $\psi(w)$ in \eqref{eq:def_psi} and note that the third term there is analytic in $w \in \mathbb{C}$. 
Moreover from (\ref{eqn:2mplusV}) we have $m_V'(w) = -\frac{1}{2} V''(w) + (rb)'(w)$, where $V''(w)$ is also analytic in $w \in \mathbb{C}$.
Since $z$ is exterior to $\cC$ and $r$ has no zero inside $\cC$ for $\kappa$ chosen small enough, these analytic terms disappear  in \eqref{eq:C} and we get
\begin{align}
  \label{eq:ElAl}
	\int_\cC \frac{\psi(w)}{r(w)(z-w)} \diff w
	& = \int_\cC \left( \frac{2t}{\beta N} \int_A^B \frac{h'(s)}{s-w} \rho_{V}(s) \diff s - \frac{1}{N}\pa{\frac{2}{\beta}-1} (rb)'(w) \right) \frac{\diff w}{r(w)(z-w)} \\
	& = -\frac{4\pi t}{\beta N} \int_A^B \frac{h'(s)}{r(s)(z-s)} \rho_{V}(s) \diff s
	- \frac{1}{N}\pa{\frac{2}{\beta}-1} 
	\int_A^B \frac{-2\ii (r\tau)'(s)}{r(s)(z-s)} \diff s,
	\nonumber
\end{align}
where, for the first term, we applied Cauchy's integral formula and, for the second term, we let the contour approach the segment $[A,B]$ and used $\lim_{y \to 0+} (rb)'(x\pm \ii y) = \pm \ii(r\tau)'(x)$ for all $x \in (A,B)$, recalling $\tau(x) = \sqrt{(x-A)(B-x)}$.
Recalling the definition of $\widetilde{\varphi}(z)$ and that $\rho_V = \frac{1}{\pi} r \tau$ (recall \eqref{eqn:r} and \eqref{e:taunotations}), we get
\begin{align} \label{eq:C_psi}
	\int_\cC \frac{\psi(w) \diff w}{r(w)(z-w)}
	& = -2 \left(
		\frac{2t}{\beta N} \int_A^B \frac{h'(s)}{s-z} \tau(s) \diff s
		+ \frac{\ii}{N}\pa{\frac{2}{\beta}-1} 
		\int_A^B \left( \frac{\tau'(s)}{s-z} + \frac{r'(s) \tau(s)}{r(s)(s-z)} \right) \diff s
		\right) 
	= \frac{4 \ii \pi b(z)}{N} \widetilde{\varphi}(z),
\end{align}
where we used that $\int_A^B \frac{\tau'(s)}{s-z} \diff s = \int_A^B \frac{\tau(s)}{(s-z)^2} \diff s = \pi (b'(z)-1)$, which follows from differentiating the identity (\ref{eqn:linear}).

Finally, we deal with the part involving $\mathrm{Err}^\mathscr{R}(w)$. We deform the contour $\cC$ into $\cC'$, the positively oriented rectangle with vertices $A-\kappa \pm \ii \eta/2$, $B+\kappa \pm \ii \eta/2$, which does not change the value of the integral because the deformation does not cross any poles.
The function $w \mapsto \mathrm{Err}^\mathscr{R}(w) / (r(w)(z-w))$ is analytic on and between these contours (remember we assume $\eta<\kappa$ first, and $\kappa$ is fixed and small enough),  so
\begin{multline} \label{eq:C_Err}
	\int_\cC \frac{\mathrm{Err}^\mathscr{R}(w)}{r(w)(z-w)} \diff w
	= \int_{\cC'} \frac{\mathrm{Err}^\mathscr{R}(w)}{r(w)(z-w)} \diff w
	= \OO \left( \frac{(\log N)^{400}}{(N\eta)^2}\cdot\frac{Z_{\Re h}(t)^2}{|Z_h(t)|^2} \right)\\
	+ \OO \left( \frac{(\log N)^{200}}{N^2} 
			\sum_{\ell=1}^m \frac{1}{\eta_\ell(\eta_{\ell}\vee|z-z_\ell|)}\cdot\frac{Z_{\Re h}(t)}{|Z_h(t)|} \right),
\end{multline}
where we used that $\abs{r(w)}$ is uniformly lower bounded and we applied  (\ref{eqn:rigBias}) on the horizontal pieces of $\cC'$,  and on the vertical pieces we used that  the same estimates hold substituting  $\eta$ with $\kappa$.
Coming back to \eqref{eq:C} and combining \eqref{eq:C_varphi}, \eqref{eq:C_psi} and \eqref{eq:C_Err},  for $\eta\leq \kappa$ we have proved that 
\begin{align}
	\varphi^\mathscr{R}(z) 
	& =\frac{\widetilde\varphi(z)}{N}
	+ \OO \left(\frac{(\log N)^{400}}{N^2\eta_0\eta}\cdot\frac{Z_{\Re h}(t)^2}{|Z_h(t)|^2}\right).\label{eqn:12}
\end{align}
For $\eta\geq \kappa$, the result follows from the Cauchy formula $\frac{1}{s-z}=\frac{1}{2\pi \ii}\int_{\mathcal{C}''}\frac{\rd w}{(w-z)(w-s)}$ where $\mathcal{C}''$ is the rectangle with vertices $A-\kappa\pm\ii\kappa/10$ and $B+\kappa\pm \ii\kappa/10$. More precisely we use (\ref{eqn:12}) in the portion $|{\rm Im}z|>\eta_0$ of $\mathcal{C}''$ and the estimates from (\ref{eqn:rigBias}) with $\eta$ replaced by $1$ on the part $|{\rm Im}z|\leq\eta_0$.

Finally,  the result follows from (\ref{eqn:12}) and (\ref{eqn:tophiR}).
\end{proof}

\subsection{Proof of Theorem \ref{thm:GaussFluct}.}\ 
Let $Z_j=z_j+\ii N^{9}$.  With $Z_h$ as in \eqref{eq-of1},
we can write
\begin{multline}
  \frac{\rd}{\rd t}\log Z_h(t)=\E^{\mu_h^{t}} \left[\sum_{i=1}^p(\zeta_i \rlog_{z_i}+\xi_i \ilog_{z_i}) \right]\\
=N\sum_{j=1}^p\frac{\zeta_j-\ii\xi_j}{2}\int_{z_j}^{Z_j} \varphi(z)\rd z
-N\sum_{j=1}^p\frac{\zeta_j+\ii\xi_j}{2} \int_{z_j}^{Z_j}\varphi(\bar z)\rd z
+
\E^{\mu_h^{t}}\left[\sum_{j=1}^p(\zeta_j \rlog_{Z_j}+\xi_j \ilog_{Z_j})\right],\label{eqn:differential}
\end{multline}
where we have used $\rd z=\ii\, \rd y$ and \[
\frac{\rd}{\rd y}\log\big|(x+\ii y)-\lambda\big|=-\frac{1}{2\ii}\left(\frac{1}{\lambda-(x+\ii y)}-\frac{1}{\lambda-(x-\ii y)}\right), \] \[ \frac{\rd}{\rd y}\im\log\big((x+\ii y)-\lambda\big)=-\frac{1}{2}\left(\frac{1}{\lambda-(x+\ii y)}+\frac{1}{\lambda-(x-\ii y)}\right).\]

To bound the last term in \eqref{eqn:differential},  we note that the inequality  $\log(1+\e)=\OO(\e)$ for $|\e|<1/2$ gives
\begin{multline*}
\E^{\mu_h^t} \left[\sum_i\log(Z_j-\lambda_i)-N\int\log(Z_j-\lambda)\rd\rho_V(\lambda)\right]\\ =\E^{\mu_h^t} \left[\sum_i\log(1-\lambda_i/Z_j) \right]-N\int\log(1-\lambda/Z_j)\rd\rho_V(\lambda)=\OO(N^{-7}).
\end{multline*}
We now insert the asymptotics from Lemma \ref{lem:estimate_varphi} in (\ref{eqn:differential}). The error term in (\ref{eqn:phi}) contributes 
\[
\frac{Z_{\Re h}(t)^2}{|Z_h(t)|^2}\sum_{j=1}^p\frac{|\zeta_j|+|\xi_j|}{2}\int_{z_j}^{Z_j}\frac{(\|\boxi\|_\infty+\|\bzeta\|_\infty)(\log N)^{500}}{N\eta_0\eta_z} |\rd z|\leq C
\frac{Z_{\Re h}(t)^2}{|Z_h(t)|^2}\frac{(\|\boxi\|_\infty^2+\|\bzeta\|_\infty^2)(\log N)^{600}}{N\eta_0}.
\]
Moreover,  one easily checks that $\int_{Z_j}^{Z_j+\ii\infty}\tilde\phi(z)\, \rd z=\OO(N^{-6})$, so that 
denoting \[p(z)=\frac{1}{b(z)} \pa{\frac{1}{4}-\frac{1}{2\beta}} 
	\left( (b'(z)-1) + \int_A^B \frac{r'(s) \tau(s)}{ r(s)(s-z)} \frac{\diff s}{\pi} 
	\right)\]
we have proved that, as long as $ |Z_h(t)|\geq e^{-(\log N)^2}$ (necessary to apply Lemma \ref{lem:estimate_varphi}) we have
\begin{align}
  \frac{\rd}{\rd t}\log Z_h(t)=&\sum_{j=1}^p\frac{\zeta_j-\ii\xi_j}{2}\int_{z_j}^{z_j+\ii\infty} \frac{t}{\pi\beta b(z)}
	\int_A^B \frac{h'(s)}{s-z} \tau(s) \diff s\rd z\notag\\
	& \notag
-\sum_{j=1}^p\frac{\zeta_j+\ii\xi_j}{2} \int_{z_j}^{z_j+\ii\infty}\frac{t}{\pi\beta b(\bar z)}
	 \int_A^B \frac{h'(s)}{s-\bar z} \tau(s) \diff s\rd z\\
	 &+\sum_{j=1}^p(\zeta_j-\ii\xi_j)\int_{z_j}^{z_j+\ii\infty} p(z)\rd z\notag
-\sum_{j=1}^p(\zeta_j+\ii\xi_j) \int_{z_j}^{z_j+\ii\infty}p(\bar z)\rd z\\
&+\OO\left(\frac{Z_{\Re h}(t)^2}{|Z_h(t)|^2}\frac{(\|\boxi\|_\infty^2+\|\bzeta\|_\infty^2)(\log N)^{600}}{N\eta_0}\right).\label{eqn:interm}
\end{align}
While the third line above cannot be simplified for general $V$, for our particular choice 
$$h=\sum_{i=1}^p(\zeta_i \rlog_{z_i}+\xi_i \ilog_{z_i}),$$
the first and second lines can.  Indeed,
\[
h'(s)=\frac{1}{2}\sum_i\left(\frac{\zeta_i-\ii\xi_i}{s-z_i}+\frac{\zeta_i+\ii\xi_i}{s-\bar z_i}\right),
\]
so that
\[
\int_A^B\frac{h'(s)}{s-z}\tau(s)\rd s=\frac{1}{2}\sum_i\int_A^B\left(\frac{\zeta_i-\ii\xi_i}{z-z_i}\left(\frac{1}{s-z}-\frac{1}{s-z_i}\right)+
\frac{\zeta_i+\ii\xi_i}{z-\bar z_i}\left(\frac{1}{s-z}-\frac{1}{s-\bar z_i}\right)
\right)\tau(s)\rd s.
\]
From (\ref{eqn:linear}) and the definition of $v$,  we can write
\[
\frac{1}{2\pi}\int_A^B\frac{h'(s)}{s-z}\tau(s)\rd s=\frac{1}{2}\sum_i\left(\frac{(\zeta_i-\ii\xi_i)(v(z)-v(z_i))}{z-z_i}+
\frac{(\zeta_i+\ii\xi_i)(v(z)-v(\bar z_i))}{z-\bar z_i}
\right).
\]
Note that $v'(z)=-v(z)/b(z)$ and $v^2+(z-\frac{A+B}{2})v+\frac{1}{4}\left(\frac{A-B}{2}\right)^2=0$ , so that, abbreviating $v_i=v(z_i)$ and $\gamma=((A-B)/2)^2/4$ we obtain
\[
\int_{w}^{w+\ii\infty}\frac{v(z)-v(z_i)}{z-z_i}\frac{\rd z}{b(z)}=\int_{v(w)}^{0}\frac{v-v_i}{v+\frac{\gamma}{v}-(v_i+\frac{\gamma}{v_i})}\frac{\rd v}{v}
=\int_{v(w)}^{0}\frac{1}{v-\frac{\gamma}{v_i}}\rd v=-\log(1-\frac{v(w) v(z_i)}{\gamma})
\]
and similarly 
\[
\int_{w}^{w+\ii\infty}\frac{v(\bar z)-v(z_i)}{\bar z-z_i}\frac{\rd z}{b(\bar z)}
=-\log\left(1-\frac{v(\bar w) v(z_i)}{\gamma}\right).
\]
We have therefore proved, using the notations (\ref{eqn:sigma}) and (\ref{eqn:shift}),
\[
\frac{\rd}{\rd t}\log Z_h(t)=t\,\sigma(\bzeta,\boxi,\bz)+\mu(\bzeta,\boxi,\bz)+\OO\left(\frac{Z_{\Re h}(t)^2}{|Z_h(t)|^2}\frac{(\log N)^{600}}{N\eta_0}\right)
\]
where here and below, we abbreviate $\OO=\OO_{M,p,\kappa}$.
From this equation we first conclude about the case of real-valued $h$. 
Then trivially $Z_{\Re h}(t)=Z_h(t)$
so integrating the above equation gives
\[
Z_h(t)=\exp\left(\frac{t^2}{2}\sigma(\bzeta,\boxi,\bz)+t\mu(\bzeta,\boxi,\bz)+\OO\left(\frac{(\log N)^{600}}{N\eta_0}\right)\right).
\]
From our assumption on $\eta_0$  the above error term is $\OO(1)$.
For the general complex case, we  now have
\begin{equation}\label{eqn:diffeqn}
\frac{\rd}{\rd t}\log Z_h(t)=t\,\sigma(\bzeta,\boxi,\bz)+\mu(\bzeta,\boxi,\bz)+\OO\left(\frac{(\log N)^{600}}{N\eta_0}
\cdot \frac{e^{\frac{t^2}{2}\sigma(\re\bzeta,\re\boxi,\bz)}}{|Z_h(t)|}
\right),
\end{equation}
so that, taking the real part above, we have 
\[
\frac{\frac{\rd}{\rd t}|Z_h(t)|}{|Z_h(t)|}=t\,\sigma(\re\bzeta,\re\boxi,\bz)-t\,\sigma(\im\bzeta,\im\boxi,\bz)+\Re\mu(\bzeta,\boxi,\bz)+\OO\left(\frac{(\log N)^{600}}{N\eta_0}
\cdot \frac{e^{t^2\sigma(\re\bzeta,\re\boxi,\bz)}}{|Z_h(t)|^2}
\right).
\]
Defining $g(t)=|Z_h(t)|^2e^{t^2 \sigma(\im\bzeta,\im\boxi,\bz)-t^2\,\sigma(\re\bzeta,\re\boxi,\bz)-t\mu(\bzeta,\boxi,\bz)}$, the above equation implies
\[
g'(t)=\OO\left(\frac{(\log N)^{600}}{N\eta_0}
\cdot e^{t^2\sigma(\im\bzeta,\im\boxi,\bz)}
\right).
\]
From the assumption $\Im(\bzeta,\boxi)\in\sqrt{\beta}\cdot [-\tfrac{1}{10p},\tfrac{1}{10p}]^{2p}$, we have
$\sigma(\im\bzeta,\im\boxi,\bz)\leq \frac{1}{5}\log (N\eta_0)$, so that
$
g'(t)=\OO\left(\frac{1}{\sqrt{N\eta_0}}\right)
$
and we have proved that
\[
|Z_h(t)|=e^{-\frac{t^2}{2}\sigma(\im\bzeta,\im\boxi,\bz)+\frac{t^2}{2}\,\sigma(\re\bzeta,\re\boxi,\bz)+t\Re\mu(\bzeta,\boxi,\bz)}\cdot \left(1+\frac{1}{\sqrt{N\eta_0}}\right).
\]
Inserting this estimate in (\ref{eqn:diffeqn}) finally gives
\[
Z_h(t)=e^{\frac{t^2}{2}\,\sigma(\bzeta,\boxi,\bz)+t\mu(\bzeta,\boxi,\bz)}\cdot \left(1+\frac{1}{\sqrt{N\eta_0}}\right),
\]
We note that all equations since (\ref{eqn:interm}) hold only provided that
$ |Z_h(s)|\geq e^{-(\log N)^2}$ for $s\in[0,t]$, which is necessary to apply Lemma \ref{lem:estimate_varphi}.
Therefore, denoting $t_0=\max\{t\in[0,1]:Z_h(t)> e^{-(\log N)^2}\}$, for large enough $N$ we have
\[Z_h(t_0)=e^{\frac{t_0^2}{2}\,\sigma(\bzeta,\boxi,\bz)+t_0\mu(\bzeta,\boxi,\bz)}\cdot (1+\frac{1}{\sqrt{N\eta_0}})>e^{-(\log N)^2},\]
where we have used in the above inequality the easy estimates $\sigma(\bzeta,\boxi,\bz)=\OO(\log N)$ and $\mu(\bzeta,\boxi,\bz)=\OO(1)$.
By continuity this implies $t_0=1$.
The expected result therefore holds by taking $t=1$. 

\subsection{Generalization to further potentials.}\label{s:otherpotentials}\ 
The proof of the  local law \Cref{th:local_law_bulk} is the only place requiring the sub-quadratic growth assumption from (\ref{eq:alternative_assumption}).  Theorem \ref{thm:Beta} also holds for
$V$ growing at least linearly,  as in Assumption (A2) (ii) through the following steps.
\begin{enumerate}
\item Denoting $\E^{[A-\delta,B+\delta]}$ for the expectation conditional on all particles remaining in $[A-\delta,B+\delta]$, by \cite[Equations (2.25), (2.14)]{BouModPai2020}, the following local law holds:
\[
		\E^{[A-\delta,B+\delta]}\left[ \absa{ s_N(z) - m_V(z) }^{2q} \right]
		\leq \frac{(Cq)^{q}}{(N\eta)^{2q}}
			+ \frac{C^qe^{-cN}}{\abs{z-A}^{q} \abs{z-B}^{q}}.
	\]
When compared to \cite[Theorem 1.1]{BouModPai2020},  note the exponentially small second error term,  possible thanks to working under the conditioned measure.  This improvement is essential to the proof of Theorem \ref{thm:Beta}.
\item Based on this local law for the conditioned measure,  an analogue of the previous quantitative central limit theorem, \Cref{thm:GaussFluct}, can be proved under  $\E^{[A-\delta,B+\delta]}$ for a function ${\tilde L}_{N}$ coinciding with $L_N$ on $[A-\delta/2,B+\delta/2]$ but compactly supported on  $[A-\delta,B+\delta]$. This gives Theorem \ref{thm:Beta} for ${\tilde L}_N$, and then for $L_N$ as the probability of a particle outside $[A-\delta/2,B+\delta/2]$ is $\oo(1)$ by rigidity.
\end{enumerate}

\begin{bibdiv}
\begin{biblist}

\bib{albeverio2011}{article}{
  title={On the 1/n expansion for some unitary invariant ensembles of random matrices},
  author={Albeverio, S.},
  author={Pastur, L.},
  author= {Shcherbina, M.},
  journal={Communications in Mathematical Physics},
  volume={224},
  number={1},
  pages={271--305},
  year={2001},
  publisher={Springer}
}

\bib{anderson2010introduction}{book}{
   author={Anderson, G.},
   author={Guionnet, A.},
   author={Zeitouni, O.},
   title={An introduction to random matrices},
   series={Cambridge Studies in Advanced Mathematics},
   volume={118},
   publisher={Cambridge University Press, Cambridge},
   date={2010},
   pages={xiv+492}
}

\bib{ArgBelBou2017}{article}{
   author={Arguin, L.-P.},
   author={Belius, D.},
   author={Bourgade, P.},
   title={Maximum of the characteristic polynomial of random unitary
   matrices},
   journal={Comm. Math. Phys.},
   volume={349},
   date={2017},
   number={2},
   pages={703--751}
}

\bib{ArgBelBouRadSou2016}{article}{
   author={Arguin, L.-P.},
   author={Belius, D.},
   author={Bourgade, P.},
   author={Radziwi\l \l , M.},
   author={Soundararajan, K.},
   title={Maximum of the Riemann zeta function on a short interval of the
   critical line},
   journal={Comm. Pure Appl. Math.},
   volume={72},
   date={2019},
   number={3},
   pages={500--535}
}

\bib{arguin2020fyodorov1}{article}{
  title={The Fyodorov-Hiary-Keating Conjecture. I},
   author={Arguin, L.-P.},
   author={Bourgade, P.},
   author={ Radziwi{\l}{\l}, M.},
  journal={arXiv preprint arXiv:2007.00988},
  year={2020}
}

\bib{arguin2020fyodorov2}{article}{
  title={The Fyodorov-Hiary-Keating Conjecture. II},
   author={Arguin, L.-P.},
   author={Bourgade, P.},
   author={ Radziwi{\l}{\l}, M.},
  journal={arXiv preprint arXiv:2307.00982},
  year={2023}
}

\bib{augeri2020clt}{article}{
  title={A CLT for the characteristic polynomial of random Jacobi matrices, and the G$\beta$E},
  author={Augeri, F.},
  author={Butez, R.},
  author={Zeitouni, O.},
  journal={Prob. Theory Rel. Fields},
  year={2023},
  volume={186},
  pages={1--89}
}

\bib{bailey2022maxima}{article}{
  title={Maxima of log-correlated fields: some recent developments},
  author={Bailey, E.},
  author= {Keating, J.P.},
  journal={Journal of Physics A: Mathematical and Theoretical},
  volume={55},
  number={5},
  pages={053001},
  year={2022},
  publisher={IOP Publishing}
}

\bib{BauHof2022}{article}{
   author={Bauerschmidt, R.},
   author={Hofstetter, M.},
   title={Maximum and coupling of the sine-Gordon field},
   journal={Ann. Probab.},
   volume={50},
   date={2022},
   number={2},
   pages={455--508}
}

\bib{BelKis14}{article}{
   author={Belius, D.},
   author={Kistler, N.},
   title={The subleading order of two dimensional cover times},
   journal={Probab. Theory Related Fields},
   volume={167},
   date={2017},
   number={1-2},
   pages={461--552}
}

\bib{BelRosZei20}{article}{
AUTHOR = {Belius, D.},
author={Rosen, J.},
author={Zeitouni, O.},
     TITLE = {Tightness for the cover time of the two dimensional sphere},
   JOURNAL = {Probab. Theory Related Fields},
    VOLUME = {176},
      YEAR = {2020},
    NUMBER = {3-4},
     PAGES = {1357--1437},
      ISSN = {0178-8051,1432-2064}
}

\bib{BelWu20}{article}{
AUTHOR = {Belius, D.},
author={Wu, W.},
     TITLE = {Maximum of the {G}inzburg-{L}andau fields},
   JOURNAL = {Ann. Probab.},
    VOLUME = {48},
      YEAR = {2020},
    NUMBER = {6},
     PAGES = {2647--2679},
      ISSN = {0091-1798,2168-894X}
}

\bib{BenBou2013}{article}{
   author={Ben Arous, G.},
   author={Bourgade, P.},
   title={Extreme gaps between eigenvalues of random matrices},
   journal={Ann. Probab.},
   volume={41},
   date={2013},
   number={4},
   pages={2648--2681}
}

\bib{benaych2016lectures}{article}{
  author={Benaych-Georges, F.},
  author={Knowles, A.},
  title={Local semicircle law for Wigner matrices},
  journal={Panor. Synth\`{e}ses},
  volume={53},
  publisher={Soci\'{e}t\'{e} Math\'{e}matique de France},
 % arXiv preprint arXiv:1601.04055},
  year={2017}
}

\bib{BenLop2022}{article}{
   author={Benigni, L.},
   author={Lopatto, P.},
   title={Optimal delocalization for generalized Wigner matrices},
   journal={Adv. Math.},
   volume={396},
   date={2022}
}
\bib{BenLopFluc2022}{article}{
AUTHOR = {Benigni, L.},
author={Lopatto, P.},
     TITLE = {Fluctuations in local quantum unique ergodicity for
              generalized {W}igner matrices},
   JOURNAL = {Comm. Math. Phys.},
    VOLUME = {391},
      YEAR = {2022},
    NUMBER = {2},
     PAGES = {401--454},
      ISSN = {0010-3616,1432-0916}
}

\bib{BerWebWon2017}{article}{
   author={Berestycki, N.},
   author={Webb, C.},
   author={Wong, M. D.},
   title={Random Hermitian matrices and Gaussian multiplicative chaos},
   journal={Probab. Theory Related Fields},
   volume={172},
   date={2018},
   number={1-2},
   pages={103--189}
}

\bib{Bis20}{book}{
    AUTHOR = {Biskup, M.},
     TITLE = {Extrema of the two-dimensional discrete {G}aussian free field},
 BOOKTITLE = {Random graphs, phase transitions, and the {G}aussian free
              field},
    SERIES = {Springer Proc. Math. Stat.},
    VOLUME = {304},
     PAGES = {163--407},
 PUBLISHER = {Springer, Cham},
      YEAR = {2020}
}

\bib{BolDeuGia01}{article}{
    AUTHOR = {Bolthausen, E.},
   AUTHOR = {Deuschel, J.-D.},
   AUTHOR = {Giacomin,  G.},
     TITLE = {Entropic repulsion and the maximum of the two-dimensional
              harmonic crystal},
   JOURNAL = {Ann. Probab.},
    VOLUME = {29},
      YEAR = {2001},
    NUMBER = {4},
     PAGES = {1670--1692}
}

\bib{Bou2018}{article}{
   author={Bourgade, P.},
   title={Extreme gaps between eigenvalues of Wigner matrices},
   journal={J. Eur. Math. Soc.},
   year={2022},
   volume={8},
   pages={2823--2873}
}

\bib{BouErdYau2014}{article}{
   author={Bourgade, P.},
   author={Erd\H os, L.},
   author={Yau, H.-T.},
   title={Edge universality for beta ensembles},
   journal={Communications in Mathematical Physics},
   number={1},
   volume={332},
   date={2014},
   pages={261--353}
}

\bib{BouErdYauYin2016}{article}{
   author={Bourgade, P.},
   author={Erd{\H{o}}s, L.},
   author={Yau, H.-T.},
   author={Yin, J.},
   title={Fixed energy universality for generalized Wigner matrices},
   journal={Comm. Pure Appl. Math.},
   volume={69},
   date={2016},
   number={10},
   pages={1815--1881}
}

\bib{BouFal2022}{article}{
  title={Liouville quantum gravity from random matrix dynamics},
  author={Bourgade, P.},
  author={Falconet, H.},
  journal={arXiv preprint arXiv:2206.03029},
  year={2022}
}

\bib{BouMod2018}{article}{
  title={Gaussian fluctuations of the determinant of Wigner matrices},
  author={Bourgade, P.},
  author={Mody, K.},
  journal={Electronic Journal of Probability},
  volume={24},
  pages={1--28},
  year={2019},
  publisher={Institute of Mathematical Statistics and Bernoulli Society}
}

\bib{BouModPai2020}{article}{
   author={Bourgade, P.},
   author={Mody, K.},
   author={Pain, M.},
   title={Optimal local law and central limit theorem for $\beta$-ensembles},
   journal={Communications in Mathematical Physics},
  volume={390},
  number={3},
  pages={1017--1079},
  year={2022},
  publisher={Springer}
}

\bib{de1995statistical}{article}{
  title={On the statistical mechanics approach in the random matrix theory: integrated density of states},
  author={Boutet De Monvel, A.},
  author={Pastur, L.},
  author={Shcherbina, M.},
  journal={Journal of Statistical Physics},
  volume={79},
  number={3},
  pages={585--611},
  year={1995},
  publisher={Springer}
}

\bib{Bra1978}{article}{
   author={Bramson, M.},
   title={Maximal displacement of branching Brownian motion},
   journal={Comm. Pure Appl. Math.},
   volume={31},
   date={1978},
   number={5},
   pages={531--581}
}

\bib{Bra1983}{article}{
   author={Bramson, M.},
   title={Convergence of solutions of the Kolmogorov equation to travelling
   waves},
   journal={Mem. Amer. Math. Soc.},
   volume={44},
   date={1983},
   number={285},
   pages={iv+190}
}

\bib{BraDinZei2016}{article}{
   author={Bramson, M.},
   author={Ding, J.},
   author={Zeitouni, O.},
   title={Convergence in law of the maximum of the two-dimensional discrete
   Gaussian free field},
   journal={Comm. Pure Appl. Math.},
   volume={69},
   date={2016},
   number={1},
   pages={62--123}
}

\bib{BraDinZei2016bis}{article}{
   author={Bramson, M.},
   author={Ding, J.},
   author={Zeitouni, O.},
   title={Convergence in law of the maximum of nonlattice branching random
   walk},
   journal={Ann. Inst. Henri Poincar\'{e} Probab. Stat.},
   volume={52},
   date={2016},
   number={4},
   pages={1897--1924}
}

\bib{cacciapuoti2015bounds}{article}{
  title={Bounds for the Stieltjes transform and the density of states of Wigner matrices},
  author={Cacciapuoti, C.},
  author={Maltsev, A.},
  author={Schlein, B.},
  journal={Probability Theory and Related Fields},
  volume={163},
  number={1},
  pages={1--59},
  year={2015},
  publisher={Springer}
}

\bib{CarSunZyg20}{article}{
AUTHOR = {Caravenna, F.},
author={Sun, R.},
author={Zygouras, N.},
     TITLE = {The two-dimensional {KPZ} equation in the entire subcritical
              regime},
   JOURNAL = {Ann. Probab.},
    VOLUME = {48},
      YEAR = {2020},
    NUMBER = {3},
     PAGES = {1086--1127}
}

\bib{ChaFahWebWon2021}{article}{
   author={Charlier, C.},
   author={Fahs, B.},
   author={Webb,  C.},
   author={Wong,  M.  D.},
   title={Asymptotics of Hankel determinants with a multi-cut regular potential and Fisher-Hartwig singularities},
   journal={to appear in Mem. Amer. Math. Soc.},
   date={2021}
}

\bib{ChhMadNaj2018}{article}{
   author={Chhaibi, R.},
   author={Madaule, T.},
   author={Najnudel, J.},
   title={On the maximum of the ${\rm C}\beta {\rm E}$ field},
   journal={Duke Math. J.},
   volume={167},
   date={2018},
   number={12},
   pages={2243--2345}
}

\bib{CipErdWu2023}{article}{
   author={Cipolloni,  G.},
   author={Erd{\H o}s,  L.},
   author={Xu, Y.},
   title={Universality of extremal eigenvalues of large random matrices},
   journal={arXiv preprint arXiv:2312.08325}
}

\bib{CipolloniLandon}{article}{
   author={Cipolloni,  G.},
   author={Landon,  B.},
   title={Maximum of the Characteristic Polynomial of {IID} Matrices},
   year={2024},
   journal={arXiv preprint arXiv:2405.05045}
}

\bib{claeys2021much}{article}{
  title={How much can the eigenvalues of a random Hermitian matrix fluctuate?},
  journal={Duke Math. J.},
  author={Claeys, T.},
  author={Fahs, B.},
  author={Lambert, G.},
  author={Webb, C.},
  volume={170},
  number={9},
  pages={2085--2235},
  year={2021},
  publisher={Duke University Press}
}

\bib{CooZei20}{article}{
AUTHOR = {Cook, N.},
author={Zeitouni, O.},
     TITLE = {Maximum of the characteristic polynomial for a random
              permutation matrix},
   JOURNAL = {Comm. Pure Appl. Math.},
    VOLUME = {73},
      YEAR = {2020},
    NUMBER = {8},
     PAGES = {1660--1731},
      ISSN = {0010-3640,1097-0312}
}

\bib{CosZei23}{article}{
AUTHOR = {Cosco, C.},
author={Zeitouni, O.},
     TITLE = {Moments of partition functions of 2d {G}aussian polymers in
              the weak disorder regime---{I}},
   JOURNAL = {Comm. Math. Phys.},
    VOLUME = {403},
      YEAR = {2023},
    NUMBER = {1},
     PAGES = {417--450},
}

\bib{DemPerRosZeo04}{article}{
    AUTHOR = {Dembo, A.},
    AUTHOR = {Peres, Y.},
    AUTHOR = {Zeitouni, O.},
     TITLE = {Cover times for {B}rownian motion and random walks in two
              dimensions},
   JOURNAL = {Ann. of Math. (2)},
    VOLUME = {160},
      YEAR = {2004},
    NUMBER = {2},
     PAGES = {433--464}
}

\bib{Dia03}{article}{
AUTHOR = {Diaconis, P.},
     TITLE = {Patterns in eigenvalues: the 70th {J}osiah {W}illard {G}ibbs
              lecture},
   JOURNAL = {Bull. Amer. Math. Soc. (N.S.)},
    VOLUME = {40},
      YEAR = {2003},
    NUMBER = {2},
     PAGES = {155--178},
      ISSN = {0273-0979,1088-9485},
}

\bib{ding2017convergence}{article}{
  title={Convergence of the centered maximum of log-correlated Gaussian fields},
  author={Ding, J.},
  author={Roy, R.},
  author={Zeitouni, O.},
  journal={The Annals of Probability},
  volume={45},
  number={6A},
  pages={3886--3928},
  year={2017},
  publisher={Institute of Mathematical Statistics}
}

\bib{ErdSchYau2011}{article}{
   author={Erd\H os, L.},
   author={Schlein, B.},
   author={Yau, H.-T.},
   title={Universality of random matrices and local relaxation flow},
   journal={Invent. Math.},
   volume={185},
   date={2011},
   number={1},
   pages={75--119}
}

\bib{erdos2017dynamical}{book}{
   author={Erd\H{o}s, L.},
   author={Yau, H.-T.},
  title={A Dynamical Approach to Random Matrix Theory},
  series={Courant Lecture Notes in Mathematics},
  volume={28},
  year={2017}
}

\bib{ErdYauYin2012}{article}{
   author={Erd\H os, L.},
   author={Yau, H.-T.},
   author={Yin, J.},
   title={Rigidity of eigenvalues of generalized Wigner matrices},
   journal={Adv. Math.},
   volume={229},
   date={2012},
   number={3},
   pages={1435--1515}
}

\bib{FengWei2018II}{article}{
   author={Feng, R.},
   author={Wei, D.},
   title={Large gaps of CUE and GUE},
   journal={arXiv preprint arXiv:1807.02149},
   date={2018}
}

\bib{FyoHiaKea2012}{article}{
   author={Fyodorov, Y.},
   author={Hiary, G.},
   author={Keating, J.},
   title={Freezing Transition, Characteristic Polynomials of Random Matrices, and the Riemann Zeta Function},
   journal={Physical Review Letters},
   volume={108},
   date={2012}
}

\bib{fyodorov2014freezing}{article}{
  title={Freezing transitions and extreme values: random matrix theory, and disordered landscapes},
  author={Fyodorov, Y.},
  author={Keating, J.},
  journal={Philosophical Transactions of the Royal Society A},
  volume={372},
  number={2007},
  pages={20120503},
  year={2014},
  publisher={The Royal Society Publishing.}
}

\bib{fyodorov2016distribution}{article}{
  title={On the distribution of the maximum value of the characteristic polynomial of GUE random matrices},
  author={Fyodorov, Y.},
  author={Simm, N.},
  journal={Nonlinearity},
  volume={29},
  number={9},
  pages={2837},
  year={2016},
  publisher={IOP Publishing}
}

\bib{GotNauTikTim2013}{article}{
   author={G\"{o}tze, F.},
   author={Tikhomirov, A.},
   title={On the rate of convergence to the semi-circular law},
   conference={
      title={High dimensional probability VI},
   },
   book={
      series={Progr. Probab.},
      volume={66},
      publisher={Birkh\"{a}user/Springer, Basel},
   },
   date={2013},
   pages={139--165}
}
\bib{GotNauTikTim2018}{article}{
   author={G\"{o}tze, F.},
   author={Naumov, A.},
   author={Tikhomirov, A.},
   author={Timushev, D.},
   title={On the local semicircular law for Wigner ensembles},
   journal={Bernoulli},
   volume={24},
   date={2018},
   number={3},
   pages={2358--2400}
}

\bib{GuiZei2000}{article}{
   author={Guionnet, A.},
   author={Zeitouni, O.},
   title={Concentration of the spectral measure for large matrices},
   journal={Electron. Comm. Probab.},
   volume={5},
   date={2000},
   pages={119--136}
}

\bib{Gus2005}{article}{
   author={Gustavsson, J.},
   title={Gaussian fluctuations of eigenvalues in the GUE},
   language={English, with English and French summaries},
   journal={Ann. Inst. H. Poincar\'{e} Probab. Statist.},
   volume={41},
   date={2005},
   number={2},
   pages={151--178}
}

\bib{Har2019}{article}{
    AUTHOR = {Harper, A. J.},
     TITLE = {On the partition function of the Riemann zeta function, and the Fyodorov-Hiary-Keating conjecture},
   JOURNAL = {arXiv preprint arXiv:1906.05783},
      YEAR = {2019},
}

\bib{Har2019II}{article}{
   author={Harper, A. J.},
   title={The Riemann zeta function in short intervals [after Najnudel, and
   Arguin, Belius, Bourgade, Radziwi\l \l  and Soundararajan]},
   journal={Ast\'{e}risque, S\'{e}minaire Bourbaki. Vol. 2018/2019.},
   volume={422},
      date={2020},
   pages={Exp. No. 1161, 391--414}
}

\bib{ItsKra2008}{article}{
   author={Its, A.},
   author={Krasovsky, I.},
   title={Hankel determinant and orthogonal polynomials for the Gaussian
   weight with a jump},
   conference={
      title={Integrable systems and random matrices},
   },
   book={
      series={Contemp. Math.},
      volume={458},
      publisher={Amer. Math. Soc., Providence, RI},
   },
   date={2008},
   pages={215--247}
}

\bib{Joh1998}{article}{
   author={Johansson, K.},
   title={On fluctuations of eigenvalues of random Hermitian matrices},
   journal={Duke Math. J.},
   volume={91},
   date={1998},
   number={1},
   pages={151--204}}

\bib{KnoYin2013}{article}{
   author={Knowles, A.},
   author={Yin, J.},
   title={The isotropic semicircle law and deformation of Wigner matrices},
   journal={Comm. Pure Appl. Math.},
   volume={66},
   date={2013},
   number={11},
   pages={1663--1750}
}

\bib{lambert2020maximum}{article}{
 title={Maximum of the characteristic polynomial of the Ginibre ensemble},
  author={Lambert, G.},
  journal={Communications in Mathematical Physics},
  volume={378},
  number={2},
  pages={943--985},
  year={2020},
  publisher={Springer}
}

\bib{lambert2021mesoscopic}{article}{ 
  title={Mesoscopic central limit theorem for the circular $\beta$-ensembles and applications},
  author={Lambert, G.},
  journal={Electronic Journal of Probability},
  volume={26},
  pages={1--33},
  year={2021},
  publisher={Institute of Mathematical Statistics and Bernoulli Society}
}

\bib{lambert2023law}{article}{
  title={Law of large numbers for the maximum of the two-dimensional Coulomb gas potential},
     author={Lambert, G.},
        author={Lebl\'e,  T.},
           author={Zeitouni,  O.},
  journal={arXiv preprint arXiv:2303.09912},
  year={2023}
}

\bib{LamOstSim2018}{article}{
   author={Lambert, G.},
   author={Ostrovsky, D.},
   author={Simm, N.},
   title={Subcritical multiplicative chaos for regularized counting
   statistics from random matrix theory},
   journal={Comm. Math. Phys.},
   volume={360},
   date={2018},
   number={1},
   pages={1--54}
}

\bib{LamPaq2019}{article}{
   author={Lambert, G.},
   author={Paquette, E.},
   title={The law of large numbers for the maximum of almost Gaussian log-correlated random fields coming from random matrices},
   journal={Probab. Theory Related Fields},
   date={2019},
   volume={173},
   note={See arXiv 1611.08885 version 3},
   pages={157--209}
}

\bib{lambert2020strong1}{article}{
  title={Strong approximation of Gaussian $\beta$-ensemble characteristic polynomials: the edge regime and the stochastic Airy function},
  author={Lambert, G.},
  author={Paquette, E.},
  journal={arXiv preprint arXiv:2009.05003},
  year={2020}
}

\bib{lambert2020strong2}{article}{
  title={Strong approximation of Gaussian $\beta$-ensemble characteristic polynomials: the hyperbolic regime},
  author={Lambert, G.},
  author={Paquette, E.},
  journal={Ann. Appl. Probab. 33},
  year={2023},
  volume={33},
  pages={549--612}
}

\bib{LanLopMar2018}{article}{
   author={Landon, B.},
   author={Lopatto, P.},
   author={Marcinek, J.},
   title={Comparison theorem for some extremal eigenvalue statistics},
   journal={Annals Probab.},
   date={2020},
   volume={48},
   pages={2894--2919}
}

\bib{LanSos2018}{article}{
   author={Landon, B.},
   author={Sosoe, P.},
   title={Applications of mesoscopic CLTs in random matrix theory},
   journal={Annals Appl.  Probab.},
   date={2020},
   volume={30},
   pages={2769--2795}
}

\bib{LanSosYau2016}{article}{
   author={Landon, B.},
   author={Sosoe, P.},
   author={Yau, H.-T.},
   title={Fixed energy universality for Dyson Brownian motion},
   journal={Advances in Mathematics},
   date={2019},
   volume={346},
   pages={1137--1332}
}

\bib{lee2018local}{article}{
  title={Local law and Tracy--Widom limit for sparse random matrices},
  author={Lee, J. O.},
  author={Schnelli, K.},
  journal={Probability Theory and Related Fields},
  volume={171},
  number={1},
  pages={543--616},
  year={2018},
  publisher={Springer}
}

\bib{LytPas2009}{article}{
   author={Lytova, A.},
   author={Pastur, L.},
   title={Central limit theorem for linear eigenvalue statistics of random
   matrices with independent entries},
   journal={Ann. Probab.},
   volume={37},
   date={2009},
   number={5},
   pages={1778--1840}
}

\bib{Naj16}{article}{
   author={Najnudel, J.},
   title={On the extreme values of the Riemann zeta function on random
   intervals of the critical line},
   journal={Probab. Theory Related Fields},
   volume={172},
   date={2018},
   number={1-2},
   pages={387--452}
}

\bib{Ngu2018}{article}{
   author={Nguyen, H.},
   title={Random matrices: overcrowding estimates for the spectrum},
   journal={J. Funct. Anal.},
   volume={275},
   date={2018},
   number={8},
   pages={2197--2224}
}

\bib{nikula2020multiplicative}{article}{
  title={Multiplicative chaos and the characteristic polynomial of the CUE: The $L^1$-phase},
  author={Nikula, M.},
  author={Saksman, E.},
  author={Webb, C.},
  journal={Transactions of the American Mathematical Society},
  volume={373},
  number={6},
  pages={3905--3965},
  year={2020}
}

\bib{Oro2010}{article}{
   author={O'Rourke, S.},
   title={Gaussian fluctuations of eigenvalues in Wigner random matrices},
   journal={J. Stat. Phys.},
   volume={138},
   date={2010},
   number={6},
   pages={1045--1066}
}

\bib{PaqZei2018}{article}{
   author={Paquette, E.},
   author={Zeitouni, O.},
   title={The maximum of the CUE field},
   journal={Int. Math. Res. Not. IMRN},
   date={2018},
   number={16},
   pages={5028--5119}
}

\bib{PaqZei2022}{article}{
   author={Paquette, E.},
   author={Zeitouni, O.},
   title={The extremal landscape for the C$\beta$E ensemble},
   journal={arXiv preprint arXiv:2209.06743},
   date={2022}
}

\bib{Shc2013}{article}{
   author={Shcherbina, M.},
   title={Fluctuations of linear eigenvalue statistics of $\beta$ matrix
   models in the multi-cut regime},
   journal={J. Stat. Phys.},
   volume={151},
   date={2013},
   number={6},
   pages={1004--1034}
}

\bib{TaoVu2011}{article}{
   author={Tao, T.},
   author={Vu, V.},
   title={Random matrices: universality of local eigenvalue statistics},
   journal={Acta Math.},
   volume={206},
   date={2011},
   number={1},
   pages={127--204}
}

\bib{TaoVu2013}{article}{
   author={Tao, T.},
   author={Vu, V.},
   title={Random matrices: sharp concentration of eigenvalues},
   journal={Random Matrices Theory Appl.},
   volume={2},
   date={2013},
   number={3}
}

\bib{Web2015}{article}{
   author={Webb, C.},
   title={The characteristic polynomial of a random unitary matrix and
   Gaussian multiplicative chaos---the $L^2$-phase},
   journal={Electron. J. Probab.},
   volume={20},
   date={2015},
   pages={no. 104, 21}
}

\bib{Zei16}{incollection}{
    AUTHOR = {Zeitouni, O.},
     TITLE = {Branching random walks and {G}aussian fields},
 BOOKTITLE = {Probability and statistical physics in {S}t. {P}etersburg},
    SERIES = {Proc. Sympos. Pure Math.},
    VOLUME = {91},
     PAGES = {437--471},
 PUBLISHER = {Amer. Math. Soc., Providence, RI},
      YEAR = {2016}
}

\end{biblist}
\end{bibdiv}

\end{document}